\def\inte#1{
\displaystyle\mathop{#1\kern0pt}^\circ }
\let\pa=\partial
\let\f=\frac
\def\pa{\partial}
\def\virgp{\raise 2pt\hbox{,}}
\def\cdotpv{\raise 2pt\hbox{;}}
\def\C{\mathop{\mathbb C\kern 0pt}\nolimits}
\def\DD{\mathop{\mathbb D\kern 0pt}\nolimits}
\def\EE{\mathop{{\mathbb E \kern 0pt}}\nolimits}
\def\K{\mathop{\mathbb K\kern 0pt}\nolimits}
\def\N{\mathop{\mathbb N\kern 0pt}\nolimits}
\def\Q{\mathop{\mathbb Q\kern 0pt}\nolimits}
\def\R{\mathop{\mathbb R\kern 0pt}\nolimits}
\def\SS{\mathop{\mathbb S\kern 0pt}\nolimits}
\def\ZZ{\mathop{\mathbb Z\kern 0pt}\nolimits}
\def\TT{\mathop{\mathbb T\kern 0pt}\nolimits}
\def\P{\mathop{\mathbb P\kern 0pt}\nolimits}
\newcommand{\beq}{\begin{equation}}
\newcommand{\eeq}{\end{equation}}
\newcommand{\ben}{\begin{eqnarray}}
\newcommand{\een}{\end{eqnarray}}
\newcommand{\beno}{\begin{eqnarray*}}
\newcommand{\eeno}{\end{eqnarray*}}
\newtheorem{defi}{Definition}[section]
\newtheorem{thm}{Theorem}[section]
\newtheorem{lem}{Lemma}[section]
\newtheorem{rmk}{Remark}[section]
\newtheorem{col}{Corollary}[section]
\newtheorem{prop}{Proposition}[section]
\renewcommand{\theequation}{\thesection.\arabic{equation}}
\numberwithin{equation}{section}
\begin{document}

\title[Non-cutoff Boltzmann equation in the grazing limit]{Solutions to the non-cutoff Boltzmann equation in the grazing limit}

\author[R.-J. Duan]{Renjun Duan}
\address[R.-J. Duan]{Department of Mathematics, The Chinese University of Hong Kong, Shatin, Hong Kong.}
\email{rjduan@math.cuhk.edu.hk}

\author[L.-B. He]{Ling-Bing He}
\address[L.-B. He]{Department of Mathematical Sciences, Tsinghua University\\
Beijing, 100084,  P.R.~China.}
\email{hlb@tsinghua.edu.cn}

\author[T. Yang]{Tong Yang}
\address[T. Yang]{Department of Mathematics, City University of Hong Kong, Tat Chee Avenue, Kowloon, Hong Kong.}
\email{matyang@cityu.edu.hk}

\author[Y.-L. Zhou]{Yu-long Zhou}
\address[Y.-L. Zhou]{School of Mathematics, Sun Yat-Sen University, Guangzhou, 510275, P.R.~China.}
\email{zhouyulong@mail.sysu.edu.cn}

\begin{abstract}
It is known that in the parameters range $-2 \leq \gamma <-2s$  spectral gap does not exist for the linearized Boltzmann operator without cutoff but it  does for the linearized Landau operator. This paper is devoted to the understanding of the formation of spectral gap in this range through the grazing limit. Precisely, we study the  Cauchy problems of these two classical collisional kinetic equations around global Maxwellians in torus and establish the following results that are uniform in the vanishing grazing parameter $\epsilon$: (i) spectral gap type estimates for the collision operators; (ii) global existence of small-amplitude solutions for  initial data with low regularity; (iii) propagation of regularity in both space and velocity variables as well as velocity moments without smallness; (iv) global-in-time asymptotics of the Boltzmann solution toward the Landau solution at the rate $O(\epsilon)$; (v)  continuous
transition of decay structure   of the Boltzmann operator to the Landau operator.
%there exists a time threshold $T_\epsilon>0$ with $T_\epsilon\to \infty $ as $\epsilon\to 0$ such that the solution to the non-cutoff Boltzmann equation for initial data with an additional exponential velocity weight decays at the exponential rate $\exp(-\lambda t)$ in time up to $T_\epsilon$ but at the sub-exponential rate $\exp(-\lambda_\epsilon t^{\kappa})$  with $\kappa = \frac{1}{1+|\gamma+2s|}$ %$0<\kappa<1$
%in time beyond $T_\epsilon$, where $\lambda>0$ is a constant independent of $\epsilon$ and $\lambda_\epsilon:=\lambda T_\epsilon^{1-\kappa}$ is chosen such that $\lambda t=\lambda_\epsilon t^{\kappa}$ at $t=T_\epsilon$.
 In particular, the result in part (v) captures the uniform-in-$\epsilon$ transition of intrinsic optimal time decay structures of solutions that reveals how the spectrum of the linearized non-cutoff Boltzmann equation in the mentioned parameter range changes continuously under the grazing limit.
\end{abstract}

%\begin{abstract}
%It is well known that in the parameter range $-2 \leq \gamma <-2s$ the spectral gap does not exist for the linearized Boltzmann operator without cutoff but it really does for the linearized Landau operator which can be derived via the formal grazing collision limit of the former one. In the paper, we are devoted to understanding how the spectral gap can form depending uniformly on the grazing parameter in the limit. For the purpose, we first study the Cauchy problem on two classical collisional kinetic equations around global Maxwellians in torus when the uniform grazing limit is taken into account, and then capture the transition of intrinsic optimal time decay structures of global in time solutions with small amplitude in a low  regularity space.  The result reveals that the spectral gap for both equations in the parameter range $-2 \leq \gamma <-2s$ changes in a continuous way under the grazing limit.
%\end{abstract}

\subjclass[2010]{35Q20, 35R11, 75P05}

\maketitle

\setcounter{tocdepth}{1}
\tableofcontents

%%%%%%%%%%%%%%

\noindent {\sl Keywords:} {Boltzmann
equation, Landau equation, long-range interactions, grazing limit, spectral gap.}

%\vskip 0.2cm

%\noindent {\sl AMS Subject Classification (2010):} {35Q20, 35R11, 75P05.}

\renewcommand{\theequation}{\thesection.\arabic{equation}}
\setcounter{equation}{0}
%%%%%%%%%%%%%%%%%%%%%%%%%%%%%%%%%%%%%%%%%%%%%%
%%%%%%%%%%%%%%%%%%%%%%%%%%%%%%%%%%%%%%%%%%

\section{Introduction}
It is well known that two classical collisional kinetic equations, Boltzmann equation without angular cutoff and Landau equation  are closely connected through the so-called grazing collision limit. The main purpose in this paper is to
reveal  the continuous
transition of decay structure   of the Boltzmann collision operator
in certain range of parameters from the sub-exponential time decay  to the exponential time decay  in the limit process. We emphasize that this problem is related to the famous spectral gap problem, that is, the linearized Boltzmann operator without cutoff does not have any  spectral gap but the linearized Landau operator  does in the same range.

\subsection{Boltzmann  and  Landau equations in the perturbation framework}
For the setting of the study, we first recall the equations. The Cauchy problem on the Boltzmann equation reads
\begin{equation}\label{Cauchy-Boltzmann} \left\{ \begin{aligned}
&\partial _t F +  v \cdot \nabla_{x} F=Q^{B}(F,F), ~~t > 0, x \in \mathbb{T}^{3}, v \in \R^3,\\
&F|_{t=0} = F_{0},
\end{aligned} \right.
\end{equation}
where $F(t,x,v)\geq 0$ is the density
function of particles with velocity
$v\in\R^3$ at time $t\geq 0$ and position $x \in \mathbb{T}^{3} :=[-\pi,\pi]^{3}$. The Boltzmann collision operator is
\ben \label{Boltzmann-operator}
Q^{B}(g,h)(v):=
\int_{\R^3}\int_{\mathbb{S}^{2}}B(v-v_*,\sigma)\left(g^{\prime}_{*} h^{\prime}-g_{*}h\right)d\sigma dv_*.
\een
Here we have used the standard shorthand notions $h=h(v)$, $g_*=g(v_*)$,
$h'=h(v')$ and $g'_*=g(v'_*)$, where $v'$ and $v_*'$ are given by
\beno
v'=\frac{v+v_{*}}{2}+\frac{|v-v_{*}|}{2}\sigma, \quad
v'_{*}=\frac{v+v_{*}}{2}-\frac{|v-v_{*}|}{2}\sigma, \quad \sigma\in\mathbb{S}^{2}.
\eeno

On the other hand,  with the same initial data as in \eqref{Cauchy-Boltzmann}, the Cauchy problem on the Landau equation reads
\begin{equation}\label{Cauchy-Landau} \left\{ \begin{aligned}
&\partial _t F +  v \cdot \nabla_{x} F=Q^{L}(F,F), ~~t > 0, x \in \mathbb{T}^{3}, v \in\R^3,\\
&F|_{t=0} = F_{0},
\end{aligned} \right.
\end{equation}
where  the Landau operator $Q^{L}(g,h)$ is given  by
\beno
% \label{oroginal-definition-Laudau-oprator}
Q^{L}(g,h)(v) :=
\nabla_{v}\cdot\bigg\{\int_{\R^3}a(v-v_{*})[g(v_{*})\nabla_{v}h(v)-\nabla_{v_{*}}g(v_{*})h(v)]dv_{*}\bigg\}.
\eeno
Here
% the symmetric matrix $a$ is given by
\begin{eqnarray}\label{matrix}
a(z)  = \Lambda |z|^{\gamma+2}(I_{3} - \frac{z \otimes z}{|z|^{2}}),
\end{eqnarray}
where $I_{3}$ is the $3 \times 3$ identity matrix and $\Lambda$ is a positive constant.

In the perturbation framework, that is, by setting  $F(t,x,v)=\mu +\mu^{\frac{1}{2}}f(t,x,v)$ with the normalized global Maxwellians $\mu=\mu(v) := (2\pi)^{-3/2}e^{-|v|^{2}/2}$,  the Cauchy problems \eqref{Cauchy-Boltzmann}  and \eqref{Cauchy-Landau}  are  reduced respectively to
\begin{equation}\label{Cauchy-linearizedBE} \left\{ \begin{aligned}
&\partial_{t}f + v\cdot \nabla_{x} f + \mathcal{L}^{B}f= \Gamma^{B}(f,f), ~~t > 0, x \in \mathbb{T}^{3}, v \in\R^3,\\
&f|_{t=0} = f_{0},
\end{aligned} \right.
\end{equation}
and
\begin{equation}\label{Cauchy-linearizedLE} \left\{ \begin{aligned}
&\partial_{t}f + v\cdot \nabla_{x} f + \mathcal{L}^{L}f= \Gamma^{L}(f,f), ~~t > 0, x \in \mathbb{T}^{3}, v \in\R^3,\\
&f|_{t=0} = f_{0}.
\end{aligned} \right.
\end{equation}
Here the linearized Boltzmann operator $\mathcal{L}^{B}$ and the nonlinear term $\Gamma^{B}$ are given respectively  by
\ben \label{defintion-GammaB-LB}
\Gamma^{B}(g,h):= \mu^{-\frac{1}{2}} Q^{B}(\mu^{\frac{1}{2}}g,\mu^{\frac{1}{2}}h), \quad
\mathcal{L}^{B}g:= -\Gamma^{B}(\mu^{\frac{1}{2}},g) - \Gamma^{B}(g, \mu^{\frac{1}{2}}).
\een
Similarly,   the linearized Landau operator $\mathcal{L}^{L}$ and the nonlinear term $\Gamma^{L}$  are
\beno \Gamma^{L}(g,h) := \mu^{-\frac{1}{2}} Q^{L}(\mu^{\frac{1}{2}}g,\mu^{\frac{1}{2}}h), \quad
\mathcal{L}^{L}g := -\Gamma^{L}(\mu^{\frac{1}{2}},g) - \Gamma^{L}(g, \mu^{\frac{1}{2}}). \eeno

In what follows,  we impose the following assumptions on the Boltzmann kernel $B$ in \eqref{Boltzmann-operator}:

 \begin{itemize}
  \item[$\mathbf{(A1).}$] The Boltzmann kernel $B$ takes the form of
  \beno
B(v-v_{*},\sigma)= C_{B} |v-v_{*}|^{\gamma}b(\cos\theta), ~~-3<\gamma \leq 1, C_{B}>0,
\eeno
where $\cos\theta= \frac{v-v_{*}}{|v-v_{*}|}\cdot \sigma$.

  \item[$\mathbf{(A2).}$] The angular function $b(\cdot)$ is singular in the sense that
  %not locally integrable and it satisfies
 \ben \label{assumption-on-angular-function-b}
C_{b}^{-1} \sin^{-2-2s}\frac{\theta}{2} \leq b(\cos\theta) \leq C_{b} \sin^{-2-2s}\frac{\theta}{2}, ~~0<s<1, C_{b} \geq 1.
\een

  \item[$\mathbf{(A3).}$]
  The parameter $\gamma$ and $s$ satisfy the condition: $\gamma+2s >-1$.

  \item[$\mathbf{(A4).}$]  Without loss of generality, we  assume that $B(v-v_*,\sigma)$ is supported in the interval $0\leq \theta\leq \pi/2$, i.e., $\frac{v-v_*}{|v-v_*|}\cdot\sigma\geq0$, for otherwise $B$ can be replaced by its symmetrized form:
\beno
\overline{B}(v-v_*,\sigma)=|v-v_*|^\gamma\big\{b(\frac{v-v_*}{|v-v_*|}\cdot\sigma)+b(\frac{v-v_*}{|v-v_*|}\cdot(-\sigma))\big\} \mathrm{1}_{\frac{v-v_*}{|v-v_*|}\cdot\sigma\ge0},
\eeno
where $\mathrm{1}_A$ is the characteristic function of a set $A$.
\end{itemize}

\begin{rmk}\label{inverse-power-law}
The above assumptions on $B(v-v_{*},\sigma)$ are motivated by the inverse power law model with   potential $U(r) = r^{-p}, p>1$ where
$s = \frac{1}{p}$ and $\gamma = \frac{p-4}{p}$ satisfying $\gamma+4s=1$.
\end{rmk}

% This norm $||\cdot||_{L^{1}_{k}L^{2}}$ is used by the initial condition $f_{0}$.

%For a function $f(t,x,v)$ on $[0, \infty) \times \mathbb{T}^{3} \times \mathbb{R}^{3}$, define for $T>0$,
%\beno
%||f||_{L^{1}_{k}L^{\infty}_{T}L^{2}} := \sum_{k \in \mathbb{Z}^{3}}  \sup_{0 \leq t \leq T} |\hat{f}(t,k,\cdot)|_{L^{2}},
%\eeno
%This norm $||\cdot||_{L^{1}_{k}L^{\infty}_{T}L^{2}}$ is used by the solution to  Cauchy problem \eqref{Cauchy-linearizedBE} or \eqref{Cauchy-linearizedLE}.
\subsubsection{Mathematical theory on the grazing collisions limit of Boltzmann equation to Landau equation}
In this  subsection, we will briefly review existing mathematical work on the grazing limit of the
Boltzmann equation to the Landau equation.
%We will also mention that the well-posedness theories for this two systems
%with different spectrum structures when the parameters $s$ and $\gamma$ are in some regime. This is related heavily to
%our problem on transition of the time decay structure in the grazing limit.

 Formally the grazing limit means that when one rescales the function of the deviation angle to be  concentrated on the collisions that become grazing, the corresponding Boltzmann equation leads to the Landau equation in the limit. Precisely,  set
\ben \label{definition-angular-part} \epsilon=\sin(\theta_{\max}/2), \quad b^{\epsilon}(\cos \theta):=(1-s)\epsilon^{2s-2}\sin^{-2-2s}(\theta/2)
\mathrm{1}_{\sin(\theta/2) \leq \epsilon},\een
where $\theta_{\max}$ is the maximum deviation angle such that  collisions happen only when $\theta \leq \theta_{\max}$. Then
  the rescaled  Boltzmann kernel $B^{\epsilon}(v-v_*,\sigma) $ and the corresponding collision operator  $Q^{\epsilon}$ are given respectively as follows:
\ben \label{Blotzmann-kernel}
B^{\epsilon}(v-v_*,\sigma) =  \left|v-v_{*}\right|^{\gamma} b^{\epsilon}(\cos \theta)
= \left|v-v_{*}\right|^{\gamma} (1-s) \epsilon^{2s-2}\sin^{-2-2s}(\theta / 2)\mathrm{1}_{\sin(\theta/2) \leq \epsilon},\een
and
\beno Q^{\epsilon}(g,h)(v):=
\int_{\R^3}\int_{\mathbb{S}^{2}}B^{\epsilon}(v-v_*,\sigma)\left(g^{\prime}_{*} h^{\prime}-g_{*}h\right)d\sigma dv_*.
\eeno
At the operator level, it is known that the following asymptotic formula between $Q^{\epsilon}$ and $Q^L$ holds  for suitably smooth functions:
\beno
 \|Q^{\epsilon}(f,f)-Q^{L}(f,f)\|_{L^{2}} \lesssim \epsilon \|f\|_{H^{3}_{\gamma+10}}\|f\|_{H^{5}_{\gamma+10}}.
\eeno
We refer to \cite{desvillettes1992asymptotics,he2014asymptotic,zhou2020refined} for the details.

\bigskip

\noindent $\bullet$ {\bf Weak convergence of the limit.} In the spatially homogeneous case, Arsen'ev-Buryak \cite{arsen1991connection} studied
the convergence of weak solutions of the Boltzmann equation to those of the Landau equation under certain assumptions on the Boltzmann kernel. However, the kernel considered in \cite{arsen1991connection} does not include the inverse power law potential. Goudon \cite{goudon1997boltzmann} proved the convergence of weak solutions  for the inverse power law potential in the case of $\gamma \geq -2$ and  $s \leq \frac{1}{4}$. Note that this range covers the potential $U(r) = r^{-p}$ for $p \geq \frac{4}{3}$ only. Villani \cite{villani1998new} used the symmetry of spherical integrals and introduced a new definition of weak solutions that enables him to show the convergence of weak solutions of the Boltzmann equation to those of the Landau equation by only assuming $\gamma>-4$. Note that  the results in \cite{villani1998new} hold for the Coulomb potential with  $p=1$.

Based on the renormalized solution theory \cite{villani1998new} and the entropy dissipation estimate obtained in \cite{alexandre2000entropy}, important contribution was made by  Alexandre-Villani in \cite{alexandre2004landau}
that gives first  study on the problem in the
spatially inhomogeneous setting.
%($x \in \mathbb{R}^{3}$).
Thanks to the general setting of weak solutions, the situation in \cite{alexandre2004landau} covers a board class of potentials including the Coulomb interaction.

\medskip

\noindent $\bullet$ {\bf Classical convergence of the limit.} In the spatially homogeneous case, the second author  in \cite{he2014asymptotic}  showed the convergence of \eqref{coboltzmann} to \eqref{Cauchy-Landau} in weighted Sobolev spaces with an explicit rate. More precisely, supposing that $F^{\epsilon}$ and $F$ are solutions to \eqref{coboltzmann} and \eqref{Cauchy-Landau} respectively, it was proved in \cite{he2014asymptotic} that
\beno
\sup_{0 \leq t \leq T} |F^{\epsilon}(t) - F(t)|_{H^{N}} \leq \epsilon C(T, |F_{0}|_{L^{1}_{q(N,l)}}, |F_{0}|_{H^{N+3}_{l}}),
\eeno
for some $T>0$. Here, $T$ can be extended to $\infty$ for $\gamma \geq -2s$, whereas $T<\infty$ is required for $-3<\gamma <-2s$.

\bigskip

In the present work, we are interested in the inverse power law model when the parameters $\gamma$ and $s$ satisfy $-2\le \gamma<-2s$,  because in this setting the linearized Boltzmann collision operator $\mathcal{L}^B$ does not have spectral gap while the linearized Landau operator does have. Correspondingly, this property induces that for the solutions to the nonlinear equations \eqref{Cauchy-linearizedBE} and \eqref{Cauchy-linearizedLE}, one can derive the sub-exponential time decay rate for the Boltzmann equation but the exponential decay rate for the Landau equation.  As we mentioned above, the grazing collision limit bridges these two equations in the limit process. It is then natural to ask whether  one can have a unified framework to show that in the vanishing-in-$\epsilon$ limit process the spectral gap is continuously transferred from non-existence to existence. Unfortunately, so far we have no idea to directly answer this question in  the level of the functional analysis. Thus we resort to finding the continuous transition from the sub-exponential  structure to the exponential  structure by studying the time-decay of solutions.  Mathematically, we are concerned with the rescaled Boltzmann equation in the perturbation framework:
 \begin{equation}\label{coboltzmann} \left\{ \begin{aligned}
&\partial _t F +  v \cdot \nabla_{x} F=Q^{\epsilon}(F,F), ~~t > 0, x \in \mathbb{T}^{3}, v \in\R^3,\\
&F|_{t=0} = F_{0},
\end{aligned} \right.
\end{equation}
as well as the associated Landau equation \eqref{Cauchy-Landau}
 with the same initial data in the limit $\epsilon \rightarrow 0$.

\subsubsection{Mathematical theory on Landau's derivation}     In 1936, Landau derived an effective kinetic equation,  named by the Landau equation (or Fokker-Planck-Landau equation) nowadays, for the charged particles governed by the Coulomb potential in the weak coupling regime. Landau's formal derivation can be found in many books, see \cite{montgomery1964plasma,Lif1981PhysicalKinetics} for instance.
In this situation,  it holds that $\gamma=-3$  in \eqref{matrix}; we refer to \cite{degond1992fokker} for the convergence of the Boltzmann operator to the  Landau operator. At the solution level for the limit from \eqref{coboltzmann} to \eqref{Cauchy-Landau}, we refer readers to \cite{alexandre2004landau} for  convergence of weak solutions as well as
  \cite{he2014well} for  convergence of classical solutions  with an explicit rate $|\ln \epsilon|^{-1}$. We remark that the Boltzmann kernel in \cite{he2014well} is taken as
\beno
\tilde{B}^{\epsilon}(v-v_{*}, \sigma) := |\ln \epsilon|^{-1} |v-v_{*}|^{-3} \sin^{-4}\frac{\theta}{2} \mathrm{1}_{\sin\frac{\theta}{2} \geq \epsilon},
\eeno
and the result holds only locally in time.  Very recently, in the near equilibrium framework, the second and fourth authors in \cite{he2020boltzmann} proved the global-in-time convergence of solutions of \eqref{Cauchy-linearizedBE} with the most singular kernel $\tilde{B}^{\epsilon}$ to solutions of \eqref{Cauchy-linearizedLE}.

% Landau equation is an important model in plasma physics, especially the Coulomb potential case,  see \cite{thorne1952mathematical} and \cite{Lif1981PhysicalKinetics}.

%This kernel is used in \cite{he2014asymptotic},
%where the grazing limit from Boltzmann to Landau equation
%in homogeneous case is studied. We would like to consider grazing limit in the inhomogeneous case.
%
%
%
%At operator level, there are many works related to the Landau approximation.
%
%Mathematically, see \cite{desvillettes1992asymptotics}, \cite{degond1992fokker}.

\subsection{Mathematical setting of the problems:} Let us give a detailed mathematical description on the problems to be discussed
in this paper. We begin with the function spaces.

\smallskip

\noindent $\bullet$ {\bf Function spaces.} We refer to  \cite{alexandre2012boltzmann,GS} and \cite{guo2002landau} on global  well-posedness theories for the Boltzmann
equation without angular cutoff and the Landau equation in weighted Sobolev spaces, respectively.
 In this paper, we will follow the low regularity function space $L^{1}_{k}L^{\infty}_{T}L^{2}$ introduced in \cite{duan2021global} to consider the limit, where $L^1_k$ corresponds to the Weiner algebra over a torus. More precisely, the function space is equipped
with the norm
%a function $f(x,v)$ on $\mathbb{T}^{3} \times \mathbb{R}^{3}$, define
\beno
\|f\|_{L^{1}_{k}L^{\infty}_{T}L^{2}} := \sum_{k \in \mathbb{Z}^{3}}  \sup_{0 \leq t \leq T} |\hat{f}(t,k,\cdot)|_{L^{2}}.
\eeno
Here $\hat{f}$ is the Fourier transform with respect to $x$. The $|\cdot|_{L^{2}}$ is taken with respect to the variable $v$. Note that in terms of the regularity of $x$ variable on torus, it holds in the formal level that $H^{3/2+\delta}_{x} \hookrightarrow L^{1}_{k} \hookrightarrow L^{\infty}_{x}$. To the best of our knowledge, $L^{1}_{k}L^{\infty}_{T}L^{2}$ seems to be the largest space in which global well-posedness theory for both the non-cutoff Boltzmann equation and the Landau equation can be established via the direct energy method, in contrast with the recent substantial progress in \cite{AMSY} for constructing the $L^2\cap L^\infty$ solutions via the De Giorgi argument.

\smallskip
\noindent $\bullet$ {\bf Some well-known facts.}
Now we list some basic facts on the large time behavior of solutions to both the Boltzmann  and the Landau equations in the space $L^{1}_{k}L^{\infty}_{T}L^{2}$.
Let $f^{L}$ be the solution to the Cauchy problem \eqref{Cauchy-linearizedLE} on the Landau equation. When $\gamma \geq -2$, under suitable smallness assumption on $f_{0}$,
it holds (see Theorem 2.1 in \cite{duan2021global}) that
\ben \label{decay-of-Landau-solution}
\|f^{L}(t)\|_{L^{1}_{k}L^{2}} \lesssim e^{-\lambda t}\|f_{0}\|_{L^{1}_{k}L^{2}} \lesssim e^{-\lambda t}.
\een
See \eqref{general-L1k-norm-for-initial-data} for the precise definition of norm $\| \cdot \|_{L^{1}_{k}L^{2}}$.
The above time  decay  property is consistent with the fact that the linearized Landau operator $\mathcal{L}^{L}$ has a spectral gap if and only if $\gamma \geq -2$. On the other hand, let $f^{B}$ be the solution to the Cauchy problem \eqref{Cauchy-linearizedBE} on the Boltzmann equation. When  $-3<\gamma <-2s$, under suitable smallness assumption on $f_{0}$,
it holds (see Theorem 2.1 in \cite{duan2021global}) that
\ben \label{decay-of-Boltzmann-solution}
\|f^{B}(t)\|_{L^{1}_{k}L^{2}} \lesssim e^{-\lambda t^{\kappa}}\|e^{q \langle v \rangle}f_{0}\|_{L^{1}_{k}L^{2}} \lesssim e^{-\lambda t^{\kappa}},
\een
where  $\kappa = \frac{1}{1+|\gamma+2s|}$, $q>0$, and $\langle v \rangle=\sqrt{1+|v|^{2}}$.
The time decay rate in \eqref{decay-of-Boltzmann-solution} is also consistent with the spectrum
structure of the  linearized Boltzmann operator $\mathcal{L}^{B}$  in the
soft potential regime $\gamma<-2s$ for which there is no  spectrum gap, cf. \cite{YY} and the references therein. To the best of our knowledge, \eqref{decay-of-Landau-solution} and \eqref{decay-of-Boltzmann-solution} provide the optimal decay rate estimates in the existing literatures.
\smallskip

\noindent $\bullet$ {\bf Spectral estimates of the linearized collision operators.}  The spectral gap estimates for the linearized   operators play the important role in the global-in-time well-posedness for the collisional kinetic equation in the perturbation framework. We recall that pioneering work is due to Wang and Uhlenbeck(see \cite{wang1952propagation}) on the Maxwell molecule model $\gamma=0$. Let us denote $\mathcal{L}^{B,\gamma}$ to address that the linearized collision operator $\mathcal{L}^B$ in fact depends on the parameter $\gamma$. In Wang-Uhlenbeck's work, the authors gave the explicit formulas to all the eigenvalues and the associated eigenfunctions to $\mathcal{L}^{B,0}$. As a direct consequence, it implies the so-called spectral gap estimate. To be  precise,  the kernel space of $\mathcal{L}^{B,\gamma}$ and $\mathcal{L}^{L}$ is defined by
\ben \label{kernel-space}
\mathrm{ker}:= \mathrm{span}\{\mu^{\frac{1}{2}}, \mu^{\frac{1}{2}}v_1, \mu^{\frac{1}{2}}v_2,\mu^{\frac{1}{2}}v_3, \mu^{\frac{1}{2}}|v|^2 \}.
\een
Usually, $\mathrm{ker}$ is called the macro-space, and $\mathrm{ker}^{\perp}$ is called micro-space. Wang and Uhlenbeck proved that for any $f\in \mathrm{ker}^{\perp}$,
\ben\label{sepctralgapWU}
\langle \mathcal{L}^{B,0}f, f\rangle \geq \lambda_{e} |f|^{2}_{L^{2}},
\een
where $\lambda_e$ is the first non zero eigenvalue of $\mathcal{L}^{B,0}$ given by
\ben\label{nonzeroeigenvalue} \lambda_e:=\int_0^{\pi/2} b(\cos\theta)\sin\theta(1-\cos\theta)d\theta. \een
Later on, authors in \cite{baranger2005explicit,mouhot2006explicit,mouhotstrain}  proved that the spectral gap estimates can be generalized to the other potentials. It was asserted that  there exists two constant $C_{\gamma}$ and $C_b$ such that for  any $f\in \mathrm{ker}^{\perp}$,
\ben\label{mouhotsepctralgap}
\langle  \mathcal{L}^{B,\gamma}f, f\rangle \geq C_{\gamma} C_b
|f|^{2}{}_{L^{2}_{\gamma/2}}, \een
where \ben\label{CBconstant} C_b:= \inf_{\sigma_1,\sigma_2 \in \SS^2}\int_{\SS^2}\min\{b(\sigma_1\cdot\sigma_3), b(\sigma_2\cdot\sigma_3)\}d\sigma_3.
\een

For the angular function $b^\epsilon$ defined in \eqref{Blotzmann-kernel}, one may check that $\lambda_e\sim 1$ while $C_{b^{\epsilon}}\rightarrow 0$ as $\epsilon$ goes to zero. This shows that the estimate \eqref{sepctralgapWU} is robust in the grazing collisions limit process and thus can be thought as a unified formula for both Boltzmann and Landau collision operators. It also requires us to establish Wang and Uhlenbeck's type estimates for the soft potentials.

\smallskip

\noindent $\bullet$ {\bf Statement of the results.}  It is obvious to see that in the regime $-2 \leq \gamma < -2s$ the time asymptotic behaviors of the solutions described in  \eqref{decay-of-Landau-solution} and \eqref{decay-of-Boltzmann-solution} are different by noticing that the latter is at the sub-exponential decay rate ($0<\kappa < 1$) while the former is at the  exponential decay rate. Since  the grazing collision limit of the Boltzmann equation yields the Landau equation, it is  interesting to find out  whether the transition from sub-exponential decay structure to exponential decay structure occurs in a continuous way through the limit. Furthermore, one may ask whether one can  provide a detailed mathematical description on the time decay structures in the limit process. To answer the above questions, we first    rewrite the rescaled Boltzmann equation  \eqref{coboltzmann} by letting $F=\mu +\mu^{\frac{1}{2}}f$:
\begin{equation}\label{Cauchy-linearizedBE-grazing} \left\{ \begin{aligned}
&\partial_{t}f + v\cdot \nabla_{x} f + \mathcal{L}^{\epsilon}f= \Gamma^{\epsilon}(f,f), ~~t > 0, x \in \mathbb{T}^{3}, v \in\R^3,\\
&f|_{t=0} = f_{0}.
\end{aligned} \right.
\end{equation}
Here the linearized Boltzmann operator $\mathcal{L}^{\epsilon}$ and the nonlinear term $\Gamma^{\epsilon}$ are defined by
\ben\label{DefLep} \Gamma^{\epsilon}(g,h):= \mu^{-\frac{1}{2}} Q^{\epsilon}(\mu^{\frac{1}{2}}g,\mu^{\frac{1}{2}}h), \quad
\mathcal{L}^{\epsilon}g:= -\Gamma^{\epsilon}(\mu^{\frac{1}{2}},g) - \Gamma^{\epsilon}(g, \mu^{\frac{1}{2}}).
\een
From now on, we assume without of loss of generality
the initial perturbation $f_0$ for \eqref{Cauchy-linearizedBE-grazing} and \eqref{Cauchy-linearizedLE} has
zero total mass, momentum and energy:
\beno  \int_{\mathbb{T}^{3} \times \R^3} \mu^{\frac{1}{2}}f_0\phi dxdv=0, \quad \phi(v)=1,v_1,v_2,v_3, |v|^2.
%\quad j=1,2,3.
\eeno

Now problems to be discussed  are the global well-posedness of \eqref{Cauchy-linearizedBE-grazing}, the uniform-in-time asymptotic rate in $\epsilon$ between solutions to \eqref{Cauchy-linearizedBE-grazing} and \eqref{Cauchy-linearizedLE}, and the transition from sub-exponential decay(cf.~\eqref{decay-of-Boltzmann-solution}) of \eqref{Cauchy-linearizedBE-grazing} to exponential decay (cf.~\eqref{decay-of-Landau-solution}) of \eqref{Cauchy-linearizedLE} as $\epsilon$ goes to $0$.

\subsection{Main results}
Before stating the main results in this paper, we recall some useful notations.
For a function $f(v)$ on $\mathbb{R}^{3}$, the following norms or semi-norms will be used:

\medskip
\noindent $\bullet$ The bracket $\langle \cdot\rangle$ is defined by $\langle v \rangle :=(1+|v|^2)^{\frac{1}{2}}$.

\noindent $\bullet$ For $N \in \mathbb{N}$ and $l \in \mathbb{R}$, set
%we define as usual,
\ben \label{Sobolev-norm}
|f|_{H^{N}_{l}} := \sum_{|\beta| \leq N} |\langle v \rangle^{l} \partial_{\beta}f|_{L^{2}},\quad
|f|_{\dot{H}^{N}_{l}} := \sum_{|\beta| = N} |\langle v \rangle^{l} \partial_{\beta}f|_{L^{2}}.
\een
When $N=0$, denote $|f|_{L^{2}_{l}}:=|f|_{H^{0}_{l}}$, and when $l=0$, denote $|f|_{L^{2}}:=|f|_{L^{2}_{0}}$.

\noindent $\bullet$ With the weighted norm $|\cdot|_{\epsilon, l}$ (from the coercivity estimate of $\mathcal{L}^{\epsilon}$ in Theorem \ref{coercivity-structure}) defined in \eqref{norm-definition},  define
% (how about use $L^{2}_{\epsilon,l}$ throughout the paper)
\ben \label{coercivity-norm}
|f|_{H^{N}_{\epsilon, l}} := \sum_{|\beta| \leq N} | \partial_{\beta}f|_{\epsilon,l},\quad
|f|_{\dot{H}^{N}_{\epsilon, l}} := \sum_{|\beta| = N} | \partial_{\beta}f|_{\epsilon,l}.
\een
When $N=0$, we write $|f|_{L^{2}_{\epsilon, l}}:=|f|_{H^{0}_{\epsilon, l}}$.

\noindent $\bullet$ For a function $f(t,x,v)$ on $[0, \infty) \times \mathbb{T}^{3} \times \mathbb{R}^{3}$, and a $X$-norm or semi-norm on the velocity variable $v$, we define for $T>0$ and $m \geq 0$ that
\ben \label{general-L1k-norm}
\|f\|_{L^{1}_{k,m}L^{\infty}_{T}X} := \sum_{k \in \mathbb{Z}^{3}}  \langle k \rangle^{m} \sup_{0 \leq t \leq T} |\hat{f}(t,k,\cdot)|_{X},\quad
\|f\|_{L^{1}_{k,m}L^{2}_{T}X} := \sum_{k \in \mathbb{Z}^{3}} \langle k \rangle^{m} \left(\int_{0}^{T} |\hat{f}(t,k,\cdot)|_{X}^{2}
dt \right)^{\frac{1}{2}}.
\een
Here $\hat{f}$ is the Fourier transform with respect to $x$. When $m=0$, denote
$$\|f\|_{L^{1}_{k}L^{\infty}_{T}X}:= \|f\|_{L^{1}_{k,0}L^{\infty}_{T}X},\quad \|f\|_{L^{1}_{k}L^{2}_{T}X}:=\|f\|_{L^{1}_{k,0}L^{2}_{T}X}.
$$

\noindent $\bullet$ In addition, for a function $f(x,v)$ on $\mathbb{T}^{3} \times \mathbb{R}^{3}$, and a $X$-norm or semi-norm on the velocity variable $v$, define for $m \geq 0$ that
\ben \label{general-L1k-norm-for-initial-data}
\|f\|_{L^{1}_{k,m}X} := \sum_{k \in \mathbb{Z}^{3}}  \langle k \rangle^{m}  |\hat{f}(k,\cdot)|_{X}.
\een
When $m=0$,
$\|f\|_{L^{1}_{k}X}:= \|f\|_{L^{1}_{k,0}X}$.

\noindent $\bullet$ For brevity of notations we denote the energy and dissipation functionals for a function $f(t,x,v)$ on $[0, \infty) \times \mathbb{T}^{3} \times \mathbb{R}^{3}$ as
\ben \label{energy-and-dissipation-for-propagation}
E_{T}(f; m,n,l) := \sum_{j=0}^{n} \|f\|_{L^{1}_{k,m+j}L^{\infty}_{T}\dot{H}^{n-j}_{l-j(\gamma+2s)}},\quad
D_{T}^{\epsilon}(f; m,n,l)  := \sum_{j=0}^{n} \|f\|_{L^{1}_{k,m+j}L^{2}_{T}\dot{H}^{n-j}_{\epsilon,\gamma/2+l-j(\gamma+2s)}},
\een
respectively, and the norm on the initial data $f_{0}(x,v)$
%on $\mathbb{T}^{3} \times \mathbb{R}^{3}$
as
\ben \label{initial-dependence-for-propagation}
\|f_{0}\|_{m,n,l}:=\sum_{j=0}^{n} \|f_{0}\|_{L^{1}_{k,m+j}H^{n-j}_{l-j(\gamma+2s)}}.
\een

%We remark that the solutions  to  \eqref{coboltzmann} and \eqref{Cauchy-Landau} have the fundamental physical properties of conserving total mass, momentum and kinetic energy. Through out the article,  we assume without loss of generality,

We begin with Wang and Uhlenbeck type spectral gap estimates for the collision operators.

\begin{thm}\label{spectral-gap-soft-inverse-power-law} Let $-3 < \gamma \leq 0, 0<s<1$. Let $B$ satisfy assumptions {\bf(A1, A2 A3, A4)} where {\bf(A2)} can be relaxed to hold when $\theta$ is sufficiently small.  Suppose that $\mathcal{L}^B$ is the linearized collision  operator associated with $B$. Then there exists a constant $C(\gamma, s, \lambda_e)$ depending only on $\gamma,s$ and $\lambda_e$(see \eqref{nonzeroeigenvalue}) such that if $f\in \mathrm{ker}^{\perp}$
\beno
\langle  \mathcal{L}^{B}f, f\rangle \geq C(\gamma,s,\lambda_e) |f|^{2}_{L^{2}_{\gamma/2}}.
\eeno
\end{thm}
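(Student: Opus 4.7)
The target estimate is a Wang-Uhlenbeck type spectral gap whose constant depends on the angular kernel only through $\lambda_e$ from \eqref{nonzeroeigenvalue}, rather than through the Mouhot-Strain constant $C_b$ from \eqref{CBconstant}. This distinction is essential for the grazing limit, since $\lambda_e$ is $O(1)$ uniformly in $\epsilon$ whereas $C_{b^\epsilon}\to 0$. My plan is to bypass the Mouhot-Strain geometric lemma and reduce the soft case $\gamma<0$ to the Maxwell molecule case $\gamma=0$, where \eqref{sepctralgapWU} delivers $\lambda_e$ explicitly as the gap.

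I would start from the Dirichlet form
$$\langle \mathcal{L}^B f, f\rangle = \frac{1}{4}\int B(v-v_*,\sigma)\sqrt{\mu\mu_*}\,(f+f_*-f'-f'_*)^2\,d\sigma\,dv_*\,dv,$$
and split the $(v,v_*)$-integration into a bulk region where $|v-v_*|\lesssim\langle v\rangle^{1/2}\langle v_*\rangle^{1/2}$, so that $|v-v_*|^\gamma\gtrsim\langle v\rangle^{\gamma/2}\langle v_*\rangle^{\gamma/2}$ for $\gamma\le 0$, together with a far-field complement on which the Gaussian $\sqrt{\mu\mu_*}$ provides enough exponential decay to absorb the weight loss. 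On the bulk region, the factorized form of the weight suggests the change of unknown $g:=\langle v\rangle^{\gamma/2} f$; rewriting the integrand in $g$ produces a Maxwell-molecule-type Dirichlet form for $g$ plus commutators between $\langle v\rangle^{\gamma/2}$ and the pre-post collision map $(v,v_*)\mapsto(v',v'_*)$. Applying the Wang-Uhlenbeck estimate \eqref{sepctralgapWU} to $g$ then yields a lower bound of order $\lambda_e\,|g|^2_{L^2}=\lambda_e\,|f|^2_{L^2_{\gamma/2}}$, while the commutator and far-field contributions are absorbed using Gaussian smallness. A secondary bookkeeping point is that the angular integral in the commutator errors must reduce to a multiple of $\lambda_e$ rather than to a different angular moment of $b$, which follows from the antisymmetry of $f+f_*-f'-f'_*$ under $v\leftrightarrow v'$ combined with the explicit form of $\lambda_e$.

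The main obstacle is that the weighted function $g=\langle v\rangle^{\gamma/2}f$ does not automatically belong to $\mathrm{ker}^\perp$ even when $f$ does, since the kernel \eqref{kernel-space} is defined via the unweighted $L^2$ inner product. A naive decomposition $g=P_{\mathrm{ker}}g+(I-P_{\mathrm{ker}})g$ combined with the identity $\langle g,\mu^{1/2}\phi\rangle=\langle f,(\langle v\rangle^{\gamma/2}-1)\mu^{1/2}\phi\rangle$ (valid for $f\in\mathrm{ker}^\perp$) followed by Cauchy-Schwarz produces a projection correction whose constant is not uniformly small in $\gamma$, so the balance between the main Wang-Uhlenbeck contribution and this correction must be tracked carefully. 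I would handle this either by (i) using a modified weight such as $\langle v\rangle^{\gamma/2}\mu^{-\eta}$ for small $\eta>0$ chosen to preserve orthogonality in an appropriate pairing, or (ii) performing a spectral perturbation around $\gamma=0$ that transports the $\lambda_e$-gap of \eqref{sepctralgapWU} to $\gamma<0$ without an explicit weight change. Propagating $\lambda_e$ through every step of the chain then yields the desired constant $C(\gamma,s,\lambda_e)$.
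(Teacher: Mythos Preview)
Your reduction-to-Maxwell strategy is the right engine, but in the paper it drives the proof of Theorem~\ref{micro-dissipation}, not of Theorem~\ref{spectral-gap-soft-inverse-power-law} itself. The paper's proof of the latter is short: it splits $b=b\mathrm{1}_{\sin(\theta/2)\leq\epsilon_0}+b\mathrm{1}_{\sin(\theta/2)\geq\epsilon_0}$ for a fixed small $\epsilon_0$, applies the Mouhot--Strain estimate \eqref{mouhotsepctralgap} to the cutoff piece (where $C_{b_{\geq\epsilon_0}}$ is harmless), and invokes Theorem~\ref{micro-dissipation} together with Remark~\ref{remarkgap} on the grazing piece, whose constant depends only on $\gamma$ and on $\int b\sin^2(\theta/2)\,d\sigma\sim\lambda_e$.

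Your direct argument has a genuine gap: the commutator errors are not small. The difference $\langle v'\rangle^{\gamma/2}\langle v'_*\rangle^{\gamma/2}-\langle v\rangle^{\gamma/2}\langle v_*\rangle^{\gamma/2}$ is of order $\theta\,|v-v_*|\,\langle v\rangle^{\gamma/2-1}$; after squaring, integrating against $b(\cos\theta)$, and using the Gaussian, this produces an error of the same size $\sim\lambda_e\,|f|^2_{L^2_{\gamma/2}}$ as the main Wang--Uhlenbeck term --- the Gaussian absorbs polynomial growth but gives no smallness. The paper's fix (in the proof of Theorem~\ref{micro-dissipation}) is to replace $\langle v\rangle$ by the scaled weight $U_\delta(v)=(1+\delta^2|v|^2)^{1/2}$ and to insert a cutoff $\chi_R$, so that every commutator carries an explicit factor $\delta^2+R^{-2}$ (Lemma~\ref{difference-term-complication}, estimates \eqref{estimate-J-2}--\eqref{estimate-J-3}); the same mechanism also makes the projection defect $\mathbb{P}(\chi_R U_\delta^{\gamma/2}g)$ of order $\delta+R^{-1}$ when $\mathbb{P}g=0$ (see \eqref{large-velocity-part-13}), resolving the obstacle you flagged. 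This yields $\langle\mathcal{L}^{\epsilon,\gamma}g,g\rangle\gtrsim \delta^{-\gamma}|g|^2_{L^2_{\gamma/2}}-C\delta^2|g|^2_{\epsilon,\gamma/2}$, which closes only after combination with the independent coercivity estimate of Theorem~\ref{coercivity-structure} and an optimization in $\delta$. Without a tunable small parameter in the weight, neither the commutators nor the projection defect can be absorbed, and your scheme does not close.
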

\begin{proof}  Let $f\in \mathrm{ker}^{\perp}$ and  $\epsilon_{0}$ be the constant in Theorem \ref{micro-dissipation}. We denote by
$\mathcal{L}^{B}_{\geq \epsilon_{0}}$ and $\mathcal{L}^{B}_{\leq \epsilon_{0}}$ the linearized operator associated to the Boltzmann kernel $B \mathrm{1}_{\sin(\theta/2) \geq \epsilon_{0}}$ and $B \mathrm{1}_{\sin(\theta/2) \leq \epsilon_{0}}$ respectively.

For $\mathcal{L}^{B}_{\geq \epsilon_{0}}$, by \eqref{mouhotsepctralgap}, we have
\beno
\langle  \mathcal{L}^{B}_{\geq \epsilon_{0}} f, f\rangle \geq  C_\gamma C_{b_{\geq \epsilon_{0}}}|f|_{L^2_{\gamma/2}}^2, \eeno
where $b_{\geq \epsilon_{0}}:=b \mathrm{1}_{\sin(\theta/2) \geq \epsilon_{0}}$.

 For $\mathcal{L}^{B}_{\leq \epsilon_{0}}$, thanks to Theorem \ref{micro-dissipation} and  Remark \ref{remarkgap},  we   get
\ben \nonumber
\langle  \mathcal{L}^{B}_{\leq \epsilon_{0}}f, f\rangle \geq     C\bigg(\gamma, s,\int_{\SS^2} \mathrm{1}_{\sin(\theta/2)\leq \epsilon_{0}} b(\cos\theta)\sin^{2}\frac{\theta}{2} d \sigma\bigg) |f|^{2}_{L^{2}_{\gamma/2}}.
\een
Combining together these two estimates, we can get the desired result and then complete the proof.
\end{proof}

\begin{rmk} In comparison with the previous work \cite{baranger2005explicit,mouhot2006explicit,mouhotstrain}, we highlight  dependence of the estimate on  $\lambda_e$.  As a direct application, we successfully extend the Wang-Uhlenbeck's work to the inverse power law interactions, that is, the kernel $B$ verifies assumptions {\bf(A1, A2,   A4)} and the condition $\gamma+4s=1$.
\end{rmk}

\smallskip

With the notations given above, we present the   result  concerning the global well-posedness, propagation of regularity of solutions and the asymptotic rate in term of $\epsilon $ under the grazing limit for the Cauchy problems  \eqref{Cauchy-linearizedBE-grazing} and \eqref{Cauchy-linearizedLE}  on the Boltzmann and the Landau equation, respectively.
%In addition, we derive the asymptotic formula for the solutions to \eqref{Cauchy-linearizedBE-grazing} and \eqref{Cauchy-linearizedLE}. For problem \eqref{Cauchy-linearizedBE-grazing}, the case $\epsilon=0$ is interpreted as problem \eqref{Cauchy-linearizedLE}.

%Note that $0< \epsilon  \leq \epsilon_{0}$ and $\epsilon=0$ correspond to the  Boltzmann equation \eqref{Cauchy-linearizedBE-grazing} and the Landau equation \eqref{Cauchy-linearizedLE} respectively.
%$\epsilon=0$ case.
\begin{thm}\label{asymptotic-result} Let $-3< \gamma \leq 0$, $\frac{1}{2}<s<1$, and $\gamma+2s > -1$.
There exist $\epsilon_{0}, \delta_{0} > 0$ such that if $0 < \epsilon  \leq \epsilon_{0}$, $\mu + \mu^{\frac{1}{2}}f_{0} \geq 0$ and $\|f_{0}\|_{L^{1}_{k}L^{2}} \leq \delta_{0}$, then the following statements hold.
\begin{enumerate}
\item{\bf (Global well-posedness)} The Cauchy problem
 \eqref{Cauchy-linearizedBE-grazing} for the non-cutoff Boltzmann equation admits a unique global solution $f^\epsilon$ with $\mu + \mu^{\frac{1}{2}}f^\epsilon \geq 0$ and
 \ben \label{lowest-regularity-bounded-by-initial}
  \|f^\epsilon\|_{L^{1}_{k}L^{\infty}_{T}L^{2}} + \|f^\epsilon\|_{L^{1}_{k}L^{2}_{T}L^{2}_{\epsilon,\gamma/2}} \lesssim \|f_{0}\|_{L^{1}_{k}L^{2}},
 \een
 for any $T \geq 0$. As a result, by passing limit $\epsilon \rightarrow 0$,  the Cauchy problem \eqref{Cauchy-linearizedLE} with the same initial data $f_0$ for the Landau equation admits a unique global solution $f^{L}$ satisfying $\mu + \mu^{\frac{1}{2}}f^{L} \geq 0$ and
 \ben \label{lowest-regularity-bounded-by-initial-landau}
  \|f^{L}\|_{L^{1}_{k}L^{\infty}_{T}L^{2}} + \|f^{L}\|_{L^{1}_{k}L^{2}_{T}L^{2}_{\epsilon,\gamma/2}} \lesssim \|f_{0}\|_{L^{1}_{k}L^{2}},
 \een
 for any $T \geq 0$.
\item{\bf (Propagation of regularity and velocity momoment)} Let $n \in \mathbb{N}$ and $m, l \geq 0$. There is a constant $\delta_{m,n,l}$ with $0<\delta_{m,n,l} \leq \delta_{0}$ and a polynomial $P_{n}$ with $P_{n}(0)=0$ such that if initial data satisfy
$\|f_{0}\|_{L^{1}_{k}L^{2}} \leq \delta_{m,n,l}$ and $ \|f_{0}\|_{m,n,l} < \infty,$ then the following statements are valid.
Let $f^{\epsilon}$ ($f^{L}$) be the solution to the Cauchy problem \eqref{Cauchy-linearizedBE-grazing}(problem \eqref{Cauchy-linearizedLE}) with initial data $f_{0}$,
then for any $T \geq 0$, it holds that
\ben \label{general-propagation-f-epsilon}
E_{T}(f^{\epsilon}; m,n,l) + D_{T}^{\epsilon}(f^{\epsilon}; m,n,l) \lesssim  P_{n}(\|f_{0}\|_{m,n,l}).
\een
As a result, by passing limit $\epsilon \rightarrow 0$,
for any $T \geq 0$, it holds that
\ben \label{general-propagation-f-epsilon-landau}
E_{T}(f^{L}; m,n,l) + D_{T}^{0}(f^{L}; m,n,l) \lesssim  P_{n}(\|f_{0}\|_{m,n,l}).
\een
\item{\bf (Asymptotic formula)} Let $f^{\epsilon}$ and $f^{L}$ be the solutions to the Cauchy problems \eqref{Cauchy-linearizedBE-grazing} and \eqref{Cauchy-linearizedLE}, respectively, with the same initial data $f_{0}$ satisfying
 $\|f_{0}\|_{0,3,9} < \infty$ and $\|f_{0}\|_{L^{1}_{k}L^{2}} \leq \delta_{0,3,9}$, then for any $T \geq 0$, it holds that
\ben \label{global-in-time-error}
\|f^{\epsilon} - f^{L}\|_{L^{1}_{k}L^{\infty}_{T}L^{2}} + \|f^{\epsilon} - f^{L}\|_{L^{1}_{k}L^{2}_{T}L^{2}_{0,\gamma/2}} \lesssim \epsilon P_{3}(\|f_{0}\|_{0,3,9})(1+P_{3}(\|f_{0}\|_{0,3,9})).
\een
\end{enumerate}
\end{thm}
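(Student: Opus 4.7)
I would take the spatial Fourier transform of \eqref{Cauchy-linearizedBE-grazing}, giving for each $k \in \mathbb{Z}^3$
\beno
\partial_t \hat{f}^\epsilon(t,k,v) + i(v\cdot k)\hat{f}^\epsilon + \mathcal{L}^\epsilon \hat{f}^\epsilon = \widehat{\Gamma^\epsilon(f^\epsilon, f^\epsilon)}(t,k,v),
\eeno
then take the $L^2_v$ inner product with $\hat{f}^\epsilon$, integrate in time, and $\ell^1$-sum in $k$. The transport term disappears by skew-adjointness; the microscopic part is coerced from below by $\|(I-\mathbb{P})f^\epsilon\|_{L^1_k L^2_T L^2_{\epsilon,\gamma/2}}^2$ uniformly in $\epsilon$ via Theorem \ref{coercivity-structure}; the nonlinearity is controlled by the uniform-in-$\epsilon$ trilinear bound on $\Gamma^\epsilon$ assumed from earlier in the paper. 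The macroscopic (kernel) component is recovered by a Guo--Strain-type macro-micro decomposition: the zero-mean assumption on $f_0$ annihilates the $k=0$ mode, and for $k\neq 0$ the transport $i(v\cdot k)$ transfers microscopic dissipation into macroscopic dissipation with $\epsilon$-independent constants. A continuity argument closes the a priori bound for $\delta_0$ small, giving \eqref{lowest-regularity-bounded-by-initial}; nonnegativity $\mu+\mu^{1/2}f^\epsilon\ge 0$ is preserved by a standard regularization/approximation procedure. The Landau bound \eqref{lowest-regularity-bounded-by-initial-landau} follows by weak-$\star$ compactness of the uniform bound together with the operator convergence $Q^\epsilon \to Q^L$ as $\epsilon \to 0$.

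\textbf{Plan for Part (2).} Induct on $n$. For each multi-index $|\beta|=j\le n$, apply $\partial_\beta^v$ to \eqref{Cauchy-linearizedBE-grazing}. The commutator $[\partial_\beta, v\cdot\nabla_x]$ trades one $v$-derivative for one $x$-derivative, explaining the shift $m \to m+j$ in $E_T(f^\epsilon;m,n,l)$, while the commutators $[\partial_\beta, \mathcal{L}^\epsilon]$ and inside $\Gamma^\epsilon$ lose up to $j(\gamma+2s)$ powers of $\langle v\rangle$ uniformly in $\epsilon$, explaining the weight shift $l \to l - j(\gamma+2s)$. Weighted energy estimates at each level $j$, summed over $j=0,\ldots,n$, produce a closed inequality of the form
\beno
E_T(f^\epsilon;m,n,l) + D_T^\epsilon(f^\epsilon;m,n,l)^2 \lesssim \|f_0\|_{m,n,l}^2 + \bigl(\|f^\epsilon\|_{L^1_k L^\infty_T L^2} + E_T(f^\epsilon;m,n,l)^{1/2}\bigr) D_T^\epsilon(f^\epsilon;m,n,l)^2 + \mathrm{l.o.t.},
\eeno
where the smallness from Part (1) absorbs the first parenthesis at the base level, induction closes the lower-order terms, and polynomial estimation in $\|f_0\|_{m,n,l}$ yields \eqref{general-propagation-f-epsilon}. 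The Landau version \eqref{general-propagation-f-epsilon-landau} follows in the same $\epsilon \to 0$ limit.

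\textbf{Plan for Part (3) and the main obstacle.} Set $g := f^\epsilon - f^L$. Subtracting the two equations and splitting $\Gamma^\epsilon(f^\epsilon, f^\epsilon) - \Gamma^L(f^L, f^L)$ bilinearly gives
\beno
\partial_t g + v\cdot \nabla_x g + \mathcal{L}^\epsilon g = \Gamma^\epsilon(g, f^\epsilon) + \Gamma^\epsilon(f^L, g) - (\mathcal{L}^\epsilon - \mathcal{L}^L)f^L + (\Gamma^\epsilon - \Gamma^L)(f^L, f^L), \quad g|_{t=0}=0.
\eeno
Running the Part (1) energy estimate for $g$, the two $\Gamma^\epsilon$ terms on the right are absorbed by the dissipation using the smallness of $\|f^\epsilon\|_{L^1_k L^\infty_T L^2}$ and $\|f^L\|_{L^1_k L^\infty_T L^2}$ from Part (1). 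The last two forcing terms are handled mode-by-mode with the operator asymptotic formula $\|(Q^\epsilon - Q^L)(F,G)\|_{L^2} \lesssim \epsilon \|F\|_{H^{N_1}_{q_1}}\|G\|_{H^{N_2}_{q_2}}$ recalled in the introduction, applied with $F, G \in \{\mu^{1/2}, \mu^{1/2}f^L\}$ and summed in $\ell^1_k$; the indices $(m,n,l)=(0,3,9)$ in the hypothesis are tuned so that Part (2) supplies the exact regularity and moments of $f^L$ needed to bound this forcing by $\epsilon P_3(\|f_0\|_{0,3,9})(1+P_3(\|f_0\|_{0,3,9}))$. Since $g(0)=0$, integration (a trivial Gronwall) yields \eqref{global-in-time-error}. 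The main obstacle throughout is the uniform-in-$\epsilon$ macro-micro decomposition in Part (1): the macroscopic dissipation must be recovered from the transport with constants that do not degenerate as $\epsilon \to 0$, so that the Boltzmann estimate \eqref{lowest-regularity-bounded-by-initial} passes faithfully to the Landau estimate \eqref{lowest-regularity-bounded-by-initial-landau}; once this uniform structure is in hand, Parts (2) and (3) follow the Guo-type energy template of \cite{duan2021global} adapted to the Wiener-algebra framework.
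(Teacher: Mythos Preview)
Your plan matches the paper's proof in structure: Part~(1) is the Wiener-algebra energy method of \cite{duan2021global} driven by the $\epsilon$-uniform spectral-gap and trilinear estimates; Part~(2) is the same induction on $n$ (carried out in Theorems~\ref{lowest-order-propagation}--\ref{high-order-propagation}); Part~(3) is the same difference-equation energy argument with the operator error estimate of Lemma~\ref{estimate-operator-difference}.

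Two minor corrections. In Part~(3) the paper writes the equation for the difference with $\mathcal{L}^{L}$ (not $\mathcal{L}^{\epsilon}$) on the left, so that the forcing terms $(\mathcal{L}^{L}-\mathcal{L}^{\epsilon})$ and $(\Gamma^{\epsilon}-\Gamma^{L})$ hit $f^{\epsilon}$ rather than $f^{L}$; this matters because the coercivity of $\mathcal{L}^{L}$ yields the Landau dissipation norm $|\cdot|_{0,\gamma/2}$ required in \eqref{global-in-time-error}, whereas your choice of $\mathcal{L}^{\epsilon}$ would only control the weaker $|\cdot|_{\epsilon,\gamma/2}$ (recall $W^{\epsilon}\le W^{0}$). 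Your splitting is algebraically valid and the forcing bounds go through equally well with $f^{L}$ in place of $f^{\epsilon}$, but to reach the stated norm you should move $\mathcal{L}^{L}g$ to the left as the paper does. In Part~(2), the weight shift $l\mapsto l-j(\gamma+2s)$ arises from balancing the transport commutator $[v\cdot k,\partial_{\beta}]$ (see \eqref{commutator-transport-general}), not from the collision-operator commutators; the latter are controlled in the same weighted $|\cdot|_{\epsilon,l+\gamma/2}$ norm without additional weight loss (Lemmas~\ref{commutator-linear-g-h}--\ref{commutator-linear}).
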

Several remarks on Theorem \ref{asymptotic-result} are given as follows.

\begin{rmk}
The  restriction of $s>1/2$ and $\gamma+2s > -1$ on the parameters $s$ and $\gamma$ comes from Theorem \ref{Gamma-full-up-bound} for the upper bound of the nonlinear term $\Gamma^{\epsilon}$. By $\gamma+2s > -1$, the inverse power law potential   is covered, because $\gamma+4s=1$ is satisfied in this case, cf. also  Remark \ref{inverse-power-law}. Since we aim to investigate the  inconsistency of spectrum in the parameter range $-2 \leq \gamma <-2s$, we only focus on the case of $-3< \gamma \leq 0$ in Theorem \ref{asymptotic-result}.
\end{rmk}

\begin{rmk} %\label{on-smallness-assumption}
Note  that all the results in Theorem \ref{asymptotic-result} are uniform with respect to the parameter $\epsilon$ and that the smallness assumption is only imposed on $\|f_{0}\|_{L^{1}_{k}L^{2}}$. In particular, in \eqref{general-propagation-f-epsilon} and \eqref{general-propagation-f-epsilon-landau}, we obtain the propagation of the bounds of solutions in the norm $\|\cdot\|_{L^{1}_{k,m}H^{n}_{l}}$ only under the smallness assumption on $\|f_{0}\|_{L^{1}_{k}L^{2}}$ and boundedness on $\|f_{0}\|_{m,n,l}$.  In comparison, \cite{duan2021global} establishes the propagation in norm $\|\cdot\|_{L^{1}_{k,m}L^{2}}$ under the smallness assumption on $\|f_{0}\|_{L^{1}_{k,m}L^{2}}$.
Moreover, the  asymptotic estimate \eqref{global-in-time-error} is global in time and has an  explicit convergence rate $O(\epsilon)$.
\end{rmk}

\begin{rmk} %\label{on-weak-unified-framework}
By the weak convergence results in \cite{alexandre2004landau} and \cite{villani1998new}, we can directly use \eqref{lowest-regularity-bounded-by-initial} and \eqref{general-propagation-f-epsilon} to derive \eqref{lowest-regularity-bounded-by-initial-landau} and \eqref{general-propagation-f-epsilon-landau}, respectively. This shows that the well-posedness of Boltzmann and Landau equations can be studied in a unified framework.
\end{rmk}

\begin{rmk} % \label{on-Coulomb-case}
Theorem \ref{asymptotic-result} does not include the Coulomb potential since $\gamma>-3$ is required. However, we can deal with the Coulomb case using the idea in \cite{he2014asymptotic}. More precisely, we can take the Boltzmann collision kernel with the mathematical choice of $s$ and $\gamma$ by
$$
s=s_\epsilon:=1-\frac{\epsilon}{4},\quad \gamma=\gamma_\epsilon:=-3+\epsilon,
$$
and consider the limit $\epsilon \rightarrow 0$. After all, we need those uniform operator estimates with respect to the parameter $s$ (near $1$) and $\gamma$ (near $-3$) similar to the situation under consideration. Since in the present paper we are mainly concerned with the spectrum inconsistency in the case of $-2 \leq \gamma <-2s$, we leave the Coulomb case for future work.
\end{rmk}

As the main goal of this work, we state the second result revealing  the transition of the decay structure from sub-exponential $e^{-\lambda t^{\kappa}}$ in \eqref{decay-of-Boltzmann-solution} to exponential $e^{-\lambda t}$ in \eqref{decay-of-Landau-solution} under the grazing limit.

\begin{thm}[Transition of decay structure]\label{decay-rate-consistency}
	Let all the assumptions in Theorem \ref{asymptotic-result} be satisfied and further let $-2 \leq \gamma < -2s$. If the positive constants
	 $\lambda$ and  $q$ are chosen such that $\lambda \ll \lambda_{0}$ and $q>2 \lambda$, where $\lambda_{0}>0$ is the constant  in Theorem \ref{micro-dissipation} given later, then
there is a constant $\delta_{1}>0$ such that if $\|e^{q \langle v \rangle}f_{0}\|_{L^{1}_{k}L^{2}} \leq \delta_{1}$,  the solution $f^{\epsilon}$ to the Cauchy problem \eqref{Cauchy-linearizedBE-grazing} for the non-cutoff Boltzmann equation satisfies
\ben \label{nice-decay-connection}
\|f^{\epsilon}(t)\|_{L^{1}_{k}L^{2}} \lesssim  \left(\mathrm{1}_{t \leq T_{\epsilon}} \exp(-\lambda t)
+ \mathrm{1}_{t > T_{\epsilon}} \exp(-\lambda \epsilon^{-2(1-s)\kappa} t^{\kappa}) \right) \|e^{q \langle v \rangle}f_{0}\|_{L^{1}_{k}L^{2}},
\een
for any $t\geq 0$, where
$ T_{\epsilon}:=(\frac{1}{\epsilon})^{\frac{2(1-s)}{|\gamma+2s|}}$ and  $\kappa:=\frac{1}{1+|\gamma+2s|}.$
\end{thm}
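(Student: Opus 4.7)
I will adapt the velocity-truncation / exponential-weight method from \cite{duan2021global} to the rescaled Boltzmann setting, tracking explicitly how the coercive norm $|\cdot|_{L^{2}_{\epsilon,\gamma/2}}$ produced by $\mathcal{L}^{\epsilon}$ interpolates between a Landau-type dissipation (with an extra fractional-derivative gain of order $\epsilon^{2(1-s)}$) and the pure Boltzmann dissipation, with the crossover occurring at the velocity scale $|v|\sim\epsilon^{-1}$. Translating this velocity scale into a time scale via the standard logarithmic optimization then produces exactly the threshold $T_{\epsilon}=\epsilon^{-2(1-s)/|\gamma+2s|}$, and the requirement of matching the two resulting rates at $t=T_{\epsilon}$ forces the prefactor $\epsilon^{-2(1-s)\kappa}$.

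\textbf{Steps 1 and 2 (energy estimates).} Take the $x$-Fourier transform of \eqref{Cauchy-linearizedBE-grazing} and test the $k$-th mode against $\hat{f}^{\epsilon}(t,k)$. Summing in $k$ with $L^{1}$ weight, the spectral gap of Theorem \ref{spectral-gap-soft-inverse-power-law} together with the refined micro-dissipation of Theorem \ref{micro-dissipation} and the nonlinear upper bound of Theorem \ref{Gamma-full-up-bound} (with the smallness of the data absorbing the trilinear term) yield
\begin{equation*}
\frac{d}{dt}\|f^{\epsilon}(t)\|^{2}_{L^{1}_{k}L^{2}}+\lambda_{0}\|f^{\epsilon}(t)\|^{2}_{L^{1}_{k}L^{2}_{\epsilon,\gamma/2}}\leq 0.
\end{equation*}
Repeating the argument for $g^{\epsilon}=e^{q\langle v\rangle}f^{\epsilon}$ and controlling the commutator $[\mathcal{L}^{\epsilon},e^{q\langle v\rangle}]$ via $q<\lambda_{0}/2$ gives the uniform exponential moment bound $\sup_{t\geq 0}\|e^{q\langle v\rangle}f^{\epsilon}(t)\|_{L^{1}_{k}L^{2}}\lesssim\|e^{q\langle v\rangle}f_{0}\|_{L^{1}_{k}L^{2}}=:M_{0}^{1/2}$.

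\textbf{Step 3 (splitting with $\epsilon$-dependent weight).} For every $R\ge 1$, write $f^{\epsilon}=f^{\epsilon}\mathbf{1}_{\langle v\rangle\le R}+f^{\epsilon}\mathbf{1}_{\langle v\rangle>R}$; the tail is dominated by $e^{-qR}$ times the exponential moment, while the bulk is compared with the coercive norm. The key output is
\begin{equation*}
\|f^{\epsilon}\|^{2}_{L^{1}_{k}L^{2}}\leq\Phi(R,\epsilon)\|f^{\epsilon}\|^{2}_{L^{1}_{k}L^{2}_{\epsilon,\gamma/2}}+Ce^{-2qR}M_{0},
\end{equation*}
where $\Phi(R,\epsilon)$ stays of order one on the range $R\lesssim\epsilon^{-1}$, because the $\epsilon^{2(1-s)}$-weighted fractional-derivative contribution to $|\cdot|_{L^{2}_{\epsilon,\gamma/2}}$ dominates there and compensates the weight mismatch $\langle v\rangle^{-(\gamma+2s)}$, and crosses over to $\langle R\rangle^{|\gamma+2s|}$ for $R\gtrsim\epsilon^{-1}$, where the pure Boltzmann weight governs the dissipation.

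\textbf{Step 4 and the main obstacle.} Combining Steps 1 and 3, the function $y(t):=\|f^{\epsilon}(t)\|^{2}_{L^{1}_{k}L^{2}}$ satisfies, for every $R\geq 1$,
\begin{equation*}
y'(t)+\frac{\lambda_{0}}{\Phi(R,\epsilon)}\bigl(y(t)-CM_{0}e^{-2qR}\bigr)\leq 0.
\end{equation*}
Optimizing $R=R(t)$ so that $e^{-2qR}\sim y(t)/M_{0}$, i.e.\ $R(t)\sim(2q)^{-1}\log(M_{0}/y(t))$, splits into two regimes: as long as $R(t)\leq\epsilon^{-1}$ (equivalently $t\leq T_{\epsilon}$) one has $\Phi\leq C$, so $y'\leq -\lambda y$ integrates to $y(t)\lesssim M_{0}e^{-\lambda t}$; once $R(t)>\epsilon^{-1}$ (i.e.\ $t>T_{\epsilon}$) one has $\Phi\sim R(t)^{|\gamma+2s|}$, and integrating $y'\lesssim-\lambda R(t)^{-|\gamma+2s|}y(t)$ with $R(t)\sim(\log(M_{0}/y(t)))^{1/(2q)}$ produces the sub-exponential rate $y(t)\lesssim M_{0}\exp(-\lambda\epsilon^{-2(1-s)\kappa}t^{\kappa})$, the $\epsilon$-dependent prefactor being fixed by continuity at $t=T_{\epsilon}$. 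Square-rooting then gives \eqref{nice-decay-connection}. The main obstacle is Step 3: one must extract from Theorem \ref{micro-dissipation} the precise bilateral behaviour of $\Phi(R,\epsilon)$ with crossover exactly at the scale $R\sim\epsilon^{-1}$, and then verify that under the logarithmic optimization this scale maps to the stated threshold $T_{\epsilon}$ with matching constants.
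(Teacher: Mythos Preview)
Your overall strategy—a Strain--Guo style ODE with an $\epsilon$-dependent comparison function $\Phi(R,\epsilon)$—differs from the paper's route, which instead multiplies the equation by the explicit time weight $\exp(\lambda A_\epsilon(t))$ with $A_\epsilon$ given by \eqref{auxiliary-function-t} and then controls the single extra term $\lambda A'_\epsilon(t)h$ via a time-velocity splitting in three $t$-regimes. Either philosophy can work, but your Step~3 contains a computational error that invalidates Step~4 as written.

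On the truncated region $\{\langle v\rangle\le R\}$ only the multiplication part $|W^\epsilon W_{\gamma/2}f|^2_{L^2}$ of $|f|^2_{\epsilon,\gamma/2}$ controls $\|f\|^2_{L^2}$; the fractional-derivative piece does not help here. Using \eqref{lower-bound-when-small-cf}--\eqref{lower-bound-when-large-cf} one finds
\[
\Phi(R,\epsilon)=\sup_{\langle v\rangle\le R}\bigl[(W^\epsilon)^{-2}(v)\langle v\rangle^{-\gamma}\bigr]\sim\max\bigl(1,\ \epsilon^{2-2s}R^{|\gamma+2s|}\bigr),
\]
so the crossover occurs at $R\sim T_\epsilon$, \emph{not} at $R\sim\epsilon^{-1}$, and above it $\Phi\sim\epsilon^{2-2s}R^{|\gamma+2s|}$, \emph{not} $R^{|\gamma+2s|}$. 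Consequently your equivalence ``$R(t)\le\epsilon^{-1}\Leftrightarrow t\le T_\epsilon$'' is false (in the exponential phase $R(t)\sim t$, so $R(t)\le\epsilon^{-1}$ only gives $t\lesssim\epsilon^{-1}$), and the claim that ``continuity at $t=T_\epsilon$ fixes the $\epsilon$-prefactor'' cannot be right: with your $\Phi$ the ODE after crossover is $z'\gtrsim z^{-|\gamma+2s|}$, which integrates to $z\gtrsim t^\kappa$ with no $\epsilon$-improvement whatsoever. With the \emph{correct} $\Phi$ the exponential phase persists while $R(t)\lesssim T_\epsilon$ (hence $t\lesssim T_\epsilon$), and afterward $z'\gtrsim\epsilon^{2s-2}z^{-|\gamma+2s|}$ integrates to $z\gtrsim(\epsilon^{2s-2}t)^\kappa=\epsilon^{-2(1-s)\kappa}t^\kappa$, recovering \eqref{nice-decay-connection}. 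Two smaller issues: the pointwise-in-$t$ inequality $\frac{d}{dt}\|f^\epsilon\|^2_{L^1_kL^2}+\lambda_0\|f^\epsilon\|^2_{L^1_kL^2_{\epsilon,\gamma/2}}\le 0$ is not literally valid for the $L^1_k$ norm and must be recast through the mode-by-mode $L^1_kL^\infty_T/L^1_kL^2_T$ machinery of \cite{duan2021global}; and the relevant constraint on $q$ is $q>2\lambda$ (so that $\int e^{(4\lambda-2q)t}\,dt<\infty$), not $q<\lambda_0/2$.
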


We also give some remarks on this result as follows.

\begin{rmk} Theorem \ref{decay-rate-consistency}  shows that  the
transition of  the decay structure is   continuous in the limit process. Note that the key estimate \eqref{nice-decay-connection} is consistent with the sub-exponential decay rate $e^{-\lambda t^{\kappa}}$ in \eqref{decay-of-Boltzmann-solution} and the exponential rate $e^{-\lambda t}$ in \eqref{decay-of-Landau-solution} by additionally taking into account the dependence of the rate on the vanishing grazing parameter $\epsilon$.  Moreover we introduce the time threshold $T_{\epsilon}$ so as to characterize how the transition occurs as $\epsilon\to 0$. Note that estimate on decay rates for the Boltzmann equation with soft potentials from angular cutoff to non-cutoff was  studied in \cite{he2021}.
\end{rmk}

\begin{rmk}  Theorem \ref{decay-rate-consistency} provides a detailed picture on the uniform-in-$\epsilon$ and global-in-time dynamics of solutions to the Cauchy problem \eqref{Cauchy-linearizedBE-grazing} for the non-cutoff Boltzmann equation in the parameter range $-2 \leq \gamma < -2s$. More precisely, the perturbation converges to zero with the exponential decay rate, i.e. $e^{-\lambda t}$, from initial time to $T_\epsilon$.  When time exactly approaches the critical one $T_{\epsilon}$, the convergent rate continuously changes to the sub-exponential decay, i.e. $\exp(-\lambda \epsilon^{-2(1-s)\kappa} t^{\kappa})$, in terms of the definition of $T_\epsilon$. After the transition time,  the solution keeps the sub-exponential decay rate until infinity. Notice $T_\epsilon\to \infty$ as $\epsilon \to 0$, which then recovers the exponential decay for the Cauchy problem  \eqref{Cauchy-linearizedLE}  on the Landau equation  in case $\gamma+2\geq 0$.
\end{rmk}

\begin{rmk}
The exponential velocity weight assumption $\|e^{q \langle v \rangle}f_{0}\|_{L^{1}_{k}L^{2}}\leq \delta_1$ on initial data is used to get the sub-exponential decay for the Cauchy problem \eqref{Cauchy-linearizedBE-grazing} with $\gamma+2s<0$. It is interesting to study what is the transition of the decay structure if  only a finite-order polynomial velocity weight is imposed on the initial data.
\end{rmk}

\begin{rmk} The constructive constant $\lambda$ in fact gives the lower bound of the first non zero eigenvalue for the linearized Landau collision operator $\mathcal{L}^L$ in Theorem \ref{decay-rate-consistency}. In other words, we have explained the formation of the spectral gap through the large time behavior of the semi-group $e^{t\mathcal{L}^{\epsilon}}$ as $\epsilon$ tends to zero. However, understanding the formation of the spectral gap via the spectrum theory is still a fundamental and  more challenging problem. At the moment we are still far from answering this question.
\end{rmk}

\subsection{Strategy of proof}
%\subsection{Key points and future work}
In this subsection, we outline the strategy in proving Theorem \ref{asymptotic-result} and \ref{decay-rate-consistency}, which would help the readers to have a better understanding of the key ideas.

\subsubsection{Proof of Theorem \ref{asymptotic-result}}

The proofs of both Theorems \ref{asymptotic-result} and \ref{decay-rate-consistency} rely on some subtle analysis of the linear operator $\mathcal{L}^{\epsilon}$ and the nonlinear operator $\Gamma^{\epsilon}$. Referring to \cite{alexandre2000entropy}, the following quantity
\beno
K^{\epsilon}(\xi):= \int_{\mathbb{S}^2} b^\epsilon(\f{\xi}{|\xi|}\cdot \sigma)\min\{ |\xi|^2\sin^2(\theta/2),1\} d\sigma,
\eeno
offers  the velocity regularity in the frequency space.
By Proposition \ref{symbol} originated in \cite[the proof of Theorem 3.1]{he2014asymptotic}),  we have
$K^{\epsilon}(\xi)+ 1 \gtrsim (W^{\epsilon})^{2}(\xi),$ where
\ben\label{characteristic-function}
W^{\epsilon}(y) := \zeta(\epsilon |y|)\langle y\rangle + \big(1-\zeta(\epsilon |y|)\big) \langle \epsilon^{-1} \rangle^{1-s} \langle y\rangle^{s}.
\een
Here the function $\zeta : [0, \infty) \rightarrow [0,1]$ satisfies that
\ben \label{zeta-property}
\zeta \in C^{\infty};\ \zeta(r)=1 \text{ if } r \in [0,1/2];\  \zeta(r)=0 \text{ if } r \in [1, \infty);\ \zeta \text{ is strictly decreasing on } [1/2, 1].
\een
When $\epsilon=0$, we define $W^{0}(y):=\langle y\rangle$.

As in \cite{he2018asymptotic}, we call $W^{\epsilon}(W^{0})$ the characteristic function  associated to  $\mathcal{L}^\epsilon(\mathcal{L}^{L})$. This is because the function $W^{\epsilon}$ is the common weight gain in phase space, frequency space, and anisotropic space.
More precisely, for $l \in \mathbb{R}$ and $\epsilon \geq 0$, we define
\ben \label{norm-definition}
|f|^{2}_{\epsilon,l} :=|W^{\epsilon}((-\Delta_{\mathbb{S}^{2}})^{\frac{1}{2}}) W_{l}f|^{2}_{L^{2}} + |W^{\epsilon}(D)W_{l}f|^2_{L^{2}} + |W^{\epsilon} W_{l}f|^{2}_{L^{2}}. \een
Here $W^{\epsilon}(D)$ is the pseudo-differential operator with symbol $W^{\epsilon}$.
The operator $W^{\epsilon}((-\Delta_{\mathbb{S}^{2}})^{\frac{1}{2}})$ is defined in the way that for $v = r \sigma$ with $r \geq 0$ and $\sigma \in \mathbb{S}^{2}$,
\beno
(W^{\epsilon}((-\Delta_{\mathbb{S}^{2}})^{\frac{1}{2}})f)(v) := \sum_{l=0}^\infty\sum_{m=-l}^{l} W^\epsilon((l(l+1))^{\frac{1}{2}}) Y^{m}_{l}(\sigma)f^{m}_{l}(r),
\eeno
where
$ f^{m}_{l}(r) = \int_{\mathbb{S}^{2}} Y^{m}_{l}(\sigma) f(r \sigma) d\sigma$,
and $Y_l^m, -l\le m\le l$ are the real spherical harmonics satisfying that
$ (-\triangle_{\mathbb{S}^2})Y_l^m=l(l+1)Y_l^m.$

We use the explicitly defined norm $|\cdot|_{\epsilon,\gamma/2}$ in \eqref{norm-definition} to characterize the lower bound of the linear operator $\mathcal{L}^{\epsilon}$ as well as the upper bound of the nonlinear term $\Gamma^{\epsilon}$.

\medskip
\noindent $\bullet$ {\bf Step 1: Coercivity estimate.} We prove that
\ben \label{uniforml2}
\langle \mathcal{L}^{\epsilon}f, f\rangle + |f|^{2}_{L^{2}_{\gamma/2}} \geq \lambda_{1} |f|^{2}_{\epsilon,\gamma/2},
\een
for some constant $\lambda_{1}>0$ independent of $\epsilon$, see Theorem \ref{coercivity-structure}. That is, the uniform-in-$\epsilon$ coercivity estimate of $\mathcal{L}^{\epsilon}$ is obtained by using the norm $|\cdot|_{\epsilon,\gamma/2}$. Note that by  \eqref{norm-definition}, the norm $|\cdot|_{\epsilon,\gamma/2}^{2}$ has three parts
$|W^{\epsilon}((-\Delta_{\mathbb{S}^{2}})^{\frac{1}{2}}) W_{\gamma/2}f|^{2}_{L^{2}}$, $|W^{\epsilon}(D)W_{\gamma/2}f|^2_{L^{2}}$, and $|W^{\epsilon} W_{\gamma/2}f|^{2}_{L^{2}}$. We use some elementary computations to obtain regularity in  phase space $|W^{\epsilon} W_{\gamma/2}f|^{2}_{L^{2}}$ in Proposition \ref{lowerboundpart1}. By the well-known result in \cite{alexandre2000entropy}, we derive frequency space  regularity $|W^{\epsilon}(D)W_{\gamma/2}f|^2_{L^{2}}$ in Lemma \ref{lowerboundpart1-general-g} and Proposition \ref{lowerboundpart1-gamma-eta}. By referring to \cite{he2018sharp}, we gain the anisotropic norm $|W^{\epsilon}((-\Delta_{\mathbb{S}^{2}})^{\frac{1}{2}}) W_{\gamma/2}f|^{2}_{L^{2}}$ in Proposition \ref{lowerboundpart2-general-g} and Proposition \ref{lowerboundpart1-gamma-eta}. Lemma \ref{l-full-estimate-geq-eta} shows that $\langle \mathcal{L}^{\epsilon}f, f\rangle$ is bounded from above  by $|f|^{2}_{\epsilon,\gamma/2}$. The lower and upper bounds together demonstrate $|\cdot|_{\epsilon,\gamma/2}^{2}$ is the right norm to characterize the inner product
$\langle \mathcal{L}^{\epsilon}f, f\rangle$.

\noindent $\bullet$ {\bf Step 2: Spectrum gap type estimate.} Indeed, we are able to prove that for any $f \in \mathrm{ker}^{\perp}$, it holds that
\ben \label{uniforml2-in-perpen}
\langle \mathcal{L}^{\epsilon}f, f\rangle \geq \lambda_{0} |f|^{2}_{\epsilon,\gamma/2},
\een
for some explicitly computable $\lambda_{0}>0$ independent of $\epsilon$, see Theorem \ref{micro-dissipation}.  This is motivated by  Wang-Uhlenbeck's work \cite{wang1952propagation}  on the  explicit spectral gap estimate for the  Maxwell molecule model $\gamma=0$. Our main idea is to reduce the desired estimate to
the case $\gamma=0$  and utilize at the same time   the coercivity estimate \eqref{uniforml2} to get \eqref{uniforml2-in-perpen} for $-3<\gamma<0$, cf. the proof of Theorem \ref{micro-dissipation} for more details.

\noindent$\bullet$ {\bf Step 3: Upper bound of nonlinear term.}
We use the norm $|\cdot|_{\epsilon,\gamma/2}$ to bound the nonlinear term $\Gamma^{\epsilon}$ as
\ben \label{nonlinear-term-upper-bound}
|\langle \Gamma^{\epsilon}(g,h), f\rangle| \leq C |g|_{L^{2}}|h|_{\epsilon,\gamma/2}|f|_{\epsilon,\gamma/2},
\een
for some $C$ independent of $\epsilon$,  see Theorem \ref{Gamma-full-up-bound}. When $\gamma<0$, the relative velocity $v-v_{*}$ has singularity near $0$. For this,  we  consider $|v-v_{*}| \lesssim 1$ and $|v-v_{*}| \gtrsim 1$ separately. When $|v-v_{*}| \gtrsim 1$, it holds that $|v-v_{*}|^{\gamma} \sim \langle v-v_{*} \rangle^{\gamma}$ so that there is no singularity. When $|v-v_{*}| \lesssim 1$, it holds that $\mu^{\frac{1}{2}}(v_{*}) \lesssim \mu^{1/4}(v)$. Hence,  one can make use of $\mu^{\frac{1}{2}}(v_{*})$ in the definition of $\Gamma^{\epsilon}$ to deal with the weight problem.
We call it the weight transferring idea for a general result, cf. Lemma \ref{mu-weight-transfer}. We carry out the idea roughly in Proposition \ref{regularity-part-Q} but fully in Lemma \ref{I-less-1-some-preparation} and Proposition \ref{I-less-eta-upper-bound}.

\noindent$\bullet$ {\bf Step 4: Global well-posedness.}
 With \eqref{uniforml2-in-perpen} and \eqref{nonlinear-term-upper-bound},  we can implement the standard macro-micro decomposition and take advantage of the functional property of the space $L^{1}_{k}L^{\infty}_{T}L^{2}$ to prove global well-posedness of the Boltzmann equation \eqref{Cauchy-linearizedBE-grazing}. Since the procedure is well-established in \cite{duan2021global}, we directly conclude the global well-posedness result in the space $L^{1}_{k}L^{\infty}_{T}L^{2}$ in Theorem \ref{asymptotic-result}.
 We remark that $|g|_{L^{2}}$ in \eqref{nonlinear-term-upper-bound} corresponds to the $L^{2}$ in $L^{1}_{k}L^{\infty}_{T}L^{2}$.

\noindent$\bullet$ {\bf Step 5: Propagation of regularity.}
To obtain propagation of regularity and velocity moments in Theorem \ref{asymptotic-result}, we first derive a commutator estimate between
$\Gamma^{\epsilon}(g, \cdot)$ and weight function $W_{l,q} := \langle v \rangle^{l} \exp(q \langle v \rangle)$ in Lemma \ref{commutatorgamma}. We then derive  a commutator estimate between
$\mathcal{L}^{\epsilon}$ and weight function $W_{l,q}$ in Lemma  \ref{commutator-linear}.
Note that the case $q=0$ represents polynomial weight which is used in Theorem \ref{asymptotic-result}. Our goal is to prove propagation of norm $\|\cdot\|_{L^{1}_{k,m}H^{n}_{l}}$ under smallness of $\|f_{0}\|_{L^{1}_{k}L^{2}}$ and finiteness of $\|f_{0}\|_{m,n,l}$. We first prove in Theorem \ref{lowest-order-propagation} for the case $n=0$ without  velocity derivative. Then the case $n \geq 1$ is proved using the induction argument in Theorem \ref{high-order-propagation}.

\noindent$\bullet$ {\bf Step 6: Asymptotic formula.}
To derive the asymptotic formula between solutions of the Boltzmann and Landau equations in Theorem \ref{asymptotic-result}, we first show the error estimate of $\Gamma^{\epsilon}-\Gamma^{L}$ in Lemma \ref{estimate-operator-difference}. Taking difference between \eqref{Cauchy-linearizedBE-grazing} and \eqref{Cauchy-linearizedLE}, we get an equation for the solution difference $f^{\epsilon} - f^{L}$. We then apply the energy method to the equation to derive \eqref{global-in-time-error} by using the propagation result \eqref{general-propagation-f-epsilon} and error estimate of $\Gamma^{\epsilon}-\Gamma^{L}$.

\subsubsection{Proof of Theorem \ref{decay-rate-consistency}}
We will apply the time-weighted energy method together with the time-velocity splitting technique to establish the transition time-decay structure \eqref{nice-decay-connection} in Theorem  \ref{decay-rate-consistency}. Such approach was initiated by Caflisch \cite{Caf1,Caf2} to treat the spatially homogeneous Boltzmann equation with cutoff soft potentials in torus and later developed by Strain-Guo  \cite{SG-CPDE,SG-08-ARMA} in the spatially inhomogeneous setting as well as by Gressman-Strain \cite{GS} for the non-cutoff case. The key for obtaining the time-decay of solutions is to impose an extra velocity weight on initial data that can be either  polynomial or exponential, inducing the polynomial or sub-exponential rate, respectively.  In what follows we explain the main points in the proof of Theorem \ref{decay-rate-consistency}.

First of all, we apply the Caflisch's idea to determine the time threshold $T_\epsilon$ as it plays the most important role in carrying out the time-velocity splitting technique under the uniform grazing limit. In terms of the energy dissipation norm $|\cdot|_{\epsilon,\gamma/2}$ in \eqref{uniforml2-in-perpen} and \eqref{norm-definition} and the function $W^{\epsilon}$ in \eqref{characteristic-function}, we introduce the following toy model for explanation:
\beno
\partial_t f+v\cdot \nabla_x f+\lambda a_\epsilon (v) f=0,
\eeno
with the constant $\lambda>0$ suitably small and
\beno
a_\epsilon (v):=\zeta(\epsilon |v|) \langle v\rangle^{\gamma+2} + [1-\zeta(\epsilon |v|)]  \frac{\langle v\rangle^{\gamma+2s}}{\epsilon^{2(1-s)}}.
\eeno
The solution is explicitly given by $f(t,x,v)=e^{-\lambda a_\epsilon (v) t}f_0(x-vt,v)$. Assuming that initial data $f_0$ decays in velocity at an exponential rate $\exp (-\lambda \langle v\rangle^\vartheta)$ with $0<\vartheta\leq 2$, one can formally bound $f(t,x,v)$ as
\beno
|f(t,x,v)|\lesssim  \exp (-\lambda b_\epsilon (t,v))\lesssim \exp (-\lambda \inf_v b_\epsilon (t,v)),\quad b_\epsilon (t,v):=a_\epsilon (v)t+\langle v\rangle^\vartheta.
\eeno
To look for a lower bound of $b_\epsilon(t,v)$ in velocity that should depend only on time, we may compute in the parameter range $-2\leq \gamma<-2s$ that
\beno
b_\epsilon(t,v) &=&\zeta (\epsilon |v|) \{\langle v\rangle^{\gamma+2} t+\langle v\rangle^\vartheta\} +[1-\zeta(\epsilon |v|)]\{\langle v\rangle^{\gamma+2s}\frac{t}{\epsilon^{2(1-s)}} +\langle v\rangle^\vartheta\} \\
&\geq & \zeta(\epsilon|v|) t +[1-\zeta(\epsilon |v|)] (\frac{t}{\epsilon^{2(1-s)}})^{\kappa}\\
&\geq & \min \{t,(\frac{t}{\epsilon^{2(1-s)}})^{\kappa}\}=t\mathrm{1}_{t< T_\epsilon} +(\frac{t}{\epsilon^{2(1-s)}})^{\kappa}\mathrm{1}_{t\geq T_\epsilon}.
\eeno
Here we have used the inequalities
\beno
\inf_v\{\langle v\rangle^{\gamma+2} t+\langle v\rangle^\vartheta\} \geq t,\quad \inf_v\{\langle v\rangle^{\gamma+2s}\frac{t}{\epsilon^{2(1-s)}} +\langle v\rangle^\vartheta\}\geq (\frac{t}{\epsilon^{2(1-s)}})^{\kappa},
\eeno
with $\kappa:=\frac{\vartheta}{\vartheta+|\gamma+2s|}$. Moreover, the time threshold $T_\epsilon>0$ has to be chosen such that
%\begin{equation}
%\label{ }
$(\frac{t}{\epsilon^{2(1-s)}})^{\kappa}=t$
%\eeno
at $t=T_\epsilon$, implying that
\beno
T_\epsilon= (\frac{1}{\epsilon^{2(1-s)}})^{\frac{\kappa}{1-\kappa}}=(\frac{1}{\epsilon})^{\frac{2\vartheta(1-s)}{|\gamma+2s|}}.
\eeno
In such way the solution $f(t,x,v)$ decays in large time as
\beno
|f(t,x,v)|\lesssim \mathrm{1}_{t \leq T_{\epsilon}} \exp(-\lambda t)
+ \mathrm{1}_{t > T_{\epsilon}} \exp(-\lambda \epsilon^{-2 (1-s)\kappa} t^{\kappa}),
\eeno
whenever $\sup_{x,v} e^{\lambda \langle v\rangle^\vartheta}|f_0(x,v)|<\infty$ holds. Therefore, the transition time-decay structure motivates us to define
\ben \label{auxiliary-function-t}
A_{\epsilon}(t):= \zeta(T_{\epsilon}^{-1}t) t + (1-\zeta(T_{\epsilon}^{-1}t))(\frac{t}{\epsilon^{2(1-s)}})^{\kappa},
\een
and  obtain the energy estimate on $h(t,x,v):=e^{\lambda A_{\epsilon}(t)}f(t,x,v)$ for $\lambda>0$ suitably small. It turns out that after repeating those known energy estimates, it suffices to obtain the uniform bound on
\beno
\sqrt{\lambda} \sum_{k \in \mathbb{Z}^{3}}\left(\int_{0}^{T} A_{\epsilon}^{\prime}(t) \|(1-\zeta)\widehat{h}(t, k)\|_{L^{2}}^{2} d t\right)^{\frac{1}{2}},
\eeno
where the fact that $0\leq A_{\epsilon}^{\prime}(t)\lesssim 1$ and $1-\zeta$ is supported in $|v| \geq \frac{1}{2\epsilon}$
has been used. Then, whenever the velocity-weighted norm $\|\exp(q \langle v \rangle^\vartheta) f(t) \|_{L^{1}_{k}L^{2}}$ is bounded uniformly in time, the time-velocity splitting technique can be applied by separating the time interval into three parts as
\beno
\int_0^Tds=\left(\int_0^{\frac{1}{2\epsilon}} +\int_{\frac{1}{2\epsilon}}^{T_\epsilon}+\int_{T_\epsilon}^T\right)ds
\eeno
for any $T>T_\epsilon$, cf. Section \ref{decay-rate} for more details. Back to propagation of the exponential velocity moments,   using the commutator estimate in Lemma \ref{commutatorgamma} and the upper bound estimate in Theorem \ref{Gamma-full-up-bound}, we have the following weighted upper bound estimate
\ben \label{exponetial-polynomial-commutator-Gamma} |\langle \Gamma^{\epsilon}(g, h), W_{l,q}^{2} f\rangle| \lesssim |W_{l,q} g|_{L^{2}}|W_{l,q}h|_{\epsilon,\gamma/2}|W_{l,q} f|_{\epsilon,\gamma/2},
\een
as shown in Lemma \ref{general-weight-upper-bound}.
Note that \eqref{exponetial-polynomial-commutator-Gamma} is stronger than Lemma 4.1 in \cite{duan2021global}. Therefore, by using the proof of Theorem 2.1 in \cite{duan2021global}, we get the propagation of norm $\|\exp(q \langle v \rangle) f(t) \|_{L^{1}_{k}L^{2}}$ under the smallness assumption on $\|\exp(q \langle v \rangle)f_{0}\|_{L^{1}_{k}L^{2}}$, cf. Theorem \ref{propagation-of-moment}. We remark that for the exponential weight $\exp(q \langle v \rangle^\vartheta)$ we can treat the case $\vartheta=1$ only, cf.~\cite{duan2013stability} and \cite{duan2021global}, due to the specific property of the non-cutoff Boltzmann operator.
% different from the Landau case.

%We prove the coercivity estimate of $\mathcal{L}^{\epsilon}$ in theorem \ref{coercivity-structure}, the spectral gap estimate of $\mathcal{L}^{\epsilon}$ in theorem \ref{micro-dissipation}, and the upper bound estimate of $\Gamma^{\epsilon}$ in Theorem \ref{Gamma-full-up-bound}.
%We emphasize that all the estimates are uniform w.r.t. the parameter $\epsilon$. With these uniform estimates, we can adopt the nice function space $L^{1}_{k}L^{\infty}_{T}L^{2}$ used in \cite{duan2021global} to implement energy estimate.

%These uniform operator estimates also serve us to study the relation between Vlasov-Possion-Boltzmann and Vlasov-Possion-Landau systems in the near future.

%To our best knowledge, mathematical result on the relation between VPB and VPL is void. It is interesting and challenging  to consider the topic in the future.

\subsection{Usual notations and organization of the paper}
  Denote the multi-index $\beta =(\beta_1,\beta_2,\beta_3)$ with
$|\beta |=\beta _1+\beta _2+\beta _3$. $a\lesssim b$  means that  there is a
generic constant $C$
such that $a\leq Cb$.  The notation $a\sim b$ implies that $a\lesssim b$ and $b\lesssim
a$.
The weight function  $W_l(v):= \langle v\rangle^l $.
We denote $C(\lambda_1,\lambda_2,\cdots, \lambda_n)$ or $C_{\lambda_1,\lambda_2,\cdots, \lambda_n}$  by a constant depending on   parameters $\lambda_1,\lambda_2,\cdots, \lambda_n$.
The notations  $\langle f,g\rangle:= \int_{\R^3}f(v)g(v)dv$ and $(f,g):= \int_{\R^3\times\TT^3} fgdxdv$
are used to denote the two standard inner products
 for $v$ variable and for $x,v$ variables respectively.
As usual, $\mathrm{1}_A$ is the characteristic function of the set $A$. If $A,B$ are two operators, then $[A,B]:= AB-BA$.
Define $|f|_{L \mathrm{log}L}:=\int_{\R^3} |f(v)|\log(1+|f(v)|) d v$.

Finally, the rest of the paper will be organized as follows.
In Section \ref{coercivity-spectral}, we will prove the coercivity estimate given in Theorem \ref{coercivity-structure}
and the spectral gap estimate given in Theorem \ref{micro-dissipation}. In section \ref{upper-bound}, we
will focus on  the upper bound estimate given in Theorem \ref{Gamma-full-up-bound}. In addition, with some commutator estimates, we will also prove upper bound
estimates with polynomial or exponential weights.
The two main theorems will be proved in Section \ref{propagation-asymptotic} and
 Section \ref{decay-rate} respectively. In the Appendix \ref{appendix}, we include some known results that are used in Section \ref{coercivity-spectral}-\ref{decay-rate} for completeness.

\section{Coercivity and spectral gap estimate} \label{coercivity-spectral}
In this section, we will prove coercivity estimate of the linear operator $\mathcal{L}^{\epsilon}$  in Theorem \ref{coercivity-structure} and the spectral gap estimate in Theorem \ref{micro-dissipation}. Unless otherwise specified, the parameter range is $-3 < \gamma \leq 0, 0<s<1$.

%Note that the structure of the norm in \eqref{norm-definition} is exactly the same as in \cite{he2018asymptotic}.
%This somehow demonstrates that the operator analysis method in \cite{he2018sharp} and \cite{he2018asymptotic} is stable
%and general such that it can still be applied here. That is, one should expect and try to gain the same weight in phase space,
%frequency space, anisotropic space.

In the rest of the paper, we will omit the range of some frequently used variables in the integrals for brevity.
Usually, $\sigma \in \mathbb{S}^{2}, v, v_{*}, u, \xi \in \mathbb{R}^{3}$.
For example, we set $\int (\cdots) d\sigma := \int_{\mathbb{S}^{2}} (\cdots) d\sigma,  \int (\cdots) d\sigma dv dv_{*}
:= \int_{\mathbb{S}^{2} \times \mathbb{R}^{3} \times \mathbb{R}^{3}} (\cdots) d\sigma dv dv_{*}$.
Integration with respect to other variables should be understood in a similar way.
Whenever a new variable appears, we will specify its range once and then omit it thereafter.

\subsection{Elementary results}
In this subsection, we give some preliminaries which will be used frequently in the rest of the paper.
We first list some properties of $W^{\epsilon}$ defined in \eqref{characteristic-function}. Note that $W^{\epsilon}$ is a radical function defined on $\mathbb{R}^{3}$. By the definition \eqref{characteristic-function}, we have
\ben
\label{middle-lower-upper} \langle y \rangle \leq W^{\epsilon}(y) \leq  \langle \epsilon^{-1} \rangle^{1-s} \langle y\rangle^{s}, \text{ if } \frac{1}{2\epsilon} \leq |y| \leq \frac{1}{\epsilon}.
\\
\label{lower-bound-when-small-cf} W^{\epsilon}(y) =   \langle y \rangle, \text{ if } |y| \leq \frac{1}{2\epsilon}.
\\
\label{lower-bound-when-large-cf} W^{\epsilon}(y) =  \langle \epsilon^{-1} \rangle^{1-s} \langle y\rangle^{s}, \text{ if } |y| \geq \frac{1}{\epsilon}.
\\
\label{low-frequency-lb-cf}  W^{\epsilon}(y) \gtrsim  \zeta(\epsilon |y|) \langle y \rangle.
\\
\label{high-frequency-lb-cf}  W^{\epsilon}(y) \gtrsim \left(1-\zeta(\epsilon |y|)\right) \langle \epsilon^{-1} \rangle^{1-s} \langle y \rangle^{s} \gtrsim \left(1-\zeta(\epsilon |y|)\right)  \epsilon^{-1}.
\een
For any $ x, y \in \mathbb{R}^{3}$, one can check that
\ben
\label{non-decreasing-cf} W^{\epsilon}(x) \leq W^{\epsilon}(y), \text{ if } |x| \leq |y|.
\\
\label{separate-into-2-cf} W^{\epsilon}(x-y) \lesssim  W^{\epsilon}(x) W^{\epsilon}(y).
\een

Let us compute an integral regarding to the angular function $b^{\epsilon}$ over the sphere $\mathbb{S}^{2}$.
Recall that $b^{\epsilon}(\cos\theta) =(1-s)\epsilon^{2s-2}\sin^{-2-2s}(\theta/2) \mathrm{1}_{\sin(\theta/2)\leq \epsilon}$. Note that $d\sigma =\sin\theta d\theta d\phi= 4 \sin (\theta/2) d \sin (\theta/2) d\phi$, we have
\ben \label{order-2}
\int b^{\epsilon}(\cos\theta)  \sin^{2}(\theta/2)  d\sigma &=&  4 (1-s)\epsilon^{2s-2}\int_{0}^{\pi} \int_{0}^{2\pi} \mathrm{1}_{\sin(\theta/2)\leq \epsilon}\sin^{1-2s}(\theta/2)  d \sin (\theta/2) d\phi
\nonumber \\&=&8\pi (1-s)\epsilon^{2s-2} \int_{0}^{\epsilon} u^{1-2s} du
=4\pi .
\een

Let us recall cancellation lemma, which is used when one needs to shift regularity between $h$ and $f$ in the inner product $\langle Q(g, h), f \rangle$. By Lemma 1 in \cite{alexandre2000entropy}, we have
\begin{lem}[Cancellation lemma, \cite{alexandre2000entropy}] \label{cancellation-lemma-general-gamma} Recalling $B^{\epsilon}$ in \eqref{Blotzmann-kernel}, then
\beno
\int B^{\epsilon}(v-v_{*},\sigma) g_{*}(h^{\prime}-h) dv d v_{*} d\sigma= C(\epsilon) \int |v-v_{*}|^{\gamma}g_{*} h  dv d v_{*}.
\eeno
where $C(\epsilon)$ is some constant depending on $\epsilon$ and $|C(\epsilon)| \lesssim 1$ thanks to \eqref{order-2}.
\end{lem}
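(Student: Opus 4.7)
I would use the classical ADVW change-of-variables argument. Split the left-hand side as
$$\int B^{\epsilon} g_* h' \, d\sigma dv dv_* - \int B^{\epsilon} g_* h \, d\sigma dv dv_*.$$
In the first integral, perform the substitution $v \mapsto v'$ with $v_*$ and $\sigma$ held fixed. Writing $k := v-v_*$, $\hat k := k/|k|$, a direct computation gives $\partial v'/\partial v = \tfrac12(I + \sigma \otimes \hat k)$, so
$$\left|\det \tfrac{\partial v'}{\partial v}\right| = \tfrac18(1+\cos\theta) = \tfrac14 \cos^2(\theta/2);$$
assumption (A4) ensures positivity and thus that the map is a diffeomorphism onto its image. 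One also checks $|v'-v_*| = |v-v_*|\cos(\theta/2)$ and that the new scattering angle $\theta^{\mathrm{new}} := \arccos(\widehat{(v'-v_*)}\cdot \sigma)$ equals $\theta/2$.

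After relabeling the new variable back to $v$ and absorbing the Jacobian $|J^{-1}|=4/\cos^2(\theta^{\mathrm{new}})$ together with the rescaling $|v-v_*|^\gamma = \cos^{-\gamma}(\theta^{\mathrm{new}})|v'-v_*|^\gamma$ into the kernel, the transformed $h'$-integral becomes an integral of $|v-v_*|^\gamma g_* h(v)$ against a purely angular factor depending only on $\widehat{(v-v_*)}\cdot\sigma$. Rotational symmetry then yields
$$\int B^{\epsilon} g_* (h'-h)\, d\sigma dv dv_* = C(\epsilon) \int |v-v_*|^\gamma g_* h \, dv dv_*,$$
and passing to spherical coordinates with the substitution $\theta = 2\theta^{\mathrm{new}}$ in the transformed first integral gives the explicit representation
$$C(\epsilon) = 4\pi \int_0^{\pi/2} b^\epsilon(\cos\theta)\, \sin(\theta/2) \cdot \frac{1-\cos^{3+\gamma}(\theta/2)}{\cos^{2+\gamma}(\theta/2)} \, d\theta.$$

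To conclude $|C(\epsilon)| \lesssim 1$, I would use the Taylor bound $1-\cos^{3+\gamma}(\theta/2) = O(\sin^2(\theta/2))$ uniformly on $[0,\pi/2]$ (the constant depending on $\gamma$ but not on $\epsilon$), so that the integrand is controlled by $b^\epsilon(\cos\theta)\sin^3(\theta/2)$ up to a bounded factor; since $\sin^3(\theta/2) \leq \sqrt{2}\, \sin^2(\theta/2)\cos(\theta/2)$ on $[0,\pi/2]$, invoking \eqref{order-2} (which gives $\int b^\epsilon(\cos\theta)\sin^2(\theta/2)\,d\sigma = 4\pi$) yields the $\epsilon$-uniform bound. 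The essential and delicate point is the cancellation of the non-integrable singularity of $b^\epsilon \sim \sin^{-2-2s}(\theta/2)$ at $\theta=0$: the bracket in $C(\epsilon)$ vanishes to order $\sin^2(\theta/2)$ as $\theta \to 0$, which is exactly the weight needed in \eqref{order-2} to produce a constant independent of the vanishing grazing parameter. The only technical subtlety is to track carefully that the image of the change of variables covers the cone $\widehat{(v-v_*)}\cdot\sigma \geq \cos(\pi/4)$ and that the substitution $\theta = 2\theta^{\mathrm{new}}$ maps this domain exactly onto $\theta \in [0,\pi/2]$, which is consistent with the support of $b^\epsilon$ prescribed by (A4).
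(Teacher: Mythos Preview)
Your proposal is correct and is precisely the standard ADVW change-of-variables argument; the paper does not give its own proof but simply cites Lemma~1 of \cite{alexandre2000entropy}, so you have supplied exactly what the reference contains. One cosmetic slip: the inequality you wrote, $\sin^3(\theta/2)\le\sqrt{2}\,\sin^2(\theta/2)\cos(\theta/2)$, reduces to $\tan(\theta/2)\le\sqrt{2}$, which is true on $[0,\pi/2]$ but is not quite the comparison you need---what you actually use is $\sin^3(\theta/2)\le\sqrt{2}\,\sin^3(\theta/2)\cos(\theta/2)$ (equivalently $\cos(\theta/2)\ge 1/\sqrt{2}$) so that the $d\theta$-integral matches the $d\sigma$-integral of \eqref{order-2}; the conclusion is unaffected.
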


Next, let us present a result regarding to the Riesz potential, whose proof can be found in Lemma 2.7 of \cite{he2018asymptotic}.
\begin{lem}\label{aftercancellation} Set
$
A :=  \int |v-v_{*}|^{\gamma}g_{*} h f dv d v_{*}.
$
Then
\begin{itemize}
\item if $-\frac{3}{2}<\gamma \leq 0$, then
$ |A| \lesssim  (|g|_{L^{2}_{|\gamma|}}+|g|_{L^{1}_{|\gamma|}})|h|_{L^{2}_{\gamma/2}}|f|_{L^{2}_{\gamma/2}};$
\item if $-3<\gamma \leq -\frac{3}{2}$, then for $\eta>0, s_1,s_2 \geq 0$  such that $s_{1} + s_{2}= -\frac{3}{2}-\gamma + \eta$ there holds
\beno
|A| \lesssim C_{\eta}(|g|_{L^1_{|\gamma|}}+|g|_{H^{s_{1}}_{|\gamma|}})|h|_{H^{s_{2}}_{\gamma/2}}|f|_{H^{0}_{\gamma/2}}.
\eeno
\end{itemize}
 \end{lem}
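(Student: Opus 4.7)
The plan is to split the integration domain as $\{|v-v_*| \geq 1\} \cup \{|v-v_*| \leq 1\}$ and handle the two pieces with different tools, using throughout the abbreviations $\bar g := \langle v\rangle^{|\gamma|} g$, $\bar h := \langle v\rangle^{\gamma/2} h$, $\bar f := \langle v\rangle^{\gamma/2} f$, so that the target norms read $|\bar g|_{L^1} = |g|_{L^1_{|\gamma|}}$, $|\bar g|_{L^2} = |g|_{L^2_{|\gamma|}}$, $|\bar h|_{L^2} = |h|_{L^2_{\gamma/2}}$, $|\bar f|_{L^2} = |f|_{L^2_{\gamma/2}}$. On the non-singular piece I would invoke Peetre's inequality $\langle v\rangle^{|\gamma|} \lesssim \langle v-v_*\rangle^{|\gamma|} \langle v_*\rangle^{|\gamma|}$ combined with the elementary bound $\langle v-v_*\rangle^{|\gamma|} |v-v_*|^\gamma \lesssim 1$ valid on $|v-v_*|\geq 1$, which yields the pointwise estimate $|v-v_*|^\gamma |g_*||h||f| \mathrm{1}_{|v-v_*|\geq 1} \lesssim |\bar g_*||\bar h||\bar f|$; Cauchy--Schwarz in $v$ then delivers the admissible bound $|\bar g|_{L^1} |\bar h|_{L^2} |\bar f|_{L^2}$ with no loss.

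On the singular piece $|v-v_*| \leq 1$ the two velocities are comparable, $\langle v\rangle \sim \langle v_*\rangle$, so the weights transfer freely between $v$ and $v_*$ and the estimate reduces to controlling the weightless trilinear form $\int_{|v-v_*|\leq 1} |v-v_*|^\gamma |\bar g_*||\bar h||\bar f| \, dv \, dv_*$. In Case (i), the assumption $\gamma > -3/2$ means that the truncated kernel $|w|^\gamma \mathrm{1}_{|w|\leq 1}$ lies in $L^1_w \cap L^2_w$: applying Cauchy--Schwarz in $v_*$ with the $L^2$-kernel against $|\bar g_*|$ produces the $|g|_{L^2_{|\gamma|}}$ term, while pulling $|\bar g_*|$ out in $L^1$ and using that $\int_{|v-v_*|\leq 1} |v-v_*|^\gamma |\bar h||\bar f| \, dv$ is bounded uniformly in $v_*$ by $|\bar h|_{L^2}|\bar f|_{L^2}$ (Cauchy--Schwarz and the $L^1$-kernel) gives the $|g|_{L^1_{|\gamma|}}$ variant.

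Case (ii) will be the chief technical difficulty, since the kernel is no longer locally square-integrable. The plan is to recognize the singular part as a Hardy--Littlewood--Sobolev trilinear form
\[
\left|\int_{|v-v_*|\leq 1} |v-v_*|^\gamma \bar g_* \bar h \bar f \, dv \, dv_*\right| \lesssim |\bar g|_{L^p} |\bar h \bar f|_{L^q}, \quad \f1p + \f1q = 2 - \f{|\gamma|}{3},
\]
then to distribute $\bar h \bar f$ by H\"older as $\bar h \in L^r$ and $\bar f \in L^2$ with $\f1q = \f12 + \f1r$, and finally to invoke Sobolev embedding $H^{s_1} \hookrightarrow L^p$ and $H^{s_2} \hookrightarrow L^r$. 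A direct index count produces precisely $s_1 + s_2 = |\gamma| - \f32 = -\f32 - \gamma$, the small parameter $\eta>0$ serving to keep each $s_i$ strictly below the endpoint $3/2$ of Sobolev embedding. The $|g|_{L^1_{|\gamma|}}$ alternative would then follow by replacing HLS with a Young-type bound $|I_{-\gamma} \bar g|_{L^\infty} \lesssim |\bar g|_{L^1}$ against the truncated kernel, which is finite exactly because $\gamma > -3$. The main obstacle is the index bookkeeping in Case (ii): one must verify that the Sobolev embeddings close up to the stated budget without exceeding the endpoint, and that the weight transfer on the singular region is indeed legal throughout the Fourier-side estimates, for which the equivalence $\langle v\rangle \sim \langle v_*\rangle$ on $|v-v_*|\leq 1$ is essential.
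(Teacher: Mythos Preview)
The paper does not give its own proof of this lemma; it simply cites Lemma~2.7 of \cite{he2018asymptotic}. Your approach---split at $|v-v_*|=1$, use Peetre's inequality on the far region to place $g$ in $L^1_{|\gamma|}$, transfer weights on the near region via $\langle v\rangle\sim\langle v_*\rangle$, and then (for Case~(ii)) invoke HLS plus H\"older plus Sobolev---is exactly the standard route and is correct in its main line. The index count $\frac1p+\frac1q=2-\frac{|\gamma|}{3}$, $\frac1q=\frac12+\frac1r$, $\frac1p=\frac12-\frac{s_1}{3}$, $\frac1r=\frac12-\frac{s_2}{3}$ closes to $s_1+s_2=-\frac32-\gamma$ as you say.

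Two points deserve correction. First, your ``$L^1$ alternatives'' on the near region are both wrong as stated: in Case~(i) the claim that $\int_{|v-v_*|\le1}|v-v_*|^\gamma|\bar h||\bar f|\,dv\lesssim|\bar h|_{L^2}|\bar f|_{L^2}$ uniformly in $v_*$ fails (Cauchy--Schwarz leaves you with $|\bar h\bar f|_{L^2}$, not $|\bar h|_{L^2}|\bar f|_{L^2}$), and in Case~(ii) the bound $|(|\cdot|^\gamma\mathrm{1}_{|\cdot|\le1})*|\bar g||_{L^\infty}\lesssim|\bar g|_{L^1}$ is false because the truncated kernel is not in $L^\infty$. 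Fortunately neither is needed: the $L^1_{|\gamma|}$ contribution to the final bound comes entirely from the far region, and the near region supplies the $L^2_{|\gamma|}$ (resp.\ $H^{s_1}_{|\gamma|}$) term. Simply drop those remarks. Second, your explanation of $\eta>0$ is slightly off: with $s_1+s_2=-\frac32-\gamma<\frac32$ and $s_i\ge0$ each $s_i$ is automatically below $3/2$. The $\eta$ is really needed to avoid the endpoint $s_2=0$, where $q=1$ falls outside the HLS range; with a spare $\eta$ one can always shift slightly so that both $s_1,s_2>0$ (or handle $s_2=0$ separately by putting the Riesz potential into $L^\infty$ via $|\bar g|_{L^p}$ with $p$ strictly above $\frac{3}{3-|\gamma|}$, which again costs $\eta$).
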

As a result of Lemma \ref{cancellation-lemma-general-gamma} and Lemma \ref{aftercancellation}, we have
\begin{col}\label{cancellation-to-Sob-norm} Fix $\eta>0$, let $s_1,s_2 \geq 0$ verify $s_{1} + s_{2}= \max\{-\frac{3}{2}-\gamma + \eta, 0\}$, then
\beno |\int B^{\epsilon}(v-v_{*},\sigma) g_{*} \left((hf)^{\prime}-hf\right) dv d v_{*} d\sigma| \lesssim C_{\eta}(|g|_{L^1_{|\gamma|}}+|g|_{H^{s_{1}}_{|\gamma|}})|h|_{H^{s_{2}}_{\gamma/2}}|f|_{H^{0}_{\gamma/2}}.
\eeno
\end{col}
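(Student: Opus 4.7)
The plan is to compose the two preceding lemmas in a direct way. First I would apply the cancellation identity of Lemma \ref{cancellation-lemma-general-gamma} with the single test function taken to be the product $hf$ (viewed as one function of the pre-collisional velocity $v$). This rewrites the quantity of interest as
\[
\int B^{\epsilon}(v-v_{*},\sigma)\, g_{*}\bigl((hf)^{\prime}-hf\bigr)\, d\sigma\, dv\, dv_{*} = C(\epsilon) \int |v-v_{*}|^{\gamma}\, g_{*}\,(hf)\, dv\, dv_{*},
\]
where $|C(\epsilon)|\lesssim 1$ uniformly in $\epsilon$ thanks to the elementary identity \eqref{order-2}. So the whole task is reduced to estimating the Riesz-type trilinear form $A=\int |v-v_*|^\gamma g_* h f\, dv\, dv_*$ already studied in Lemma \ref{aftercancellation}.

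Then I would invoke Lemma \ref{aftercancellation} for $A$ and distinguish two regimes. In the soft regime $-3<\gamma \leq -\tfrac{3}{2}$ the parameters $s_1,s_2$ with $s_1+s_2 = -\tfrac{3}{2}-\gamma+\eta$ are exactly those in the second case of the lemma, which produces the target bound with constant $C_\eta$. In the moderately soft regime $-\tfrac{3}{2}<\gamma\leq 0$, the first case of Lemma \ref{aftercancellation} gives
\[
|A|\lesssim (|g|_{L^{2}_{|\gamma|}}+|g|_{L^{1}_{|\gamma|}})\,|h|_{L^{2}_{\gamma/2}}\,|f|_{L^{2}_{\gamma/2}},
\]
which coincides with the claim when $\max\{-\tfrac{3}{2}-\gamma+\eta,0\}=0$ since then $s_1=s_2=0$ and $H^0=L^2$. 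If $\eta$ is chosen so large that $-\tfrac{3}{2}-\gamma+\eta>0$, the bound still transfers to the required form by monotonicity of the Sobolev norms, $|g|_{L^2_{|\gamma|}}\le |g|_{H^{s_1}_{|\gamma|}}$ and $|h|_{L^2_{\gamma/2}}\le |h|_{H^{s_2}_{\gamma/2}}$.

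There is essentially no obstacle; the only sensitive point is to track that the constant $C(\epsilon)$ coming out of the cancellation lemma is genuinely $\epsilon$-independent, which is built into \eqref{order-2} where $\int b^{\epsilon}(\cos\theta)\sin^2(\theta/2)\, d\sigma = 4\pi$ does not depend on $\epsilon$. Combining the two regimes then yields the stated inequality for all $-3<\gamma\leq 0$, and the corollary follows.
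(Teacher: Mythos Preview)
Your proposal is correct and is exactly the route the paper intends: the corollary is stated as an immediate consequence of Lemma~\ref{cancellation-lemma-general-gamma} applied to the single function $hf$, followed by Lemma~\ref{aftercancellation}, and you have filled in those details accurately, including the case split on $\gamma$ and the monotonicity remark when $\eta$ is large.
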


\subsection{Coercivity estimate}
We present the coercivity estimate of $\mathcal{L}^{\epsilon}$ in the following theorem.
\begin{thm}\label{coercivity-structure}
There exists a constant $\epsilon_0>0$ such that for $0 < \epsilon \le\epsilon_0$ and any smooth function $f$, there exist is a constant $\lambda_{1}>0$ depending only on $\gamma$, such that
\beno
\langle \mathcal{L}^{\epsilon}f, f\rangle + |f|^{2}_{L^{2}_{\gamma/2}} \geq \lambda_{1} |f|^{2}_{\epsilon,\gamma/2}.
\eeno
\end{thm}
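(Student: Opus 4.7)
The plan is to decompose $|f|^2_{\epsilon,\gamma/2}$ into its three constituent pieces and establish a coercivity lower bound on each uniformly in $\epsilon$, then combine them. Following the notation \eqref{norm-definition}, the three pieces are the anisotropic dissipation $|W^{\epsilon}((-\Delta_{\mathbb{S}^{2}})^{\frac{1}{2}}) W_{\gamma/2}f|_{L^{2}}$, the frequency regularity $|W^{\epsilon}(D)W_{\gamma/2}f|_{L^{2}}$, and the weighted physical part $|W^{\epsilon} W_{\gamma/2}f|_{L^{2}}$. Modulo controlled errors that the $|f|^2_{L^2_{\gamma/2}}$ term on the left-hand side will absorb, each of these three pieces must be bounded by $\langle \mathcal L^\epsilon f,f\rangle$.

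First I would write $\langle \mathcal L^\epsilon f,f\rangle$ via the standard Boltzmann dissipation identity: using $\Gamma^\epsilon(\mu^{1/2},f)+\Gamma^\epsilon(f,\mu^{1/2})$ and a pre-post collisional symmetrization, one obtains a representation of the form
\begin{equation*}
\langle \mathcal L^\epsilon f,f\rangle = \tfrac12\int B^\epsilon \mu_*(f'-f)^2\,d\sigma dv dv_* + \text{lower-order terms involving } \mu^{1/2},
\end{equation*}
where the lower-order pieces are controlled by $|f|_{L^2_{\gamma/2}}$ and hence are absorbable. This representation is the source of all three ingredients. Next, applying the Alexandre--Desvillettes--Villani--Wennberg-type entropy dissipation identity, the dominant term transforms in Fourier variables and produces the symbol $K^\epsilon(\xi)=\int b^\epsilon(\tfrac{\xi}{|\xi|}\cdot\sigma)\min\{|\xi|^2\sin^2(\theta/2),1\}\,d\sigma$. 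Invoking Proposition \ref{symbol} one has $K^\epsilon(\xi)+1\gtrsim (W^\epsilon(\xi))^2$, which after inserting the weight $W_{\gamma/2}$ (via the soft-potential localization arguments of \cite{alexandre2000entropy} as stated in Lemma \ref{lowerboundpart1-general-g} / Proposition \ref{lowerboundpart1-gamma-eta}) yields the frequency-space piece $|W^\epsilon(D)W_{\gamma/2}f|_{L^2}^2$ uniformly in $\epsilon$.

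For the anisotropic piece, I would apply the spherical dyadic decomposition strategy of He \cite{he2018sharp}: decompose the $\sigma$-integration dyadically on scales $\sin(\theta/2)\sim 2^{-k}$, exploit $b^\epsilon\geq c(1-s)\epsilon^{2s-2}\sin^{-2-2s}(\theta/2)\mathbf 1_{\sin(\theta/2)\le\epsilon}$, and convert the resulting gain into the operator $W^\epsilon((-\Delta_{\mathbb S^2})^{1/2})$ acting on $W_{\gamma/2}f$. The key point is that the prefactor $(1-s)\epsilon^{2s-2}$ in $b^\epsilon$ is precisely what is needed for the spherical weight $W^\epsilon((l(l+1))^{1/2})$ to emerge with $\epsilon$-independent constants, matching the symbol growth $W^\epsilon$ between the low-frequency regime $\langle l\rangle\le 1/(2\epsilon)$ and the high-frequency regime $\langle l\rangle\ge 1/\epsilon$ of \eqref{lower-bound-when-small-cf}--\eqref{lower-bound-when-large-cf}. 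The physical-space piece $|W^\epsilon W_{\gamma/2}f|^2_{L^2}$ is obtained by the standard Pao-type commutator trick, freezing $v$ and treating $v_*$-integration against $\mu^{1/2}_*$ as producing the multiplier $W^\epsilon W_{\gamma/2}$ modulo terms of order $|f|_{L^2_{\gamma/2}}^2$.

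Combining these three lower bounds yields $\langle \mathcal L^\epsilon f,f\rangle \gtrsim |f|_{\epsilon,\gamma/2}^2-C|f|_{L^2_{\gamma/2}}^2$, from which the theorem follows for $\epsilon\le\epsilon_0$ small. I expect the main obstacle to be tracking the $\epsilon$-dependence in the spherical harmonic expansion when extracting the anisotropic part: one must show that in the intermediate regime $1/(2\epsilon)\le \langle l\rangle\le 1/\epsilon$ (where $\zeta$ interpolates) the symbol $W^\epsilon((l(l+1))^{1/2})$ still arises with a constant independent of $\epsilon$, since this is the transition zone between $\langle l\rangle$ and $\langle\epsilon^{-1}\rangle^{1-s}\langle l\rangle^s$. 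A second technical nuisance is the soft-potential weight: before one can insert $W_{\gamma/2}$ inside the quadratic form one must commute it past the collision gain-loss difference, and Corollary \ref{cancellation-to-Sob-norm} together with $\mu^{1/2}$-decay will be used to dispose of the commutator error, which is safely absorbed by $|f|_{L^2_{\gamma/2}}^2$ on the left.
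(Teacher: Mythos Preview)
Your overall architecture is right and matches the paper's: reduce to the three pieces of $|\cdot|_{\epsilon,\gamma/2}^2$, extract each from the Boltzmann dissipation, and absorb errors into $|f|^2_{L^2_{\gamma/2}}$. The frequency piece via ADVW plus Proposition~\ref{symbol}, and the anisotropic piece via the spherical decomposition of \cite{he2018sharp}, are essentially what the paper does in Lemma~\ref{lowerboundpart1-general-g} and Propositions~\ref{lowerboundpart2-general-g}--\ref{lowerboundpart1-gamma-eta}.

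The genuine gap is the physical-space weight $|W^\epsilon W_{\gamma/2}f|^2_{L^2}$. Your representation $\langle \mathcal L^\epsilon f,f\rangle \approx \tfrac12\int B^\epsilon \mu_*(f'-f)^2$ captures only $\mathcal N^{\epsilon,\gamma}(\mu^{1/2},f)$, and this functional cannot yield the pointwise multiplier $W^\epsilon(v)$ by any ``Pao-type'' trick: the integrand involves $(f'-f)^2$, which depends on $v_*,\sigma$ through $v'$, so you cannot integrate out $v_*,\sigma$ to produce a weight acting on $f(v)^2$; and the bare loss term $\int b^\epsilon\,d\sigma$ diverges. Concretely, take $f$ radial and concentrated at low frequency but large $|v|$: both the anisotropic and frequency pieces collapse to $|f|^2_{L^2_{\gamma/2}}$, yet $|W^\epsilon W_{\gamma/2}f|^2_{L^2}$ can be arbitrarily larger, so the physical weight is not implied by the other two.

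The paper instead extracts the \emph{second} functional $\mathcal N^{\epsilon,\gamma}(f,\mu^{1/2})=\int B^\epsilon f_*^2\big((\mu^{1/2})'-\mu^{1/2}\big)^2$ from $\langle\mathcal L^\epsilon_1 f,f\rangle$ via the AMUXY equivalence (Proposition~2.16 of \cite{alexandre2012boltzmann}, see \eqref{L-1-dominate}), and then shows in Proposition~\ref{lowerboundpart1} that $\mathcal N^{\epsilon,\gamma}(f,\mu^{1/2})+|f|^2_{L^2_{\gamma/2}}\gtrsim |W^\epsilon f|^2_{L^2_{\gamma/2}}$. That proposition is not routine: it splits $|v_*|$ into the ranges $\{16/\pi\le|v_*|\le\delta/\epsilon\}$, $\{|v_*|\ge R/\epsilon\}$, and the intermediate zone, Taylor-expands $(\mu^{1/2})'-\mu^{1/2}$ to first and second order in each regime, and tracks the exact $\epsilon$-scaling to recover the transition weight $W^\epsilon$. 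You need this ingredient; the other two pieces then build on it (see \eqref{full-regularity-gamma-eta}, where $|W^\epsilon W_{\gamma/2}f|^2_{L^2}$ appears on the left as an input).
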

In this subsection, we will prove Theorem \ref{coercivity-structure}.
Our strategy is based on the following relation in the spirit of the triple norm introduced in
\cite{alexandre2012boltzmann} (see the proof of Theorem \ref{coercivity-structure} in subsection \ref{coercivity}):
\ben \label{equivalence-relation} \langle \mathcal{L}^{\epsilon} f,f \rangle+|f|_{L^2_{\gamma/2}}^2
\gtrsim \mathcal{N}^{\epsilon,\gamma}(\mu^{\frac{1}{2}},f) + \mathcal{N}^{\epsilon,\gamma}(f,\mu^{\frac{1}{2}}),
\\ \label{definition-of-functional-N}
\mathcal{N}^{\epsilon,\gamma}(g,h) := \int b^{\epsilon}(\cos\theta)| v-v_{*} |^{\gamma}
g^{2}_{*} (h^{\prime}-h)^{2} d\sigma dv dv_{*}.\een
Thanks to \eqref{equivalence-relation},
to get the coercivity estimate of $\mathcal{L}^{\epsilon}$, it suffices to estimate from below the two functionals
$\mathcal{N}^{\epsilon,\gamma}(\mu^{\frac{1}{2}},f)$  and $ \mathcal{N}^{\epsilon,\gamma}(f,\mu^{\frac{1}{2}})$.
 We will study $ \mathcal{N}^{\epsilon,\gamma}(f,\mu^{\frac{1}{2}})$ in  subsection \ref{gain-weight} and $\mathcal{N}^{\epsilon,\gamma}(\mu^{\frac{1}{2}},f)$ in subsection \ref{gain-S-regularity}, \ref{gain-A-regularity} and \ref{gain-regularity}.
The coercivity estimate is obtained in subsection \ref{coercivity} by utilizing \eqref{equivalence-relation}.
% Our strategy of the proof can be summarized as follows. We first give the  estimates of $\mathcal{N}^{\epsilon,\gamma,\eta}(\mu^{\frac{1}{2}},f) $
%and $ \mathcal{N}^{\epsilon,\gamma,\eta}(f,\mu^{\frac{1}{2}})$.
%Then the coercivity estimate of $\mathcal{L}^{\epsilon,\gamma,\eta}$ is obtained by \eqref{equivalence-relation}.
\subsubsection{Gain of weight from $\mathcal{N}^{\epsilon,\gamma}(f,\mu^{\frac{1}{2}}) $} \label{gain-weight}
The functional  $\mathcal{N}^{\epsilon,\gamma}(f,\mu^{\frac{1}{2}})$ yields weight $W^{\epsilon}$ in the phase space as shown in the following proposition.
\begin{prop}\label{lowerboundpart1}
There exists $\epsilon_{0} >0$ such that for  $0< \epsilon \leq \epsilon_{0}$,
\beno
\mathcal{N}^{\epsilon,\gamma}(f,\mu^{\frac{1}{2}}) + |f|^{2}_{L^{2}_{\gamma/2}} \geq  C|W^{\epsilon}f|^{2}_{L^{2}_{\gamma/2}},
\eeno
where $C>0$ is a universal constant.
\end{prop}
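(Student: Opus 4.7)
The plan is to factor
$$\mathcal{N}^{\epsilon,\gamma}(f,\mu^{\frac{1}{2}})=\int_{\R^3} f(v_*)^2\,\Phi^\epsilon(v_*)\,dv_*,\qquad \Phi^\epsilon(v_*):=\int b^\epsilon(\cos\theta)|v-v_*|^\gamma\bigl(\mu^{\frac{1}{2}}(v')-\mu^{\frac{1}{2}}(v)\bigr)^2\,d\sigma\,dv,$$
so that the proposition reduces to a pointwise lower bound $\Phi^\epsilon(v_*)\geq c\,W^\epsilon(v_*)^2\langle v_*\rangle^\gamma - C\langle v_*\rangle^\gamma$ with $c,C$ independent of $\epsilon$; integration against $f(v_*)^2$ then absorbs the $-C\langle v_*\rangle^\gamma$ term into the additive $|f|_{L^2_{\gamma/2}}^2$ on the left, so only the regime $|v_*|$ large is nontrivial, since $W^\epsilon(v_*)^2\langle v_*\rangle^\gamma$ is bounded on any compact set.

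For large $|v_*|$, I would Taylor expand $\mu^{\frac{1}{2}}(v')-\mu^{\frac{1}{2}}(v)=(v'-v)\cdot\nabla\mu^{\frac{1}{2}}(v)+R(v,v')$ with $|R|\lesssim|v'-v|^2$, square, and apply $(a+b)^2\geq\tfrac{1}{2}a^2-b^2$ to isolate the quadratic Taylor term $[(v'-v)\cdot\nabla\mu^{\frac{1}{2}}(v)]^2$. Writing $v'-v=\tfrac{|v-v_*|}{2}(\sigma-e)$ with $e:=(v-v_*)/|v-v_*|$, decomposing $\xi:=\nabla\mu^{\frac{1}{2}}(v)=(\xi\cdot e)e+\xi_\perp$, and integrating the azimuthal variable yields a linear combination of $|\xi_\perp|^2\int b^\epsilon\sin^2(\theta/2)\cos^2(\theta/2)\sin\theta\,d\theta$ (comparable to $4\pi$ by \eqref{order-2}) and $(\xi\cdot e)^2\int b^\epsilon\sin^4(\theta/2)\sin\theta\,d\theta$ (of order $\epsilon^2$ and hence negligible). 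Integration in $v$ against $|v-v_*|^{\gamma+2}$ then produces the principal bound $\gtrsim\langle v_*\rangle^{\gamma+2}\int|\nabla\mu^{\frac{1}{2}}(v)_\perp|^2\,dv$, with $\int|\nabla\mu^{\frac{1}{2}}(v)_\perp|^2\,dv=\tfrac{1}{4}\int\mu(v)|v-(v\cdot e)e|^2\,dv=\tfrac{1}{2}$, a positive constant uniform in $e\in\mathbb{S}^2$ by the rotational symmetry of $\mu$.

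A case split according to \eqref{characteristic-function} then completes the proof. When $|v_*|\leq 1/(2\epsilon)$, so $W^\epsilon(v_*)=\langle v_*\rangle$, the Gaussian decay of $\mu^{\frac{1}{2}}$ confines the relevant $v$ to a bounded set on which $|v-v_*|\approx\langle v_*\rangle$ and $|v'-v|\lesssim\epsilon|v-v_*|\lesssim 1$, so the Taylor step is valid throughout the support of $b^\epsilon$ and gives $\Phi^\epsilon(v_*)\gtrsim\langle v_*\rangle^{\gamma+2}=W^\epsilon(v_*)^2\langle v_*\rangle^\gamma$. When $|v_*|\geq 1/\epsilon$, so $W^\epsilon(v_*)=\langle\epsilon^{-1}\rangle^{1-s}\langle v_*\rangle^s$, I would restrict the angular integration to the sub-cone $\sin(\theta/2)\leq 1/|v_*|$ on which Taylor remains valid; redoing the integration of \eqref{order-2} on this sharper range gives $\int b^\epsilon\sin^2(\theta/2)\mathrm{1}_{\sin(\theta/2)\leq 1/|v_*|}\,d\sigma=4\pi(|v_*|\epsilon)^{2s-2}$, so $\Phi^\epsilon(v_*)\gtrsim\epsilon^{-2(1-s)}\langle v_*\rangle^{\gamma+2s}\approx W^\epsilon(v_*)^2\langle v_*\rangle^\gamma$. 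The transition band $1/(2\epsilon)\leq|v_*|\leq 1/\epsilon$ is covered by either bound since $W^\epsilon(v_*)\approx\langle v_*\rangle$ there up to a universal constant.

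The hardest step is the sub-case $|v_*|\geq 1/\epsilon$: on the complement $1/|v_*|<\sin(\theta/2)\leq\epsilon$ of the truncation, $|v'-v|$ may be of order one so the Taylor expansion fails, and I would resort to the crude bound $(\mu^{\frac{1}{2}}(v')-\mu^{\frac{1}{2}}(v))^2\leq 2\mu(v)+2\mu(v')$, bounding the $\mu(v)$ part directly by $\langle v_*\rangle^\gamma$ times an $O(1)$ restricted moment (absorbable into the error term) and the $\mu(v')$ part via the pre-post collisional change of variables swapping $v\leftrightarrow v'$. Smallness of $\epsilon_0$ is used precisely to ensure that the quartic angular moments $\int b^\epsilon\sin^4(\theta/2)\,d\sigma=O(\epsilon^2)$ coming from the Taylor remainder stay dominated by the leading constant $4\pi$ of \eqref{order-2}, so the quadratic main term controls the lower bound.
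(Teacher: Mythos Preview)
Your overall plan---factor out $\Phi^\epsilon(v_*)$, Taylor expand $\mu^{1/2}(v')-\mu^{1/2}(v)$, average azimuthally, and split on the size of $|v_*|$---matches the paper's Step~1. The gap is in the control of the Taylor remainder: in both of your regimes the error term has \emph{exactly the same order} in $|v_*|$ and $\epsilon$ as the main term, and you have no free parameter to beat the constants. Concretely, the remainder contributes (after $v$--integration) roughly
\[
\Big(\int b^\epsilon\sin^4(\theta/2)\,d\sigma\Big)\,|v-v_*|^{\gamma+4}
\ \sim\ \epsilon^{2}\,|v_*|^{\gamma+4},
\]
while the main term is $\sim 4\pi\,|v_*|^{\gamma+2}$; the ratio is $\sim(\epsilon|v_*|)^2$, which is $O(1)$ at $|v_*|=1/(2\epsilon)$ regardless of how small $\epsilon_0$ is. The same happens in your high regime after truncating to $\sin(\theta/2)\le 1/|v_*|$: both main and error become $\sim\epsilon^{2s-2}|v_*|^{\gamma+2s}$. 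So your final sentence---that smallness of $\epsilon_0$ makes the quartic moment dominated by $4\pi$---is precisely where the argument fails, because it ignores the extra $|v-v_*|^2$ in the remainder. Your last paragraph compounds the confusion: once you have restricted the integral to the sub-cone for a lower bound, the complement is discarded and there is nothing to estimate there; invoking the \emph{upper} bound $(\mu^{1/2}(v')-\mu^{1/2}(v))^2\le 2\mu(v)+2\mu(v')$ on the complement plays no role in a lower bound for $\Phi^\epsilon$.

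The paper fixes this by inserting free parameters. In the low range it works on $16/\pi\le|v_*|\le\delta/\epsilon$ so the error picks up an extra $\delta^2$ and can be absorbed; you could equally restrict your sub-cone to $\sin(\theta/2)\le\eta/|v_*|$ to gain an $\eta^2$. In the high range $|v_*|\ge R/\epsilon$ the paper abandons Taylor entirely: it restricts to the angular \emph{annulus} $\sin(\theta/2)\in[R/(4|v_*|),\,R/|v_*|]$, where $|v'-v|\sim R$, and uses $(\mu^{1/2}(v')-\mu^{1/2}(v))^2\ge\mu(v)-2\mu^{1/2}(v')\mu^{1/2}(v)$; since $|v'|\gtrsim R$ on this annulus, the cross term carries an $e^{-cR^2}$ factor and is beaten by taking $R$ large. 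A separate rescaling argument (Step~3) then fills the gap $[\delta/\epsilon,R/\epsilon]$. Your scheme becomes correct once you build in such a parameter, but as written the constant comparison is unsubstantiated.
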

\begin{proof}  The proof is divided into four steps.

{\it{Step 1: $16/\pi \leq |v_{*}| \leq \delta/\epsilon.$}} The parameter $0<\delta \leq 1$ will be determined later.
We consider the set $A(\epsilon, \delta) := \{(v_{*},v,\sigma): 16/\pi \leq |v_{*}| \leq \delta/\epsilon, |v| \leq 8/\pi, \sin(\theta/2)
\leq \epsilon\}$.
When $\epsilon \leq \frac{\pi}{16}\delta$, it is easy to check that $A(\epsilon, \delta) $ is  non-empty.
We restrict the integral on the set $A(\epsilon, \delta)$ to get
\begin{eqnarray}\label{reduce-to-set-A}
\mathcal{N}^{\epsilon,\gamma}(f,\mu^{\frac{1}{2}}) \geq \int B^{\epsilon} \mathrm{1}_{A(\epsilon, \delta)} f_{*}^{2}
((\mu^{\frac{1}{2}})^{\prime}-\mu^{\frac{1}{2}})^{2} d\sigma dv dv_{*}.
\end{eqnarray}
Note that $\nabla \mu^{\frac{1}{2}} = -\frac{\mu^{\frac{1}{2}}}{2} v$ and $\nabla^{2} \mu^{\frac{1}{2}} = \frac{\mu^{\frac{1}{2}}}{4} (-2I_{3}+v \otimes v)$. By Taylor expansion,
we have
\beno
\mu^{\frac{1}{2}}(v^{\prime}) - \mu^{\frac{1}{2}}(v) = -\frac{\mu^{\frac{1}{2}}(v)}{2} v \cdot (v^{\prime}-v) + \int_{0}^{1} (1-\kappa)
 (\nabla^{2} \mu^{\frac{1}{2}}) (v(\kappa)):(v^{\prime}-v)\otimes(v^{\prime}-v) d \kappa,
\eeno
where $v(\kappa) = v+\kappa (v^{\prime}-v)$.
Thanks to the fact $(a-b)^{2} \geq \frac{a^{2}}{2} - b^{2}$, we have
\beno
(\mu^{\frac{1}{2}}(v^{\prime}) - \mu^{\frac{1}{2}}(v))^{2} \geq \frac{\mu(v)}{8} |v \cdot (v^{\prime}-v)|^{2} - \int_{0}^{1}  |(\nabla^{2} \mu^{\frac{1}{2}}) (v(\kappa))|^{2}|v^{\prime}-v|^{4} d \kappa.
\eeno
Plugging this into \eqref{reduce-to-set-A} to get
\begin{eqnarray}\label{vsmallvstarsmall}
\mathcal{N}^{\epsilon,\gamma}(f,\mu^{\frac{1}{2}}) &\geq& \frac{1}{8}\int B^{\epsilon} \mathrm{1}_{A(\epsilon, \delta)} \mu(v)|v \cdot (v^{\prime}-v)|^{2} f_{*}^{2}  d\sigma dv dv_{*}
\\&& -  \int B^{\epsilon} \mathrm{1}_{A(\epsilon, \delta)} |(\nabla^{2} \mu^{\frac{1}{2}}) (v(\kappa))|^{2}|v^{\prime}-v|^{4} f_{*}^{2}  d\sigma dv dv_{*} d\kappa  := \frac{1}{8}\mathcal{I}_{1}^{\epsilon} (\delta) - \mathcal{I}_{2}^{\epsilon} (\delta). \nonumber
\end{eqnarray}

To estimate $\mathcal{I}_{1}^{\epsilon} (\delta)$, for fixed $v, v_*$, we introduce an orthonormal basis $(h^{1}_{v,v_{*}},h^{2}_{v,v_{*}}, \frac{v-v_{*}}{|v-v_{*}|})$ such that $d\sigma= \sin\theta d\theta d\phi$. We express $\frac{v^{\prime}-v}{|v^{\prime}-v|}$ and $\frac{v}{|v|}$ using the basis as follows:
\beno
\frac{v^{\prime}-v}{|v^{\prime}-v|} = \cos\frac{\theta}{2}\cos\phi h^{1}_{v,v_{*}} + \cos\frac{\theta}{2}\sin\phi h^{2}_{v,v_{*}} -\sin\frac{\theta}{2} \frac{v-v_{*}}{|v-v_{*}|},
\\
\frac{v}{|v|} = c_{1} h^{1}_{v,v_{*}} + c_{2} h^{2}_{v,v_{*}} + c_{3} \frac{v-v_{*}}{|v-v_{*}|},
\eeno
where $c_{3}=\frac{v}{|v|}\cdot \frac{v-v_{*}}{|v-v_{*}|}$ and $c_{1}, c_{2}$ are  constants independent of $\theta$ and $\phi$. Then we have
\beno
\frac{v}{|v|} \cdot \frac{v^{\prime}-v}{|v^{\prime}-v|}  = c_{1}\cos\frac{\theta}{2}\cos\phi + c_{2}\cos\frac{\theta}{2}\sin\phi - c_{3}\sin\frac{\theta}{2},
\eeno
and thus
\beno
|\frac{v}{|v|} \cdot \frac{v^{\prime}-v}{|v^{\prime}-v|}|^{2}  &=& c^{2}_{1}\cos^{2}\frac{\theta}{2}\cos^{2}\phi + c^{2}_{2}\cos^{2}\frac{\theta}{2}\sin^{2}\phi + c^{2}_{3}\sin^{2}\frac{\theta}{2}
\\ && + 2c_{1}c_{2}\cos^{2}\frac{\theta}{2}\cos\phi\sin\phi - 2c_{3}\cos\frac{\theta}{2}\sin\frac{\theta}{2}(c_{1}\cos\phi + c_{2}\sin\phi).
\eeno
Integrating with respect to $\sigma$, we have
\beno
\int b^{\epsilon}(\cos\theta)\mathrm{1}_{A(\epsilon, \delta)}|v \cdot (v^{\prime}-v)|^{2}d\sigma &=& \int_{0}^{\pi}\int_{0}^{2\pi}b^{\epsilon}(\cos\theta)\sin\theta \mathrm{1}_{A(\epsilon, \delta)}|v \cdot (v^{\prime}-v)|^{2}d\phi d\theta
\\ &\geq& \pi(c^{2}_{1}+c^{2}_{2})|v|^{2}|v-v_{*}|^{2}\mathrm{1}_{B(\epsilon, \delta)},
\eeno
where $B(\epsilon, \delta) = \{(v_{*},v):16/\pi \leq |v_{*}| \leq \delta/\epsilon, |v| \leq 8/\pi\}$. Plugging the above estimate into the definition of
$\mathcal{I}_{1}^{\epsilon} (\delta)$, we get
\beno
\mathcal{I}_{1}^{\epsilon} (\delta) &\geq&  \pi\int (c^{2}_{1}+c^{2}_{2})|v-v_{*}|^{\gamma+2}|v|^{2}\mathrm{1}_{B(\epsilon, \delta)} \mu(v) f_{*}^{2}dv dv_{*}
\\&=& \pi\int (1-(\frac{v}{|v|}\cdot\frac{v_{*}}{|v_{*}|})^{2})|v_{*}|^{2}|v-v_{*}|^{\gamma}|v|^{2} \mathrm{1}_{B(\epsilon, \delta)} \mu(v) f_{*}^{2}dv dv_{*},
\eeno
where we have used the fact $c_1^2+c_2^2+c_3^2=1$ and  $$(1-(\frac{v}{|v|}\cdot\frac{v_{*}}{|v_{*}|})^{2})^{-1}|v-v_{*}|^{2}= (1-c_3^2)^{-1}|v_{*}|^{2}.$$
Note that in the region $B(\epsilon, \delta)$, one has $\frac{1}{2} | v_* | \leq |v-v_{*}| \leq \frac{3}{2} | v_* |$.
Since $\gamma \leq 0$, then
\ben \label{v-vstar-is-vstar-norm}
(\frac{3}{2})^{\gamma} | v_* |^{\gamma} \leq |v-v_{*}|^{\gamma}  \leq  (\frac{1}{2})^{\gamma} | v_* |^{\gamma}.
\een
We then get
\beno
\mathcal{I}_{1}^{\epsilon} (\delta) &\geq&
\pi (\frac{3}{2})^{\gamma}\int (1-(\frac{v}{|v|}\cdot\frac{v_{*}}{|v_{*}|})^{2})|v_{*}|^{\gamma+2} |v|^{2}\mathrm{1}_{B(\epsilon, \delta)} \mu(v) f_{*}^{2}dv dv_{*}
\\&=& \pi (\frac{3}{2})^{\gamma} c_{1}
 \int  | v_{*} |^{\gamma+2}\mathrm{1}_{16/\pi \leq |v_{*}|\leq \delta/\epsilon}  f_{*}^{2} dv_{*},
\eeno
where $c_{1} = \int (1-(\frac{v}{|v|}\cdot\frac{v_{*}}{|v_{*}|})^{2}) |v|^{2} \mu(v) \mathrm{1}_{|v|\leq 8/\pi}dv$ is independent of $v_{*}$.

We now estimate $\mathcal{I}_{2}^{\epsilon} (\delta)$. Recall $B^{\epsilon} = (1-s) \epsilon^{2s-2} \mathrm{1}_{\sin(\theta/2) \leq \epsilon} |v-v_{*}|^{\gamma} \sin^{-2-2s}(\theta/2), |v^{\prime}-v| = |v-v_{*}|\sin(\theta/2)$, we have
\beno
\mathcal{I}_{2}^{\epsilon} (\delta) &=& \int B^{\epsilon} \mathrm{1}_{A(\epsilon, \delta)} |(\nabla^{2} \mu^{\frac{1}{2}}) (v(\kappa))|^{2}|v^{\prime}-v|^{4} f_{*}^{2}  d\sigma dv dv_{*} d\kappa
\\&=& \epsilon^{2s-2}\int \sin^{2-2s}(\theta/2) \mathrm{1}_{A(\epsilon, \delta)} |(\nabla^{2} \mu^{\frac{1}{2}}) (v(\kappa))|^{2}|v-v_{*}|^{\gamma+4} f_{*}^{2}  d\sigma dv dv_{*} d\kappa
\\&\leq& (\frac{3}{2})^{\gamma+4} \epsilon^{2s-2}\int \sin^{2-2s}(\theta/2) \mathrm{1}_{A(\epsilon, \delta)} |(\nabla^{2} \mu^{\frac{1}{2}}) (v(\kappa))|^{2}|v_{*}|^{\gamma+4} f_{*}^{2}  d\sigma dv dv_{*} d\kappa.
\\&=& (\frac{3}{2})^{\gamma+4} \epsilon^{2s-2} \int_{0}^{\pi}\int_{0}^{2\pi} \sin^{2-2s}(\theta/2) \int \mathrm{1}_{A(\epsilon, \delta)} |(\nabla^{2} \mu^{\frac{1}{2}}) (v(\kappa))|^{2}|v_{*}|^{\gamma+4} f_{*}^{2}  \sin\theta d\theta d\phi dv dv_{*} d\kappa.
\eeno
Fix $\kappa, v_{*}, \phi$, in the change of variable $(v,\theta) \rightarrow (v(\kappa),\theta(\kappa))$ where $\theta(\kappa)$ is the angle between $\sigma$  and $v(\kappa) - v_{*}$, we have
\ben %\label{change-of-variable-Jacobean}
\nonumber
|\frac{\partial (v(\kappa), \theta(\kappa))}{\partial (v, \theta)}|^{-1} \leq  (1-\frac{\kappa}{2})^{-5} \leq 32 =2^{5},
\\ \label{change-of-variable-angle-1}
\theta/2 \leq \theta(\kappa) \leq \theta,
\sin\theta \leq 2\sin\theta(\kappa),  \sin(\theta(\kappa)/2) \leq \sin(\theta/2) \leq 2\sin(\theta(\kappa)/2),
\\ \label{change-of-variable-angle-2}
\sin(\theta/2) \leq \epsilon \Rightarrow \sin(\theta(\kappa)/2) \leq \epsilon.
\een
Hence, we have $\mathrm{1}_{A(\epsilon, \delta)} \leq \mathrm{1}_{16/\pi \leq |v_{*}| \leq \delta/\epsilon, \sin(\theta(\kappa)/2) \leq \epsilon} $ so that
\beno
\mathcal{I}_{2}^{\epsilon} (\delta)
&\leq& 2^{6}2^{2-2s}(\frac{3}{2})^{\gamma+4}  \epsilon^{2s-2} \int_{0}^{\pi} \int_{0}^{2\pi}  \int \sin^{2-2s}(\theta(\kappa)/2) \mathrm{1}_{16/\pi \leq |v_{*}| \leq \delta/\epsilon, \sin(\theta(\kappa)/2) \leq \epsilon}
\\&&\times |(\nabla^{2} \mu^{\frac{1}{2}}) (v(\kappa))|^{2}|v_{*}|^{\gamma+4} f_{*}^{2}
 \sin\theta(\kappa) d\theta(\kappa) d\phi dv(\kappa) dv_{*} d\kappa
\\&=& 2^{9-2s}\pi(\frac{3}{2})^{\gamma+4}  \epsilon^{2s-2} \left( \int_{0}^{\pi} \sin^{2-2s}(\theta/2) \mathrm{1}_{\sin(\theta/2) \leq \epsilon} \sin\theta d\theta \right)
\\&&\times
\left(\int \mathrm{1}_{16/\pi \leq |v_{*}| \leq \delta/\epsilon} |(\nabla^{2} \mu^{\frac{1}{2}}) (v)|^{2}|v_{*}|^{\gamma+4} f_{*}^{2}   dv dv_{*} \right).
\eeno
Direct computation gives
\beno
\int_{0}^{\pi} \sin^{2-2s}(\theta/2) \mathrm{1}_{\sin(\theta/2) \leq \epsilon} \sin\theta d\theta
= 4 \int_{0}^{\epsilon} u^{3-2s} d u  = \frac{4\epsilon^{4-2s}}{4-2s}.
\eeno
Let $c_{2} = \int   |(\nabla^{2} \mu^{\frac{1}{2}}) (v)|^{2}  dv$. Then we get
\beno
\mathcal{I}_{2}^{\epsilon} (\delta)
&\leq&  2^{11-2s}\pi(\frac{3}{2})^{\gamma+4} (4-2s)^{-1} c_{2}\epsilon^{2}
\left(  \int   \mathrm{1}_{16/\pi \leq |v_{*}| \leq \delta/\epsilon}|v_{*}|^{\gamma+4} f_{*}^{2}  dv_{*} \right)
\\&\leq&  2^{11-2s}\pi(\frac{3}{2})^{\gamma+4} (4-2s)^{-1} c_{2}\delta^{2} \int \mathrm{1}_{16/\pi \leq |v_{*}| \leq \delta/\epsilon}|v_{*}|^{\gamma+2} f_{*}^{2} dv_{*},
\eeno
where we have used   $\epsilon |v_{*}| \leq \delta $.

Plugging the estimates of $\mathcal{I}_{1}^{\epsilon} (\delta)$ and $\mathcal{I}_{2}^{\epsilon} (\delta)$ into \eqref{vsmallvstarsmall}, we get
\beno
\mathcal{N}^{\epsilon,\gamma}(f,\mu^{\frac{1}{2}}) \geq \left(C_{1} -C_{2} \delta^{2}\right)  \int \mathrm{1}_{16/\pi \leq |v_{*}| \leq \delta/\epsilon}|v_{*}|^{\gamma+2} f_{*}^{2} dv_{*},
\eeno
where $C_{1}=2^{-3}\pi (\frac{3}{2})^{\gamma} c_{1}, C_{2}=2^{11-2s}\pi(\frac{3}{2})^{\gamma+4} (4-2s)^{-1} c_{2}$. By choosing $\delta$ such that $C_{2} \delta^{2} = C_{1}/2$, we get
\ben \label{lowerboundvstarsmall}
\mathcal{N}^{\epsilon,\gamma}(f,\mu^{\frac{1}{2}}) \geq \frac{1}{2}C_{1}
 \int \mathrm{1}_{16/\pi \leq |v_{*}| \leq \delta/\epsilon}|v_{*}|^{\gamma+2} f_{*}^{2} dv_{*}.
\een

{\it{Step 2: $|v_{*}| \geq R/\epsilon.$}} Here $R \geq 1$ is a parameter to be determined later. By direct computation, we have
\beno
\mathcal{N}^{\epsilon,\gamma}(f,\mu^{\frac{1}{2}}) &=& \int B^{\epsilon} f_{*}^{2} ((\mu^{\frac{1}{2}})^{\prime}-\mu^{\frac{1}{2}})^{2} d\sigma dv dv_{*}
\\&\geq& \int B^{\epsilon} \mathrm{1}_{4^{-1}R|v_{*}|^{-1} \leq \sin\theta/2 \leq R|v_{*}|^{-1}}
\mathrm{1}_{|v_{*}|\geq R/\epsilon} \mathrm{1}_{|v|\leq 1} f_{*}^{2} ((\mu^{\frac{1}{2}})^{\prime}-\mu^{\frac{1}{2}})^{2} d\sigma dv dv_{*}
\\&\geq&
\int b^{\epsilon} |v-v_{*}|^{\gamma} \mathrm{1}_{4^{-1}R|v_{*}|^{-1} \leq \sin\theta/2 \leq R|v_{*}|^{-1}}\mathrm{1}_{|v_{*}|\geq R/\epsilon} \mathrm{1}_{|v|\leq 1} f_{*}^{2} \mu d\sigma dv dv_{*}
\\&&-2\int b^{\epsilon} |v-v_{*}|^{\gamma} \mathrm{1}_{4^{-1}R|v_{*}|^{-1} \leq \sin\theta/2 \leq R|v_{*}|^{-1}} \mathrm{1}_{|v_{*}|\geq R/\epsilon} \mathrm{1}_{|v|\leq 1} f_{*}^{2}  (\mu^{\frac{1}{2}})^{\prime}\mu^{\frac{1}{2}} d\sigma dv dv_{*}
\\&:=&\mathcal{J}_{1}^{\epsilon}(R)-\mathcal{J}_{2}^{\epsilon}(R).
\eeno
Note that
\ben \label{order-0-restricted-bound}
\int b^{\epsilon} \mathrm{1}_{4^{-1}R|v_{*}|^{-1} \leq \sin\theta/2 \leq R|v_{*}|^{-1}} d\sigma &=& 8\pi (1-s) \epsilon^{2s-2} \int_{4^{-1}R|v_{*}|^{-1}}^{R|v_{*}|^{-1}} u^{-1-2s} du
\\&=& 4\pi \frac{4^{2s}-1}{s}(1-s)R^{-2s}\epsilon^{2s-2}|v_{*}|^{2s} = C_{s} R^{-2s}\epsilon^{2s-2}|v_{*}|^{2s}, \nonumber
\een
where $C_{s} = 4\pi \frac{4^{2s}-1}{s}(1-s)$.
If $\epsilon \leq \frac{1}{2}, |v_{*}|\geq R/\epsilon \geq 2, |v|\leq 1$, we have
\ben \label{v-vstar-is-vstar} \frac{1}{2}|v_{*}| \leq |v-v_{*}| \leq \frac{3}{2}|v_{*}|.\een
Plugging  \eqref{order-0-restricted-bound} into the definition of $\mathcal{J}_{1}^{\epsilon}(R)$ and using \eqref{v-vstar-is-vstar-norm}, we have
\beno
\mathcal{J}_{1}^{\epsilon}(R) &\geq& C_{s} R^{-2s}\epsilon^{2s-2} \int |v-v_{*}|^{\gamma}|v_{*}|^{2s} \mathrm{1}_{|v_{*}|\geq R/\epsilon} \mathrm{1}_{|v|\leq 1} f_{*}^{2} \mu d\sigma dv dv_{*}
\\&\geq& C_{s} (\frac{3}{2})^{\gamma} c_{3} R^{-2s}\epsilon^{2s-2}\int |v_{*}|^{\gamma+2s} \mathrm{1}_{|v_{*}|\geq R/\epsilon} f_{*}^{2}  dv_{*},
\eeno
where $c_{3}= \int \mathrm{1}_{|v|\leq 1} \mu(v)  dv$.

Since $\sin(\theta/2) \geq \epsilon$, there holds $|v^{\prime}|+|v| \geq |v^{\prime}-v| = \sin\frac{\theta}{2}|v-v_{*}|\geq\epsilon|v-v_{*}|\geq \epsilon(|v_{*}|-|v|)$, and then $|v^{\prime}|+(1+\epsilon)|v| \geq \epsilon|v_{*}| \geq R$. Thus,
$R^{2} \leq (|v^{\prime}|+2|v|)^{2} \leq 8 (|v^{\prime}|^{2}+|v|^{2}),$
which implies
\ben \label{relax-cross-term}
\mu^{\prime \frac{1}{2}}\mu^{\frac{1}{2}} = (2\pi)^{-\frac{3}{2}} e^{-\frac{|v^{\prime}|^{2}+|v|^{2}}{4}} \leq (2\pi)^{-\frac{3}{2}}e^{-\frac{|v|^{2}}{8}}e^{-\frac{R^{2}}{2^{6}}}.
\een
Then by \eqref{relax-cross-term}, \eqref{order-0-restricted-bound}, and \eqref{v-vstar-is-vstar},
we have
\beno
\mathcal{J}_{2}^{\epsilon}(R) &=&2\int b^{\epsilon} |v-v_{*}|^{\gamma}  \mathrm{1}_{4^{-1}R|v_{*}|^{-1} \leq \sin\theta/2 \leq R|v_{*}|^{-1}}\mathrm{1}_{|v_{*}|\geq R/\epsilon} \mathrm{1}_{|v|\leq 1} f_{*}^{2}  \mu^{\prime \frac{1}{2}}\mu^{\frac{1}{2}} d\sigma dv dv_{*}
\\ &\leq&2 (2\pi)^{-\frac{3}{2}} e^{-\frac{R^{2}}{2^{6}}}  \int b^{\epsilon} |v-v_{*}|^{\gamma} \mathrm{1}_{4^{-1}R|v_{*}|^{-1} \leq \sin\theta/2 \leq R|v_{*}|^{-1}} \mathrm{1}_{|v_{*}|\geq R/\epsilon} \mathrm{1}_{|v|\leq 1} f_{*}^{2}  e^{-\frac{|v|^{2}}{8}} d\sigma dv dv_{*}
\\ &\leq& 2 (2\pi)^{-\frac{3}{2}} C_{s} e^{-\frac{R^{2}}{2^{6}}}  R^{-2s}\epsilon^{2s-2}
\int |v-v_{*}|^{\gamma} |v_{*}|^{2s}\mathrm{1}_{|v_{*}|\geq R/\epsilon} \mathrm{1}_{|v|\leq 1} f_{*}^{2}  e^{-\frac{|v|^{2}}{8}} dv dv_{*}
\\ &\leq& 2 (2\pi)^{-\frac{3}{2}} C_{s} (\frac{1}{2})^{\gamma} e^{-\frac{R^{2}}{2^{6}}}  R^{-2s}\epsilon^{2s-2}
\int |v_{*}|^{\gamma+2s} \mathrm{1}_{|v_{*}|\geq R/\epsilon} \mathrm{1}_{|v|\leq 1} f_{*}^{2}  e^{-\frac{|v|^{2}}{8}} dv dv_{*}
\\ &=&2 (2\pi)^{-\frac{3}{2}} C_{s} (\frac{1}{2})^{\gamma} c_{4} e^{-\frac{R^{2}}{2^{6}}}  R^{-2s}\epsilon^{2s-2}
\int |v_{*}|^{\gamma+2s} \mathrm{1}_{|v_{*}|\geq R/\epsilon} f_{*}^{2} dv_{*},
\eeno
where $c_{4}= \int \mathrm{1}_{|v|\leq 1} e^{-\frac{|v|^{2}}{8}} dv.$
Combining  the above estimates for $\mathcal{J}_{1}^{\epsilon}(R)$ and $\mathcal{J}_{2}^{\epsilon}(R),$ we arrive at for any $\epsilon \leq \frac{1}{2}, R \geq 1$,
\beno
\mathcal{N}^{\epsilon,\gamma}(f,\mu^{\frac{1}{2}}) \geq  (C_{3} - C_{4} e^{-\frac{R^{2}}{2^{6}}})  R^{-2s}\epsilon^{2s-2} \int | v_{*} |^{\gamma+2s}\mathrm{1}_{|v_{*}|\geq R/\epsilon}  f_{*}^{2} dv_{*},
\eeno
where $C_{3}=C_{s} (\frac{3}{2})^{\gamma} c_{3}, C_{4}= 2 (2\pi)^{-\frac{3}{2}} C_{s} (\frac{1}{2})^{\gamma} c_{4}$. We choose $R \geq 1$ such that $\frac{1}{2}C_{3} \geq C_{4} e^{-\frac{R^{2}}{2^{6}}}$ and arrive at
\ben \label{general-result-large}
\mathcal{N}^{\epsilon,\gamma}(f,\mu^{\frac{1}{2}}) \geq  \frac{1}{2} C_{3}  R^{-2s} \epsilon^{2s-2} \int | v_{*} |^{\gamma+2s}\mathrm{1}_{|v_{*}|\geq R/\epsilon}  f_{*}^{2} dv_{*}.
\een

{\it{Step 3: $16/\pi \leq |v_{*}| \leq R/\epsilon.$}} Here $R$ is the fixed constant in {\it{Step 2}}.
We also recall the fixed constant $\delta$ in {\it{Step 1}}. Since $16/\pi \leq |v_{*}| \leq R/\epsilon = \delta/(\delta R^{-1}\epsilon)$, by \eqref{lowerboundvstarsmall}, we have
\beno
\mathcal{N}^{\delta R^{-1}\epsilon,\gamma}(f,\mu^{\frac{1}{2}}) \geq \frac{1}{2}C_{1}
 \int \mathrm{1}_{16/\pi \leq |v_{*}| \leq R/\epsilon}|v_{*}|^{\gamma+2} f_{*}^{2} dv_{*}.
\eeno
Observe
\beno
b^{\delta R^{-1}\epsilon}(\cos\theta) =  (R/\delta)^{2-2s} (1-s)\epsilon^{2s-2} \sin^{-2-2s}(\theta/2) \mathrm{1}_{\sin(\theta/2) \leq \delta R^{-1}\epsilon} \leq
(R/\delta)^{2-2s} b^{\epsilon}(\cos\theta).
\eeno
From which we get
\ben \label{geq-delta-ep-to-minus-1}
\mathcal{N}^{\epsilon,\gamma}(f,\mu^{\frac{1}{2}}) \geq (R/\delta)^{2s-2} \mathcal{N}^{\delta R^{-1}\epsilon,\gamma}(f,\mu^{\frac{1}{2}})
\geq \frac{1}{2}C_{1}(R/\delta)^{2s-2}  \int \mathrm{1}_{16/\pi \leq |v_{*}| \leq R/\epsilon}|v_{*}|^{\gamma+2} f_{*}^{2} dv_{*}.
\een

{\it{Step 4: To recover weight $W^{\epsilon}$.}}
Combining \eqref{general-result-large}, \eqref{geq-delta-ep-to-minus-1} and $|f|^{2}_{L^{2}_{\gamma/2}} \geq  \int \mathrm{1}_{|v_{*}|\leq 16/\pi}  \langle v_{*} \rangle^{\gamma} f_{*}^{2} dv_{*},$
we arrive at
\beno
\mathcal{N}^{\epsilon,\gamma}(f,\mu^{\frac{1}{2}}) + |f|^{2}_{L^{2}_{\gamma/2}} &\geq& \int \mathrm{1}_{|v_{*}|\leq 16/\pi}  \langle v_{*} \rangle^{\gamma}  f_{*}^{2} dv_{*} +
\frac{1}{4}C_{1}(R/\delta)^{2s-2}\int  | v_{*} |^{\gamma+2}\mathrm{1}_{16/\pi \leq |v_{*}|\leq R/\epsilon}  f_{*}^{2} dv_{*}
\\&& +\frac{1}{4} C_{3}  R^{-2s} \epsilon^{2s-2}\int | v_{*} |^{\gamma+2s}\mathrm{1}_{|v_{*}|\geq R/\epsilon}  f_{*}^{2} dv_{*}.
\eeno
Since  $|v_{*}| \geq 16/\pi \geq 4$, we get $|v_{*}|^{2} \leq 1+ |v_{*}|^{2} \leq \frac{17}{16}|v_{*}|^{2}$, which gives
\beno
| v_{*} |^{\gamma+2} \geq \min \{ 1, (17/16)^{-\gamma/2-1}\} \langle v_{*} \rangle^{\gamma+2},  | v_{*} |^{\gamma+2s} \geq \min \{ 1, (17/16)^{-\gamma/2-s}\} \langle v_{*} \rangle^{\gamma+2s}.
\eeno
Suppose $\epsilon \leq 1/4$, we have $ \epsilon^{2s-2}  \geq (17/16)^{s-1}\langle \epsilon^{-1} \rangle^{2-2s}$.
Therefore,
%By these facts,
we get
\beno
\mathcal{N}^{\epsilon,\gamma}(f,\mu^{\frac{1}{2}}) + |f|^{2}_{L^{2}_{\gamma/2}} &\geq& \int \mathrm{1}_{|v_{*}|\leq 16/\pi}  \langle v_{*} \rangle^{\gamma}  f_{*}^{2} dv_{*} +
C_{5} \int  \langle v_{*} \rangle^{\gamma+2} \mathrm{1}_{16/\pi \leq |v_{*}|\leq R/\epsilon}  f_{*}^{2} dv_{*}
\\&& +C_{6}\langle \epsilon^{-1} \rangle^{2-2s}\int \langle v_{*} \rangle^{\gamma+2s}\mathrm{1}_{|v_{*}|\geq R/\epsilon}  f_{*}^{2} dv_{*},
\eeno
where $C_{5}=\frac{1}{4}C_{1}(R/\delta)^{2s-2} \min \{ 1, (17/16)^{-\gamma/2-1}\}, C_{6}= \frac{1}{4} C_{3}  R^{-2s} (17/16)^{s-1} \min \{ 1, (17/16)^{-\gamma/2-s}\}.$

By \eqref{non-decreasing-cf} and \eqref{lower-bound-when-small-cf},
we have $\mathrm{1}_{|v_{*}| \leq 16/\pi}W^{\epsilon}(v_{*}) = \langle v_{*} \rangle \leq W_{1}(16/\pi) =(1+4 (16/\pi)^{2})^{\frac{1}{2}}.$ Then we get
\ben \label{reduce-to-the-exact-form-near-origin}  \mathrm{1}_{|v_{*}| \leq 16/\pi}  \geq  (1+4(16/\pi)^{2})^{-1}\mathrm{1}_{|v_{*}| \leq 16/\pi} (W^{\epsilon})^{2}(v_{*}).\een
In the region $16/\pi \leq |v_{*}|\leq \frac{1}{2}\epsilon^{-1}$, by \eqref{lower-bound-when-small-cf}, we have
\ben \label{reduce-to-the-exact-form}  \langle v_{*} \rangle^{2} = (W^{\epsilon})^{2}(v_{*}). \een
In the region $\frac{1}{2}\epsilon^{-1} \leq |v_{*}|\leq \epsilon^{-1}$, by \eqref{middle-lower-upper},
we have
\ben \label{reduce-to-the-exact-form-2}  \langle v_{*} \rangle^{2} \geq \langle \frac{1}{2}\epsilon^{-1} \rangle^{2-2s} \langle v_{*} \rangle^{2s} \geq (\frac{1}{2})^{2-2s}\langle \epsilon^{-1} \rangle^{2-2s} \langle v_{*} \rangle^{2s}  \geq (\frac{1}{2})^{2-2s} (W^{\epsilon})^{2}(v_{*}). \een
In the region $\epsilon^{-1} \leq |v_{*}|\leq R\epsilon^{-1}$, by \eqref{lower-bound-when-large-cf}, we have
\ben \label{reduce-to-the-exact-form-far-origin} \langle v_{*} \rangle^{2} \geq \langle \epsilon^{-1} \rangle^{2-2s} \langle v_{*} \rangle^{2s} = (W^{\epsilon})^{2}(v_{*}). \een
In the region $|v_{*}| \geq R\epsilon^{-1}$, by \eqref{lower-bound-when-large-cf}, we have
\ben \label{reduce-to-the-exact-form-far-origin-2} \langle \epsilon^{-1} \rangle^{2-2s} \langle v_{*} \rangle^{2s} = (W^{\epsilon})^{2}(v_{*}).\een

Then by \eqref{reduce-to-the-exact-form-near-origin}, \eqref{reduce-to-the-exact-form}, \eqref{reduce-to-the-exact-form-2},  \eqref{reduce-to-the-exact-form-far-origin} and \eqref{reduce-to-the-exact-form-far-origin-2}, we get
\beno
\mathcal{N}^{\epsilon,\gamma}(f,\mu^{\frac{1}{2}}) + |f|^{2}_{L^{2}_{\gamma/2}}&\geq&
 (1+4(16/\pi)^{2})^{-1}  \int \mathrm{1}_{|v_{*}|\leq 16/\pi}  (W^{\epsilon})^{2}(v_{*}) \langle v_{*} \rangle^{\gamma}  f_{*}^{2} dv_{*}
\\&& +  (\frac{1}{2})^{2-2s} C_{5} \int  (W^{\epsilon})^{2}(v_{*}) \langle v_{*} \rangle^{\gamma}\mathrm{1}_{16/\pi < |v_{*}|\leq \epsilon^{-1}}  f_{*}^{2} dv_{*}
\\&& + \min \{C_{5},C_{6}\} \int  (W^{\epsilon})^{2}(v_{*}) \langle v_{*} \rangle^{\gamma}\mathrm{1}_{|v_{*}| > \epsilon^{-1}}  f_{*}^{2} dv_{*}
\\ &\geq& C(\gamma,s)|W^{\epsilon}f|^{2}_{L^{2}_{\gamma/2}},
\eeno
which completes the proof. Here $C(\gamma,s) := \min\{(1+4(16/\pi)^{2})^{-1}, (\frac{1}{2})^{2-2s} C_{5}, C_{6}\}$ is a positive constant depending only on $\gamma, s$. It is easy to check that $C(\gamma,s) \gtrsim 1$ uniformly when $-3 < \gamma \leq 0, 0<s<1$.
\end{proof}

In the following, we show that Proposition \ref{lowerboundpart1} is sharp.
\begin{prop} \label{upperboundpart} The estimate
$ \mathcal{N}^{\epsilon,\gamma}(f,\mu^{\frac{1}{2}}) \lesssim |W^{\epsilon}f|^{2}_{L^{2}_{\gamma/2}}$ holds.
\end{prop}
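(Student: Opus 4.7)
The plan is to mirror the region-by-region analysis used in Proposition \ref{lowerboundpart1}, splitting $\mathcal{N}^{\epsilon,\gamma}(f,\mu^{1/2})$ according to whether the jump $|v'-v|=|v-v_*|\sin(\theta/2)$ is small or large. Write $\mathcal{N}^{\epsilon,\gamma}(f,\mu^{1/2}) = \mathcal{N}_{\mathrm{sm}} + \mathcal{N}_{\mathrm{lg}}$, where $\mathcal{N}_{\mathrm{sm}}$ is the integral over $\{\sin(\theta/2)\leq 1/|v-v_*|\}$ and $\mathcal{N}_{\mathrm{lg}}$ over the complement (which requires $|v-v_*|\geq\epsilon^{-1}$ since $\sin(\theta/2)\leq\epsilon$).

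First I would bound $\mathcal{N}_{\mathrm{sm}}$ by the first-order Taylor expansion $(\mu'^{1/2}-\mu^{1/2})^2\leq|v'-v|^2\int_0^1|\nabla\mu^{1/2}(v(\kappa))|^2\,d\kappa$. Using the change of variable $(v,\theta)\to(v(\kappa),\theta(\kappa))$ exactly as in the proof of Proposition \ref{lowerboundpart1} (with bounded Jacobian and $\sin(\theta(\kappa)/2)\sim\sin(\theta/2)$ by \eqref{change-of-variable-angle-1}--\eqref{change-of-variable-angle-2}), this shifts the rapidly decaying factor $|\nabla\mu^{1/2}|^2$ onto a fixed integration variable $u$. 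The angular integral then evaluates (as in \eqref{order-2}) to
\beno
\int b^{\epsilon}(\cos\theta)\sin^{2}(\theta/2)\,\mathrm{1}_{\sin(\theta/2)\leq\min(\epsilon,\,1/|u-v_*|)}\,d\sigma\;\lesssim\;\min\bigl(1,(\epsilon|u-v_*|)^{-(2-2s)}\bigr).
\eeno
Since $|\nabla\mu^{1/2}(u)|^{2}$ localizes $u$ to a bounded set, $|u-v_*|\sim\langle v_*\rangle$ on the relevant support, and separating the cases $|v_*|\leq\epsilon^{-1}$ and $|v_*|\geq\epsilon^{-1}$ reproduces precisely the weight $(W^{\epsilon})^{2}(v_*)\langle v_*\rangle^{\gamma}$ in view of \eqref{lower-bound-when-small-cf}--\eqref{lower-bound-when-large-cf}.

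For $\mathcal{N}_{\mathrm{lg}}$ I would use the trivial bound $(\mu'^{1/2}-\mu^{1/2})^2\leq 2(\mu+\mu')$ and split into a $\mu$-piece and a $\mu'$-piece. For the $\mu$-piece the angular integral is
\beno
\int b^{\epsilon}\,\mathrm{1}_{1/|v-v_*|<\sin(\theta/2)\leq\epsilon}\,d\sigma\;\lesssim\;\epsilon^{2s-2}|v-v_*|^{2s}\,\mathrm{1}_{|v-v_*|\geq\epsilon^{-1}},
\eeno
and after localizing $v$ to the support of $\mu$ (so $|v-v_*|\sim\langle v_*\rangle$) this yields $\lesssim\langle\epsilon^{-1}\rangle^{2-2s}\langle v_*\rangle^{\gamma+2s}f_*^{2}$ on $|v_*|\gtrsim\epsilon^{-1}$, again matching $(W^{\epsilon})^{2}(v_*)\langle v_*\rangle^{\gamma}$. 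For the $\mu'$-piece I would perform the change of variable $v\mapsto w=v'$ with $v_*,\sigma$ held fixed: its Jacobian equals $|\det(\partial v'/\partial v)|=\cos^{2}(\theta/2)/4\in[1/8,1/4]$ since $\theta\in[0,\pi/2]$ under $(\mathbf{A4})$, and $|v-v_*|=|w-v_*|/\cos(\theta/2)\sim|w-v_*|$, so after substitution the $\mu'$-piece reduces to an integral of the same form as the $\mu$-piece and is treated identically.

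The main obstacle is the bookkeeping in the substitution $v\mapsto v'$ for the $\mu'$-piece: one must verify that the angular constraint $\sin(\theta/2)\leq\epsilon$ and the splitting threshold $|v-v_*|\sin(\theta/2)>1$ transform into comparable conditions on the new variables $(w,\sigma,v_*)$. This should be manageable because $\cos(\theta/2)\in[1/\sqrt{2},1]$ throughout the grazing regime, so all the relevant lengths and sines are preserved up to fixed multiplicative constants, after which the $\mu$-piece estimate applies verbatim and the collected bound gives $\mathcal{N}^{\epsilon,\gamma}(f,\mu^{1/2})\lesssim|W^{\epsilon}f|^{2}_{L^{2}_{\gamma/2}}$.
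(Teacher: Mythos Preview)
Your approach is correct but takes a genuinely different route from the paper. You mirror the lower-bound proof: split by jump size, Taylor-expand $(\mu^{1/2})'-\mu^{1/2}$ in the small-jump region, and use the change of variable $(v,\theta)\to(v(\kappa),\theta(\kappa))$ to shift the Gaussian decay onto an explicit integration variable; in the large-jump region you use the crude bound $(\mu^{1/2})'-\mu^{1/2}\,)^2\le 2(\mu+\mu')$ and handle the $\mu'$-piece by $v\mapsto v'$. This works, though the step ``$|\nabla\mu^{1/2}(u)|^2$ localizes $u$ to a bounded set, so $|u-v_*|\sim\langle v_*\rangle$'' needs a small tail argument (the exponential decay of $\mu$ absorbs the polynomial losses when $|u|\gtrsim|v_*|$).

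The paper's argument is shorter and avoids the $\kappa$-change of variable entirely. It starts from the algebraic factorization $(\mu^{1/2})'-\mu^{1/2}=\bigl((\mu^{1/4})'+\mu^{1/4}\bigr)\bigl((\mu^{1/4})'-\mu^{1/4}\bigr)$, which produces an explicit factor $\mu^{1/2}$ or $(\mu^{1/2})'$ \emph{before} any Taylor expansion. Then the global bound $((\mu^{1/4})'-\mu^{1/4})^2\lesssim\min\{1,|v-v_*|^2\theta^2\}$ feeds directly into Proposition~\ref{symbol}, giving $\int b^\epsilon\min\{1,|v-v_*|^2\theta^2\}\,d\sigma\lesssim (W^\epsilon)^2(v-v_*)$; the submultiplicativity \eqref{separate-into-2-cf} then transfers the weight to $v$ and $v_*$, and the surviving $\mu^{1/2}$ (or $(\mu^{1/2})'$ after $v\to v'$) is integrated against $|v-v_*|^\gamma$. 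In effect, your small/large-jump splitting and explicit angular computations are precisely what Proposition~\ref{symbol} packages, and the factorization trick replaces your $v\to v(\kappa)$ step. Your route is more self-contained; the paper's exploits previously established machinery and is correspondingly more economical.
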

\begin{proof}
First we have
\beno
\mathcal{N}^{\epsilon,\gamma}(f,\mu^{\frac{1}{2}}) &\lesssim& \int B^{\epsilon} f_{*}^{2} ((\mu^{\frac{1}{4}})^{\prime}-\mu^{\frac{1}{4}})^{2}(\mu^{\prime \frac{1}{2}}+\mu^{\frac{1}{2}}) d\sigma dv dv_{*}
\\&\lesssim& \int  B^{\epsilon} f_{*}^{2} ((\mu^{\frac{1}{4}})^{\prime}-\mu^{\frac{1}{4}})^{2}\mu^{\prime \frac{1}{2}} d\sigma dv dv_{*} + \int  B^{\epsilon} f_{*}^{2} ((\mu^{\frac{1}{4}})^{\prime}-\mu^{\frac{1}{4}})^{2}\mu^{\frac{1}{2}} d\sigma dv dv_{*}
\\&:=& \mathcal{K}^{\epsilon,\gamma}_{1}(f) + \mathcal{K}^{\epsilon,\gamma}_{2}(f).
\eeno
By Taylor expansion, one has
$((\mu^{\frac{1}{4}})^{\prime} - \mu^{\frac{1}{4}})^{2} \lesssim \min\{1,|v-v_{*}|^{2}\theta^{2}\} \sim \min\{1,|v^{\prime}-v_{*}|^{2}\theta^{2}\}.$
By Proposition \ref{symbol} and \eqref{separate-into-2-cf}, we have
$
\int b^{\epsilon}(\cos\theta) \min\{1,|v-v_{*}|^{2}\theta^{2}\} d\sigma \lesssim (W^{\epsilon})^2(|v-v_*|) \lesssim (W^{\epsilon})^{2}(v)(W^{\epsilon})^{2}(v_{*}),
$
which gives
\beno
\mathcal{K}^{\epsilon,\gamma}_{2}(f) \lesssim
\int f_{*}^{2}|v-v_{*}|^{\gamma}(W^{\epsilon})^{2}(v)(W^{\epsilon})^{2}(v_{*})  \mu^{\frac{1}{2}} dv dv_{*} \lesssim \int f_{*}^{2}\langle v_{*}\rangle^{\gamma}(W^{\epsilon})^{2}(v_{*}) dv_{*}= |W^\epsilon f|_{L^2_{\gamma/2}}^2.
\eeno
Here we have used the fact that $\int |v-v_{*}|^{\gamma} \mu^{\frac{1}{4}} dv \lesssim \langle v_{*}\rangle^{\gamma}$.
By the change of variable $v \rightarrow v^{\prime}$, similarly we have $\mathcal{K}^{\epsilon,\gamma}_{1}(f) \lesssim |W^\epsilon f|_{L^2_{\gamma/2}}^2$.
The proof of the lemma is completed.
\end{proof}

\begin{rmk} \label{also-hold-for-a}
By the proof of Proposition \ref{upperboundpart}, for $a \geq \frac{1}{8}$,
the estimate $\mathcal{N}^{\epsilon,\gamma}(f,\mu^{a}) \lesssim |W^{\epsilon}f|^{2}_{L^{2}_{\gamma/2}}$ holds.
\end{rmk}

\subsubsection{Gain of Sobolev regularity from $\mathcal{N}^{\epsilon,0}(g,f)$ } \label{gain-S-regularity}
By Corollary 2.1 and Lemma 3 of \cite{alexandre2000entropy}, and Proposition \ref{symbol}, we have
the following lemma.
\begin{lem}\label{lowerboundpart1-general-g}
Let $g$ be a function such that $|g^{2}|_{L^{1}} \geq \delta >0, |g^{2}|_{L^{1}_{1}} + |g^{2}|_{L \mathrm{log}L} \leq \lambda < \infty$, then there exists $C(\delta, \lambda)$ such that
\beno
\mathcal{N}^{\epsilon,0}(g,f)+ |f|^{2}_{L^{2}} \geq  C(\delta, \lambda)|W^{\epsilon}(D)f|^{2}_{L^{2}} .
\eeno
\end{lem}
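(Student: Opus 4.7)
The plan is to invoke the entropy dissipation machinery of Alexandre--Desvillettes--Villani--Wennberg in Fourier space and then translate the resulting symbol into the characteristic weight $W^\epsilon$ via Proposition \ref{symbol}. Throughout the argument the only properties of $g$ that matter are the three quantitative bounds in the hypothesis: a lower bound on its mass $|g^2|_{L^1}\ge \delta$ and an upper bound on its first moment and entropy $|g^2|_{L^1_1}+|g^2|_{L\log L}\le \lambda$.

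First, I would apply Corollary 2.1 of \cite{alexandre2000entropy} to the functional $\mathcal{N}^{\epsilon,0}(g,f)$. Using Bobylev's identity (this is where $\gamma=0$ is crucial, so the collision operator is translation invariant in $v$) one writes $\mathcal{N}^{\epsilon,0}(g,f)$ as an integral in Fourier variables, and the ADVW argument yields
\[
\mathcal{N}^{\epsilon,0}(g,f) \;\geq\; C_{0}(\delta,\lambda)\int_{\mathbb{R}^{3}}|\widehat{f}(\xi)|^{2} K^{\epsilon}(\xi)\, d\xi \;-\; C_{1}(\delta,\lambda)\,|f|^{2}_{L^{2}},
\]
where
\[
K^{\epsilon}(\xi)=\int_{\mathbb{S}^{2}} b^{\epsilon}\!\Bigl(\tfrac{\xi}{|\xi|}\cdot\sigma\Bigr)\min\{|\xi|^{2}\sin^{2}(\theta/2),1\}\, d\sigma.
\]
The constant $C_{0}(\delta,\lambda)$ is produced by Lemma 3 of \cite{alexandre2000entropy}: it is a quantitative lower bound on the probability that the density $g^{2}/|g^{2}|_{L^{1}}$ is not too concentrated, and it depends only on $\delta$ and $\lambda$ through the Dunford--Pettis style compactness driven by mass, energy and entropy control. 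The lower-order correction $C_{1}(\delta,\lambda)|f|^{2}_{L^{2}}$ absorbs the contributions coming from the regularization of the singular symbol near $\xi = 0$.

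Second, I would apply Proposition \ref{symbol}, which gives $K^{\epsilon}(\xi)+1\gtrsim (W^{\epsilon})^{2}(\xi)$ pointwise in $\xi$. Integrating against $|\widehat{f}(\xi)|^{2}$ and using Plancherel converts the previous estimate into
\[
\int |\widehat{f}(\xi)|^{2} K^{\epsilon}(\xi)\,d\xi \;+\; |f|^{2}_{L^{2}} \;\gtrsim\; \int (W^{\epsilon})^{2}(\xi)|\widehat{f}(\xi)|^{2}\,d\xi \;=\; |W^{\epsilon}(D)f|^{2}_{L^{2}}.
\]
Combining this with the ADVW bound and absorbing the $|f|^{2}_{L^{2}}$ terms on the left into the one already present gives the desired inequality
\[
\mathcal{N}^{\epsilon,0}(g,f)+|f|^{2}_{L^{2}} \;\geq\; C(\delta,\lambda)\,|W^{\epsilon}(D)f|^{2}_{L^{2}},
\]
with $C(\delta,\lambda)>0$ independent of $\epsilon$.

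The only nontrivial step is the first one, and it is not really proved here but imported wholesale from \cite{alexandre2000entropy}; the main obstacle, if one were to reprove it, would be obtaining a quantitative version of the non-concentration of $g^{2}$ from the mass/energy/entropy controls (so that the resulting $C_{0}$ really depends only on $\delta$ and $\lambda$, and not on further features of $g$). The translation from the ADVW symbol $K^{\epsilon}$ to the characteristic function $W^{\epsilon}$ is already packaged into Proposition \ref{symbol}, and the rest is a one-line Plancherel manipulation. Note that uniformity in $\epsilon$ is automatic: both $K^{\epsilon}$ and $W^{\epsilon}$ are built from the same rescaled angular cutoff \eqref{Blotzmann-kernel}, and no constant introduced along the way depends on $\epsilon$.
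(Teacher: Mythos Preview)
Your proposal is correct and follows exactly the same route as the paper: the paper simply cites Corollary 2.1 and Lemma 3 of \cite{alexandre2000entropy} together with Proposition \ref{symbol}, and you have accurately unpacked how these three ingredients combine (ADVW coercivity in Fourier to produce the symbol $K^{\epsilon}$, the mass/energy/entropy non-concentration lemma for the constant, and the pointwise comparison $K^{\epsilon}+1\gtrsim (W^{\epsilon})^{2}$). There is nothing to add.
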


\subsubsection{Gain of  anisotropic regularity from $\mathcal{N}^{\epsilon,0}(g,f)$ } \label{gain-A-regularity}
In this part, we derive the anisotropic regularity from  $\mathcal{N}^{\epsilon,0}(g,f)$. To this end, we apply a geometric decomposition in the frequency space. More precisely, we  will use the following decomposition (see \eqref{geo-deco-frequency-space-another} in the proof of Proposition \ref{lowerboundpart2-general-g})
\ben \label{sphere-radius}
\hat{f}(\xi) - \hat{f}(\xi^{+}) &=& \underbrace{\hat{f}(\xi) - \hat{f}(|\xi|\frac{\xi^{+}}{|\xi^{+}|})}_{\text{ spherical part }}+ \underbrace{\hat{f}(|\xi|\frac{\xi^{+}}{|\xi^{+}|}) - \hat{f}(\xi^{+}).}_{\text{ radical part }}
\een

The ``spherical part" gives the anisotropic regularity. Namely,
\begin{lem}\label{spherical-part} Set $\mathcal{A}^{\epsilon}(f) := \int b^{\epsilon}(\frac{\xi}{|\xi|} \cdot \sigma)|\hat{f}(\xi) - \hat{f}(|\xi|\frac{\xi^{+}}{|\xi^{+}|})|^{2} d\xi d\sigma$ where $\xi^{+} = \frac{\xi + |\xi|\sigma}{2}$, then
\beno
\mathcal{A}^{\epsilon}(f) +|f|_{L^2}^2
\sim |W^{\epsilon}((-\Delta_{\mathbb{S}^{2}})^{\frac{1}{2}})f|^{2}_{L^{2}}+|f|^{2}_{L^{2}}.
\eeno
\end{lem}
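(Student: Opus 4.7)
The plan is to diagonalize $\mathcal{A}^{\epsilon}$ in the spherical harmonic basis and reduce the equivalence to a pointwise spectral estimate already supplied by Proposition \ref{symbol}. I would first expand $\hat{f}(r\omega)=\sum_{l\ge 0}\sum_{|m|\le l} b_l^m(r) Y_l^m(\omega)$ and use the fact that each component $F(r)Y_l^m(\omega)$ transforms under the $\mathbb{R}^3$ Fourier transform into $G(\rho)Y_l^m(\omega)$; consequently $W^{\epsilon}((-\Delta_{\mathbb{S}^{2}})^{1/2})$ commutes with the Fourier transform, and Plancherel yields
$$
|W^{\epsilon}((-\Delta_{\mathbb{S}^{2}})^{1/2})f|_{L^2}^2 =\sum_{l,m}\int_0^\infty W^{\epsilon}(\sqrt{l(l+1)})^2 |b_l^m(r)|^2 r^2\,dr, \qquad |f|_{L^2}^2=\sum_{l,m}\int_0^\infty |b_l^m(r)|^2 r^2\,dr.
$$

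Next, I would expand $\mathcal{A}^{\epsilon}(f)$ in the same basis. Writing $\omega=\xi/|\xi|$ and $\omega'=\xi^{+}/|\xi^{+}|$, a direct check gives $\omega\cdot\omega'=\cos(\theta/2)$ with $\cos\theta=\omega\cdot\sigma$, so $|\xi|\omega'$ stays on the same sphere as $\xi$. The pre-post change of variable $\sigma\mapsto\omega'$ with Jacobian $4\cos(\theta/2)$ turns the angular integral into a convolution against the zonal kernel $\tilde b^{\epsilon}(\omega\cdot\omega'):=4\cos(\theta/2)b^{\epsilon}(\cos\theta)$. Using the addition formula $\sum_{m=-l}^{l} Y_l^m(\omega)\overline{Y_l^m(\omega')}=\frac{2l+1}{4\pi}P_l(\omega\cdot\omega')$, the azimuthal symmetry of $b^{\epsilon}(\omega\cdot\sigma)$ (which forces orthogonality across $m$), and the Funk--Hecke theorem applied to $\tilde b^{\epsilon}$ (which forces orthogonality across $l$), the off-diagonal terms vanish and one arrives at
$$
\mathcal{A}^{\epsilon}(f)= 2\sum_{l,m}\beta_l^{\epsilon} \int_0^\infty |b_l^m(r)|^2 r^2\,dr, \qquad \beta_l^{\epsilon}:=\int_{\mathbb{S}^{2}} b^{\epsilon}(\cos\theta)\bigl(1-P_l(\cos(\theta/2))\bigr)\,d\sigma.
$$

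With the diagonalization in hand it remains to verify the pointwise equivalence $\beta_l^{\epsilon}+1\sim W^{\epsilon}(\sqrt{l(l+1)})^2+1$ uniformly in $l\in\mathbb{N}$ and in small $\epsilon$. On the support $\sin(\theta/2)\le\epsilon\le 1$ of $b^{\epsilon}$ the standard Legendre estimate $1-P_l(\cos(\theta/2))\sim \min\{1,l(l+1)\sin^2(\theta/2)\}$ applies, so
$$
\beta_l^{\epsilon}\sim\int_{\mathbb{S}^{2}} b^{\epsilon}(\cos\theta)\min\{1,l(l+1)\sin^2(\theta/2)\}\,d\sigma= K^{\epsilon}(y)\big|_{|y|=\sqrt{l(l+1)}},
$$
and Proposition \ref{symbol} gives $K^{\epsilon}(\sqrt{l(l+1)})+1\sim W^{\epsilon}(\sqrt{l(l+1)})^2$, which combined with the previous displays closes the equivalence. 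One may also check the scaling directly from the explicit form of $b^{\epsilon}$ by splitting the $u=\sin(\theta/2)$ integral at $u=1/\sqrt{l(l+1)}$, recovering $\beta_l^{\epsilon}\sim l(l+1)$ when $\sqrt{l(l+1)}\le 1/\epsilon$ and $\beta_l^{\epsilon}\sim \epsilon^{2s-2}(l(l+1))^s$ otherwise, matching $W^{\epsilon}(\sqrt{l(l+1)})^2$ in both regimes.

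The main technical obstacle is the rigorous vanishing of the off-diagonal $(l,m)\neq (l',m')$ terms, since the map $(\omega,\sigma)\mapsto\omega'$ is genuinely nonlinear on $\mathbb{S}^{2}$ and is not a rotation in $\omega$ for fixed $\sigma$. Recasting the kernel into a zonal form on $\mathbb{S}^{2}\times \mathbb{S}^{2}$ via the $\sigma\mapsto\omega'$ change of variable and then invoking Funk--Hecke together with azimuthal symmetry is the decisive step; once diagonalization is secured, the remainder is a routine comparison of Legendre gaps with the characteristic weight $W^{\epsilon}$ governed by Proposition \ref{symbol}.
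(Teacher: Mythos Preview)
Your approach is correct but takes a genuinely different route from the paper. The paper performs the same pre--post change of variable $(\xi,\sigma)\mapsto(r,\tau,\varsigma)$ (your $\sigma\mapsto\omega'$), but then stops once $\mathcal{A}^{\epsilon}(f)$ has been recast as a localized Gagliardo seminorm on each sphere, namely an integral of $(1-s)\epsilon^{2s-2}|\hat f(r\tau)-\hat f(r\varsigma)|^{2}/|\tau-\varsigma|^{2+2s}$ over $\{|\tau-\varsigma|\lesssim\epsilon\}$. At that point it invokes Lemma~\ref{aniso-from-He-Sharp-Bounds} (quoted from \cite{he2018sharp}) as a black box to obtain the equivalence with $|W^{\epsilon}((-\Delta_{\mathbb{S}^{2}})^{1/2})\hat f|_{L^2}^2$, and closes with Lemma~\ref{comWep} and Plancherel.

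You instead open the black box: your spherical harmonic expansion together with Funk--Hecke diagonalizes $\mathcal{A}^{\epsilon}$ explicitly, and the Legendre estimate $1-P_l(\cos(\theta/2))\sim\min\{1,l(l+1)\sin^2(\theta/2)\}$ reduces the eigenvalue comparison to Proposition~\ref{symbol}. This is essentially a direct proof of the special case of Lemma~\ref{aniso-from-He-Sharp-Bounds} that the paper needs, and it makes the role of Proposition~\ref{symbol} visible rather than hidden inside the cited reference. The paper's route is shorter on the page; yours is more self-contained and exposes the spectral mechanism. Both are valid.
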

\begin{proof}
Let $ r= |\xi|, \tau = \xi/|\xi|$ and $\varsigma = \frac{\tau+\sigma}{|\tau+\sigma|}$, then $\frac{\xi}{|\xi|} \cdot \sigma = 2(\tau\cdot\varsigma)^{2} - 1$ and $|\xi|\frac{\xi^{+}}{|\xi^{+}|} = r \varsigma$. In the change of variable $(\xi, \sigma) \rightarrow (r, \tau, \varsigma)$, one has
$
d\xi d\sigma = 4  (\tau\cdot\varsigma) r^{2} dr d \tau d \varsigma.
$
Let $\theta$ be the angle between $\tau$ and $\sigma$, then $2 \sin\frac{\theta}{2} = |\tau-\sigma|$ and thus
\ben \label{b-to-metric}
b^{\epsilon}(\cos\theta) = (1-s)\epsilon^{2s-2} 2^{2+2s} |\tau-\sigma|^{-2-2s} \mathrm{1}_{|\tau-\sigma| \leq 2\epsilon}.
\een
Since $\theta/2$ is the angle between the two unit vectors $\tau$ and $\varsigma$, we get $|\tau - \varsigma| = 2(1-\cos \frac{\theta}{2})$.
It is easy to check that
\beno \frac{1}{2} |\tau-\sigma| \leq |\tau - \varsigma| \leq |\tau-\sigma|. \eeno
From which together with \eqref{b-to-metric}, we get
\beno
b^{\epsilon}(\cos\theta) \geq (1-s)\epsilon^{2s-2} |\tau - \varsigma|^{-2-2s} \mathrm{1}_{|\tau - \varsigma| \leq \epsilon}, b^{\epsilon}(\cos\theta) \leq (1-s) \epsilon^{2s-2} 2^{2+2s} |\tau - \varsigma|^{-2-2s} \mathrm{1}_{|\tau - \varsigma| \leq 2\epsilon}.
\eeno

By \eqref{anisotropic-R-3} in Lemma  \ref{aniso-from-He-Sharp-Bounds}, we have
\beno
\mathcal{A}^{\epsilon}(f) +|f|_{L^2}^2 &=& 4 \int b^{\epsilon}(2(\tau\cdot\varsigma)^{2} - 1)|\hat{f}(r\tau) - \hat{f}(r\varsigma)|^{2} (\tau\cdot\varsigma) r^{2} dr d \tau d \varsigma+|f|_{L^2}^2
\\&\geq& 4(1-s)\epsilon^{2s-2} \int \frac{|\hat{f}(r\tau) - \hat{f}(r\varsigma)|^{2}}{|\tau - \varsigma|^{2+2s}}\mathrm{1}_{|\tau-\varsigma| \leq \epsilon}  r^{2} dr d \tau d \varsigma+|f|_{L^2}^2
\\&\sim& |W^{\epsilon}((-\Delta_{\mathbb{S}^{2}})^{\frac{1}{2}})\hat{f}|^{2}_{L^{2}}+ |\hat{f}|^{2}_{L^{2}}.
\eeno
By Remark \ref{another-factor-equivalent},
we have
\beno
\mathcal{A}^{\epsilon}(f) +|f|_{L^2}^2 &\leq& 4(1-s)\epsilon^{2s-2} 2^{2+2s}\int \frac{|\hat{f}(r\tau) - \hat{f}(r\varsigma)|^{2}}{|\tau - \varsigma|^{2+2s}}\mathrm{1}_{|\tau-\varsigma| \leq 2\epsilon}  r^{2} dr d \tau d \varsigma+|f|_{L^2}^2
\\&\sim& |W^{\epsilon}((-\Delta_{\mathbb{S}^{2}})^{\frac{1}{2}})\hat{f}|^{2}_{L^{2}}+ |\hat{f}|^{2}_{L^{2}}.
\eeno
With the help of Lemma \ref{comWep} and Plancherel's theorem, $|W^{\epsilon}((-\Delta_{\mathbb{S}^{2}})^{\frac{1}{2}})\hat{f}|^{2}_{L^{2}} = |W^{\epsilon}((-\Delta_{\mathbb{S}^{2}})^{\frac{1}{2}})f|^{2}_{L^{2}}$, which completes the proof.
\end{proof}

The ``radical part" in \eqref{sphere-radius} can be bounded by weight $W^{\epsilon}$ gain in the phase and frequency space. Namely,

\begin{lem}\label{gammanonzerotozero}
	Let $
	\mathcal{Z}^{\epsilon,\gamma}(f) := \int b^{\epsilon}(\frac{\xi}{|\xi|}\cdot\sigma)\langle \xi\rangle^{\gamma} |f(|\xi|\frac{\xi^{+}}{|\xi^{+}|}) - f(\xi^{+})|^{2} d\xi d\sigma
	$ with $\xi^{+} = \frac{\xi + |\xi|\sigma}{2}$. Then
	\beno
	\mathcal{Z}^{\epsilon,\gamma}(f) \lesssim |W^{\epsilon}(D)W_{\gamma/2}f|^{2}_{L^{2}} + |W^{\epsilon}W_{\gamma/2}f|^{2}_{L^{2}}.
	\eeno
\end{lem}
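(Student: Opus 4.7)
I would begin by running the same change of variables $(\xi,\sigma)\mapsto(r,\tau,\varsigma)$ used in the proof of Lemma \ref{spherical-part}: write $\xi=r\tau$ and $\varsigma=(\tau+\sigma)/|\tau+\sigma|$, so that $\cos(\theta/2)=\tau\cdot\varsigma$ and $d\xi\,d\sigma=4(\tau\cdot\varsigma)r^{2}\,dr\,d\tau\,d\varsigma$. Since $\xi^{+}=r\cos(\theta/2)\varsigma$ and $|\xi|\xi^{+}/|\xi^{+}|=r\varsigma$, the integrand $|f(|\xi|\xi^{+}/|\xi^{+}|)-f(\xi^{+})|^{2}$ becomes a purely radial difference of $f$ along the fixed direction $\varsigma$ between radii $r$ and $r\cos(\theta/2)$; the gap between these radii is $r(1-\cos(\theta/2))\leq r\sin^{2}(\theta/2)/2$, which is $O(\epsilon^{2}r)$ on the support of $b^{\epsilon}$ and genuinely smaller than the spherical displacement $r\sin(\theta/2)$ treated in Lemma \ref{spherical-part}.

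The next step is to split according to whether $r\sin(\theta/2)\lesssim 1$ or $r\sin(\theta/2)\gtrsim 1$. On the ``low'' set I would apply the fundamental theorem of calculus and Cauchy--Schwarz to obtain
\[
|f(r\varsigma)-f(r\cos(\theta/2)\varsigma)|^{2}\leq r(1-\cos(\theta/2))\int_{r\cos(\theta/2)}^{r}|\partial_{\rho}f(\rho\varsigma)|^{2}\,d\rho,
\]
swap the $r$- and $\rho$-integration orders, and use $\int b^{\epsilon}(\cos\theta)\sin^{2}(\theta/2)\,d\sigma=4\pi$ from \eqref{order-2} together with Proposition \ref{symbol} to control the angular integral. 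The resulting weighted $L^{2}$ bound on $\partial_{\rho}f$, viewed in Fourier variables, is absorbed into $|W^{\epsilon}(D)W_{\gamma/2}f|^{2}_{L^{2}}$ via Plancherel, using that $W^{\epsilon}$ as a symbol in the variable dual to $\xi$ majorises the truncated gradient. On the ``high'' set, which is non-empty only when $r\geq\epsilon^{-1}$, I would instead use the trivial bound $|a-b|^{2}\leq 2|a|^{2}+2|b|^{2}$; combined with the direct computation $\int b^{\epsilon}\mathrm{1}_{\sin(\theta/2)\geq 1/r}\,d\sigma\lesssim (W^{\epsilon})^{2}(r)$, which is valid precisely in this regime, this delivers the pointwise contribution $|W^{\epsilon}W_{\gamma/2}f|^{2}_{L^{2}}$.

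The main technical obstacle I anticipate is the calibration at the transition scale $r\sim\epsilon^{-1}$: the small-set argument supplies the Fourier multiplier $W^{\epsilon}(D)$ while the large-set argument supplies the pointwise multiplier $W^{\epsilon}$, and these are exactly the two weights that appear because $W^{\epsilon}$ itself changes its asymptotic regime at $r=\epsilon^{-1}$ (cf.\ \eqref{lower-bound-when-small-cf}--\eqref{lower-bound-when-large-cf}). Tracking the $\langle\xi\rangle^{\gamma}=\langle r\rangle^{\gamma}$-weight through the argument amounts to distributing $\langle r\rangle^{\gamma/2}$ to each copy of $f$, which, after the change of variable and on each of the two sets, is comparable to the multiplication weight $W_{\gamma/2}$ at both endpoints $r\varsigma$ and $r\cos(\theta/2)\varsigma$, so that it absorbs cleanly into the stated right-hand side.
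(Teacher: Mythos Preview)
Your high-set argument ($r\sin(\theta/2)\gtrsim 1$) is fine: the trivial bound together with $\int b^{\epsilon}\mathrm{1}_{\sin(\theta/2)\geq 1/r}\,d\sigma\lesssim \epsilon^{2s-2}r^{2s}\sim (W^{\epsilon})^{2}(r)$ for $r\geq 1/\epsilon$ gives exactly the $|W^{\epsilon}W_{\gamma/2}f|^{2}_{L^{2}}$ contribution.

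The low-set argument, however, does not close. After the fundamental theorem of calculus and the swap of integrals you obtain (for $\gamma=0$, say) a bound of the form
\[
\int_{|u|\leq 1/\epsilon}\epsilon^{2}|u|^{2}\,|\partial_{\rho}f(u)|^{2}\,du \;+\; \epsilon^{2s-2}\int_{|u|>1/\epsilon}|u|^{2s-2}\,|\partial_{\rho}f(u)|^{2}\,du,
\]
and both weights are $\leq 1$, so this is at best $\lesssim|\nabla f|^{2}_{L^{2}}$. Your claimed passage ``viewed in Fourier variables, is absorbed into $|W^{\epsilon}(D)W_{\gamma/2}f|^{2}_{L^{2}}$ via Plancherel'' is where the argument breaks: taking the Fourier transform in the integration variable turns $\nabla f$ into multiplication by the dual variable $\eta$, but $W^{\epsilon}(\eta)\leq\langle\eta\rangle$, so one only has $|W^{\epsilon}(D)f|_{L^{2}}\leq|f|_{H^{1}}$, not the reverse. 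For a concrete obstruction, take $f$ localized near the origin in position and with Fourier support at scale $|\eta|\sim K\gg 1/\epsilon$; then $|\nabla f|^{2}_{L^{2}}\sim K^{2}|f|^{2}_{L^{2}}$ while $|W^{\epsilon}(D)f|^{2}_{L^{2}}+|W^{\epsilon}f|^{2}_{L^{2}}\sim(\epsilon^{2s-2}K^{2s}+1)|f|^{2}_{L^{2}}$, and the ratio $(\epsilon K)^{2-2s}$ diverges. The phrase ``truncated gradient'' does not help: your truncation $r\sin(\theta/2)\leq 1$ acts in the \emph{position} variable $r=|\xi|$, not in the Fourier variable $\eta$, so it places no restriction on the $\eta$-support of $f$.

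The paper's proof avoids this. It first reduces $\gamma<0$ to $\gamma=0$ by writing $\langle\xi\rangle^{\gamma}(f(w)-f(\xi^{+}))^{2}$ in terms of $(W_{\gamma/2}f)(w)-(W_{\gamma/2}f)(\xi^{+})$ plus a harmless remainder (this part your last paragraph gets right). For $\gamma=0$ it performs your change of variables to reach $\epsilon^{2s-2}\int\!\!\int_{0}^{\epsilon}(\sin\alpha)^{-1-2s}|f(u)-f(u\cos\alpha)|^{2}\,du\,d\alpha$, but then invokes Lemma~\ref{a-technical-lemma}, whose proof (indicated to follow Lemma~\ref{crosstermsimilar}) uses dyadic localization in \emph{both} position and frequency. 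That phase-space localization is the missing ingredient: the right-hand side is a genuinely hybrid object, one multiplier $W^{\epsilon}(D)$ and one multiplication $W^{\epsilon}$, and a purely position-space (or purely frequency-space) FTC argument cannot see this structure.
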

\begin{proof} We divide the proof into two steps.
	
{\it Step 1: $\gamma=0$.} As in the proof of Lemma \ref{spherical-part}, with the same change of variable $(\xi, \sigma) \rightarrow (r, \tau, \varsigma)$ with $\xi=r\tau$ and $\varsigma=\f{\sigma+\tau}{|\sigma+\tau|}$, we have
	\beno
	 \mathcal{Z}^{\epsilon, 0}(f) = 4 \int b^{\epsilon}(2(\tau\cdot\varsigma)^{2} - 1)|f(r\varsigma) - f((\tau\cdot\varsigma)r\varsigma)|^{2} (\tau\cdot\varsigma) r^{2} dr d \tau d \varsigma.
	\eeno
Recalling that
\beno
b^{\epsilon}(\frac{\xi}{|\xi|}\cdot\sigma)  = (1-s)\epsilon^{2s-2} (\sin \frac{\theta}{2})^{-2-2s} \mathrm{1}_{\sin \frac{\theta}{2} \leq \epsilon},
\eeno
where $\theta$ be the angle between $\tau$ and $\sigma$. Let $\alpha = \frac{\theta}{2}$ be the angle between $\tau$ and $\sigma$. Let $u = r \varsigma$, then we get $r^{2} dr d \varsigma d \tau= \sin \alpha du  d\alpha d\mathbb{S}$. Therefore we have
	\ben \label{f-to-hat-f-in-future}
	\mathcal{Z}^{\epsilon,0}(f) &=&  8 \pi (1-s)\epsilon^{2s-2} \int_{\R^{3}}\int_{0}^{\pi} (\sin \alpha)^{-1-2s} \mathrm{1}_{\sin \alpha \leq \epsilon}|f(u) - f(u \cos\alpha)|^{2} \cos\alpha d u d\alpha
	\\ \label{Z-epsilon-0-result} &\lesssim& |W^{\epsilon}(D) f|^{2}_{L^{2}} + |W^{\epsilon} f|^{2}_{L^{2}},
	\een
where the last inequality is given by Lemma \ref{a-technical-lemma}.

{\it Step 2: $\gamma<0$.}	We  reduce the case when $\gamma<0$ to the special case $\gamma = 0$. For simplicity, denote $w = |\xi|\frac{\xi^{+}}{|\xi^{+}|}$, then $W_{\gamma}(\xi) = W_{\gamma}(w)$. Hence, we have
	\beno
	&&\langle \xi\rangle^{\gamma} (f(w)-f(\xi^{+}))^{2} \\&=& \{[(W_{\gamma/2}f)(w)-(W_{\gamma/2}f)(\xi^{+})]
	 + (W_{\gamma/2}f)(\xi^{+})(1-W_{\gamma/2}(w)W_{-\gamma/2}(\xi^{+}))\}^{2}
	\\&\leq& 2 [(W_{\gamma/2}f)(\xi^{+})-(W_{\gamma/2}f)(w)]^{2}
	+ 2 |(W_{\gamma/2}f)(\xi^{+})|^{2}|1-W_{\gamma/2}(w)W_{-\gamma/2}(\xi^{+})|^{2}.
	\eeno
	Thus we have
	\beno
	\mathcal{Z}^{\epsilon,\gamma}(f) &\leq& 2\mathcal{Z}^{\epsilon,0}(W_{\gamma/2}f)
	+ 2\int b^{\epsilon}(\frac{\xi}{|\xi|}\cdot\sigma)|(W_{\gamma/2}f)(\xi^{+})|^{2}|1-W_{\gamma/2}(w)W_{-\gamma/2}(\xi^{+})|^{2} d\xi d\sigma
	\\&:=& \mathcal{Z}^{\epsilon,0}(W_{\gamma/2}f) + \mathcal{A}.
	\eeno
	By noticing that
	$
	|W_{\gamma/2}(w)W_{-\gamma/2}(\xi^{+}) - 1| \lesssim \theta^{2},
	$
	we have
	$
	|\mathcal{A}| \lesssim  |W_{\gamma/2}f|^{2}_{L^{2}},
	$
	where the change of variable $\xi \rightarrow \xi^{+}$ has been used.
	The desired result follows from the estimate \eqref{Z-epsilon-0-result} in {\it Step 1}.
\end{proof}
\begin{rmk} \label{hat-f-also-valid}
 With the same notations as in Lemma \ref{gammanonzerotozero}, we also have
	\beno
	\mathcal{Z}^{\epsilon,0}(\hat{f}) \lesssim |W^{\epsilon}(D)f|^{2}_{L^{2}} + |W^{\epsilon}f|^{2}_{L^{2}}.
	\eeno
Indeed, by \eqref{f-to-hat-f-in-future} and Plancherel's theorem, we have
\beno
	\mathcal{Z}^{\epsilon,0}(\hat{f}) &=&  8 \pi (1-s)\epsilon^{2s-2} \int_{\R^{3}}\int_{0}^{\pi} (\sin \alpha)^{-1-2s} \mathrm{1}_{\sin \alpha \leq \epsilon}|\hat{f}(u) - \hat{f}(u \cos\alpha)|^{2} \cos\alpha d u d\alpha
\\&=&  8 \pi (1-s)\epsilon^{2s-2} \int_{\R^{3}}\int_{0}^{\pi} (\sin \alpha)^{-1-2s} \mathrm{1}_{\sin \alpha \leq \epsilon}
|f(u) - f(u / \cos\alpha)|^{2}\cos\alpha d u d\alpha
	\\&\lesssim& |W^{\epsilon}(D) f|^{2}_{L^{2}} + |W^{\epsilon} f|^{2}_{L^{2}},
\eeno
where we have used the change of variable $u \rightarrow u \cos \alpha$ and the estimate \eqref{Z-epsilon-0-result}
in the last inequality.
\end{rmk}

Now we are in a position to get $|W^{\epsilon}((-\Delta_{\mathbb{S}^{2}})^{\frac{1}{2}})f|^{2}_{L^{2}_{\gamma/2}}$ from $\mathcal{N}^{\epsilon,0}(g,f)$.

\begin{prop}\label{lowerboundpart2-general-g}
The following two estimates hold.
\ben\label{anisotropic-regularity-general-g}
\mathcal{N}^{\epsilon,0}(g,f) + |g|^{2}_{L^{2}_{1}}|W^{\epsilon}(D)f|^{2}_{L^{2}}+ |g|^{2}_{L^{2}} |W^{\epsilon}f|^{2}_{L^{2}} \gtrsim  |g|^{2}_{L^{2}} |W^{\epsilon}((-\Delta_{\mathbb{S}^{2}})^{\frac{1}{2}})f|^{2}_{L^{2}}.
\\ \label{anisotropic-regularity-general-g-up-bound}
\mathcal{N}^{\epsilon,0}(g,f) \lesssim  |g|^{2}_{L^{2}} |W^{\epsilon}((-\Delta_{\mathbb{S}^{2}})^{\frac{1}{2}})f|^{2}_{L^{2}} + |g|^{2}_{L^{2}_{1}}|W^{\epsilon}(D)f|^{2}_{L^{2}}+ |g|^{2}_{L^{2}} |W^{\epsilon}f|^{2}_{L^{2}}.
\een
\end{prop}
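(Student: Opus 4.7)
The plan is to transfer $\mathcal{N}^{\epsilon,0}(g,f)$ to the Fourier side so as to extract a factor of $|g|_{L^2}^2$, and then apply the geometric decomposition \eqref{sphere-radius} together with Lemma \ref{spherical-part} and Remark \ref{hat-f-also-valid}.

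First, I would perform the change of variables $(v,v_*)\mapsto(V,u)$ with $V=\tfrac{v+v_*}{2}$, $u=v-v_*$, which turns
\[
\mathcal{N}^{\epsilon,0}(g,f)=\iiint g(V-\tfrac{u}{2})^{2}\,\bigl|f(V+\tfrac{|u|\sigma}{2})-f(V+\tfrac{u}{2})\bigr|^{2}\,b^{\epsilon}(\cos\theta)\,dV\,du\,d\sigma.
\]
Viewing the inner $V$-integral as $\|gF\|_{L^{2}(dV)}^{2}$ with $F(V)=f(V+|u|\sigma/2)-f(V+u/2)$ and applying Plancherel together with the explicit Fourier representation $\hat F(\eta)=\hat f(\eta)\bigl(e^{2\pi i\eta\cdot|u|\sigma/2}-e^{2\pi i\eta\cdot u/2}\bigr)$, I expect to derive, after a careful rearrangement, an identity of the form
\[
\mathcal{N}^{\epsilon,0}(g,f)=|g|_{L^{2}}^{2}\int b^{\epsilon}\!\bigl(\tfrac{\xi}{|\xi|}\!\cdot\!\sigma\bigr)\,\bigl|\hat f(\xi)-\hat f(\xi^{+})\bigr|^{2}\,d\xi\,d\sigma\;+\;\mathcal{R}(g,f),
\]
in which the remainder $\mathcal{R}(g,f)$ accounts for the mismatch between the exact Plancherel-side kernel $|e^{2\pi i\eta\cdot|u|\sigma/2}-e^{2\pi i\eta\cdot u/2}|^{2}$ and the Bobylev-type target $|\hat f(\xi)-\hat f(\xi^{+})|^{2}$. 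Using $|e^{i\alpha}-e^{i\beta}|\le\min(2,|\alpha-\beta|)$ together with a Taylor expansion in the grazing regime $\sin(\theta/2)\le\epsilon$, and absorbing one factor of $|u|\le|v|+|v_{*}|$ via one extra $v_{*}$-moment, I expect $|\mathcal{R}(g,f)|\lesssim|g|_{L^{2}_{1}}^{2}|W^{\epsilon}(D)f|_{L^{2}}^{2}+|g|_{L^{2}}^{2}|W^{\epsilon}f|_{L^{2}}^{2}$.

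Next, I would insert the geometric identity \eqref{sphere-radius} into the principal term. The spherical piece is exactly $\mathcal{A}^{\epsilon}(f)$, which by Lemma \ref{spherical-part} is equivalent to $|W^{\epsilon}((-\Delta_{\mathbb{S}^{2}})^{1/2})f|_{L^{2}}^{2}+|f|_{L^{2}}^{2}$; the radial piece is $\mathcal{Z}^{\epsilon,0}(\hat f)$, which by Remark \ref{hat-f-also-valid} is controlled by $|W^{\epsilon}(D)f|_{L^{2}}^{2}+|W^{\epsilon}f|_{L^{2}}^{2}$. Applying $|a+b|^{2}\le 2(|a|^{2}+|b|^{2})$ for the upper bound and $|a+b|^{2}\ge\tfrac{1}{2}|a|^{2}-|b|^{2}$ for the lower bound, then combining with the remainder estimate on $\mathcal{R}$ (and absorbing $|f|_{L^{2}}^{2}\le|W^{\epsilon}f|_{L^{2}}^{2}$), will yield the upper bound \eqref{anisotropic-regularity-general-g-up-bound} and the lower bound \eqref{anisotropic-regularity-general-g}, respectively.

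The hard part will be Step~1: cleanly extracting the factor $|g|_{L^{2}}^{2}$ from $\mathcal{N}^{\epsilon,0}(g,f)$ while identifying $\int b^{\epsilon}|\hat f(\xi)-\hat f(\xi^{+})|^{2}d\xi d\sigma$ as the principal Fourier-side kernel, because the product $g(V-u/2)^{2}|F(V)|^{2}$ does not factor directly under Plancherel. In particular, explaining precisely why the weight $|g|_{L^{2}_{1}}$, rather than $|g|_{L^{2}}$, is needed in front of $|W^{\epsilon}(D)f|_{L^{2}}^{2}$ requires carefully tracking the one factor of $|u|$ arising from the phase mismatch, which can be absorbed only at the cost of one extra $v$-moment on $g$; isolating the anisotropic main term with only $|g|_{L^{2}}$ will rely on the fact that the spherical part corresponds to the leading $u$-independent structure of the kernel.
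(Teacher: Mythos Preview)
Your overall architecture---extract a factor $|g|_{L^{2}}^{2}$ on the Fourier side, apply the geometric decomposition \eqref{sphere-radius}, then invoke Lemma~\ref{spherical-part} and Remark~\ref{hat-f-also-valid}---is exactly the paper's, and your treatment of the principal term $\int b^{\epsilon}|\hat f(\xi)-\hat f(\xi^{+})|^{2}\,d\xi\,d\sigma$ via $|a+b|^{2}\ge\tfrac12|a|^{2}-|b|^{2}$ and $|a+b|^{2}\le 2|a|^{2}+2|b|^{2}$ matches the paper line for line.

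The gap is in Step~1. Writing the $V$-integral as $\|g(\,\cdot-u/2)F\|_{L^{2}(dV)}^{2}$ and ``applying Plancherel'' does not give what you want: Plancherel turns $\|gF\|_{L^{2}}^{2}$ into $\|\widehat{gF}\|_{L^{2}}^{2}=\|\hat g*\hat F\|_{L^{2}}^{2}$, which entangles $\hat g$ and $\hat f$ rather than separating them. There is no route from this convolution to a clean $|g|_{L^{2}}^{2}\int b^{\epsilon}|\hat f(\xi)-\hat f(\xi^{+})|^{2}$ plus a remainder of the form you describe; the ``phase mismatch'' heuristic you invoke does not arise, because you never actually obtain a product of Fourier transforms.

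The tool that does this job is \emph{Bobylev's formula}, which the paper applies directly:
\[
\mathcal{N}^{\epsilon,0}(g,f)=\frac{1}{(2\pi)^{3}}\int b^{\epsilon}\!\bigl(\tfrac{\xi}{|\xi|}\!\cdot\!\sigma\bigr)\Bigl(\widehat{g^{2}}(0)\,|\hat f(\xi)-\hat f(\xi^{+})|^{2}+2\Re\bigl[(\widehat{g^{2}}(0)-\widehat{g^{2}}(\xi^{-}))\hat f(\xi^{+})\overline{\hat f}(\xi)\bigr]\Bigr)d\xi\,d\sigma.
\]
Here $\widehat{g^{2}}(0)=|g|_{L^{2}}^{2}$ gives the principal term immediately, and the remainder is explicit rather than conjectural. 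One then writes $\widehat{g^{2}}(0)-\widehat{g^{2}}(\xi^{-})=\int(1-\cos(v\cdot\xi^{-}))g^{2}(v)\,dv$, uses $1-\cos(v\cdot\xi^{-})\lesssim\min\{|v|^{2}|\xi|^{2}\sin^{2}(\theta/2),1\}$, and after Cauchy--Schwarz and the change $\xi\to\xi^{+}$ obtains $|\mathcal{I}_{2}|\lesssim|g|_{L^{2}_{1}}^{2}|W^{\epsilon}(D)f|_{L^{2}}^{2}$ via Proposition~\ref{symbol} and the multiplicativity $W^{\epsilon}(|v||\xi|)\lesssim W^{\epsilon}(v)W^{\epsilon}(\xi)$. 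This is precisely where the weight $|g|_{L^{2}_{1}}$ (rather than $|g|_{L^{2}}$) enters: one factor of $|v|$ comes from the bound on $1-\cos(v\cdot\xi^{-})$, not from any ``phase mismatch'' in a Plancherel argument. Replace your Step~1 with Bobylev's formula and the rest of your proof goes through unchanged.
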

\begin{proof}
By Bobylev's formula, we have
\beno
\mathcal{N}^{\epsilon,0}(g,f) &=& \frac{1}{(2\pi)^{3}}\int b^{\epsilon}(\frac{\xi}{|\xi|} \cdot \sigma)(\widehat{g^{2}}(0)|\hat{f}(\xi) - \hat{f}(\xi^{+})|^{2} + 2\Re((\widehat{g^{2}}(0) - \widehat{g^{2}}(\xi^{-}))\hat{f}(\xi^{+})\overline{\hat{f}}(\xi)) d\xi d\sigma
\\ &:=& \frac{|g|^{2}_{L^{2}}}{(2\pi)^{3}}\mathcal{I}_{1} + \frac{2}{(2\pi)^{3}}\mathcal{I}_{2},
\eeno
where $\xi^{+} = \frac{\xi+|\xi|\sigma}{2}$ and $\xi^{-} = \frac{\xi-|\xi|\sigma}{2}$.
Thanks to the fact $\widehat{g^{2}}(0) - \widehat{g^{2}}(\xi^{-}) = \int(1-\cos(v \cdot \xi^{-}))g^{2}(v) dv$, we have
\beno
|\mathcal{I}_{2}| &=& |\int b^{\epsilon}(\frac{\xi}{|\xi|} \cdot \sigma) (1-\cos(v \cdot \xi^{-}))g^{2}(v) \Re(\hat{f}(\xi^{+})\overline{\hat{f}}(\xi)) d\sigma d\xi dv |
\\ &\leq& (\int b^{\epsilon}(\frac{\xi}{|\xi|} \cdot \sigma) (1-\cos(v \cdot \xi^{-}))g^{2}(v) |\hat{f}(\xi^{+})|^{2} d\sigma d\xi dv)^{\frac{1}{2}}
\\&& \times (\int b^{\epsilon}(\frac{\xi}{|\xi|} \cdot \sigma) (1-\cos(v \cdot \xi^{-}))g^{2}(v) |\overline{\hat{f}}(\xi)|^{2} d\sigma d\xi dv)^{\frac{1}{2}}.
\eeno
Observe that
$
1-\cos(v \cdot \xi^{-}) \lesssim |v|^{2}|\xi^{-}|^{2} = \frac{1}{4}|v|^{2}|\xi|^{2}|\frac{\xi}{|\xi|} - \sigma|^{2} \sim |v|^{2}|\xi^{+}|^{2}|\frac{\xi^{+}}{|\xi^{+}|} - \sigma|^{2},
$
thus
$
1-\cos(v \cdot \xi^{-}) \lesssim \min\{|v|^{2}|\xi|^{2}|\frac{\xi}{|\xi|} - \sigma|^{2},1\} \sim  \min\{|v|^{2}|\xi^{+}|^{2}|\frac{\xi^{+}}{|\xi^{+}|} - \sigma|^{2},1\}.
$
Note that
$
\frac{\xi}{|\xi|} \cdot \sigma  = 2(\frac{\xi^{+}}{|\xi^{+}|} \cdot \sigma)^{2} - 1,
$
by the change of variable from $\xi$ to $\xi^{+}$, and the property $W^{\epsilon}(|v\|\xi|) \lesssim W^{\epsilon}(|v|)W^{\epsilon}(|\xi|)$, we have
\ben \label{upper-I-2-another}
|\mathcal{I}_{2}| \lesssim \int (W^{\epsilon})^{2}(|v\|\xi|)|\hat{f}(\xi)|^{2}g^{2}(v)dvd\xi
\lesssim |W^{\epsilon}g|^{2}_{L^{2}} |W^{\epsilon}(D)f|^{2}_{L^{2}} \lesssim |g|^{2}_{L^{2}_{1}} |W^{\epsilon}(D)f|^{2}_{L^{2}}.
\een
Now we study the lower bound of $\mathcal{I}_{1}$. By the geometric decomposition
\ben \label{geo-deco-frequency-space-another}
\hat{f}(\xi) - \hat{f}(\xi^{+}) = \hat{f}(\xi) - \hat{f}(|\xi|\frac{\xi^{+}}{|\xi^{+}|})+ \hat{f}(|\xi|\frac{\xi^{+}}{|\xi^{+}|}) - \hat{f}(\xi^{+}),
\een
we have
\beno
\mathcal{I}_{1} &=& \int b^{\epsilon}(\frac{\xi}{|\xi|} \cdot \sigma)|\hat{f}(\xi) - \hat{f}(\xi^{+})|^{2} d\xi d\sigma
\\&\geq& \frac{1}{2} \int b^{\epsilon}(\frac{\xi}{|\xi|} \cdot \sigma)|\hat{f}(\xi) - \hat{f}(|\xi|\frac{\xi^{+}}{|\xi^{+}|})|^{2} d\xi d\sigma
- \int b^{\epsilon}(\frac{\xi}{|\xi|} \cdot \sigma)|\hat{f}(|\xi|\frac{\xi^{+}}{|\xi^{+}|}) - \hat{f}(\xi^{+})|^{2} d\xi d\sigma
\\&:=& \frac{1}{2}\mathcal{I}_{1,1} - \mathcal{I}_{1,2}.
\eeno
By Lemma \ref{spherical-part}, we have
\ben \label{upper-lower-I-11-another}
\mathcal{I}_{1,1}+|f|_{L^2}^2 \sim |W^{\epsilon}((-\Delta_{\mathbb{S}^{2}})^{\frac{1}{2}})f|^{2}_{L^{2}}+|f|^{2}_{L^{2}}.
\een
By  Remark \ref{hat-f-also-valid}, there holds
\ben \label{upper-I-12-another} \mathcal{I}_{1,2} \lesssim |W^{\epsilon}(D)f|^{2}_{L^{2}} + |W^{\epsilon}f|^{2}_{L^{2}}.\een
Combining upper bounds estimates \eqref{upper-I-2-another}, \eqref{upper-I-12-another} and  lower bound estimate \eqref{upper-lower-I-11-another}, we get \eqref{anisotropic-regularity-general-g}.
On the other hand, by $\mathcal{N}^{\epsilon,0}(g,f) \lesssim \mathcal{I}_{1,1} + \mathcal{I}_{1,2} + |\mathcal{I}_{2}|$, one can get \eqref{anisotropic-regularity-general-g-up-bound} by the upper bounds estimates \eqref{upper-I-2-another}, \eqref{upper-lower-I-11-another}, \eqref{upper-I-12-another}.
\end{proof}

\subsubsection{Gain of  anisotropic regularity from $\mathcal{N}^{\epsilon,\gamma}(\mu^{\frac{1}{2}},f)$} \label{gain-regularity}
The strategy is to reduce $\mathcal{N}^{\epsilon,\gamma}$ to $\mathcal{N}^{\epsilon,0}$ so that the estimates in previous parts can be used.

For technical reasons, we  define
\ben \label{N-tilde-epsilon-gamma}
\tilde{\mathcal{N}}^{\epsilon,\gamma}(g,h) := \int_{\R^6\times\mathbb{S}^2} b^{\epsilon}(\cos\theta) \langle v-v_{*} \rangle^{\gamma} g^{2}_{*} (h^{\prime}-h)^{2} d\sigma dv dv_{*}.\een
Moreover, we need to consider the ``velocity regular" version $\tilde{\mathcal{N}}^{\epsilon,\gamma}$ of $\mathcal{N}^{\epsilon,\gamma}$ to
reduce $\tilde{\mathcal{N}}^{\epsilon,\gamma}$ to $\mathcal{N}^{\epsilon,0}$ in the following lemma.
\begin{lem}\label{reduce-gamma-to-0-no-sigularity} Let  $\gamma \in \mathbb{R}$, then
\begin{eqnarray}\label{gamma-to-0-no-sigularity}
 &&\frac{1}{2}C_{1}\mathcal{N}^{\epsilon,0}(W_{-|\gamma|/2}g,W_{\gamma/2}f) - C_{3}|g|^{2}_{L^{2}_{|\gamma/2+1|}}|f|^{2}_{L^{2}_{\gamma/2}}
 \\&\leq& \tilde{\mathcal{N}}^{\epsilon,\gamma}(g,f)  \leq  2C_{2}\mathcal{N}^{\epsilon,0}(W_{|\gamma|/2}g,W_{\gamma/2}f) + 2C_{3}|g|^{2}_{L^{2}_{|\gamma/2+1|}}|f|^{2}_{L^{2}_{\gamma/2}}, \nonumber
\end{eqnarray}
where $C_{1},C_{2},C_{3}$ are constants depending only on $\gamma$ that can be chosen as
some generic constants if $-3 \leq \gamma \leq 0$.
\end{lem}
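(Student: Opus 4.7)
The strategy is to transfer the kernel weight $\langle v-v_*\rangle^\gamma$ onto the functions $g$ and $f$, at the price of a Taylor remainder that sits in ordinary weighted $L^2$. Set $F := W_{\gamma/2}f$, so that $f = W_{-\gamma/2}F$. The starting point is the algebraic identity
\begin{equation*}
 f(v')-f(v) = W_{-\gamma/2}(v)\bigl(F(v')-F(v)\bigr) + F(v')\bigl(W_{-\gamma/2}(v')-W_{-\gamma/2}(v)\bigr).
\end{equation*}
Squaring and applying $(A+B)^2\leq 2A^2+2B^2$ (for the upper bound) or $(A+B)^2\geq \tfrac12 A^2-B^2$ (for the lower bound) splits $(f'-f)^2$ into a main piece $W_{-\gamma}(v)(F'-F)^2$ and a remainder $\mathcal R := (F')^2\bigl(W_{-\gamma/2}(v')-W_{-\gamma/2}(v)\bigr)^2$.

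\noindent\textbf{Main piece.} Peetre's inequality $\langle v-v_*\rangle^2\leq 2\langle v\rangle^2\langle v_*\rangle^2$ together with its reverse $\langle v\rangle^2\leq 2\langle v-v_*\rangle^2\langle v_*\rangle^2$ yields, for $\gamma\le 0$, the two-sided weight-transfer bound
\begin{equation*}
 2^{-|\gamma|/2}\langle v_*\rangle^{-|\gamma|} \leq \langle v-v_*\rangle^\gamma W_{-\gamma}(v) \leq 2^{|\gamma|/2}\langle v_*\rangle^{|\gamma|}.
\end{equation*}
Multiplying the pointwise decomposition of $(f'-f)^2$ by $b^\epsilon(\cos\theta)\langle v-v_*\rangle^\gamma g_*^2$ and integrating in $(v,v_*,\sigma)$, the main piece becomes exactly $\mathcal N^{\epsilon,0}(W_{\pm|\gamma|/2}g,F)$ up to the absolute constants $C_1=2^{-|\gamma|/2}$ (for the lower bound) and $C_2=2^{|\gamma|/2}$ (for the upper bound).

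\noindent\textbf{Remainder.} By a first order Taylor expansion of $W_{-\gamma/2}$ along the segment from $v$ to $v'$,
\begin{equation*}
 \bigl|W_{-\gamma/2}(v')-W_{-\gamma/2}(v)\bigr| \lesssim |\gamma|\,\bigl(\langle v\rangle+\langle v'\rangle\bigr)^{|\gamma|/2-1}\,|v-v_*|\,\sin(\theta/2),
\end{equation*}
which, combined with the elementary inequality $\langle u\rangle^{|\gamma|/2-1}\leq \langle u\rangle^{|\gamma+2|/2}$ (verified separately in the subcases $\gamma\leq -2$ and $-2<\gamma\leq 0$), gives
\begin{equation*}
 \mathcal R \lesssim (F')^2\bigl(\langle v\rangle^{|\gamma+2|}+\langle v'\rangle^{|\gamma+2|}\bigr)|v-v_*|^2\sin^2(\theta/2).
\end{equation*}
The spherical integral $\int b^\epsilon(\cos\theta)\sin^2(\theta/2)\,d\sigma = 4\pi$ is uniformly bounded in $\epsilon$ by \eqref{order-2}. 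Using $\langle v-v_*\rangle^\gamma\leq 1$ (since $\gamma\leq 0$) and $(F')^2=\langle v'\rangle^\gamma (f')^2$, then performing the pre--post collisional change of variables $v\leftrightarrow v'$ (bounded Jacobian on the support $\theta\leq\pi/2$) together with the bounds $|v-v_*|^2\lesssim\langle v\rangle^2+\langle v_*\rangle^2$ and $\langle v'\rangle\lesssim\langle v\rangle+\langle v_*\rangle$, yields
\begin{equation*}
 \int b^\epsilon\langle v-v_*\rangle^\gamma g_*^2\,\mathcal R\,d\sigma\,dv\,dv_* \lesssim C_3\,|g|^2_{L^2_{|\gamma/2+1|}}|f|^2_{L^2_{\gamma/2}}.
\end{equation*}

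\noindent\textbf{Conclusion and main obstacle.} Combining the main piece and the remainder produces the desired two-sided inequality. All the constants $C_1,C_2,C_3$ depend on $\gamma$ only through $2^{\pm|\gamma|/2}$ and the Taylor coefficient $|\gamma|$, so they may be chosen as generic constants uniformly for $\gamma\in[-3,0]$. The main delicate step is the remainder estimate: after the Taylor expansion the weight $\langle v\rangle^{|\gamma+2|}+\langle v'\rangle^{|\gamma+2|}$ is not naturally attached to either $v$ or $v_*$, and one must redistribute it via the change of variables and Peetre-type bounds so that the $v$-integral carries only the weight $\langle v\rangle^\gamma$ (producing $|f|^2_{L^2_{\gamma/2}}$) while the $v_*$-integral picks up the weight $\langle v_*\rangle^{|\gamma+2|}$ (producing $|g|^2_{L^2_{|\gamma/2+1|}}$). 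The subcases $\gamma\leq -2$ and $\gamma>-2$ must be handled separately because $|\gamma+2|$ switches between $-\gamma-2$ and $\gamma+2$.
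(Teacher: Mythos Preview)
Your main-piece argument is essentially the paper's (it uses the mirror decomposition with $(W_{-\gamma/2})'$ in front of $F'-F$, you use $W_{-\gamma/2}(v)$; both give the same Peetre bounds on $\langle v-v_*\rangle^\gamma W_{-\gamma}$). The genuine gap is in the remainder. Once you throw away $\langle v-v_*\rangle^\gamma\le 1$ and bound $|v-v_*|^2\lesssim\langle v\rangle^2+\langle v_*\rangle^2$ separately, the worst combination coming out of $(\langle v\rangle^{|\gamma+2|}+\langle v'\rangle^{|\gamma+2|})(\langle v\rangle^2+\langle v_*\rangle^2)$ leaves an excess weight of order $\langle v\rangle^{|\gamma+2|+2}$ on $(f')^2$; after the change of variable this is $\langle v\rangle^{\gamma+|\gamma+2|+2}f^2$, which for $-3\le\gamma\le 0$ is always strictly heavier than the $\langle v\rangle^\gamma$ you need for $|f|_{L^2_{\gamma/2}}^2$. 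Your ``elementary inequality'' $\langle u\rangle^{|\gamma|/2-1}\le\langle u\rangle^{|\gamma+2|/2}$ also hurts you in the range $-2<\gamma\le 0$: there the left side is $\le 1$, and replacing it by the larger $\langle u\rangle^{(\gamma+2)/2}$ destroys exactly the cancellation you need.

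The paper avoids all this in two ways. First, it uses the other splitting, so the remainder carries $F(v)^2$ rather than $(F')^2$ and no change of variable is needed. Second, and this is the point you are missing, it does \emph{not} separate the three factors: it keeps the Taylor weight at the intermediate point $v(\kappa)$ and bounds the whole product in one stroke,
\[
\langle v-v_*\rangle^\gamma\,|v-v_*|^2\,\langle v(\kappa)\rangle^{-\gamma-2}
\;\lesssim\;\langle v(\kappa)-v_*\rangle^{\gamma+2}\,\langle v(\kappa)\rangle^{-\gamma-2}
\;\lesssim\;\langle v_*\rangle^{|\gamma+2|},
\]
using $|v-v_*|\sim|v(\kappa)-v_*|$ and Peetre at the \emph{same} point $v(\kappa)$ (both sign cases of $\gamma+2$). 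Only after this cancellation does one integrate $b^\epsilon\sin^2(\theta/2)\,d\sigma$. If you insist on your decomposition with $(F')^2$, the same one-line bound still works and then a single $v\to v'$ change (bounded Jacobian, as in the paper's \eqref{compute-angular-integral}) finishes; but the sequence of crude bounds you wrote does not.
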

\begin{proof} Set $F= W_{\gamma/2}f$. By definition, we have
\beno  \tilde{\mathcal{N}}^{\epsilon,\gamma}(g,f) = \int_{\R^6\times\mathbb{S}^2} b^{\epsilon}(\cos\theta) \langle v-v_{*} \rangle^{\gamma}  g^{2}_{*} ((W_{-\gamma/2}F)^{\prime}-W_{-\gamma/2}F)^{2} d\sigma dv dv_{*}.
\eeno
Make the decomposition:
\beno
(W_{-\gamma/2}F)^{\prime}-W_{-\gamma/2}F = (W_{-\gamma/2})^{\prime} (F^{\prime}-F) + F(W_{-\gamma/2}^{\prime}-W_{-\gamma/2}) :=  A + B.
\eeno
From $\frac{1}{2}A^{2} - B^{2} \leq (A+B)^{2} \leq 2A^{2}+2B^{2}$, we get
\ben  \label{lower-upper-bound-basic}
\frac{1}{2}\mathcal{I}_{1}- \mathcal{I}_{2} \leq \tilde{\mathcal{N}}^{\epsilon,\gamma}(g,f) \leq 2(\mathcal{I}_{1} + \mathcal{I}_{2}),
\een
where
\beno
\mathcal{I}_{1} &:=& \int_{\R^6\times\mathbb{S}^2} b^{\epsilon}(\cos\theta) \langle v-v_{*} \rangle^{\gamma}  g^{2}_{*} W_{-\gamma}^{\prime} (F^{\prime}-F)^{2} d\sigma dv dv_{*},\\
\mathcal{I}_{2} &:=& \int_{\R^6\times\mathbb{S}^2} b^{\epsilon}(\cos\theta) \langle v-v_{*} \rangle^{\gamma}  g^{2}_{*}F^{2} (W_{-\gamma/2}^{\prime}-W_{-\gamma/2})^{2} d\sigma dv dv_{*}.
\eeno
Thanks to $|v_{*}-v|\sim|v_{*}-v^{\prime}|, \langle v_{*} \rangle^{-|\gamma|} \lesssim \langle v_{*}-v^{\prime} \rangle^{\gamma} \langle v^{\prime} \rangle^{-\gamma} \lesssim \langle v_{*} \rangle^{|\gamma|}$, we get
\ben \label{mathcal-I-1}
\mathcal{N}^{\epsilon,0}(W_{-|\gamma|/2}g,W_{\gamma/2}f) \lesssim \mathcal{I}_{1} \lesssim  \mathcal{N}^{\epsilon,0}(W_{|\gamma|/2}g,W_{\gamma/2}f).\een
By Taylor expansion, one has $(W_{-\gamma/2}^{\prime}-W_{-\gamma/2})^{2} \lesssim \int \langle v(\kappa) \rangle^{-\gamma-2}|v-v_{*}|^{2}\sin^{2}(\theta/2) d\kappa$. Note that
\beno\langle v-v_{*} \rangle^{\gamma} |v-v_{*}|^{2} \langle v(\kappa) \rangle^{-\gamma-2} \lesssim
 \langle v-v_{*} \rangle^{\gamma+2} \langle v(\kappa) \rangle^{-\gamma-2} \lesssim
 \langle v(\kappa)-v_{*} \rangle^{\gamma+2} \langle v(\kappa) \rangle^{-\gamma-2} \lesssim \langle v_{*} \rangle^{|\gamma+2|}. \eeno
Then by \eqref{order-2},
we get
\ben  \label{mathcal-I-2}
\mathcal{I}_{2} \lesssim  \int g^{2}_{*}\langle v_{*} \rangle^{|\gamma+2|} F^{2} dv dv_{*} \lesssim |g|^{2}_{L^{2}_{|\gamma/2+1|}}|F|^{2}_{L^{2}}.\een
Plugging \eqref{mathcal-I-1} and \eqref{mathcal-I-2} into \eqref{lower-upper-bound-basic}, we get \eqref{gamma-to-0-no-sigularity}. In addition, one can track the proof for the dependence of $C_{1},C_{2},C_{3}$ on $\gamma$.
\end{proof}

We are now ready to give lower bound estimate of $\mathcal{N}^{\epsilon,\gamma}(\mu^{\frac{1}{2}},f)$.

\begin{prop}\label{lowerboundpart1-gamma-eta} The following two estimates are valid.
\ben \label{sobolev-regularity-gamma-eta} \mathcal{N}^{\epsilon,\gamma}(\mu^{\frac{1}{2}},f) +|f|_{L^2_{\gamma/2}}^2\gtrsim |W^\epsilon(D)W_{\gamma/2}f|_{L^2}^2.
\\ \label{anisotropic-regularity-gamma-eta} \mathcal{N}^{\epsilon,\gamma}(\mu^{\frac{1}{2}},f) +  |W^\epsilon(D)W_{\gamma/2}f|_{L^2}^2 + |W^\epsilon W_{\gamma/2}f|_{L^2}^2\gtrsim |W^{\epsilon}((-\Delta_{\mathbb{S}^{2}})^{\frac{1}{2}})W_{\gamma/2}f|^{2}_{L^{2}}. \een
By suitable combination, we have
\ben \label{full-regularity-gamma-eta} \mathcal{N}^{\epsilon,\gamma}(\mu^{\frac{1}{2}},f) + |W^\epsilon W_{\gamma/2}f|_{L^2}^2\gtrsim |W^{\epsilon}((-\Delta_{\mathbb{S}^{2}})^{\frac{1}{2}})W_{\gamma/2}f|^{2}_{L^{2}} +  |W^\epsilon(D)W_{\gamma/2}f|_{L^2}^2. \een
\end{prop}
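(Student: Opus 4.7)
\textbf{Proof plan for Proposition \ref{lowerboundpart1-gamma-eta}.}

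The plan is to reduce the functional $\mathcal{N}^{\epsilon,\gamma}(\mu^{\frac{1}{2}},f)$, which involves the singular factor $|v-v_*|^\gamma$, to the Maxwell-molecule functional $\mathcal{N}^{\epsilon,0}(g,h)$ studied in Subsections \ref{gain-S-regularity}--\ref{gain-A-regularity}, with a suitably modified ``weight function'' $g$ in place of $\mu^{\frac{1}{2}}$. Once this is done, Lemma \ref{lowerboundpart1-general-g} will yield the frequency regularity \eqref{sobolev-regularity-gamma-eta} and Proposition \ref{lowerboundpart2-general-g} will yield the anisotropic regularity \eqref{anisotropic-regularity-gamma-eta}; the combined estimate \eqref{full-regularity-gamma-eta} follows by using \eqref{sobolev-regularity-gamma-eta} to absorb the $|W^\epsilon(D)W_{\gamma/2}f|_{L^2}^2$ term appearing on the right-hand side of \eqref{anisotropic-regularity-gamma-eta}.

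\textbf{Step 1 (desingularization).} Since $-3<\gamma\leq 0$, one has the pointwise inequality $|v-v_*|^\gamma\geq \langle v-v_*\rangle^\gamma$, so dropping this factor only strengthens the lower bound:
\[
\mathcal{N}^{\epsilon,\gamma}(\mu^{\frac{1}{2}},f)\ \geq\ \tilde{\mathcal{N}}^{\epsilon,\gamma}(\mu^{\frac{1}{2}},f),
\]
with $\tilde{\mathcal{N}}^{\epsilon,\gamma}$ as in \eqref{N-tilde-epsilon-gamma}. I would then invoke the lower bound half of Lemma \ref{reduce-gamma-to-0-no-sigularity} with $g=\mu^{\frac{1}{2}}$ to obtain
\[
\tilde{\mathcal{N}}^{\epsilon,\gamma}(\mu^{\frac{1}{2}},f)\ \gtrsim\ \mathcal{N}^{\epsilon,0}\!\bigl(W_{-|\gamma|/2}\mu^{\frac{1}{2}},\,W_{\gamma/2}f\bigr)\ -\ C\,|\mu^{\frac{1}{2}}|^{2}_{L^{2}_{|\gamma/2+1|}}\,|f|^{2}_{L^{2}_{\gamma/2}}.
\]
The weighted Maxwellian $W_{-|\gamma|/2}\mu^{\frac{1}{2}}$ is still a nonzero Schwartz function, so all of the hypotheses $|g^{2}|_{L^{1}}\geq\delta$ and $|g^{2}|_{L^{1}_{1}}+|g^{2}|_{L\log L}\leq\lambda$ required in Lemma \ref{lowerboundpart1-general-g} and Proposition \ref{lowerboundpart2-general-g} are satisfied with explicit constants depending only on $\gamma$.

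\textbf{Step 2 (applying the $\gamma=0$ estimates).} For \eqref{sobolev-regularity-gamma-eta} I apply Lemma \ref{lowerboundpart1-general-g} to the pair $\bigl(W_{-|\gamma|/2}\mu^{\frac{1}{2}},\ W_{\gamma/2}f\bigr)$, obtaining
\[
\mathcal{N}^{\epsilon,0}\!\bigl(W_{-|\gamma|/2}\mu^{\frac{1}{2}},W_{\gamma/2}f\bigr)+|W_{\gamma/2}f|_{L^{2}}^{2}\ \gtrsim\ |W^{\epsilon}(D)W_{\gamma/2}f|_{L^{2}}^{2},
\]
and combine with Step 1, noting $|W_{\gamma/2}f|_{L^{2}}^{2}=|f|_{L^{2}_{\gamma/2}}^{2}$. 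For \eqref{anisotropic-regularity-gamma-eta} I instead apply \eqref{anisotropic-regularity-general-g} from Proposition \ref{lowerboundpart2-general-g} to the same pair, which produces the anisotropic norm $|W^{\epsilon}((-\Delta_{\mathbb{S}^{2}})^{\frac{1}{2}})W_{\gamma/2}f|_{L^{2}}^{2}$ on the right with $|W^{\epsilon}(D)W_{\gamma/2}f|_{L^{2}}^{2}+|W^{\epsilon}W_{\gamma/2}f|_{L^{2}}^{2}$ as error terms; combining again with Step 1 and bounding $|f|_{L^{2}_{\gamma/2}}^{2}\leq |W^{\epsilon}W_{\gamma/2}f|_{L^{2}}^{2}$ yields \eqref{anisotropic-regularity-gamma-eta}.

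\textbf{Step 3 (combination).} The final estimate \eqref{full-regularity-gamma-eta} is obtained by eliminating $|W^{\epsilon}(D)W_{\gamma/2}f|_{L^{2}}^{2}$ from the right-hand side of \eqref{anisotropic-regularity-gamma-eta} using \eqref{sobolev-regularity-gamma-eta}: since $|f|_{L^{2}_{\gamma/2}}^{2}\leq |W^{\epsilon}W_{\gamma/2}f|_{L^{2}}^{2}$, estimate \eqref{sobolev-regularity-gamma-eta} gives $|W^{\epsilon}(D)W_{\gamma/2}f|_{L^{2}}^{2}\lesssim \mathcal{N}^{\epsilon,\gamma}(\mu^{\frac{1}{2}},f)+|W^{\epsilon}W_{\gamma/2}f|_{L^{2}}^{2}$; plugging this into \eqref{anisotropic-regularity-gamma-eta} and adding back a small multiple of \eqref{sobolev-regularity-gamma-eta} produces both the anisotropic and frequency-weighted norms on the right with only $\mathcal{N}^{\epsilon,\gamma}(\mu^{\frac{1}{2}},f)$ and $|W^{\epsilon}W_{\gamma/2}f|_{L^{2}}^{2}$ on the left.

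The main technical subtlety is Step 1: the $\mathcal{N}^{\epsilon,0}$ estimates require control on $g$ in $L^{2}_{1}$ and $L\log L$, which is why the $W_{|\gamma+2|/2}$-type tails arising from commuting $W_{\gamma/2}$ through the pre-post collisional difference in Lemma \ref{reduce-gamma-to-0-no-sigularity} must be absorbed by the Gaussian $\mu^{\frac{1}{2}}$. Since $\mu$ has all velocity moments, this absorption is routine, but it is the only place where it matters that the ``weight function'' is actually $\mu^{\frac{1}{2}}$ rather than a general $g$; all other steps are uniform in $\epsilon$ thanks to the $\epsilon$-free constants produced by Lemma \ref{reduce-gamma-to-0-no-sigularity}, Lemma \ref{lowerboundpart1-general-g} and Proposition \ref{lowerboundpart2-general-g}.
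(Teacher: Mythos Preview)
Your proposal is correct and follows essentially the same route as the paper: desingularize via $|v-v_*|^\gamma\geq\langle v-v_*\rangle^\gamma$, apply the lower bound of Lemma \ref{reduce-gamma-to-0-no-sigularity} with $g=\mu^{1/2}$ to reduce to $\mathcal{N}^{\epsilon,0}(W_{-|\gamma|/2}\mu^{1/2},W_{\gamma/2}f)$, then invoke Lemma \ref{lowerboundpart1-general-g} and Proposition \ref{lowerboundpart2-general-g}. The paper additionally lowers the weight once more to $W_{-3/2}\mu^{1/2}$ (using monotonicity of $\mathcal{N}^{\epsilon,0}$ in $g^2$ and $\gamma/2\geq -3/2$) so that the constants coming out of the $\gamma=0$ lemmas are uniform over the whole range $-3<\gamma\leq 0$, but this is a cosmetic refinement and your version with $W_{-|\gamma|/2}\mu^{1/2}$ works just as well.
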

\begin{proof} Since $\gamma \leq  0$, then $|v-v_{*}|^{\gamma} \geq \langle v-v_{*} \rangle^{\gamma}$, and thus $\mathcal{N}^{\epsilon,\gamma}(g,f) \geq \tilde{\mathcal{N}}^{\epsilon,\gamma}(g,f)$. Then as a direct result of Lemma \ref{reduce-gamma-to-0-no-sigularity}
with $\eta=0, g=\mu^{\frac{1}{2}}$, we have
\ben \label{move-weight-in-side}
\mathcal{N}^{\epsilon,\gamma}(\mu^{\frac{1}{2}},f) +|f|^{2}_{L^{2}_{\gamma/2}}  \gtrsim  \mathcal{N}^{\epsilon,0}(W_{\gamma/2}\mu^{\frac{1}{2}},W_{\gamma/2}f) \geq \mathcal{N}^{\epsilon,0}(W_{-\frac{3}{2}}\mu^{\frac{1}{2}},W_{\gamma/2}f).
\een
Note that $|W_{-3} \mu|_{L^{1}}$ and $|W_{-3} \mu|_{L^{1}_{1}} + |W_{-3} \mu|_{L\mathrm{log}L}$ are generic  constants. Then
according to Lemma \ref{lowerboundpart1-general-g}, we have
\beno \mathcal{N}^{\epsilon,0}(W_{-\frac{3}{2}}\mu^{\frac{1}{2}},W_{\gamma/2}f)
+ |f|^{2}_{L^{2}_{\gamma/2}} \gtrsim |W^{\epsilon}(D)W_{\gamma/2}f|^{2}_{L^{2}}.
\eeno
From which together with \eqref{move-weight-in-side}, we get \eqref{sobolev-regularity-gamma-eta}.

Taking $g=W_{-\frac{3}{2}}\mu^{\frac{1}{2}}$ in \eqref{anisotropic-regularity-general-g} of Proposition \ref{lowerboundpart2-general-g}, we have
\beno \mathcal{N}^{\epsilon,0}(W_{-\frac{3}{2}}\mu^{\frac{1}{2}},W_{\gamma/2}f)
+  |W^\epsilon(D)W_{\gamma/2}f|_{L^2}^2 + |W^\epsilon W_{\gamma/2}f|_{L^2}^2\gtrsim |W^{\epsilon}((-\Delta_{\mathbb{S}^{2}})^{\frac{1}{2}})W_{\gamma/2}f|^{2}_{L^{2}}.
\eeno
From which together with \eqref{move-weight-in-side},
we get \eqref{anisotropic-regularity-gamma-eta}. The proof is completed.
\end{proof}

\subsubsection{Coercivity estimate} \label{coercivity}
We are ready to prove coercivity estimate of $\mathcal{L}^{\epsilon}$ in Theorem \ref{coercivity-structure}.
\begin{proof}[Proof of Theorem \ref{coercivity-structure}] By combining Proposition \ref{lowerboundpart1} and \eqref{full-regularity-gamma-eta} in Proposition \ref{lowerboundpart1-gamma-eta},  we get
\ben \label{two-parts-together-N-N}
&& \mathcal{N}^{\epsilon,\gamma}(\mu^{\frac{1}{2}},f) + \mathcal{N}^{\epsilon,\gamma}(f,\mu^{\frac{1}{2}}) + |f|^{2}_{L^{2}_{\gamma/2}}
\\&\gtrsim&  |W^{\epsilon}((-\Delta_{\mathbb{S}^{2}})^{\frac{1}{2}})W_{\gamma/2}f|_{L^2}^2
+ |W^{\epsilon}(D)W_{\gamma/2}f|_{L^2}^2 + |W^{\epsilon}W_{\gamma/2}f|_{L^2}^2=|f|_{\epsilon,\gamma/2}^2 . \nonumber
\een
Observe that $\mathcal{N}^{\epsilon,\gamma}(\mu^{\frac{1}{2}},f)
+ \mathcal{N}^{\epsilon,\gamma}(f,\mu^{\frac{1}{2}})$ corresponds to the anisotropic norm $|||f|||^{2}$ introduced in \cite{alexandre2012boltzmann}. Recalling \eqref{DefLep}, we have
\ben \label{definition-L1-L2}
\mathcal{L}^{\epsilon} = \mathcal{L}^{\epsilon}_1+\mathcal{L}^{\epsilon}_2, \quad
\mathcal{L}^{\epsilon}_1 g:= -\Gamma^{\epsilon}(\mu^{\frac{1}{2}},g),  \quad \mathcal{L}^{\epsilon}_2 g :=- \Gamma^{\epsilon}(g, \mu^{\frac{1}{2}}).
\een
By Proposition 2.16 in \cite{alexandre2012boltzmann} and Corollary \ref{cancellation-to-Sob-norm}, there holds
\ben \label{L-1-dominate}
\langle \mathcal{L}^{\epsilon}_{1} f,f \rangle \geq \frac{1}{10}\left(\mathcal{N}^{\epsilon,\gamma}(\mu^{\frac{1}{2}},f)
+ \mathcal{N}^{\epsilon,\gamma}(f,\mu^{\frac{1}{2}}) \right) - C |f|_{L^2_{\gamma/2}}^2.
\een
By Lemma \ref{l2-full-estimate-geq-eta} in the next section, we have
\ben \label{L-2-lower}
|\langle \mathcal{L}^{\epsilon}_{2} f,f \rangle| \lesssim |\mu^{\f18}f|_{L^2}^2 \lesssim |f|_{L^2_{\gamma/2}}^2.
\een
Combining \eqref{L-1-dominate}, \eqref{L-2-lower} and \eqref{two-parts-together-N-N} completes  the proof of the theorem.
\end{proof}

\subsection{Dissipation in microscopic space} In this subsection, we consider the dissipative property $\mathcal{L}^{\epsilon}$ in the microscopic space. This is also referred as the ``spectral gap" estimate.
Recall the kernel space $\mathrm{ker}$ defined in \eqref{kernel-space}.
An orthonormal basis of $\mathrm{ker}$ can be chosen as $\{\mu^{\frac{1}{2}}, \mu^{\frac{1}{2}}v_1, \mu^{\frac{1}{2}}v_2,\mu^{\frac{1}{2}}v_3, \mu^{\frac{1}{2}}(|v|^2-3) /\sqrt{6} \}
:= \{e_{j}\}_{1 \leq j \leq 5}$. The projection operator $\mathbb{P}$ on the kernel space
is defined as follows:
\ben\label{DefProj} \mathbb{P}f:=\sum_{j=1}^{5}\langle f, e_{j}\rangle e_{j} =(a+b\cdot v+c|v|^2)\mu^{\frac{1}{2}}, \een
where for $1\le i\le 3$,
\beno
 a=\int_{\R^3} (\frac{5}{2}-\frac{|v|^{2}}{2})\mu^{\frac{1}{2}}fdv, \quad  b_i=\int_{\R^3} v_i\mu^{\frac{1}{2}}fdv, \quad c=\int_{\R^3} (\frac{|v|^2}{6}-\frac{1}{2})\mu^{\frac{1}{2}}fdv.
\eeno

The dissipative property of $\mathcal{L}^{\epsilon}$ in the $\mathrm{ker}^{\perp}$ space is
given in the following theorem.
\begin{thm}\label{micro-dissipation} Let $-3 < \gamma \leq 0$. There are two generic constants $\epsilon_{0}, \lambda_{0}>0$
such that for any $0 < \epsilon \leq \epsilon_{0}$, it holds
\beno \langle  \mathcal{L}^{\epsilon}g, g\rangle \geq \lambda_{0}|g-\mathbb{P}g|^{2}_{\epsilon,\gamma/2}.
\eeno
\end{thm}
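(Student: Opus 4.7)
Following the authors' hint, the strategy is to combine the Wang--Uhlenbeck explicit spectral gap in the Maxwell-molecule case $\gamma=0$ with the uniform coercivity estimate of Theorem~\ref{coercivity-structure}. The argument naturally splits into two stages.

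\emph{Stage 1 (coercive upgrade from a weak gap).} First I would reduce the assertion to a weak spectral gap in $L^2_{\gamma/2}$: there exists $c_0>0$, uniform in $\epsilon\in(0,\epsilon_0]$, such that
\[
\langle \mathcal{L}^\epsilon g, g\rangle \geq c_0\,|g-\mathbb{P}g|^2_{L^2_{\gamma/2}} \quad \text{for all } g\in L^2.
\]
Once this is established, set $h:=g-\mathbb{P}g\in\mathrm{ker}^\perp$. Theorem~\ref{coercivity-structure} applied to $h$ gives $\langle \mathcal{L}^\epsilon h,h\rangle+|h|^2_{L^2_{\gamma/2}}\geq \lambda_1|h|^2_{\epsilon,\gamma/2}$, while the weak gap (applied to $g=h$, noting $\mathbb{P}h=0$) yields $|h|^2_{L^2_{\gamma/2}}\leq c_0^{-1}\langle \mathcal{L}^\epsilon h,h\rangle$. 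Combining these two inequalities produces $\langle \mathcal{L}^\epsilon h,h\rangle\geq\tfrac{c_0\lambda_1}{1+c_0}|h|^2_{\epsilon,\gamma/2}$, so the theorem holds with $\lambda_0=c_0\lambda_1/(1+c_0)$.

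\emph{Stage 2 (weak gap via reduction to $\gamma=0$).} For the pure Maxwell kernel $b^\epsilon(\cos\theta)$ (i.e., $\gamma=0$), the Wang--Uhlenbeck diagonalization via spherical harmonics and generalized Burnett functions provides the explicit bound
\[
\langle \mathcal{L}^{\epsilon,0} h,h\rangle \geq \lambda_e^\epsilon\,|h|^2_{L^2}, \qquad h\in\mathrm{ker}^\perp,
\]
with first nonzero eigenvalue $\lambda_e^\epsilon:=\int_0^{\pi/2}b^\epsilon(\cos\theta)\sin\theta(1-\cos\theta)\,d\theta$. Writing $1-\cos\theta=2\sin^2(\theta/2)$ and invoking~\eqref{order-2} shows that $\lambda_e^\epsilon$ reduces to a positive universal constant, independent of $\epsilon$. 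To transfer this to $-3<\gamma<0$, I would localize in velocity: pick a radius $R\geq 1$ and a smooth cutoff $\chi_R$ supported in $\{|v|\leq R\}$. On the compact region, the rapid decay of $\mu_*$ effectively restricts $|v_*|$ to a bounded set, so $|v-v_*|^\gamma\gtrsim c(R)>0$; the local part of $\langle \mathcal{L}^\epsilon g,g\rangle$ is then comparable to the Maxwell quadratic form and controls $c(R)|\chi_R h|^2_{L^2}\geq c(R)|\chi_R h|^2_{L^2_{\gamma/2}}$ (using $\langle v\rangle^\gamma\leq 1$ since $\gamma<0$). The complementary piece is handled by Proposition~\ref{lowerboundpart1}: on $|v|>R$ we have $\langle v\rangle^\gamma\leq \langle R\rangle^\gamma\ll 1$ for $R$ large, so the weight gain provided by $W^\epsilon$ absorbs $|(1-\chi_R)h|^2_{L^2_{\gamma/2}}$. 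The hypothesis $h\in\mathrm{ker}^\perp$ is essential in order to close the cross-terms generated by the localization and to prevent macroscopic modes from spoiling the Wang--Uhlenbeck bound on the compact region.

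\emph{Main obstacle.} The principal difficulty is executing Stage~2 so that all constants remain uniform in $\epsilon$ as $b^\epsilon$ concentrates on the grazing angles $\theta\approx 0$. Uniformity is made possible by the renormalization built into the definition of $b^\epsilon$: the prefactor $(1-s)\epsilon^{2s-2}$ is precisely chosen so that the basic angular moment $\int b^\epsilon\sin^2(\theta/2)\,d\sigma=4\pi$ is independent of $\epsilon$, cf.~\eqref{order-2}; this is what keeps $\lambda_e^\epsilon$ from collapsing in the grazing limit and what makes the reduction effective. A secondary technical point is the bookkeeping of cross-terms produced by the velocity cut-off, which must be absorbed into either the coercive contribution of Proposition~\ref{lowerboundpart1} or the small factor $\langle R\rangle^\gamma$, with the kernel-orthogonality constraint $h\in\mathrm{ker}^\perp$ playing a decisive role in closing the estimate.
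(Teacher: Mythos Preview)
Your overall strategy---reduce to the Maxwell case $\gamma=0$, invoke Wang--Uhlenbeck, and absorb the localization errors via the coercivity estimate of Theorem~\ref{coercivity-structure}---is exactly the route the paper takes, and your Stage~1 upgrade from a weak $L^2_{\gamma/2}$ gap to the full $|\cdot|_{\epsilon,\gamma/2}$ estimate is correct and mirrors the paper's final combination step.

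Stage~2, however, has a real gap. The claim that ``the rapid decay of $\mu_*$ effectively restricts $|v_*|$ to a bounded set, so $|v-v_*|^\gamma\gtrsim c(R)$'' does not go through: the symmetrized form $4\langle\mathcal{L}^\epsilon g,g\rangle=\int B^\epsilon\big(\mu_*^{1/2}g+\mu^{1/2}g_*-\mu_*'^{1/2}g'-\mu'^{1/2}g'_*\big)^2$ contains the term $\mu^{1/2}g_*$, which carries \emph{no} Gaussian decay in $v_*$, so a one-sided cut-off $\chi_R(v)$ gives you no control over $|v-v_*|$. The paper replaces your heuristic with a genuinely pointwise device: set $U_\delta(v)=(1+\delta^2|v|^2)^{1/2}$ and use $|v-v_*|^{-\gamma}\lesssim \delta^{\gamma}U_\delta^{-\gamma}(v)U_\delta^{-\gamma}(v_*)$, valid on all of $\mathbb{R}^6$. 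This converts $|v-v_*|^\gamma$ into a symmetric product of weights that are then absorbed into the arguments of the quadratic form, $\mathbb{F}(\mu^{1/2},g)\rightsquigarrow\mathbb{F}(\chi_R U_\delta^{\gamma/2}\mu^{1/2},\chi_R U_\delta^{\gamma/2}g)+\text{commutators}$, after which Wang--Uhlenbeck is applied to $\chi_R U_\delta^{\gamma/2}g$. The commutator errors come out of size $(\delta^2+R^{-2})|g|^2_{\epsilon,\gamma/2}$---in the \emph{full} dissipation norm, not merely $L^2_{\gamma/2}$---so your clean two-stage separation (``first a pure weak gap, then upgrade'') is too optimistic; the coercivity inequality must be fed back in to absorb these errors, exactly as you do in Stage~1 but now simultaneously.

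Two further points your sketch omits. First, the leading gain from the $U_\delta$ reduction is $\delta^{-\gamma}|g|^2_{L^2_{\gamma/2}}$ against an error $\delta^2|g|^2_{\epsilon,\gamma/2}$, which closes only when $-\gamma<2$; the paper runs the direct argument for $-7/4\le\gamma\le 0$ and then, for $-3<\gamma<-7/4$, iterates by writing $\gamma=\alpha+\beta$ with $\alpha,\beta\in[-7/4,0)$ and absorbing $|v-v_*|^\alpha$ into the kernel. You will need this bootstrap. Second, your observation that the Maxwell eigenvalue stays uniform in $\epsilon$ because of~\eqref{order-2} is exactly right and is the heart of the uniformity; the paper uses it via~\eqref{eta-pure-lower-bound}.
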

\begin{rmk}\label{remarkgap}  We address that from the proof in the below,  $\lambda_0$ depends only on $\gamma$ and the lower bound of $\int b^\epsilon(\cos\theta)\sin^2(\theta/2)d\sigma$.
\end{rmk}

To prove Theorem \ref{micro-dissipation}, we first introduce a special weight function $U_{\delta}$ defined by
\ben\label{specialweightfun} U_{\delta}(v) := (1+ \delta^{2}|v|^{2})^{\frac{1}{2}} \geq \max\{\delta|v|,1\}. \een
Here $\delta$ is a sufficiently small parameter.
We then introduce a new smooth function $\chi$. Recalling the smooth function $\zeta$ in \eqref{zeta-property}.
Let $\chi(\cdot) = \zeta(\cdot/2)$. Then $\chi$ is a smooth function verifying $0 \leq \chi \leq 1, \chi=1$ on $[0,1]$ and $\chi=0$ on  $[2,\infty)$.
Let $\chi_{R}(v):=\chi(|v|/R)$. The following lemma is the Lemma 3.2 in \cite{he2020boltzmann}.

\begin{lem}\label{difference-term-complication} Set $X(\gamma,R,\delta):=\delta^{-\gamma}\left((\chi_{R})^{\prime}(\chi_{R})^{\prime}_{*}(U^{\gamma/2}_{\delta})^{\prime}(U^{\gamma/2}_{\delta})^{\prime}_{*}-
\chi_{R}(\chi_{R})_{*}U^{\gamma/2}_{\delta}(U^{\gamma/2}_{\delta})_{*}\right)^{2}$ with $\gamma \leq 0 < \delta \leq 1 \leq R$, then
\beno
X(\gamma,R,\delta)
\lesssim(\delta^{2}+R^{-2})\theta^{2}\langle v \rangle^{\gamma+2}\langle v_{*} \rangle^{2} \mathrm{1}_{|v|\leq 4R}.
\eeno
\end{lem}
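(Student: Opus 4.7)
Setting $G(v):=\chi_R(v)U_\delta^{\gamma/2}(v)$, so that $X=\delta^{-\gamma}(G'G'_*-GG_*)^2$, my plan proceeds in three stages. First, I reduce to the support $\{|v|\leq 4R\}$: for $|v|>4R$, the cutoff $\chi_R(v)=0$ forces $GG_*=0$, whereas $G'G'_*\neq 0$ would require $|v'|,|v'_*|\leq 2R$ and then, by energy conservation, $|v|^2+|v_*|^2=|v'|^2+|v'_*|^2\leq 8R^2$, contradicting $|v|^2>16R^2$. Hence $X\equiv 0$ off $\{|v|\leq 4R\}$, which accounts for the indicator in the statement.

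Second, on $\{|v|\leq 4R\}$, I decompose $G'G'_*-GG_* = G(G'_*-G_*)+(G'-G)G'_*$ and further split each $G$-difference via the product rule into a \emph{cutoff piece} $U_\delta^{\gamma/2}(v)[\chi_R(v')-\chi_R(v)]$ and a \emph{weight piece} $\chi_R(v')[U_\delta^{\gamma/2}(v')-U_\delta^{\gamma/2}(v)]$. The four pointwise estimates that drive the bound are: (a) the weight-absorption identity $\delta^{-\gamma/2}U_\delta^{\gamma/2}(w)=(\delta^{-2}+|w|^2)^{\gamma/4}\leq \langle w\rangle^{\gamma/2}$ for $\gamma\leq 0<\delta\leq 1$, so that $\delta^{-\gamma}|G(w)|^2\leq \chi_R^2(w)\langle w\rangle^{\gamma}$; (b) the cutoff Lipschitz bound $|\chi_R(v')-\chi_R(v)|\leq CR^{-1}|v-v_*|\sin(\theta/2)$; (c) the weight Lipschitz bound $|U_\delta^{\gamma/2}(v')-U_\delta^{\gamma/2}(v)|\leq C\delta|v-v_*|\sin(\theta/2)$, obtained from $|\nabla U_\delta^{\gamma/2}(w)|\leq C\delta\, U_\delta^{\gamma/2-1}(w)\leq C\delta$ via $\delta|w|\leq U_\delta(w)$ and $U_\delta\geq 1$; and (d) the elementary inequality $|v-v_*|^2\langle v\rangle^{\gamma}\leq 4\langle v\rangle^{\gamma+2}\langle v_*\rangle^2$, which follows from $|v-v_*|^2\leq 2\langle v\rangle^2+2\langle v_*\rangle^2$ together with $\langle v\rangle,\langle v_*\rangle\geq 1$.

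Third, I combine (a)--(d) into the desired bound. For the term $G(G'_*-G_*)$, estimate (a) supplies $\delta^{-\gamma}|G(v)|^2\leq \chi_R^2(v)\langle v\rangle^{\gamma}$, estimates (b)--(c) yield $|G'_*-G_*|\leq C(R^{-1}+\delta)|v-v_*|\sin(\theta/2)$, and after squaring and applying (d) together with $\sin^2(\theta/2)\leq \theta^2/4$ one reads off the required bound $(\delta^2+R^{-2})\theta^2\langle v\rangle^{\gamma+2}\langle v_*\rangle^2$. The symmetric term $(G'-G)G'_*$ is treated analogously, exploiting the joint support $\chi_R(v')\chi_R(v'_*)\neq 0$ together with energy conservation (which confines the variables to $|v|^2+|v_*|^2\leq 8R^2$) in order to keep the $\langle\cdot\rangle^{\gamma}$-weight properly attached to $v$ rather than to $v'$ or $v'_*$.

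The main obstacle is the weight-piece subterm of $(G'-G)G'_*$: the naive bound (c) places $U_\delta^{\gamma/2}$ at $v'$ instead of $v$, and the only clean weight from $G'_*$ is $\delta^{-\gamma}|G'_*|^2\leq \langle v'_*\rangle^{\gamma}\leq 1$, which misses the target $\langle v\rangle^{\gamma+2}\langle v_*\rangle^2$ by as much as a factor $R^{|\gamma|}$ in the worst case. Closing this gap requires either the sharper mean-value estimate $|U_\delta^{\gamma/2}(v')-U_\delta^{\gamma/2}(v)|\leq C\delta^2(|v|+|v'|)|v-v_*|\sin(\theta/2)(1+\delta^2\min(|v|,|v'|)^2)^{\gamma/4-1}$, obtained by viewing $U_\delta^{\gamma/2}$ as a function of $|v|^2$, or a case split between small and large $|v|$, together with the energy-conservation constraint coming from the joint cutoff support, to recover the correct power of $R$ and conclude the proof.
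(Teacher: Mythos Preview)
The paper itself does not supply a proof of this lemma; it is quoted verbatim as Lemma~3.2 of \cite{he2020boltzmann}. So there is nothing in the present paper to compare your argument against, and the question reduces to whether your outline stands on its own.

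Your first two stages are correct: the support reduction to $\{|v|\le 4R\}$ via energy conservation is exactly right, the telescoping $G'G'_*-GG_*=G(G'_*-G_*)+(G'-G)G'_*$ followed by the product-rule split of each $G$-difference is the natural attack, and your elementary estimates (a)--(d) are all valid as stated. The treatment of the term $G(G'_*-G_*)$ and of the cutoff piece of $(G'-G)G'_*$ goes through cleanly.

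You have also correctly located the genuine difficulty, namely the weight piece of $(G'-G)G'_*$: the naive bound (c) produces $C\delta^{2}|v-v_*|^{2}\sin^{2}(\theta/2)$ after spending $\delta^{-\gamma}$ on $|G'_*|^{2}$, and this is short of the target by a factor $\langle v\rangle^{-\gamma}$. Your two proposed remedies are in the right direction, but the proposal stops before carrying either one out, so as written there is a gap. The clean way to close it is to push $\delta^{-\gamma/2}$ \emph{onto the difference itself}: set $V_\delta(w)=(\delta^{-2}+|w|^{2})^{1/2}$ so that $\delta^{-\gamma/2}U_\delta^{\gamma/2}=V_\delta^{\gamma/2}$, and observe from AM--GM that
\[
|\nabla V_\delta^{\gamma/2}(w)|=\tfrac{|\gamma|}{2}\,|w|\,V_\delta^{\gamma/2-2}(w)
=\tfrac{|\gamma|}{2}\,\frac{|w|}{V_\delta^{2}(w)}\,V_\delta^{\gamma/2}(w)
\le \tfrac{|\gamma|}{4}\,\delta\,\langle w\rangle^{\gamma/2},
\]
which retains \emph{both} the $\delta$-smallness and the correct $\gamma/2$-weight. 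The mean-value bound along the segment $[v,v']$ then gives $\delta^{-\gamma/2}|U_\delta^{\gamma/2}(v')-U_\delta^{\gamma/2}(v)|\le C\delta\,\langle v(\kappa^{*})\rangle^{\gamma/2}|v'-v|$ for some $\kappa^{*}\in[0,1]$, and the remaining $\delta^{-\gamma/2}$ goes on $U_\delta^{\gamma/2}(v'_*)\le 1$. When $|v'-v|\le |v|/2$ one has $|v(\kappa^{*})|\ge |v|/2$ and (d) finishes the job; the complementary case $|v'-v|>|v|/2$ still needs a short separate argument (e.g.\ splitting on $\sin(\theta/2)$ small versus large and using the joint support constraint $|v'|,|v'_*|\le 2R$), which your sketch alludes to but does not execute. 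Once that last case split is written out, the proof is complete.
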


\begin{proof}[Proof of Theorem \ref{micro-dissipation}]
Suppose  $\mathbb{P}g=0$ and then it suffices to   prove
$ \langle  \mathcal{L}^{\epsilon}g, g\rangle \gtrsim  |g|^{2}_{\epsilon,\gamma/2}.
$
In the following, we specify the parameter $\gamma$  in the operator $\mathcal{L}^{\epsilon}$ and denote it by $\mathcal{L}^{\epsilon,\gamma}$.
For brevity, set
\beno J^{\epsilon,\gamma}(g) := 4 \langle  \mathcal{L}^{\epsilon,\gamma}g, g\rangle,\quad\mathbb{A}(f, g):=(f_{*}g + f g_{*} - f^{\prime}_{*}g^{\prime} - f^{\prime} g^{\prime}_{*}), \quad\mathbb{F}(f, g):= \mathbb{A}^{2}(f, g).\eeno
With these notations, we have
 $J^{\epsilon,\gamma}(g) = \int B^{\epsilon} \mathbb{F}(\mu^{\frac{1}{2}}, g) d\sigma dv dv_{*}.$ The proof is divided into four steps.

\noindent{\it Step 1: Localization of $J^{\epsilon,\gamma}(g)$.}
By \eqref{specialweightfun} and the condition $\gamma\le0$,  we get \beno |v - v_{*}|^{-\gamma}
\leq  C_{\gamma} \delta^{\gamma} ((\delta|v|)^{-\gamma}+(\delta|v_{*}|)^{-\gamma})
 \leq 2C_{\gamma} \delta^{\gamma} U^{-\gamma}_{\delta}(v)U^{-\gamma}_{\delta}(v_{*}),  \eeno
which gives
$|v - v_{*}|^{\gamma} \gtrsim \delta^{-\gamma} U^{\gamma}_{\delta}(v)U^{\gamma}_{\delta}(v_{*})$
so that
\beno J^{\epsilon,\gamma}(g) \gtrsim \delta^{-\gamma}\int b^{\epsilon}  \chi^{2}_{R}(\chi^{2}_{R})_{*}U^{\gamma}_{\delta}(U^{\gamma}_{\delta})_{*}\mathbb{F}(\mu^{\frac{1}{2}}, g) d\sigma dv dv_{*}.
\eeno
We include the function $\chi^{2}_{R}(\chi^{2}_{R})_{*}U^{\gamma}_{\delta}(U^{\gamma}_{\delta})_{*}$ inside $\mathbb{F}(\mu^{\frac{1}{2}}, g)$, which leads to  $\mathbb{F}(\chi_{R}U^{\gamma/2}_{\delta}\mu^{\frac{1}{2}}, \chi_{R}U^{\gamma/2}_{\delta}g)$ with some correction terms.
For   simplicity, set $h=\chi_{R} U^{\gamma/2}_{\delta}, f=\mu^{\frac{1}{2}}$,
then
\beno
&&\chi^{2}_{R}(\chi^{2}_{R})_{*}U^{\gamma}_{\delta}(U^{\gamma}_{\delta})_{*}\mathbb{F}(\mu^{\frac{1}{2}}, g)
\\ &=&
h^{2}_{*}h^{2}\mathbb{F}(f, g)
=
\left(h h_{*}\left(f_{*}g + f g_{*}\right) -
h h_{*}\left(f^{\prime}_{*}g^{\prime} + f^{\prime} g^{\prime}_{*}\right)\right)^{2}
 \\&=& \left(h h_{*}\left(f_{*}g + f g_{*}\right) -
h^{\prime} h^{\prime}_{*}\left(f^{\prime}_{*}g^{\prime} + f^{\prime} g^{\prime}_{*}\right)
+ \left(h^{\prime} h^{\prime}_{*}-
h h_{*}\right)
\left(f^{\prime}_{*}g^{\prime} + f^{\prime} g^{\prime}_{*}\right)
\right)^{2}
\nonumber \\  &\geq&  \frac{1}{2}  \left(h h_{*}\left(f_{*}g + f g_{*}\right) -
h^{\prime} h^{\prime}_{*}\left(f^{\prime}_{*}g^{\prime} + f^{\prime} g^{\prime}_{*}\right) \right)^{2} -\left(h^{\prime} h^{\prime}_{*}-
h h_{*}\right)^{2}
\left(f^{\prime}_{*}g^{\prime} + f^{\prime} g^{\prime}_{*}\right)^{2}
\\  &=&  \frac{1}{2}  \mathbb{F}(h f, h g)
-\left(h^{\prime} h^{\prime}_{*}-
h h_{*}\right)^{2}
\left(f^{\prime}_{*}g^{\prime} + f^{\prime} g^{\prime}_{*}\right)^{2}.
\eeno
Hence,  we get
\ben \label{move-inside-sym} J^{\epsilon,\gamma}(g) &\gtrsim&
\frac{1}{2}\delta^{-\gamma}
\int b^{\epsilon}  \mathbb{F}(\chi_{R}U^{\gamma/2}_{\delta}\mu^{\frac{1}{2}}, \chi_{R}U^{\gamma/2}_{\delta}g) d\sigma dv dv_{*}
\\  && - \delta^{-\gamma}\int b^{\epsilon} \left(h^{\prime} h^{\prime}_{*}-
h h_{*}\right)^{2}
\left(f^{\prime}_{*}g^{\prime} + f^{\prime} g^{\prime}_{*}\right)^{2} d\sigma dv dv_{*}. \nonumber
\een
We now move $\chi_{R}U^{\gamma/2}_{\delta}$ before $\mu^{\frac{1}{2}}$ out of $\mathbb{F}(\chi_{R}U^{\gamma/2}_{\delta}\mu^{\frac{1}{2}}, \chi_{R}U^{\gamma/2}_{\delta}g)$,
which leads to $\mathbb{F}(\mu^{\frac{1}{2}}, \chi_{R}U^{\gamma/2}_{\delta}g)$ with some correction terms. That is,
\ben \label{move-outside-asym} \mathbb{F}(\chi_{R}U^{\gamma/2}_{\delta}\mu^{\frac{1}{2}}, \chi_{R}U^{\gamma/2}_{\delta}g)  &=& \mathbb{A}^{2}(\chi_{R}U^{\gamma/2}_{\delta}\mu^{\frac{1}{2}}, \chi_{R}U^{\gamma/2}_{\delta}g)
\nonumber \\&=&\left( \mathbb{A}(\mu^{\frac{1}{2}}, \chi_{R}U^{\gamma/2}_{\delta}g)  - \mathbb{A}\big((1-\chi_{R}U^{\gamma/2}_{\delta})\mu^{\frac{1}{2}}, \chi_{R}U^{\gamma/2}_{\delta}g\big) \right)^{2}
\nonumber \\&\geq&\frac{1}{2} \mathbb{A}^{2}(\mu^{\frac{1}{2}}, \chi_{R}U^{\gamma/2}_{\delta}g)
-\mathbb{A}^{2}\big((1-\chi_{R}U^{\gamma/2}_{\delta})\mu^{\frac{1}{2}}, \chi_{R}U^{\gamma/2}_{\delta}g\big)
\nonumber \\&=&\frac{1}{2}\mathbb{F}(\mu^{\frac{1}{2}}, \chi_{R}U^{\gamma/2}_{\delta}g) -  \mathbb{F}((1-\chi_{R}U^{\gamma/2}_{\delta})\mu^{\frac{1}{2}}, \chi_{R}U^{\gamma/2}_{\delta}g).
\een
By symmetry, we have
\ben \label{sym-term}\int b^{\epsilon} \left(h^{\prime} h^{\prime}_{*}-
h h_{*}\right)^{2}
\left(f^{\prime}_{*}g^{\prime} + f^{\prime} g^{\prime}_{*}\right)^{2} d\sigma dv dv_{*} \leq 4\int b^{\epsilon} \left(h^{\prime} h^{\prime}_{*}-
h h_{*}\right)^{2}f^{2}_{*}g^{2} d\sigma dv dv_{*}.
 \een
Thanks  to \eqref{move-inside-sym}, \eqref{move-outside-asym} and \eqref{sym-term}, we get
\ben \label{separate-key-part} J^{\epsilon,\gamma}(g) &\gtrsim&
\frac{1}{4}\delta^{-\gamma}
\int b^{\epsilon} \mathbb{F}(\mu^{\frac{1}{2}}, \chi_{R}U^{\gamma/2}_{\delta}g) d\sigma dv dv_{*}
\\  && - \frac{1}{2}\delta^{-\gamma}\int b^{\epsilon} \mathbb{F}((1-\chi_{R}U^{\gamma/2}_{\delta})\mu^{\frac{1}{2}}, \chi_{R}U^{\gamma/2}_{\delta}g) d\sigma dv dv_{*}
\nonumber
\\  && - 4\delta^{-\gamma}\int b^{\epsilon} \left(h^{\prime} h^{\prime}_{*}-
h h_{*}\right)^{2} f^{2}_{*}g^{2} d\sigma dv dv_{*}
:= \frac{1}{4}J_{1} - \frac{1}{2}J_{2} -  4J_{3}. \nonumber
\een

\noindent{\it Step 2: Estimates of $J_i(i=1,2,3)$.} We will give the estimates term by term.

{\it \underline{Lower bound of $J_1$.}}
 We claim that for $\epsilon \leq 16^{-1}R^{-1}$ and some generic constant $C$,
\ben \label{estimate-J-1}  J_{1} \gtrsim \delta^{-\gamma}|g|^{2}_{L^{2}_{\gamma/2}}-C(\delta^{2}+R^{-2})|g|^{2}_{\epsilon,\gamma/2}.  \een
By Wang-Uhlenbeck \cite{wang1952propagation}, for any function $F$, it holds
\beno \langle  \mathcal{L}^{\epsilon,0}F, F\rangle \gtrsim  (\int b^{\epsilon}(\cos\theta) \sin^{2}(\theta/2) d \sigma)|(\mathbb{I}-\mathbb{P})F|^{2}_{L^{2}},\eeno
where $\mathbb{I}$ is the identity operator.
Thanks to \eqref{order-2}, there is a generic constant $c_{0}$ such that
\ben \label{eta-pure-lower-bound} \langle  \mathcal{L}^{\epsilon,0}F, F\rangle \geq c_{0}|(\mathbb{I}-\mathbb{P})F|^{2}_{L^{2}}. \een
Applying \eqref{eta-pure-lower-bound} with $F=\chi_{R}U^{\gamma/2}_{\delta}g$,
and using $(a-b)^{2} \geq a^{2}/2-b^{2}$, we have
\beno J_{1}&=&\delta^{-\gamma}
\int b^{\epsilon} \mathbb{F}(\mu^{\frac{1}{2}}, \chi_{R}U^{\gamma/2}_{\delta}g) d\sigma dv dv_{*}
= 4 \delta^{-\gamma}\langle  \mathcal{L}^{\epsilon,0}\chi_{R}U^{\gamma/2}_{\delta}g, \chi_{R}U^{\gamma/2}_{\delta}g\rangle
\\&\gtrsim& \delta^{-\gamma}|(\mathbb{I}-\mathbb{P})(\chi_{R}U^{\gamma/2}_{\delta}g)|^{2}_{L^{2}}
\\&\gtrsim& \frac{1}{2}\delta^{-\gamma}|\chi_{R}U^{\gamma/2}_{\delta}g|^{2}_{L^{2}} - \delta^{-\gamma}|\mathbb{P}(\chi_{R}U^{\gamma/2}_{\delta}g)|^{2}_{L^{2}}
\\&\gtrsim& \frac{1}{4}\delta^{-\gamma}|U^{\gamma/2}_{\delta}g|^{2}_{L^{2}} - \frac{1}{2}
\delta^{-\gamma}|(1-\chi_{R})U^{\gamma/2}_{\delta}g|^{2}_{L^{2}}
-\delta^{-\gamma}|\mathbb{P}(\chi_{R}U^{\gamma/2}_{\delta}g)|^{2}_{L^{2}}
 := J_{1,1}- J_{1,2} - J_{1,3}.
\eeno
\begin{itemize}
\item Since $\delta \leq 1$ and $\gamma \leq 0$, then $U^{\gamma/2}_{\delta} \geq W_{\gamma/2}$, which yields the leading term
\ben \label{leading-term} J_{1,1} \gtrsim \delta^{-\gamma}|g|^{2}_{L^{2}_{\gamma/2}}.\een
\item
Thanks to the fact $\delta^{-\gamma}U^{\gamma}_{\delta} \leq W_{\gamma}$ and $1-\chi_{R}(v)=0$ when $|v|\leq R$,  we have
\ben \label{large-velocity-part-1} J_{1,2} &=& \frac{1}{2}
\delta^{-\gamma}|(1-\chi_{R})U^{\gamma/2}_{\delta}g|^{2}_{L^{2}}
\lesssim |(1-\chi_{R})W_{\gamma/2}g|^{2}_{L^{2}}
 \\&\lesssim&  |\mathrm{1}_{|v| \geq R} \zeta(\epsilon \cdot) W_{\gamma/2}g|^{2}_{L^{2}}  +
 |(1- \zeta(\epsilon \cdot))W_{\gamma/2}g|^{2}_{L^{2}} \nonumber
\\&\lesssim&  R^{-2}| \zeta(\epsilon \cdot) W_{\gamma/2+1}g|^{2}_{L^{2}}  +
 \epsilon^{2}|(1- \zeta(\epsilon \cdot)) \epsilon^{-2} W_{\gamma/2}g|^{2}_{L^{2}}
\lesssim (R^{-2}+\epsilon^{2})|W^{\epsilon}W_{\gamma/2}g|^{2}_{L^{2}}, \nonumber\een
where we have used \eqref{low-frequency-lb-cf} and \eqref{high-frequency-lb-cf} in the last inequality.
By the assumption $\epsilon \leq 16^{-1}R^{-1}$, we have
\ben \label{large-velocity-part-12} J_{1,2} \lesssim R^{-2}|W^{\epsilon}W_{\gamma/2}g|^{2}_{L^{2}}.\een
\item
We now estimate $J_{1,3}$. Recalling \eqref{DefProj} for the definition of $\mathbb{P}$
and  by the condition $\mathbb{P}g = 0$, we have
\beno  \mathbb{P}(\chi_{R}U^{\gamma/2}_{\delta}g) &=& \sum_{i=1}^{5} e_{i}\int e_{i}\chi_{R}U^{\gamma/2}_{\delta}g dv
= \sum_{i=1}^{5} e_{i}\int e_{i}(\chi_{R}U^{\gamma/2}_{\delta}-1)g dv .\eeno
Observing
\ben\label{lclfact1} 1-\chi_{R}U^{\gamma/2}_{\delta}\lesssim 1-\chi_R+\delta |v|\chi_{R}, \een
and thus $e_{i}(1-\chi_{R}U^{\gamma/2}_{\delta}) \lesssim (\delta+R^{-1}) \mu^{\frac{1}{4}}$, we have
$ \big|\int e_{i}(\chi_{R}U^{\gamma/2}_{\delta}-1)g dv\big| \lesssim (\delta+R^{-1}) |\mu^{\f18}g|_{L^{2}},$
which gives
\ben \label{large-velocity-part-13} J_{1,3} = \delta^{-\gamma}|\mathbb{P}(\chi_{R}U^{\gamma/2}_{\delta}g)|^{2}_{L^{2}} \lesssim (\delta^{2}+R^{-2}) |\mu^{\f18}g|^{2}_{L^{2}} \lesssim (\delta^{2}+R^{-2}) |g|^{2}_{L^{2}_{\gamma/2}}.  \een
 \end{itemize}
Combining the estimates \eqref{leading-term}, \eqref{large-velocity-part-12} and \eqref{large-velocity-part-13} gives \eqref{estimate-J-1}.

{\it \underline{Upper bound of $J_2$.}}
For simplicity, by setting $f_{\gamma}= (1-\chi_{R}U^{\gamma/2}_{\delta})\mu^{\frac{1}{2}}, g_{\gamma}=\chi_{R}U^{\gamma/2}_{\delta}g$, we get
\ben \label{J2-part}
J_{2} &=&
\delta^{-\gamma}\int b^{\epsilon} \mathbb{F}((1-\chi_{R}U^{\gamma/2}_{\delta})\mu^{\frac{1}{2}}, \chi_{R}U^{\gamma/2}_{\delta}g) d\sigma dv dv_{*}
=\delta^{-\gamma}\int b^{\epsilon} \mathbb{F}(f_{\gamma}, g_{\gamma}) d\sigma dv dv_{*}
\nonumber\\&\lesssim& \delta^{-\gamma}\int b^{\epsilon} (f_{\gamma}^{2})_{*}(g_{\gamma}^{\prime}-g_{\gamma})^{2} d\sigma dv dv_{*} + \delta^{-\gamma}\int b^{\epsilon} (g_{\gamma}^{2})_{*}(f_{\gamma}^{\prime}-f_{\gamma})^{2} d\sigma dv dv_{*}
:= J_{2,1} + J_{2,2}.
\een
Thanks to \eqref{lclfact1}, we have
\ben \label{f-gamma-upper}
(f_{\gamma}^{2})_{*} = ((1-\chi_{R}U^{\gamma/2}_{\delta})\mu^{\frac{1}{2}})^{2}_{*} \lesssim (\delta^{2}+R^{-2}) \mu^{\frac{1}{2}}_{*}.
\een
Plugging \eqref{f-gamma-upper} into $J_{2,1},$ we have
\ben \label{J21}
J_{2,1}&\lesssim& (\delta^{2}+R^{-2})\delta^{-\gamma}\int b^{\epsilon}  \mu^{\frac{1}{2}}_{*} (g_{\gamma}^{\prime}-g_{\gamma})^{2} d\sigma dv dv_{*}
\\&=& (\delta^{2}+R^{-2})\delta^{-\gamma} \mathcal{N}^{\epsilon,0}(\mu^{\frac{1}{4}},g_{\gamma})
\lesssim (\delta^{2}+R^{-2})\delta^{-\gamma}|\chi_{R}U^{\gamma/2}_{\delta}g|^{2}_{\epsilon,\gamma/2} \lesssim (\delta^{2}+R^{-2})|g|^{2}_{\epsilon,\gamma/2},\nonumber
\een
where we have used \eqref{anisotropic-regularity-general-g-up-bound}
and  Lemma \ref{operatorcommutator1} with  $\Phi=\delta^{-\gamma/2}\chi_{R}U^{\gamma/2}_{\delta}\in S^{\gamma/2}_{1,0}$ and $M=W^\epsilon\in S^1_{1,0}$.

By Taylor expansion up to order 1,
$ f_{\gamma}^{\prime}-f_{\gamma}  = \int_{0}^{1} (\nabla f_{\gamma})(v(\kappa))\cdot (v^{\prime}-v)d\kappa.$ From which together with
\beno |\nabla f_{\gamma}| &=&| \nabla ((1-\chi_{R}U^{\gamma/2}_{\delta})\mu^{\frac{1}{2}})|
= |(1-\chi_{R}U^{\gamma/2}_{\delta}) \nabla \mu^{\frac{1}{2}}
- U^{\gamma/2}_{\delta}\mu^{\frac{1}{2}} \nabla \chi_{R} - \chi_{R} \mu^{\frac{1}{2}} \nabla U^{\gamma/2}_{\delta} |
\\&\lesssim& \mu^{\f18}(\delta + R^{-1}),
\eeno
we get
\ben \label{f-gamma-by-taylor}
|f_{\gamma}^{\prime}-f_{\gamma}|^{2}  \lesssim
(\delta^{2}+R^{-2})\theta^{2} \int_{0}^{1} \mu^{\frac{1}{4}}(v(\kappa))|(v(\kappa)-v_{*})|^{2}d\kappa.\een
Since $R \leq 16^{-1}\epsilon^{-1}$, by the change $v \rightarrow v(\kappa)$, and the fact \eqref{lower-bound-when-small-cf},
we have
\ben \label{J22}
J_{2,2}&\lesssim& (\delta^{2}+R^{-2})\delta^{-\gamma}\int b^{\epsilon} \theta^{2} (\chi_{R}U^{\gamma/2}_{\delta}g)^{2}_{*} \mu^{\frac{1}{4}}(v(\kappa))|v(\kappa)-v_{*}|^{2} d\sigma dv(\kappa) dv_{*} d\kappa
\nonumber\\&\lesssim& (\delta^{2}+R^{-2})|\chi_{R}W_{\gamma/2+1}g|^{2}_{L^{2}}
\lesssim (\delta^{2}+R^{-2})|W_{\gamma/2}W^{\epsilon}g|^{2}_{L^{2}}
\lesssim (\delta^{2}+R^{-2})|g|^{2}_{\epsilon,\gamma/2}.
\een
Plugging the estimates \eqref{J21} and \eqref{J22} into \eqref{J2-part}, we get
\ben \label{estimate-J-2}
J_{2} \lesssim (\delta^{2}+R^{-2})|g|^{2}_{\epsilon,\gamma/2}.
\een

{\it \underline{Upper bound of $J_3$.}}
By Lemma \ref{difference-term-complication}, we have
$
\delta^{-\gamma}\left(h^{\prime} h^{\prime}_{*}-
h h_{*}\right)^{2}
\lesssim(\delta^{2}+R^{-2})\theta^{2}\langle v \rangle^{\gamma+2}\langle v_{*} \rangle^{2}\mathrm{1}_{|v|\leq 4R}. $
Since $8R \leq \frac{1}{2}\epsilon^{-1}$, by \eqref{lower-bound-when-small-cf},  we have,
\ben \label{estimate-J-3}
J_{3} &=& \delta^{-\gamma}\int b^{\epsilon} \left(h^{\prime} h^{\prime}_{*}-
h h_{*}\right)^{2} \mu_{*}g^{2} d\sigma dv dv_{*}
\lesssim (\delta^{2}+R^{-2})
\int b^{\epsilon} \theta^{2}\langle v_{*} \rangle^{2}\langle v \rangle^{\gamma+2}\mu_{*}\mathrm{1}_{|v|\leq 4R} g^{2} d\sigma dv dv_{*}
\nonumber\\&\lesssim& (\delta^{2}+R^{-2})|\mathrm{1}_{|\cdot|\leq 4R} W_{\gamma/2+1}g|^{2}_{L^{2}} \lesssim (\delta^{2}+R^{-2})|W_{\gamma/2}W^{\epsilon}g|^{2}_{L^{2}}
\leq (\delta^{2}+R^{-2})|g|^{2}_{\epsilon,\gamma/2}.
\een

{\it Step 3: Case $- \frac{7}{4} \leq  \gamma \leq 0$.}
Plugging the estimates of $J_{1}$ in \eqref{estimate-J-1},  $J_{2}$ in \eqref{estimate-J-2}, $J_{3}$ in \eqref{estimate-J-3} into \eqref{separate-key-part},  for $\epsilon \leq 16^{-1}R^{-1}, 0<\delta<1$, we get
\beno   J^{\epsilon,\gamma}(g) \gtrsim \delta^{-\gamma}|g|^{2}_{L^{2}_{\gamma/2}}-C(\delta^{2}+R^{-2})|g|^{2}_{\epsilon,\gamma/2}. \eeno
Choosing $R = \delta^{-1}$, for some universal constants $C_{1}, C_{2}$, we have
\ben \label{key-estimate-by-He}   J^{\epsilon,\gamma}(g) \geq C_{1}\delta^{-\gamma}|g|^{2}_{L^{2}_{\gamma/2}}-C_{2}\delta^{2}|g|^{2}_{\epsilon,\gamma/2}. \een
By the coercivity estimate in Theorem \ref{coercivity-structure}, for some universal constants $C_{3}, C_{4}$, we have
\ben \label{known-estimate}    J^{\epsilon,\gamma}(g) \geq C_{3}|g|^{2}_{\epsilon,\gamma/2}- C_{4}|g|^{2}_{L^{2}_{\gamma/2}}.\een
Multiplying \eqref{known-estimate} by $C_{5}\delta^{2}$ and adding the resulting inequality to \eqref{key-estimate-by-He}, we get
\beno   (1+C_{5}\delta^{2}) J^{\epsilon,\gamma}(g) \geq (
C_{1}\delta^{-\gamma}-C_{4}C_{5}\delta^{2})|g|^{2}_{L^{2}_{\gamma/2}}+(C_{3}C_{5}-C_{2})\delta^{2}|g|^{2}_{\epsilon,\gamma/2}.  \eeno
Firstly, we  take $C_{5}$ large enough such that $C_{3}C_{5}-C_{2} \geq C_{2}$, for example let $C_{5} = 2C_{2}/C_{3}$.
Then we take $\delta$ small enough such that $C_{1}\delta^{-\gamma}-C_{4}C_{5}\delta^{2} \geq 0$, for example, let $\delta=\left(\frac{C_{1}}{C_{4}C_{5}}\right)^{1/(2+\gamma)} = \left(\frac{C_{1}C_{3}}{2C_{4}C_{2}}\right)^{1/(2+\gamma)}$. Then we get
\beno   J^{\epsilon,\gamma}(g) \geq C_{2}\delta^{2}|g|^{2}_{\epsilon,\gamma/2} = C_{2}\left(\frac{C_{1}C_{3}}{2C_{4}C_{2}}\right)^{2/(2+\gamma)}|g|^{2}_{\epsilon,\gamma/2}, \eeno
for any $0 < \epsilon \leq 16^{-1}R^{-1}  = 16^{-1}\left(\frac{C_{1}C_{3}}{2C_{4}C_{2}}\right)^{2/(2+\gamma)}$. Without loss of generality, we may assume $\frac{C_{1}C_{3}}{2C_{4}C_{2}} \leq 1$, which gives for $-\frac{7}{4} \leq  \gamma \leq 0$,
\ben \label{conclusion1-gamma-minus2}   J^{\epsilon,\gamma}(g) \geq C_{2}\left(\frac{C_{1}C_{3}}{2C_{4}C_{2}}\right)^{8}|g|^{2}_{\epsilon,\gamma/2}, \een
Note that $C_{2}\left(\frac{C_{1}C_{3}}{2C_{4}C_{2}}\right)^{8}$ depends only on the parameter $\gamma$ and is a universal constant when $-\frac{7}{4} \leq \gamma \leq 0$.

{\it Step 4: Case $-3< \gamma < -\frac{7}{4}$.}
In this case, we take $-\frac{7}{4} \leq \alpha, \beta<0$ such that $\alpha+\beta=\gamma$. Replacing $b^{\epsilon}$ by  $b^{\epsilon}|v-v_{*}|^{\alpha}$ and $\gamma$ by $\beta$, similar to \eqref{separate-key-part}, we get
\ben \label{separate-gamma-very-small} J^{\epsilon,\gamma}(g) &\gtrsim&
\frac{1}{4}\delta^{-\beta}
\int b^{\epsilon}|v-v_{*}|^{\alpha} \mathbb{F}(\mu^{\frac{1}{2}}, \chi_{R}U^{\beta/2}_{\delta}g) d\sigma dv dv_{*}
\\  && - \frac{1}{2}\delta^{-\beta}\int b^{\epsilon}|v-v_{*}|^{\alpha} \mathbb{F}((1-\chi_{R}U^{\beta/2}_{\delta})\mu^{\frac{1}{2}}, \chi_{R}U^{\beta/2}_{\delta}g) d\sigma dv dv_{*}
\nonumber\\  && -  4\delta^{-\beta}\int b^{\epsilon}|v-v_{*}|^{\alpha} \left(h^{\prime} h^{\prime}_{*}-
h h_{*}\right)^{2} \mu_{*}g^{2} d\sigma dv dv_{*}
:= \frac{1}{4} J^{\alpha,\beta}_{1}- \frac{1}{2} J^{\alpha,\beta}_{2} - 4 J^{\alpha,\beta}_{3},
\nonumber
\een
where $h :=\chi_{R} U^{\beta/2}_{\delta}$.

 {\it \underline{ Lower bound of $J^{\alpha,\beta}_{1}$.}}
Since $-\frac{7}{4} \leq \alpha<0$, we can use previous estimate \eqref{conclusion1-gamma-minus2} to get
\beno J^{\alpha,\beta}_{1} = \delta^{-\beta} J^{\epsilon,\alpha}(\chi_{R}U^{\beta/2}_{\delta}g) = \delta^{-\beta} J^{\epsilon,\alpha}((\mathbb{I}-\mathbb{P})\chi_{R}U^{\beta/2}_{\delta}g) \gtrsim \delta^{-\beta}
|W_{\alpha/2}(\mathbb{I}-\mathbb{P})(\chi_{R}U^{\beta/2}_{\delta}g)|^{2}_{L^{2}}.
\eeno
Using $(a-b)^{2} \geq a^{2}/2 - b^{2}$, we get
\ben \label{out-truncation-projection-2}   J^{\alpha,\beta}_{1}
&\gtrsim& \frac{1}{4}\delta^{-\beta}|W_{\alpha/2}U^{\beta/2}_{\delta}g|^{2}_{L^{2}} -
\frac{1}{2}\delta^{-\beta}|W_{\alpha/2}(1-\chi_{R})U^{\beta/2}_{\delta}g|^{2}_{L^{2}}
-\delta^{-\beta}|W_{\alpha/2}\mathbb{P}(\chi_{R}U^{\beta/2}_{\delta}g)|^{2}_{L^{2}}
 \nonumber \\&:=& J^{\alpha,\beta}_{1,1}- J^{\alpha,\beta}_{1,2} - J^{\alpha,\beta}_{1,3}.
\een
Thanks to $U_{\delta} \leq W$, one has $U^{\beta/2}_{\delta} \geq W_{\beta/2}$ and
\ben J^{\alpha,\beta}_{1,1} \label{J-al-be-11}
\gtrsim \delta^{-\beta}|W_{\alpha/2}W_{\beta/2}g|^{2}_{L^{2}} =  \delta^{-\beta}|g|^{2}_{L^{2}_{\gamma/2}}.
\een
Thanks to $\delta^{-\beta}U^{\beta}_{\delta} \leq W_{\beta}$, similar to \eqref{large-velocity-part-1}  and \eqref{large-velocity-part-12}, we have
\ben \label{J-al-be-12}
J^{\alpha,\beta}_{1,2} \lesssim |W_{\alpha/2}(1-\chi_{R})W_{\beta/2}g|^{2}_{L^{2}} \lesssim R^{-2}|W^{\epsilon}W_{\gamma/2}g|^{2}_{L^{2}}.\een
%Next, since $\mathbb{P}g = 0$, we have
%\beno  \mathbb{P}(\chi_{R}U^{\beta/2}_{\delta}g) &=& \sum_{i=1}^{5} e_{i}\int e_{i}\chi_{R}U^{\beta/2}_{\delta}g
%\\&=& \sum_{i=1}^{5} e_{i}\int e_{i}(\chi_{R}U^{\beta/2}_{\delta}-1)g  \eeno
%Thanks to $e_{i}(1-\chi_{R}U^{\beta/2}_{\delta}) \lesssim (\delta+R^{-m/2}) \mu^{\frac{1}{4}}$, we get
%\beno  |\int e_{i}(\chi_{R}U^{\beta/2}_{\delta}-1)g| \lesssim (\delta+R^{-m/2}) |\mu^{\f18}g|_{L^{2}}\eeno
%which gives
Similar to \eqref{large-velocity-part-13}, we get
\ben \label{J-al-be-13}
J^{\alpha,\beta}_{1,3} = \delta^{-\beta}|W_{\alpha/2}\mathbb{P}(\chi_{R}U^{\beta/2}_{\delta}g)|^{2}_{L^{2}}  \lesssim (\delta^{2}+R^{-2}) |\mu^{\f18}g|^{2}_{L^{2}} \lesssim (\delta^{2}+R^{-2}) |g|^{2}_{L^{2}_{\gamma/2}}.  \een
Plugging \eqref{J-al-be-11}, \eqref{J-al-be-12}, \eqref{J-al-be-13} into \eqref{out-truncation-projection-2},
we get
\ben \label{J-al-be-1}  J^{\alpha,\beta}_{1} \gtrsim \delta^{-\beta}|g|^{2}_{L^{2}_{\gamma/2}}-C(\delta^{2}+R^{-2})|g|^{2}_{\epsilon,\gamma/2}. \een

 {\it \underline{ Upper bound of $J^{\alpha,\beta}_{2}$.}} Now we estimate
\beno J^{\alpha,\beta}_{2}= \delta^{-\beta}\int b^{\epsilon}|v-v_{*}|^{\alpha} \mathbb{F}((1-\chi_{R}U^{\beta/2}_{\delta})\mu^{\frac{1}{2}}, \chi_{R}U^{\beta/2}_{\delta}g) d\sigma dv dv_{*}. \eeno
For simplicity, set $f_{\beta}= (1-\chi_{R}U^{\beta/2}_{\delta})\mu^{\frac{1}{2}}, g_{\beta}=\chi_{R}U^{\beta/2}_{\delta}g$. Then we get
\ben \label{J-al-be-2-into-2}
J^{\alpha,\beta}_{2} &\lesssim& \delta^{-\beta}\int b^{\epsilon} |v-v_{*}|^{\alpha} (f_{\beta}^{2})_{*}(g_{\beta}^{\prime}-g_{\beta})^{2} d\sigma dv dv_{*} + \delta^{-\beta}\int b^{\epsilon} |v-v_{*}|^{\alpha} (g_{\beta}^{2})_{*}(f_{\beta}^{\prime}-f_{\beta})^{2} d\sigma dv dv_{*}
\nonumber \\&:=& J^{\alpha,\beta}_{2,1} + J^{\alpha,\beta}_{2,2}.
\een
Similar to \eqref{f-gamma-upper}, we get
$ f_{\beta}^{2}=((1-\chi_{R}U^{\beta/2}_{\delta})\mu^{\frac{1}{2}})^{2} \lesssim (\delta^{2}+R^{-2}) \mu^{\frac{1}{2}}. $
From which together with $\delta^{-\beta}U^{\beta/2}_{\delta} \leq W_{\beta/2}$, we get
\ben \label{J-al-be-2-into-2-1}
J^{\alpha,\beta}_{2,1}&\lesssim& (\delta^{2}+R^{-2})\delta^{-\beta}\int b^{\epsilon}|v-v_{*}|^{\alpha}  \mu^{\frac{1}{2}}_{*} (g_{\beta}^{\prime}-g_{\beta})^{2} d\sigma dv dv_{*}
=(\delta^{2}+R^{-2})\delta^{-\beta}\mathcal{N}^{\epsilon,\alpha}(\mu^{\frac{1}{4}},g_{\beta})
\nonumber \\&\lesssim& (\delta^{2}+R^{-2})\delta^{-\beta}|\chi_{R}U^{\beta/2}_{\delta}g|^{2}_{\epsilon,\alpha/2} \lesssim (\delta^{2}+R^{-2})|g|^{2}_{\epsilon,\gamma/2},
\een
where we have used Corollary \ref{functional-N-epsilon-gamma} and
Lemma \ref{operatorcommutator1} with $\Phi=W_{\alpha/2}\delta^{-\beta/2}U^{\beta/2}_{\delta}\chi_{R}, M=W^\epsilon$.
Similar to \eqref{f-gamma-by-taylor}, we have
\beno  |f_{\beta}^{\prime}-f_{\beta}|^{2}  \lesssim
(\delta^{2}+R^{-2})\theta^{2} \int_{0}^{1} \mu^{\frac{1}{4}}(v(\kappa))|v(\kappa)-v_{*}|^{2}d\kappa.\eeno
Thanks to $|v-v_{*}| \sim |v(\kappa)-v_{*}|$, since $2 R \leq \frac{1}{2}\epsilon^{-1}$, by the change of
variable $v \rightarrow v(\kappa)$, we have
\ben \label{J-al-be-2-into-2-2}
J^{\alpha,\beta}_{2,2}&\lesssim& (\delta^{2}+R^{-2})\delta^{-\beta}\int b^{\epsilon} \theta^{2} (\chi_{R}U^{\beta/2}_{\delta}g)^{2}_{*} \mu^{\frac{1}{4}}(v(\kappa))|v(\kappa)-v_{*}|^{2+\alpha} d\sigma dv(\kappa) dv_{*} d\kappa.
\nonumber \\&\lesssim& (\delta^{2}+R^{-2})|\chi_{R}W_{\gamma/2+1}g|^{2}_{L^{2}} \lesssim (\delta^{2}+R^{-2})|g|^{2}_{\epsilon,\gamma/2}.
\een
Plugging \eqref{J-al-be-2-into-2-1} and \eqref{J-al-be-2-into-2-2} into \eqref{J-al-be-2-into-2}, we get
\ben \label{J-al-be-2-into-2-final}
J^{\alpha,\beta}_{2} \lesssim (\delta^{2}+R^{-2})|g|^{2}_{\epsilon,\gamma/2}.
\een

 {\it \underline{ Upper bound of $J^{\alpha,\beta}_{3}$.}}
Recall $J^{\alpha,\beta}_{3} = \delta^{-\beta}\int b^{\epsilon}|v-v_{*}|^{\alpha} \left(h^{\prime} h^{\prime}_{*}-
h h_{*}\right)^{2} \mu_{*}g^{2} d\sigma dv dv_{*} $.
By Lemma \ref{difference-term-complication}, we have
\beno\delta^{-\beta}\left(h^{\prime} h^{\prime}_{*}-
h h_{*}\right)^{2} = X(\beta,R,\delta)
 \lesssim (\delta^{2}+R^{-2})\theta^{2}\langle v_{*} \rangle^{2}\langle v \rangle^{\beta+2}\mathrm{1}_{|v|\leq 4R}. \eeno
Thanks to $\int |v-v_{*}|^{\alpha}\langle v_{*} \rangle^{2} \mu_{*} d v_{*} \lesssim \langle v \rangle^{\alpha}$, since $8 R \leq \frac{1}{2}\epsilon^{-1}$,
we get
\ben \label{J-al-be-3-up}
J^{\alpha,\beta}_{3} &\lesssim& (\delta^{2}+R^{-2}) \int b^{\epsilon}|v-v_{*}|^{\alpha} \theta^{2}\langle v_{*} \rangle^{2}\langle v \rangle^{\beta+2}\mu_{*} \mathrm{1}_{|v|\leq 4R} g^{2} d\sigma dv dv_{*}
\nonumber \\&\lesssim& (\delta^{2}+R^{-2})|\mathrm{1}_{|\cdot|\leq 4R} W_{\gamma/2+1}g|^{2}_{L^{2}} \lesssim (\delta^{2}+R^{-2})|g|^{2}_{\epsilon,\gamma/2}.\een

Plugging the estimates \eqref{J-al-be-1},
 \eqref{J-al-be-2-into-2-final} and \eqref{J-al-be-3-up} into \eqref{separate-gamma-very-small},
we get
\beno   J^{\epsilon,\gamma}(g) \gtrsim \delta^{-\beta}|g|^{2}_{L^{2}_{\gamma/2}}-C(\delta^{2}+R^{-2})|g|^{2}_{\epsilon,\gamma/2}. \eeno
Choosing $R = \delta^{-1}$, for some universal constants $C_{6},C_{7}$,
we get
\beno   J^{\epsilon,\gamma}(g) \geq C_{6}\delta^{-\beta}|g|^{2}_{L^{2}_{\gamma/2}}-C_{7}\delta^{2}|g|^{2}_{\epsilon,\gamma/2}. \eeno
Together with coercivity estimate \eqref{known-estimate}, thanks to $- 7/4 \leq \beta <0$,
by a similar argument as in {\it Step 3}, similar to \eqref{conclusion1-gamma-minus2}, we get for $-3< \gamma < - 7/4$,
\beno   J^{\epsilon,\gamma}(g) \geq C_{7}\left(\frac{C_{6}C_{3}}{2C_{4}C_{7}}\right)^{2/(2+\beta)}|g|^{2}_{\epsilon,\gamma/2}
\geq C_{7}\left(\frac{C_{6}C_{3}}{2C_{4}C_{7}}\right)^{8}|g|^{2}_{\epsilon,\gamma/2},
\eeno
for any $0 < \epsilon \leq \min\bigg\{  16^{-1}\left(\frac{C_{1}C_{3}}{2C_{4}C_{2}}\right)^{8}, 16^{-1}\left(\frac{C_{6}C_{3}}{2C_{4}C_{7}}\right)^{8}\bigg\}$.
Note that $C_{4}$ depends on $\gamma$. And this completes the proof of the theorem.
\end{proof}

\section{Upper bound estimate} \label{upper-bound}
Unless otherwise specified, in this section the parameters $\gamma$ and $s$ satisfy $-3 < \gamma \leq 0, \frac{1}{2}<s<1, \gamma+2s>-1$.
In particular, we specify the parameter $\gamma$ in $\Gamma^{\epsilon}$ and use $ \Gamma^{\epsilon,\gamma}$.
We derive the uniform upper bound of the nonlinear term $ \Gamma^{\epsilon,\gamma}$ given
in the following theorem.
\begin{thm}\label{Gamma-full-up-bound} Let $-3< \gamma \leq 0, \frac{1}{2}<s<1, \gamma+2s > -1$. It holds
\ben \label{upper-bound-Gamma}
\langle \Gamma^{\epsilon,\gamma}(g,h), f \rangle \lesssim |g|_{L^{2}}|h|_{\epsilon,\gamma/2}|f|_{\epsilon,\gamma/2}. \een
\end{thm}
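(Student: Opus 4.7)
Using $\mu(v)\mu(v_*)=\mu(v')\mu(v'_*)$, one rewrites
\[\langle\Gamma^{\epsilon,\gamma}(g,h),f\rangle=\int B^\epsilon(v-v_*,\sigma)\,\mu^{1/2}_*\,(g'_*h'-g_*h)\,f\,d\sigma\,dv\,dv_*.\]
The strategy is to cut the $(v,v_*)$-region with a smooth cutoff $\chi(|v-v_*|/\eta)$ for a small fixed $\eta$, yielding a ``regular'' piece on $\{|v-v_*|\geq \eta\}$ and a ``singular'' piece on $\{|v-v_*|\leq 2\eta\}$. These two regions demand fundamentally different treatments: on the regular region, $|v-v_*|^\gamma\lesssim \langle v-v_*\rangle^\gamma$ removes the Riesz singularity altogether, while on the singular region one exploits the weight-transfer bound $\mu^{1/2}(v_*)\lesssim \mu^{1/4}(v)$ (and similarly for primed variables, since $|v'-v'_*|=|v-v_*|$), allowing a $v$-Gaussian to absorb $|v-v_*|^\gamma$ after integration in $v_*$ via the local integrability of the Riesz kernel (Lemma \ref{aftercancellation}), which is where $\gamma>-3$ is used.

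\textbf{Regular region.} On $\{|v-v_*|\geq\eta\}$ I would use the standard decomposition $g'_*h'-g_*h=g_*(h'-h)+(g'_*-g_*)h'$. For the first term, symmetrize $(h'-h)f=\tfrac12[(hf)'-hf]-\tfrac12(h'-h)(f'-f)+\tfrac12(h'-h)f-\tfrac12 h'(f'-f)$: the $(hf)'-hf$ part is handled by the cancellation lemma (Lemma \ref{cancellation-lemma-general-gamma}) and Corollary \ref{cancellation-to-Sob-norm}, while the product $(h'-h)(f'-f)$ is Cauchy--Schwarzed against the functional $\mathcal N^{\epsilon,\gamma}(\mu^{1/2}_*,\cdot)$, which by Proposition \ref{upperboundpart} and Remark \ref{also-hold-for-a} is dominated by $|W^\epsilon W_{\gamma/2}\cdot|^2_{L^2}$. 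The term $(g'_*-g_*)h'$ is treated either by Taylor expansion in $v_*$ (bringing out $\nabla_*\mu^{1/2}_* g_*$ plus $\mu^{1/2}_*\nabla_* g_*$) or by a pre-post change of variable; after this one again reduces to the functionals $\mathcal N^{\epsilon,\gamma}(\cdot,\cdot)$ bounded via the upper bound \eqref{anisotropic-regularity-general-g-up-bound} and Proposition \ref{upperboundpart} in terms of $|\cdot|^2_{\epsilon,\gamma/2}$. All contributions on this region are then bounded by $|g|_{L^2}|h|_{\epsilon,\gamma/2}|f|_{\epsilon,\gamma/2}$ as desired.

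\textbf{Singular region, main obstacle.} On $\{|v-v_*|\leq 2\eta\}$ the weight-transfer identity converts $\mu^{1/2}(v_*)(g'_*h'-g_*h)f$ into an expression of the form $\mu^{1/8}(v)\mu^{1/8}(v_*)(g'_*h'-g_*h)f$ (absorbing the excess Gaussian on $v$), after which $|v-v_*|^\gamma$ integrates harmlessly in $v_*$ against $\mu^{1/8}_*$ near the diagonal. The delicate part is that, in order to extract the $W^\epsilon(D)$- and $W^\epsilon((-\Delta_{\mathbb{S}^2})^{1/2})$-regularity on $h$ and $f$ at the $\epsilon$-dependent scale, one must still expand $g'_*h'-g_*h$ as $g'_*(h'-h)+(g'_*-g_*)h$ and apply a Bobylev-style frequency analysis as in Proposition \ref{lowerboundpart2-general-g} (together with its upper bound \eqref{anisotropic-regularity-general-g-up-bound}). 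The conditions $\gamma+2s>-1$ and $s>1/2$ enter precisely here: after angular integration of $b^\epsilon \sin^{2s}(\theta/2)$ one is left with a residual $|v-v_*|^{\gamma+2s}$-singularity which, via Lemma \ref{aftercancellation} with $s_1+s_2=0$, can be controlled using only $|g|_{L^2}$ on the right, matching the statement of the theorem. The uniformity in $\epsilon$ is ultimately built into the continuous interpolation $W^\epsilon \sim \langle\cdot\rangle$ at low frequencies and $W^\epsilon \sim \langle\epsilon^{-1}\rangle^{1-s}\langle\cdot\rangle^s$ at high frequencies, so that every $\epsilon^{-(1-s)}$ factor produced by the singular kernel $b^\epsilon$ is precisely absorbed by the corresponding factor in $W^\epsilon$ on $h$ or $f$.
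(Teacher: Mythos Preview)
Your overall architecture (regular/singular split, weight transfer on the singular region) matches the paper, but there is a genuine gap in how you handle the ``second'' piece of your decomposition.

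You decompose $g'_*h'-g_*h=g_*(h'-h)+(g'_*-g_*)h'$ and propose to treat $\mu^{1/2}_*(g'_*-g_*)h'$ by Taylor expansion in $v_*$, ``bringing out $\nabla_*\mu^{1/2}_*g_*$ plus $\mu^{1/2}_*\nabla_*g_*$''. The second term $\mu^{1/2}_*\nabla_*g_*$ puts a derivative on $g$, but the theorem only permits $|g|_{L^2}$ on the right-hand side. A Cauchy--Schwarz argument fares no better: it produces $\int B^\epsilon\mu^{1/2}_*(g'_*-g_*)^2(\cdots)$, which after the swap $(v,v_*)\to(v_*,v)$ is an $\mathcal N^{\epsilon,\gamma}$-type functional in $g$ and hence costs $|g|_{\epsilon,\gamma/2}$ rather than $|g|_{L^2}$. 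The ``pre-post change of variable'' alternative does not remove the difference from $g$ either. The same issue recurs in your singular-region plan, where you again use $(g'_*-g_*)h$.

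The paper avoids this by a different splitting, $\Gamma^{\epsilon,\gamma}(g,h)=Q^{\epsilon,\gamma}(\mu^{1/2}g,h)+I^{\epsilon,\gamma}(g,h)$ (equation \eqref{Gamma-ep-ga-into-IQ}), which routes the difference onto the \emph{Maxwellian}: the $I$-part carries $(\mu^{1/2})_*-(\mu^{1/2})'_*$, so Taylor expansion hits only $\mu^{1/2}$ and never $g$ (Propositions \ref{upforI-ep-ga-et} and \ref{I-less-eta-upper-bound}). The $Q$-part is a genuine Boltzmann operator applied to $\mu^{1/2}g$; in the regular region (Proposition \ref{ubqepsilonnonsingular}) it is controlled via the translation operator $T_{v_*}$ and a radial/spherical geometric decomposition, producing an $|{\mu^{1/2}g}|_{L^1_{|\gamma|+2}}\lesssim|g|_{L^2}$ factor with no regularity on $g$. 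The constraints $s>\tfrac12$ and $\gamma+2s>-1$ enter in the singular $I^{\epsilon,\gamma,<}$ analysis (Lemma \ref{I-less-1-some-preparation}) through a dyadic frequency decomposition of $h$, not through Lemma \ref{aftercancellation} as you suggest.

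Two smaller points: your symmetrization identity for $(h'-h)f$ is written with $(h'-h)f$ on both sides and does not hold as stated (the correct one is $(h'-h)f=[(hf)'-hf]-(h'-h)(f'-f)-h(f'-f)$, and even this leaves a term of the same type). Also, you invoke Proposition \ref{upperboundpart} to bound $\mathcal N^{\epsilon,\gamma}(\mu^{1/2}_*,\cdot)$, but that proposition treats $\mathcal N^{\epsilon,\gamma}(\cdot,\mu^{1/2})$; the relevant upper bound for $\mathcal N^{\epsilon,\gamma}(\mu^{a},f)$ is Corollary \ref{functional-N-epsilon-gamma}, and it yields $|f|^2_{\epsilon,\gamma/2}$, not just $|W^\epsilon W_{\gamma/2}f|^2_{L^2}$.
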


Note that the estimate \eqref{upper-bound-Gamma} matches perfectly with Theorem \ref{micro-dissipation}, which enables us to establish the well-posedness theory for Cauchy problem \eqref{Cauchy-linearizedBE-grazing} near equilibrium. We remark that the parameter constraint $s>1/2, \gamma+2s>-1$, is a condition of Lemma 1.1 in \cite{he2014asymptotic}.

When $\gamma<0$, the relative velocity $v-v_{*}$ has singularity near $0$, which creates some difficulty to  obtain
the $L^{2}$ norm for the position $g$ in \eqref{upper-bound-Gamma}.
To deal with the singularity, we separate the kernel $B^{\epsilon} = B^{\epsilon,\gamma,<} + B^{\epsilon,\gamma,>}$, where
$B^{\epsilon,\gamma,<} := \zeta(|v-v_{*}|)B^{\epsilon}, B^{\epsilon,\gamma,>} := (1- \zeta(|v-v_{*}|))B^{\epsilon}$. We recall that the function $\zeta$ is defined in \eqref{zeta-property}. We call $|v-v_{*}| \leq 1$ (support of $\zeta(|v-v_{*}|)$) the singular region and $|v-v_{*}| \geq 1/2$ (support of $1-\zeta(|v-v_{*}|)$) the regular region.

 We associate $Q^{\epsilon,\gamma,>}$ with kernel $B^{\epsilon,\gamma,>}$ and
 denote $\mathcal{L}^{\epsilon,\gamma,>}, \mathcal{L}^{\epsilon,\gamma,>}_{1}, \mathcal{L}^{\epsilon,\gamma,>}_{2}, \Gamma^{\epsilon,\gamma,>}$
 correspondingly. Without ambiguity,
we explicitly define  the Boltzmann operator $Q^{\epsilon,\gamma,>}$ as follows:
\ben \label{Q-ep-ga-geq-eta}
Q^{\epsilon,\gamma,>}(g,h)(v):=
\int B^{\epsilon,\gamma,>}(v-v_*,\sigma) (g^{\prime}_{*} h^{\prime}-g_{*} h )d\sigma dv_*.
\een
Similar in \eqref{DefLep}, we define
\ben\label{Gamma-ep-eta-ga}\Gamma^{\epsilon,\gamma,>}(g,h):= \mu^{-\frac{1}{2}} Q^{\epsilon,\gamma,>}(\mu^{\frac{1}{2}}g,\mu^{\frac{1}{2}}h),
\\ \label{L-ep-eta-ga}
\mathcal{L}^{\epsilon,\gamma,>}g:= -\Gamma^{\epsilon,\gamma,>}(\mu^{\frac{1}{2}},g) - \Gamma^{\epsilon,\gamma,>}(g, \mu^{\frac{1}{2}}),
\\ \label{L1-ep-eta-ga}
\mathcal{L}^{\epsilon,\gamma,>}_{1}g:=  -\Gamma^{\epsilon,\gamma,>}(\mu^{\frac{1}{2}},g),
\\ \label{L2-ep-eta-ga} \mathcal{L}^{\epsilon,\gamma,>}_{2}g:=  - \Gamma^{\epsilon,\gamma,>}(g, \mu^{\frac{1}{2}}).
\een
Obviously
\beno
\Gamma^{\epsilon,\gamma,>}(g,h) &=& \mu^{-\frac{1}{2}} Q^{\epsilon,\gamma,>}(\mu^{\frac{1}{2}}g,\mu^{\frac{1}{2}}h)
\\&=& \int B^{\epsilon,\gamma,>}(v-v_*,\sigma) \mu^{\frac{1}{2}}_{*} \left(g^{\prime}_{*} h^{\prime}-g_{*} h\right) d\sigma dv_*
\\&=& \int B^{\epsilon,\gamma,>}(v-v_*,\sigma) \left((\mu^{\frac{1}{2}}g)^{\prime}_{*} h^{\prime}-(\mu^{\frac{1}{2}}g)_{*} h\right)d\sigma dv_*
\\&&+ \int B^{\epsilon,\gamma,>}(v-v_*,\sigma) \left(\mu^{\frac{1}{2}}_{*}-(\mu^{\frac{1}{2}})^{\prime}_{*}\right)g^{\prime}_{*} h^{\prime} d\sigma dv_*
\\&=& Q^{\epsilon,\gamma,>}(\mu^{\frac{1}{2}}g,h) + I^{\epsilon,\gamma,>}(g,h),
\eeno
where
\ben \label{I-ep-ga-geq-eta}
I^{\epsilon,\gamma,>}(g,h) :=\int B^{\epsilon,\gamma,>}(v-v_*,\sigma) \left(\mu^{\frac{1}{2}}_{*}-(\mu^{\frac{1}{2}})^{\prime}_{*}\right)g^{\prime}_{*} h^{\prime} d\sigma dv_{*} .
\een
We use kernel $B^{\epsilon,\gamma,<}(v-v_*,\sigma)$ for the Boltzmann operator $Q^{\epsilon,\gamma,<}$ in the same way as in \eqref{Q-ep-ga-geq-eta}. As in \eqref{Gamma-ep-eta-ga},  \eqref{L-ep-eta-ga}, \eqref{L1-ep-eta-ga}, \eqref{L2-ep-eta-ga}, we define
$\Gamma^{\epsilon,\gamma,<}(g,h), \mathcal{L}^{\epsilon,\gamma,<}g, \mathcal{L}^{\epsilon,\gamma,<}_{1}g, \mathcal{L}^{\epsilon,\gamma,<}_{2}g$ correspondingly.
In addition, $I^{\epsilon,\gamma,<}(g,h)$ is defined using the kernel $B^{\epsilon,\gamma,<}(v-v_*,\sigma)$  as in \eqref{I-ep-ga-geq-eta}. With these notations in hand, we have
\ben
\label{Gamma-ep-ga-into-IQ}
\Gamma^{\epsilon,\gamma}(g,h) &=& Q^{\epsilon,\gamma}(\mu^{\frac{1}{2}}g,h) + I^{\epsilon,\gamma}(g,h),
\\
\label{Gamma-ep-ga-geq-eta-into-IQ}
\Gamma^{\epsilon,\gamma,>}(g,h) &=& Q^{\epsilon,\gamma,>}(\mu^{\frac{1}{2}}g,h) + I^{\epsilon,\gamma,>}(g,h),
\\
\label{Gamma-ep-ga-leq-eta-into-IQ}
\Gamma^{\epsilon,\gamma,<}(g,h) &=& Q^{\epsilon,\gamma,<}(\mu^{\frac{1}{2}}g,h)+ I^{\epsilon,\gamma,<}(g,h),
\\
\label{Q-ep-ga-sep-eta}
Q^{\epsilon,\gamma}(g,h) &=& Q^{\epsilon,\gamma,>}(g,h) + Q^{\epsilon,\gamma,<}(g,h),
\\
\label{I-ep-ga-sep-eta}
I^{\epsilon,\gamma}(g,h) &=& I^{\epsilon,\gamma,>}(g,h) + I^{\epsilon,\gamma,<}(g,h).
\een

We will give the estimates in the following propositions and theorems as shown in the following table.
\begin{table}[!htbp]
\centering
\caption{Summary of results} \label{ResultSummary} %¨¬¨ª?¨®¡À¨º¨¬a ¨¦¨¨??¡À¨º??
\begin{tabular}{cc}
\hline
Functionals &Proposition or Theorem \\
\hline
$\langle Q^{\epsilon,\gamma,>}(g,h), f\rangle$ & Proposition \ref{ubqepsilonnonsingular}\\
$\langle I^{\epsilon,\gamma,>}(g,h), f\rangle$ & Proposition \ref{upforI-ep-ga-et}\\
$\langle Q^{\epsilon,\gamma,<}(\mu^{\frac{1}{2}}g,h), f\rangle$ & Proposition \ref{regularity-part-Q}\\
$\langle I^{\epsilon,\gamma,<}(g,h), f\rangle$ & Proposition \ref{I-less-eta-upper-bound}\\
$\langle \Gamma^{\epsilon,\gamma,>}(g,h), f\rangle$ & Theorem \ref{upGammagh-geq-eta}\\
$\langle Q^{\epsilon,\gamma}(\mu^{\frac{1}{2}}g,h), f\rangle$ & Theorem \ref{Q-full-up-bound}\\
$\langle I^{\epsilon,\gamma}(g,h), f\rangle$ & Theorem \ref{upforI-total}\\
$\langle \Gamma^{\epsilon,\gamma}(g,h), f\rangle$ & Theorem \ref{Gamma-full-up-bound}\\
\hline
\end{tabular}
% \label{ResultSummary}  ¡À¨º¨¬a¡¤??¨²?a¨¤?¨°2¨º??¨¦¨°?¦Ì?
\end{table}

The theorems in table \ref{ResultSummary} can be derived from the propositions. First, by \eqref{Gamma-ep-ga-geq-eta-into-IQ}, Proposition \ref{ubqepsilonnonsingular} and Proposition \ref{upforI-ep-ga-et} give Theorem \ref{upGammagh-geq-eta}.
By \eqref{Q-ep-ga-sep-eta}, Proposition \ref{ubqepsilonnonsingular} and Proposition \ref{regularity-part-Q} give Theorem \ref{Q-full-up-bound}. Similarly, by \eqref{I-ep-ga-sep-eta},
 Proposition \ref{upforI-ep-ga-et} and Proposition \ref{I-less-eta-upper-bound} give Theorem \ref{upforI-total}. By \eqref{Gamma-ep-ga-into-IQ}, Theorem \ref{Q-full-up-bound} and Theorem \ref{upforI-total} give Theorem \ref{Gamma-full-up-bound}.

Therefore, it remains to prove Propositions \ref{ubqepsilonnonsingular}-\ref{I-less-eta-upper-bound}
in this section.

\subsection{Upper bound in the regular region}
In this subsection, we will give the upper bound for the nonlinear term $\Gamma^{\epsilon,\gamma,>}(g,h)$.
Thanks to \eqref{Gamma-ep-ga-geq-eta-into-IQ}, we have
\ben \label{Gamma-ep-ga-geq-eta-into-IQ-inner}
\langle \Gamma^{\epsilon,\gamma,>}(g,h), f\rangle =  \langle Q^{\epsilon,\gamma,>}(\mu^{\frac{1}{2}}g,h), f\rangle +
\langle I^{\epsilon,\gamma,>}(g,h), f\rangle.
\een
We first consider $\langle Q^{\epsilon,\gamma,>}(g,h), f\rangle$ in subsection \ref{Q-regular}.
and then $\langle I^{\epsilon,\gamma,>}(g,h), f\rangle$ in subsection \ref{I-regular}.
\subsubsection{Upper bound of $Q^{\epsilon,\gamma,>}$} \label{Q-regular}
We give the upper bound of $ Q^{\epsilon,\gamma,>}$ in the following proposition.
% for all $gamma>-3$
\begin{prop}\label{ubqepsilonnonsingular} The estimate
$ |\langle Q^{\epsilon,\gamma,>}(g,h), f\rangle| \lesssim |g|_{L^{1}_{|\gamma|+2}}|h|_{\epsilon,\gamma/2}|f|_{\epsilon,\gamma/2}$ holds.
\end{prop}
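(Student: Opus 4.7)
The plan is to prove Proposition~\ref{ubqepsilonnonsingular} by a trilinear estimate that combines the cancellation lemma with Cauchy--Schwarz after symmetrization in the pre--post-collision variables. First, using the standard change of variables $(v,v_*)\to(v',v'_*)$ in the gain term, I rewrite
\[
\langle Q^{\epsilon,\gamma,>}(g,h),f\rangle = \int B^{\epsilon,\gamma,>}g_*\,h\,(f'-f)\,d\sigma dv dv_*,
\]
and then apply the algebraic identity $h(f'-f)=[(hf)'-hf]-(h'-h)f'$ to split this into $I_1-I_2$.

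For $I_1=\int B^{\epsilon,\gamma,>}g_*[(hf)'-hf]\,d\sigma dv dv_*$, I apply the cancellation lemma (Lemma~\ref{cancellation-lemma-general-gamma}, which remains valid when $b^\epsilon$ is weighted by the truncation $(1-\zeta(|v-v_*|))|v-v_*|^\gamma$) to reduce the angular integral to a Riesz-type bound
\[
|I_1|\lesssim \int(1-\zeta(|v-v_*|))|v-v_*|^\gamma|g_*||h||f|\,dv dv_*.
\]
On the support $|v-v_*|\geq 1/2$, Peetre's inequality gives $|v-v_*|^\gamma\lesssim\langle v\rangle^\gamma\langle v_*\rangle^{|\gamma|}$, so Cauchy--Schwarz in $v$ yields $|I_1|\lesssim|g|_{L^1_{|\gamma|}}|h|_{L^2_{\gamma/2}}|f|_{L^2_{\gamma/2}}$, which is dominated by the claimed bound since $|\phi|_{L^2_{\gamma/2}}\leq|\phi|_{\epsilon,\gamma/2}$.

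For $I_2=\int B^{\epsilon,\gamma,>}g_*(h'-h)f'\,d\sigma dv dv_*$, the absence of cancellation on $f'$ is resolved by a second symmetrization: applying $(v,v_*)\to(v',v'_*)$ gives $I_2=-\int B^{\epsilon,\gamma,>}g'_*(h'-h)f\,d\sigma dv dv_*$, and averaging with the expansion $g_*f'-g'_*f=g_*(f'-f)+(g_*-g'_*)f$ yields
\[
2I_2 = \int B^{\epsilon,\gamma,>}g_*(h'-h)(f'-f)\,d\sigma dv dv_* + \int B^{\epsilon,\gamma,>}(g_*-g'_*)(h'-h)f\,d\sigma dv dv_*.
\]
Each summand now contains two cancellation differences. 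For the first, Cauchy--Schwarz with the splitting $g_*=|g|^{1/2}_*\cdot|g|^{1/2}_*\,\mathrm{sgn}(g_*)$ produces $\bigl(\mathcal{N}^{\epsilon,\gamma,>}(|g|^{1/2},h)\bigr)^{1/2}\bigl(\mathcal{N}^{\epsilon,\gamma,>}(|g|^{1/2},f)\bigr)^{1/2}$. On the support, $|v-v_*|^\gamma\sim\langle v-v_*\rangle^\gamma$, so Lemma~\ref{reduce-gamma-to-0-no-sigularity} reduces each factor to $\mathcal{N}^{\epsilon,0}(W_{|\gamma|/2}|g|^{1/2},W_{\gamma/2}\phi)$ modulo an $L^2$ remainder, and \eqref{anisotropic-regularity-general-g-up-bound} then gives the bound $|W_{|\gamma|/2}|g|^{1/2}|_{L^2_1}^2|\phi|_{\epsilon,\gamma/2}^2=|g|_{L^1_{|\gamma|+2}}|\phi|_{\epsilon,\gamma/2}^2$ for $\phi\in\{h,f\}$; here the crucial identity is $|W_{|\gamma|/2+1}|g|^{1/2}|_{L^2}^2=|g|_{L^1_{|\gamma|+2}}$, which keeps the $L^1$ weight on $g$. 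The second summand $\int B^{\epsilon,\gamma,>}(g_*-g'_*)(h'-h)f\,d\sigma dv dv_*$ is treated analogously, using the $v\leftrightarrow v_*$ symmetry of $B^{\epsilon,\gamma,>}$: the cancellation structure supporting $(h'-h)$ in the $v$ direction has an exact analog for $(g_*-g'_*)$ in the $v_*$ direction, and Cauchy--Schwarz paired against $(h'-h)$ produces the same $|g|_{L^1_{|\gamma|+2}}|h|_{\epsilon,\gamma/2}|f|_{\epsilon,\gamma/2}$ bound.

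The principal obstacle is the second summand above, since $f$ carries no cancellation in $\sigma$ and a naive Cauchy--Schwarz either introduces the divergent integral $\int b^\epsilon d\sigma$ or replaces the $L^1$ weight on $g$ by an $L^2$ weight incompatible with the statement. The resolution is to exploit the symmetric role of $v$ and $v_*$ in $B^{\epsilon,\gamma,>}$: the $(g_*-g'_*)$ difference behaves as a ``$v_*$-direction analog'' of $(h'-h)$ and can be paired with it via Cauchy--Schwarz, while the substitution $g\mapsto|g|^{1/2}$ together with the $L^2_1$-type term in \eqref{anisotropic-regularity-general-g-up-bound} supplies exactly the weight $L^1_{|\gamma|+2}$ on $g$ in the final estimate.
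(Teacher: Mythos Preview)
Your treatment of $I_1$ and of the first summand in your expression for $2I_2$ is fine and in fact coincides with what the paper does for the analogous piece (their $\mathcal{A}_{\text{s},2}$). The gap is in the second summand
\[
J := \int B^{\epsilon,\gamma,>}(g_*-g'_*)(h'-h)\,f\,d\sigma\,dv\,dv_*.
\]
Your proposed resolution --- pair $(g_*-g'_*)$ with $(h'-h)$ via Cauchy--Schwarz and invoke the $v\leftrightarrow v_*$ symmetry plus the $|g|^{1/2}$ substitution --- does not produce an $L^1$ bound on $g$. The $|g|^{1/2}$ trick works only when $g_*$ appears \emph{undifferenced} as a multiplicative weight, so that $|g_*|=|g_*|^{1/2}\cdot|g_*|^{1/2}$ places one copy in each Cauchy--Schwarz factor and the identity $|W_{|\gamma|/2+1}|g|^{1/2}|_{L^2}^2=|g|_{L^1_{|\gamma|+2}}$ applies. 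In $J$, however, $g$ is already differenced. Any Cauchy--Schwarz that isolates $(g_*-g'_*)^2$ yields a functional of the type $\mathcal{N}^{\epsilon,\gamma}(\cdot\,,g)$ after the $(v,v_*,\sigma)\to(v_*,v,-\sigma)$ swap, which is controlled by $|g|_{\epsilon,\gamma/2}^2$, not $|g|_{L^1}$. Alternatively, if you keep $g_*-g'_*$ with $f$ and isolate $(h'-h)^2$, the remaining $v_*$-integral carries no decay (only $(1-\zeta(|v-v_*|))|v-v_*|^\gamma$, whose integral over $\{|v-v_*|\geq 1/2\}$ diverges for $\gamma>-3$) and the factor is infinite. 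Bounding $|g_*-g'_*|\leq|g_*|+|g'_*|$ destroys the angular cancellation and reintroduces the divergent $\int b^\epsilon\,d\sigma$.

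The paper circumvents this obstruction by never introducing a difference on $g$. It writes the trilinear form in the relative-velocity variable $u=v-v_*$ and decomposes $f(u^+)-f(u)$ geometrically into a radial part $f(u^+)-f(|u|\tfrac{u^+}{|u^+|})$ and a spherical part $f(|u|\tfrac{u^+}{|u^+|})-f(u)$. The spherical part, after the change $(u,\sigma)\to(r,\tau,\varsigma)$ with $\varsigma=\tfrac{\tau+\sigma}{|\tau+\sigma|}$, enjoys a $\tau\leftrightarrow\varsigma$ symmetry that converts $h(u)\bigl[f(|u|\tfrac{u^+}{|u^+|})-f(u)\bigr]$ into $-\tfrac12\bigl[h(|u|\tfrac{u^+}{|u^+|})-h(u)\bigr]\bigl[f(|u|\tfrac{u^+}{|u^+|})-f(u)\bigr]$. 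This is the crucial step: it produces differences on \emph{both} $h$ and $f$ while $g_*$ remains an undifferenced weight, so Cauchy--Schwarz with the $|g|^{1/2}$ splitting works directly. Your symmetrization via $(v,v_*)\to(v',v'_*)$ on $I_2$ does not achieve this; it shifts the difference from $f$ onto $g$, which is exactly what must be avoided.
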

\begin{proof} Define the translation operator $T_{v_{*}}$ by $(T_{v_{*}}f)(v) = f(v_{*}+v)$.
By geometric decomposition, we have
$\langle Q^{\epsilon,\gamma,>}(g,h), f\rangle = \mathcal{A}_{\text{r}} + \mathcal{A}_{\text{s}}$,
where
 \beno \mathcal{A}_{\text{r}} := \int b^{\epsilon}(\frac{u}{|u|}\cdot\sigma)|u|^{\gamma}(1-\zeta(|u|))g_{*} (T_{v_{*}}h)(u) ((T_{v_{*}}f)(u^{+})-(T_{v_{*}}f)(|u|\frac{u^{+}}{|u^{+}|})) d\sigma dv_{*} du,
\\
\mathcal{A}_{\text{s}}:=\int b^{\epsilon}(\frac{u}{|u|}\cdot\sigma)|u|^{\gamma}(1-\zeta(|u|))g_{*} (T_{v_{*}}h)(u)((T_{v_{*}}f)(|u|\frac{u^{+}}{|u^{+}|})- (T_{v_{*}}f)(u)) d\sigma dv_{*} du.\eeno
Note that "r" and "s" are referred to "radical" and "spherical" respectively.

{\it Step 1: Estimate of $\mathcal{A}_{\text{r}}$.}
By Lemma \ref{crosstermsimilar} and Remark \ref{exact-cross-term}, we have
\ben \label{A-r-upper-bound}
|\mathcal{A}_{\text{r}}| &\lesssim& \int |g_{*}| (|W^{\epsilon}W_{\gamma/2}T_{v_{*}}h|_{L^{2}}+|W^{\epsilon}(D)W_{\gamma/2}T_{v_{*}}h|_{L^{2}})
\\&&\times(|W^{\epsilon}W_{\gamma/2}T_{v_{*}}f|_{L^{2}}+|W^{\epsilon}(D)W_{\gamma/2}T_{v_{*}}f|_{L^{2}}) dv_{*}.\nonumber
\een
Thanks to \eqref{separate-into-2-cf}, for $u \in \mathbb{R}^{3}$, we have
\ben \label{translation-out-cf}
|W^{\epsilon}T_{u}f|_{L^{2}} \lesssim W^{\epsilon}(u) |W^{\epsilon}f|_{L^{2}}.
\een
For $u \in \mathbb{R}^{3}, l \in \mathbb{R}$, $
(T_{u} W^{l})(v) = \langle v + u \rangle^{l} \lesssim C(l)\langle u \rangle^{|l|} \langle v \rangle^{l}.$
As a result, we have
\ben \label{translation-out-weight}
|T_{u}f|_{L^{2}_{l}} \lesssim \langle u \rangle^{|l|} |f|_{L^{2}_{l}}.
\een
By \eqref{translation-out-cf} and \eqref{translation-out-weight}, we have
\begin{eqnarray}\label{tvstartonovstar1}
|W^{\epsilon}W_{\gamma/2}T_{v_{*}}h|_{L^{2}} \lesssim W^{\epsilon}(v_{*})W_{|\gamma|/2}(v_{*})|W^{\epsilon}W_{\gamma/2}h|_{L^{2}}\lesssim W_{|\gamma|/2+1}(v_{*})|W^{\epsilon}W_{\gamma/2}h|_{L^{2}}.
\end{eqnarray}
Since $W^{\epsilon} \in S^{1}_{1,0}, W_{\gamma/2} \in S^{\gamma/2}_{1,0}$, by Lemma \ref{operatorcommutator1}, we have
\begin{eqnarray}\label{tvstartonovstar2}
|W^{\epsilon}(D)W_{\gamma/2}T_{v_{*}}h|_{L^{2}} &\lesssim& |W_{\gamma/2}W^{\epsilon}(D)T_{v_{*}}h|_{L^{2}} + |T_{v_{*}}h|_{H^{0}_{\gamma/2-1}}
\\&=& |W_{\gamma/2}T_{v_{*}}W^{\epsilon}(D)h|_{L^{2}} + |T_{v_{*}}h|_{H^{0}_{\gamma/2-1}}
\nonumber\\&\lesssim& W_{|\gamma|/2}(v_{*})(|W_{\gamma/2}W^{\epsilon}(D)h|_{L^{2}} + |h|_{L^{2}_{\gamma/2-1}})
\nonumber\\&\lesssim& W_{|\gamma|/2}(v_{*})|W^{\epsilon}(D)W_{\gamma/2}h|_{L^{2}}, \nonumber
\end{eqnarray}
where we have used the fact $T_{v_{*}}$ and $W^{\epsilon}(D)$ are commutable, inequality \eqref{translation-out-weight} and Lemma \ref{operatorcommutator1}.
Plugging \eqref{tvstartonovstar1} and \eqref{tvstartonovstar2} into \eqref{A-r-upper-bound}, we have
\beno
|\mathcal{A}_{\text{r}}| \lesssim | g|_{L^{1}_{|\gamma|+2}}( |W^{\epsilon}(D)W_{\gamma/2}h|_{L^{2}} + |W^{\epsilon}W_{\gamma/2}h|_{L^{2}})
( |W^{\epsilon}(D)W_{\gamma/2}f|_{L^{2}} + |W^{\epsilon}W_{\gamma/2}f|_{L^{2}}).
\eeno

{\it Step 2: Estimate of $\mathcal{A}_{\text{s}}$.}
Let $u = r \tau$ and $\varsigma = \frac{\tau+\sigma}{|\tau+\sigma|}$, then $\frac{u}{|u|} \cdot \sigma = 2(\tau\cdot\varsigma)^{2} - 1$ and $|u|\frac{u^{+}}{|u^{+}|} = r \varsigma$. In the change of variable $(u, \sigma) \rightarrow (r, \tau, \varsigma)$, one has
$
du d\sigma = 4  (\tau\cdot\varsigma) r^{2} dr d \tau d \varsigma.
$
Then
\beno
\mathcal{A}_{\text{s}} &=& 4 \int r^\gamma (1-\zeta(r))b^{\epsilon}(2(\tau\cdot\varsigma)^{2} - 1)(T_{v_*}h)(r\tau)\big((T_{v_*}f)(r\varsigma) - (T_{v_*}f) (r\tau)\big) (\tau\cdot\varsigma) r^{2} dr d \tau d \varsigma d v_{*}\\
&=& 2 \int r^\gamma(1-\zeta(r))b^{\epsilon}(2(\tau\cdot\varsigma)^{2} - 1)\big((T_{v_*}h)(r\tau) - (T_{v_*}h) (r\varsigma)\big)\\ &&\times \big((T_{v_*}f)(r\varsigma) - (T_{v_*}f) (r\tau)\big) (\tau\cdot\varsigma) r^{2} dr d \tau d \varsigma d v_{*}\\&=&
-\frac{1}{2}\int b^{\epsilon}(\frac{u}{|u|}\cdot\sigma)|u|^{\gamma}(1-\zeta(|u|))g_{*} ((T_{v_{*}}h)(|u|\frac{u^{+}}{|u^{+}|})-(T_{v_{*}}h)(u))\\ &&\times
 ((T_{v_{*}}f)(|u|\frac{u^{+}}{|u^{+}|})-(T_{v_{*}}f)(u)) d\sigma dv_{*} du.
\eeno
Then by Cauchy-Schwartz inequality and the fact $|u|^{\gamma}(1-\zeta(|u|)) \lesssim \langle u \rangle^{\gamma}$, we have
\beno
|\mathcal{A}_{\text{s}}| &\lesssim& \{\int b^{\epsilon}(\frac{u}{|u|}\cdot\sigma)\langle u \rangle^{\gamma}|g_{*}| ((T_{v_{*}}h)( |u|\frac{u^{+}}{|u^{+}|})-(T_{v_{*}}h)( u))^{2} d\sigma dv_{*} du\}^{\frac{1}{2}}
\\&& \times \{\int b^{\epsilon}(\frac{u}{|u|}\cdot\sigma)\langle u \rangle^{\gamma}|g_{*}|
((T_{v_{*}}f)(|u|\frac{u^{+}}{|u^{+}|})-(T_{v_{*}})f(u))^{2} d\sigma dv_{*} du\}^{\frac{1}{2}}
:= (\mathcal{A}_{\text{s}}(h))^{\frac{1}{2}}(\mathcal{A}_{\text{s}}(f))^{\frac{1}{2}}.
\eeno
Note that $\mathcal{A}_{\text{s}}(h)$ and $\mathcal{A}_{\text{s}}(f)$ have exactly the same structure. It suffices to estimate  $\mathcal{A}_{\text{s}}(f)$.
Since
\beno
((T_{v_{*}}f)(|u|\frac{u^{+}}{|u^{+}|})-(T_{v_{*}}f)(u))^{2} \leq 2 ((T_{v_{*}}f)(|u|\frac{u^{+}}{|u^{+}|})-(T_{v_{*}}f)(u^{+}))^{2} + 2 ((T_{v_{*}}f)(u^{+})-(T_{v_{*}}f)(u))^{2},
\eeno
we have
\beno
\mathcal{A}_{\text{s}}(f) &\lesssim& \int b^{\epsilon}(\frac{u}{|u|}\cdot\sigma)\langle u \rangle^{\gamma}|g_{*}| ((T_{v_{*}}f)(|u|\frac{u^{+}}{|u^{+}|})-(T_{v_{*}}f)(u^{+}))^{2} d\sigma dv_{*} du
\\&&+\int b^{\epsilon}(\frac{u}{|u|}\cdot\sigma)\langle u \rangle^{\gamma}|g_{*}| ((T_{v_{*}}f)(u^{+})-(T_{v_{*}}f)(u))^{2} d\sigma dv_{*} du
:= \mathcal{A}_{\text{s},1}(f)+ \mathcal{A}_{\text{s},2}(f).
\eeno
By Lemma \ref{gammanonzerotozero}, and the facts (\ref{tvstartonovstar1}) and (\ref{tvstartonovstar2}), we have
\beno
\mathcal{A}_{\text{s},1}(f) \lesssim \int |g_{*}| \mathcal{Z}^{\epsilon,\gamma}(T_{v_{*}}f) dv_{*} \lesssim| g|_{L^{1}_{|\gamma|+2}}(|W^{\epsilon}(D)W_{\gamma/2}f|^{2}_{L^{2}}+|W^{\epsilon}W_{\gamma/2}f|^{2}_{L^{2}}).
\eeno
Recalling the notation in \eqref{N-tilde-epsilon-gamma}, we observe that $\mathcal{A}_{\text{s},2}(f) =  \tilde{\mathcal{N}}^{\epsilon,\gamma}(\sqrt{|g|}, f)$.
By Lemma \ref{reduce-gamma-to-0-no-sigularity}, we have
\beno
\tilde{\mathcal{N}}^{\epsilon,\gamma}(\sqrt{|g|},f)  \lesssim \mathcal{N}^{\epsilon,0}(W_{|\gamma|/2} \sqrt{|g|},W_{\gamma/2}f) + |g|_{L^{1}_{|\gamma+2|}}|f|^{2}_{L^{2}_{\gamma/2}}. \nonumber
\eeno
Then by \eqref{anisotropic-regularity-general-g-up-bound} in Proposition \ref{lowerboundpart2-general-g}, we get
\beno
\mathcal{N}^{\epsilon,0}(W_{|\gamma|/2} \sqrt{|g|},W_{\gamma/2}f) \lesssim  |W_{|\gamma|/2} \sqrt{|g|}|^{2}_{L^{2}_{1}}|f|^{2}_{\epsilon,\gamma/2} \lesssim |g|_{L^{1}_{|\gamma|+2}}|f|^{2}_{\epsilon,\gamma/2},
\eeno
which gives
$
\mathcal{A}_{\text{s},2}(f) \lesssim  |g|_{L^{1}_{|\gamma|+2}}|f|^{2}_{\epsilon,\gamma/2}.
$
Combining the estimates for  $\mathcal{A}_{\text{s},1}(f)$ and $\mathcal{A}_{\text{s},2}(f)$, we get
$
\mathcal{A}_{\text{s}}(f) \lesssim |g|_{L^{1}_{|\gamma|+2}}|f|^{2}_{\epsilon,\gamma/2}.
$
Then we have
\beno
|\mathcal{A}_{\text{s}}| \lesssim (\mathcal{A}_{\text{s}}(h))^{\frac{1}{2}}(\mathcal{A}_{\text{s}}(f))^{\frac{1}{2}} \lesssim |g|_{L^{1}_{|\gamma|+2}}|h|_{\epsilon,\gamma/2}|f|_{\epsilon,\gamma/2}.
\eeno
Then the proof of the proposition is completed by  the estimates of $\mathcal{A}_{\text{r}}$ and $\mathcal{A}_{\text{s}}$.
\end{proof}

\subsubsection{Upper bound of $I^{\epsilon,\gamma,>}$} \label{I-regular}
We now turn to the upper bound estimate of the term $\langle I^{\epsilon,\gamma,>}(g,h), f\rangle$ that is given in the following proposition.

\begin{prop}\label{upforI-ep-ga-et}
The estimate
$ |\langle I^{\epsilon,\gamma,>}(g,h), f\rangle|  \lesssim  |g|_{L^{2}}|h|_{\epsilon,\gamma/2}|W^{\epsilon}f|_{L^{2}_{\gamma/2}}$ holds.
\end{prop}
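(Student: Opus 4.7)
The plan is to use a Taylor expansion to extract a key $\sin(\theta/2)|v-v_*|$ factor from the difference $\mu^{\frac12}_*-(\mu^{\frac12})'_*$, and then apply a carefully chosen Cauchy-Schwarz splitting that respects the $\epsilon$-uniform angular integrability. First I would write
\[
\mu^{\frac12}_*-(\mu^{\frac12})'_*=-\int_0^1 (\nabla \mu^{\frac12})(v_*(\kappa))\cdot(v_*'-v_*)\,d\kappa,\qquad v_*(\kappa):=v_*+\kappa(v_*'-v_*),
\]
so that, using $|v_*'-v_*|=|v-v_*|\sin(\theta/2)\leq \epsilon|v-v_*|$, we obtain the pointwise bound
\[
|\mu^{\frac12}_*-(\mu^{\frac12})'_*|\lesssim \sin(\theta/2)|v-v_*|\int_0^1 \mu^{1/4}(v_*(\kappa))\,d\kappa.
\]
In the regular region where $|v-v_*|\geq 1/2$, the factor $(1-\zeta(|v-v_*|))|v-v_*|^{\gamma+1}$ is polynomially bounded, and for $\epsilon\leq\epsilon_0$ small the Gaussian $\mu^{1/4}(v_*(\kappa))$ can be controlled by $\mu^{1/8}(v_*)\mu^{1/8}(v_*')$ up to fixed polynomial weights in $v_*$ and $v_*'$.

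Next, I would decompose $h'=h+(h'-h)$ to split $\langle I^{\epsilon,\gamma,>}(g,h),f\rangle=J_A+J_B$. For the difference piece $J_A$, I would apply Cauchy-Schwarz distributing one copy of $|\mu^{\frac12}_*-(\mu^{\frac12})'_*|$ to each side via an auxiliary weight $\mu_*^{\pm a}$ with $a>0$ small,
\[
|J_A|^2\lesssim \Big(\int B^{\epsilon,\gamma,>}|\mu^{\frac12}_*-(\mu^{\frac12})'_*|\,\mu_*^{-a}|g_*'|^2|f|^2\,d\sigma dv_* dv\Big)\Big(\int B^{\epsilon,\gamma,>}|\mu^{\frac12}_*-(\mu^{\frac12})'_*|\,\mu_*^{a}(h'-h)^2\,d\sigma dv_*dv\Big),
\]
and estimate each factor. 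After inserting the Taylor bound, the second factor is controlled by a $\widetilde{\mathcal{N}}^{\epsilon,\gamma}(\mu^{a/2},h)$-type functional, which, by reducing to the case $\gamma=0$ via Lemma \ref{reduce-gamma-to-0-no-sigularity} and invoking the upper bound \eqref{anisotropic-regularity-general-g-up-bound} of Proposition \ref{lowerboundpart2-general-g}, is bounded by $|h|^2_{\epsilon,\gamma/2}$. For the first factor the Taylor estimate together with the key integrability $\int b^\epsilon(\cos\theta)\sin^2(\theta/2)\,d\sigma=4\pi$ from \eqref{order-2} yields an integral over $(v,v_*)$ of a Gaussian times a polynomial, which after a change of variable $v_*\to v_*'$ (with Jacobian bounded away from $0$ for $\sin(\theta/2)\leq\epsilon$ small) is dominated by $|g|_{L^2}^2|W^\epsilon f|^2_{L^2_{\gamma/2}}$.

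For the piece $J_B$, in which $h(v)$ is independent of $(v_*,\sigma)$, I would pull it out and treat $H(v):=h(v)f(v)$ as a single test function: a single Cauchy-Schwarz and the Taylor estimate then give $|J_B|\lesssim |g|_{L^2}|W_{\gamma/2+1}h|_{L^2}|W^\epsilon f|_{L^2_{\gamma/2}}$, and the weight $W_{\gamma/2+1}$ can be absorbed into $|h|_{\epsilon,\gamma/2}$ because on the support of the $J_B$ integrand a factor of $\mu^{1/8}_*$ is retained that allows a pre-post type change of variable.

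The main obstacle is the \emph{angular integrability}: since $\int b^\epsilon d\sigma\sim \epsilon^{-2}$ and even $\int b^\epsilon\sin(\theta/2)\,d\sigma\sim\epsilon^{-1}$ for $s>1/2$, any Cauchy-Schwarz split must ultimately retain $b^\epsilon\sin^2(\theta/2)$ (the only uniformly integrable combination) on whichever side is not controlled by an anisotropic functional. This forces the asymmetric distribution of $|\mu^{\frac12}_*-(\mu^{\frac12})'_*|$ above, and is exactly why the weight $W^\epsilon$ appears on $f$ in the target inequality: without it one cannot absorb the residual $\epsilon$-dependence on the $f$-side after the splitting.
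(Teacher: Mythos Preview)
Your proposal has a genuine gap in the first factor of $J_A$. After your Cauchy--Schwarz split, that factor carries exactly one copy of $|\mu^{1/2}_*-(\mu^{1/2})'_*|$, so the Taylor bound produces only \emph{one} power of $\sin(\theta/2)$. But for $s\geq\tfrac12$ (the regime of the paper) one has $\int b^\epsilon(\cos\theta)\sin(\theta/2)\,d\sigma\sim\epsilon^{2s-2}\int_0^\epsilon t^{-2s}\,dt=\infty$, so the step where you invoke \eqref{order-2} is illegitimate: that identity needs $\sin^2(\theta/2)$, and your first factor simply does not contain two powers. The ``asymmetry'' you introduce via $\mu_*^{\pm a}$ is in the wrong variable; it is the distribution of $|\mu^{1/2}_*-(\mu^{1/2})'_*|$ itself that must be asymmetric. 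A related problem occurs in $J_B$: first-order Taylor alone cannot yield angular integrability, and your claim that $|W_{\gamma/2+1}h|_{L^2}$ is absorbed by $|h|_{\epsilon,\gamma/2}$ fails uniformly in $\epsilon$, since for $|v|\geq\epsilon^{-1}$ one has $W^\epsilon(v)\sim\epsilon^{s-1}\langle v\rangle^s\ll\langle v\rangle$.

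The missing idea is the algebraic identity
\[
(\mu^{1/2})'_*-\mu^{1/2}_*=\big((\mu^{1/4})'_*-\mu^{1/4}_*\big)^2+2\mu^{1/4}_*\big((\mu^{1/4})'_*-\mu^{1/4}_*\big),
\]
which converts the linear difference into a built-in \emph{square} plus a cross term carrying an explicit $\mu^{1/4}_*$. The square satisfies $((\mu^{1/4})'_*-\mu^{1/4}_*)^2\lesssim\min\{1,|v-v_*|^2\sin^2(\theta/2)\}$, which via Proposition~\ref{symbol} is exactly what generates the weight $(W^\epsilon)^2$ on $f$. For the cross term, the paper's Cauchy--Schwarz places the \emph{entire} factor $((\mu^{1/4})'_*-\mu^{1/4}_*)^2$ on the $f$-side (so that side gets $\sin^2(\theta/2)$ and is $\sigma$-integrable) while the $h$-side becomes an $\mathcal{N}^{\epsilon,\gamma}$-type functional with no angular weight loss. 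The analogue of your $J_B$ (the paper's $\mathcal{I}_3$) is then handled by second-order Taylor, using the symmetry $\int b^\epsilon(v-v')\,d\sigma\parallel(v-v_*)$ to annihilate the first-order part, together with a further splitting of the $\theta$-range according to $|v_*|$.
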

\begin{proof} Notice that
$
(\mu^{\frac{1}{2}})_{*}^{\prime} - (\mu^{\frac{1}{2}})_{*} = ((\mu^{\frac{1}{4}})^{\prime}_{*} - \mu^{\frac{1}{4}}_{*})^{2} + 2\mu^{\frac{1}{4}}_{*}((\mu^{\frac{1}{4}})^{\prime}_{*} - \mu^{\frac{1}{4}}_{*})
$
and $h = (h-h^{\prime}) + h^{\prime}$, we have
\ben \label{I-into-3-parts}
\langle I^{\epsilon,\gamma,>}(g,h), f\rangle &=& \int B^{\epsilon,\gamma,>}((\mu^{\frac{1}{2}})_{*}^{\prime} - (\mu^{\frac{1}{2}})_{*}) g_{*} h f^{\prime} d\sigma dv_{*} dv
\nonumber \\&=& \int B^{\epsilon,\gamma,>}((\mu^{\f18})_{*}^{\prime} + (\mu^{\f18})_{*})^{2}((\mu^{\f18})_{*}^{\prime} - (\mu^{\f18})_{*})^{2}g_{*} h f^{\prime} d\sigma dv_{*} dv
\nonumber \\&&+ 2 \int B^{\epsilon,\gamma,>}((\mu^{\frac{1}{4}})^{\prime}_{*} - \mu^{\frac{1}{4}}_{*})(\mu^{\frac{1}{4}}g)_{*}(h-h^{\prime})f^{\prime} d\sigma dv_{*}dv
\nonumber \\&&+ 2 \int B^{\epsilon,\gamma,>}((\mu^{\frac{1}{4}})^{\prime}_{*} - \mu^{\frac{1}{4}}_{*})(\mu^{\frac{1}{4}}g)_{*} h^{\prime} f^{\prime} d\sigma dv_{*}dv
:= \mathcal{I}_{1} + \mathcal{I}_{2} + \mathcal{I}_{3}.
\een

We divide the  proof into three steps for  $\mathcal{I}_{1}, \mathcal{I}_{2}$ and $\mathcal{I}_{3}$ defined in \eqref{I-into-3-parts} respectively.
In the proof, the following estimate is used often:
\ben \label{mu-prodcut-square}
((\mu^{\f18})_{*}^{\prime} - (\mu^{\f18})_{*})^{2} \lesssim \min\{1,|v-v_{*}|^{2}\sin^{2}\frac{\theta}{2}\} \sim \min\{1,|v^{\prime}-v_{*}|^{2}\sin^{2}\frac{\theta}{2}\}\sim \min\{1,|v-v^{\prime}_{*}|^{2}\sin^{2}\frac{\theta}{2}\}.
\een

{\it Step 1: Estimate of $\mathcal{I}_{1}$.}
Recall that
\beno
\mathcal{I}_{1} = \int B^{\epsilon,\gamma,>}((\mu^{\f18})_{*}^{\prime} + (\mu^{\f18})_{*})^{2}((\mu^{\f18})_{*}^{\prime} - (\mu^{\f18})_{*})^{2}g_{*} h f^{\prime} d\sigma dv_{*} dv.
\eeno
Since $|v-v_{*}| \geq 1/2$, we have
\ben \label{v-minus-vstar-lower-no-singularity}
|v-v_{*}|^{\gamma}  \sim \langle v -v_{*} \rangle^{\gamma}. \een
By \eqref{v-minus-vstar-lower-no-singularity} and Cauchy-Schwartz inequality, we have
\beno
|\mathcal{I}_{1}| &\lesssim& \{\int b^{\epsilon}(\cos\theta)\langle v-v_{*}\rangle^{\gamma}((\mu^{\f18})_{*}^{\prime} + (\mu^{\f18})_{*})^{2}((\mu^{\f18})_{*}^{\prime} - (\mu^{\f18})_{*})^{2}g^{2}_{*} h^{2}  d\sigma dv_{*} dv\}^{\frac{1}{2}}
\\&&\times \{\int b^{\epsilon}(\cos\theta)\langle v-v_{*} \rangle^{\gamma}((\mu^{\f18})_{*}^{\prime} + (\mu^{\f18})_{*})^{2}((\mu^{\f18})_{*}^{\prime} - (\mu^{\f18})_{*})^{2} f^{\prime 2} d\sigma dv_{*} dv\}^{\frac{1}{2}}
:= (\mathcal{I}_{1,1})^{\frac{1}{2}} (\mathcal{I}_{1,2})^{\frac{1}{2}}.
\eeno

\underline{Estimate of $\mathcal{I}_{1,1}$.}
We claim that
\ben \label{claim-1-L2} \mathcal{A} := \int b^{\epsilon}(\cos\theta)\langle v-v_{*}\rangle^{\gamma}((\mu^{\f18})_{*}^{\prime} + (\mu^{\f18})_{*})^{2}((\mu^{\f18})_{*}^{\prime} - (\mu^{\f18})_{*})^{2}  d\sigma \lesssim (W^{\epsilon})^{2}(v)\langle v\rangle^{\gamma},\een
which yields
$\mathcal{I}_{1,1} \lesssim  |g|^{2}_{L^{2}}|W^{\epsilon}h|^{2}_{L^{2}_{\gamma/2}}$. Now we prove \eqref{claim-1-L2}. Since
$((\mu^{\f18})_{*}^{\prime} + (\mu^{\f18})_{*})^{2} \leq 2(\mu^{\frac{1}{4}})^{\prime}_{*} + 2\mu^{\frac{1}{4}}_{*}$, we have
\beno \mathcal{A} &\lesssim& \int b^{\epsilon}(\cos\theta)\langle v-v_{*}\rangle^{\gamma}\mu^{\frac{1}{4}}_{*}((\mu^{\f18})_{*}^{\prime} - (\mu^{\f18})_{*})^{2}  d\sigma
\\&&+ \int b^{\epsilon}(\cos\theta)\langle v-v_{*}\rangle^{\gamma}(\mu^{\frac{1}{4}})^{\prime}_{*}((\mu^{\f18})_{*}^{\prime} - (\mu^{\f18})_{*})^{2}  d\sigma
:= \mathcal{A}_{1}+\mathcal{A}_{2}.
\eeno
By \eqref{mu-prodcut-square} and Proposition \ref{symbol},  one has
\ben \label{estimate-of-A1-gives-weight-v}
\mathcal{A}_{1} \lesssim \langle v-v_{*}\rangle^{\gamma}\mu^{\frac{1}{4}}_{*}(W^{\epsilon})^{2}(v-v_{*}) \lesssim \langle v \rangle^{\gamma} \langle v_{*}\rangle^{|\gamma|} \mu^{\frac{1}{4}}_{*} (W^{\epsilon})^{2}(v) (W^{\epsilon})^{2}(v_{*}) \lesssim (W^{\epsilon})^{2}(v)\langle v\rangle^{\gamma},\een
where we have used \eqref{separate-into-2-cf}.
As for $\mathcal{A}_{2}$, since $|v-v_{*}| \sim |v-v^{\prime}_{*}|$ so that $\langle v-v_{*}\rangle^{\gamma} \lesssim \langle v-v^{\prime}_{*}\rangle^{\gamma} \lesssim \langle v\rangle^{\gamma}\langle v^{\prime}_{*}\rangle^{|\gamma|}$, we have
 $\mathcal{A}_{2} \lesssim \langle v\rangle^{\gamma} \int b^{\epsilon}(\cos\theta) (\mu^{\f18})_{*}^{\prime} \min\{1,|v-v_{*}|^{2}\sin^{2}\frac{\theta}{2}\} d\sigma$.
If $|v-v_{*}|\geq 10|v|$, then it holds $|v^{\prime}_{*}| = |v^{\prime}_{*}-v+v|\geq |v^{\prime}_{*}-v| -|v| \geq (\frac{1}{\sqrt{2}} - \frac{1}{10})|v-v_{*}| \geq \frac{1}{5}|v-v_{*}|$, and thus $(\mu^{\f18})_{*}^{\prime} \lesssim \mu^{\frac{1}{200}}(v-v_{*})$ that gives
\beno \mathcal{A}_{2} \lesssim \langle v\rangle^{\gamma}\mu^{\frac{1}{200}}(v-v_{*}) (W^{\epsilon})^{2}(v-v_{*}) \lesssim \langle v\rangle^{\gamma}.\eeno
If $|v-v_{*}|\leq 10|v|$, by Proposition \ref{symbol}, we have
\beno \mathcal{A}_{2} \lesssim \langle v\rangle^{\gamma} \int b^{\epsilon}(\cos\theta)  \min\{1,|v|^{2}\sin^{2}\frac{\theta}{2}\} d\sigma  \lesssim (W^{\epsilon})^{2}(v)\langle v\rangle^{\gamma}.\eeno
Combining the estimates of $\mathcal{A}_{1}$ and $\mathcal{A}_{2}$ gives \eqref{claim-1-L2}.

\underline{Estimate of $\mathcal{I}_{1,2}$.}
By the change of variables $(v,v_{*}) \rightarrow (v^{\prime},v_{*}^{\prime})$ and   $(v,v_{*},\sigma) \rightarrow (v_{*},v,-\sigma)$, using \eqref{v-minus-vstar-lower-no-singularity} and $\gamma \leq 0$, we have
\beno
\mathcal{I}_{1,2} \lesssim \int b^{\epsilon}(\cos\theta)|v-v_{*}|^{\gamma}((\mu^{\f18})^{\prime} - \mu^{\f18})^{2} f_{*}^{2} d\sigma dv_{*} dv
= \mathcal{N}^{\epsilon,\gamma}(f, \mu^{\f18}) \lesssim  |W^{\epsilon}f|^{2}_{L^{2}_{\gamma/2}},
\eeno
where we have used the estimate in  Remark \ref{also-hold-for-a}.

Combining the estimates of $\mathcal{I}_{1,1}$ and $\mathcal{I}_{1,2}$, we have
\ben \label{I-1-final}
\mathcal{I}_{1} \lesssim |g|_{L^{2}}|W^{\epsilon}h|_{L^{2}_{\gamma/2}}|W^{\epsilon}f|_{L^{2}_{\gamma/2}}.
\een

{\it Step 2: Estimate of $\mathcal{I}_{2}$.} By Cauchy-Schwartz inequality, we have
\beno
\mathcal{I}_{2} &=& 2\int B^{\epsilon,\gamma,>}((\mu^{\frac{1}{4}})^{\prime}_{*} - \mu^{\frac{1}{4}}_{*})(\mu^{\frac{1}{4}}g)_{*}(h-h^{\prime})f^{\prime}
d\sigma dv_{*} dv
\\&\lesssim&  \bigg(\int B^{\epsilon,\gamma,>}|(\mu^{\frac{1}{4}}g)_{*}|(h-h^{\prime})^{2}d\sigma dv_{*} dv\bigg)^{\frac{1}{2}}
\\&&\times \bigg(\int B^{\epsilon,\gamma,>}((\mu^{\frac{1}{4}})^{\prime}_{*} - \mu^{\frac{1}{4}}_{*})^{2}|(\mu^{\frac{1}{4}}g)_{*}|f^{\prime 2}d\sigma dv_{*} dv\bigg)^{\frac{1}{2}}
:= (\mathcal{I}_{2,1})^{\frac{1}{2}}(\mathcal{I}_{2,2})^{\frac{1}{2}}.
\eeno

\underline{Estimate of $\mathcal{I}_{2,1}$.}
Notice that
$
(h-h^{\prime})^{2} = (h^{2})^{\prime} - h^{2} - 2h(h^{\prime}-h),
$
we have
\beno
\mathcal{I}_{2,1} = \int B^{\epsilon,\gamma,>}(\mu^{\frac{1}{4}}g)_{*}((h^{2})^{\prime} - h^{2})d\sigma dv_{*} dv - 2 \langle Q^{\epsilon}(|\mu^{\frac{1}{4}}g|, h), h\rangle.
\eeno
By cancellation lemma and \eqref{v-minus-vstar-lower-no-singularity}, we get
\beno |\int B^{\epsilon,\gamma,>}(\mu^{\frac{1}{4}}g)_{*}((h^{2})^{\prime} - h^{2})d\sigma dv_{*} dv|\lesssim \int \langle v-v_{*} \rangle^{\gamma} | (\mu^{\frac{1}{4}}g)_{*}h^{2} | dv dv_{*} \lesssim |\mu^{\f18}g|_{L^{2}}|h|^{2}_{L^{2}_{\gamma/2}}.\eeno
By Proposition \ref{ubqepsilonnonsingular}, we have
\beno |\langle Q^{\epsilon,\gamma,>}(\mu^{\frac{1}{4}}g, h), h\rangle|  \lesssim |\mu^{\frac{1}{4}}g|_{L^{1}_{|\gamma|+2}}|h|^{2}_{\epsilon,\gamma/2} \lesssim |\mu^{\f18}g|_{L^{2}}|h|_{\epsilon,\gamma/2}^2.\eeno
Then the above two estimates give
\beno
|\mathcal{I}_{2,1}| \lesssim |\mu^{\f18}g|_{L^{2}}|h|_{\epsilon,\gamma/2}^2.
\eeno

\underline{Estimate of $\mathcal{I}_{2,2}$.}
Using the change of variable $v \rightarrow v^{\prime}$ and the estimate \eqref{estimate-of-A1-gives-weight-v} of $\mathcal{A}_{1}$, we have
$
\mathcal{I}_{2,2} \lesssim |\mu^{\f18}g|_{L^{2}}|W^{\epsilon}f|^{2}_{L^{2}_{\gamma/2}}.
$

Putting together the estimates of $\mathcal{I}_{2,1}$ and $\mathcal{I}_{2,2}$, we get
\ben \label{I-2-final}
|\mathcal{I}_{2}| \lesssim
| \mu^{\f18}g|_{L^{2}}|h|_{\epsilon,\gamma/2}|W^{\epsilon}f|_{L^{2}_{\gamma/2}}.
\een

{\it Step 3: Estimate of $\mathcal{I}_{3}$.}
By the change of variables $(v,v_{*}) \rightarrow (v^{\prime},v_{*}^{\prime})$ and   $(v,v_{*},\sigma) \rightarrow (v_{*},v,-\sigma)$,
\beno
\mathcal{I}_{3} =  2\int B^{\epsilon,\gamma,>}(\mu^{\frac{1}{4}} - (\mu^{\frac{1}{4}})^{\prime})(\mu^{\frac{1}{4}}g)^{\prime} h_{*}f_{*} d\sigma dv_{*}dv.
\eeno
For notational convenience, let $E_{1} = \{(v,v_{*},\sigma): |v_{*}| \geq 1/\epsilon, \sin(\theta/2) \leq |v_{*}|^{-1}\}, E_{2} = \{(v,v_{*},\sigma): |v_{*}| \geq 1/\epsilon, |v_{*}|^{-1} \leq \sin(\theta/2) \leq \epsilon\}, E_{3} = \{(v,v_{*},\sigma):  |v_{*}| \leq 1/\epsilon\}$. Then $\mathcal{I}_{3}$ can be decomposed into three parts:
 $ \mathcal{I}_{3,i}$ corresponding to $E_{i}$ for $i=1,2,3$.

 \underline{Estimate of $\mathcal{I}_{3,1}$} By Taylor expansion, one has
\beno
\mu^{\frac{1}{4}} - (\mu^{\frac{1}{4}})^{\prime} = (\nabla \mu^{\frac{1}{4}})(v^{\prime})\cdot(v-v^{\prime}) + \int_{0}^{1} (1-\kappa) [(\nabla^{2} \mu^{\frac{1}{4}})(v(\kappa)):(v-v^{\prime})\otimes(v-v^{\prime})] d\kappa,
\eeno
where $v(\kappa) = v^{\prime} + \kappa(v-v^{\prime})$.
Observe that for any fixed $v_{*}$, it holds
\beno
\int B^{\epsilon,\gamma,>} \mathrm{1}_{|v_{*}| \geq 1/\epsilon, \sin(\theta/2) \leq |v_{*}|^{-1}}(\nabla \mu^{\frac{1}{4}})(v^{\prime})\cdot(v-v^{\prime})(\mu^{\frac{1}{4}}g)^{\prime}  d\sigma dv = 0,
\eeno
which gives
\beno
|\mathcal{I}_{3,1}| &=&  |\int_{E_{3}\times[0,1]} B^{\epsilon,\gamma,>}
\mathrm{1}_{|v_{*}| \geq 1/\epsilon,  \sin(\theta/2) \leq |v_{*}|^{-1}}
\\&&\times (1-\kappa)[(\nabla^{2} \mu^{\frac{1}{4}})(v(\kappa)):(v-v^{\prime})\otimes(v-v^{\prime})] (\mu^{\frac{1}{4}}g)^{\prime} h_{*}f_{*} d\kappa d\sigma dv_{*}dv|
\\&\lesssim& \int b^{\epsilon}(\cos\theta) \langle v^{\prime}-v_{*} \rangle ^{\gamma+2}\sin^{2}\frac{\theta}{2}
\mathrm{1}_{|v_{*}| \geq 1/\epsilon, \sin(\theta/2) \leq |v_{*}|^{-1}}|(\mu^{\frac{1}{4}}g)^{\prime} h_{*}f_{*}|  d\sigma dv_{*}dv^{\prime}
\\&\lesssim& \epsilon^{2s-2}\int \langle v_{*} \rangle^{\gamma+2s} \mathrm{1}_{|v_{*}| \geq 1/\epsilon}|(\mu^{\frac{1}{8}}g)^{\prime} h_{*}f_{*}|   dv_{*}dv^{\prime} \lesssim |\mu^{\f{1}{16}}g|_{L^{2}}|W^{\epsilon}h|_{L^{2}_{\gamma/2}}|W^{\epsilon}f|_{L^{2}_{\gamma/2}},
\eeno
where we have used the fact $|v^{\prime}-v_{*}| \sim |v-v_{*}|$, the estimate \eqref{v-minus-vstar-lower-no-singularity}, the change of variable $v \rightarrow v^{\prime}$, the estimate $\int b^{\epsilon}(\cos\theta) \sin^{2}\frac{\theta}{2}
\mathrm{1}_{\sin(\theta/2) \leq |v_{*}|^{-1}} d\sigma \lesssim \epsilon^{2s-2} |v_{*}|^{2s-2} \sim \epsilon^{2s-2} \langle v_{*} \rangle^{2s-2}$, $\langle v^{\prime}-v_{*} \rangle^{\gamma+2} \lesssim \langle v_{*} \rangle^{\gamma+2}\langle v^{\prime} \rangle ^{|\gamma+2|}$ and $\langle v^{\prime} \rangle^{|\gamma+2|}(\mu^{\frac{1}{4}})^{\prime} \lesssim (\mu^{\frac{1}{8}})^{\prime}$.

 \underline{Estimate of $\mathcal{I}_{3,2}$.} By the estimate $\int b^{\epsilon}(\cos\theta) \sin^{2}\frac{\theta}{2}
\mathrm{1}_{|v_{*}|^{-1} \leq \sin(\theta/2) \leq \epsilon} d\sigma \lesssim \epsilon^{2s-2} |v_{*}|^{2s} \sim \epsilon^{2s-2} \langle v_{*} \rangle^{2s}$ and the similar argument used for $\mathcal{I}_{3,1}$, we get
\beno
|\mathcal{I}_{3,2}| &\lesssim& \int b^{\epsilon}(\cos\theta) \mathrm{1}_{|v_{*}| \geq 1/\epsilon,  |v_{*}|^{-1} \leq \sin(\theta/2) \leq \epsilon}
\langle v^{\prime}-v_{*} \rangle^{\gamma} |(\mu^{\frac{1}{4}}g)^{\prime} h_{*}f_{*}| d\sigma dv_{*}dv^{\prime}
\\&\lesssim& \epsilon^{2s-2}\int \langle v_{*} \rangle^{\gamma+2s} \mathrm{1}_{|v_{*}| \geq 1/\epsilon}|(\mu^{\frac{1}{8}}g)^{\prime} h_{*}f_{*}|   dv_{*}dv^{\prime} \lesssim |\mu^{\f{1}{16}}g|_{L^{2}}|W^{\epsilon}h|_{L^{2}_{\gamma/2}}|W^{\epsilon}f|_{L^{2}_{\gamma/2}}.
\eeno

\underline{Estimate of $\mathcal{I}_{3,3}$.}
By the estimate \eqref{order-2}
and the similar argument used for $\mathcal{I}_{3,1}$, we get
\beno
|\mathcal{I}_{3,3}| &\lesssim& \int b^{\epsilon}(\cos\theta)\langle v^{\prime}-v_{*}\rangle^{\gamma+2}\sin^{2}\frac{\theta}{2}\mathrm{1}_{|v_{*}| \leq 1/\epsilon }|(\mu^{\frac{1}{4}}g)^{\prime} h_{*}f_{*}|  d\sigma dv_{*}dv^{\prime}
\\&\lesssim& \int \langle v_{*} \rangle^{\gamma+2} \mathrm{1}_{|v_{*}| \leq 1/\epsilon}|(\mu^{\frac{1}{8}}g)^{\prime} h_{*}f_{*}|   dv_{*}dv^{\prime} \lesssim |\mu^{\f{1}{16}}g|_{L^{2}}|W^{\epsilon}h|_{L^{2}_{\gamma/2}}|W^{\epsilon}f|_{L^{2}_{\gamma/2}}.
\eeno

The estimates of $\mathcal{I}_{3,1}, \mathcal{I}_{3,2}$ and $\mathcal{I}_{3,3}$ give
\ben \label{I-3-final}
|\mathcal{I}_{3}| \lesssim |\mu^{\f18}g|_{L^{2}}|W^{\epsilon}h|_{L^{2}_{\gamma/2}}|W^{\epsilon}f|_{L^{2}_{\gamma/2}}.
\een

The proof of the proposition is completed  by combining \eqref{I-1-final}, \eqref{I-2-final} and \eqref{I-3-final}.
\end{proof}

\subsubsection{Upper bound of $ \Gamma^{\epsilon,\gamma,>}(g,h)$}
Recalling \eqref{Gamma-ep-ga-geq-eta-into-IQ-inner},
by Proposition \ref{ubqepsilonnonsingular} and Proposition \ref{upforI-ep-ga-et}, noting that $|\mu^{\frac{1}{2}} g|_{L^{1}_{|\gamma|+2}} \lesssim |g|_{L^{2}}$,
we have
\begin{thm}\label{upGammagh-geq-eta}
The estimate
$
|\langle \Gamma^{\epsilon,\gamma,>}(g,h), f\rangle| \lesssim  |g|_{L^{2}}|h|_{\epsilon,\gamma/2}|f|_{\epsilon, \gamma/2}
$ holds.
\end{thm}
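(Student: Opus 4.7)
The plan is to invoke the decomposition \eqref{Gamma-ep-ga-geq-eta-into-IQ-inner}, namely
\beno
\langle \Gamma^{\epsilon,\gamma,>}(g,h), f\rangle = \langle Q^{\epsilon,\gamma,>}(\mu^{\frac{1}{2}}g,h), f\rangle + \langle I^{\epsilon,\gamma,>}(g,h), f\rangle,
\eeno
and then reduce each piece to an estimate that has already been proved. This is really a bookkeeping step once Propositions \ref{ubqepsilonnonsingular} and \ref{upforI-ep-ga-et} are in hand; the two preceding propositions were designed precisely so that their right-hand sides can be absorbed into $|g|_{L^2}|h|_{\epsilon,\gamma/2}|f|_{\epsilon,\gamma/2}$.

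For the $Q$-term I would apply Proposition \ref{ubqepsilonnonsingular} with the first argument $\mu^{\frac{1}{2}}g$, obtaining
\beno
|\langle Q^{\epsilon,\gamma,>}(\mu^{\frac{1}{2}}g,h), f\rangle| \lesssim |\mu^{\frac{1}{2}}g|_{L^{1}_{|\gamma|+2}}|h|_{\epsilon,\gamma/2}|f|_{\epsilon,\gamma/2}.
\eeno
The key mild observation is that the Gaussian weight converts the $L^{1}_{|\gamma|+2}$-norm into an $L^2$-norm: by Cauchy--Schwarz,
\beno
|\mu^{\frac{1}{2}}g|_{L^{1}_{|\gamma|+2}} = \int \langle v\rangle^{|\gamma|+2} \mu^{\frac{1}{2}}(v)|g(v)|\,dv \lesssim |g|_{L^{2}},
\eeno
since $\langle v\rangle^{|\gamma|+2}\mu^{\frac{1}{2}}\in L^{2}_{v}$ uniformly in $\gamma\in(-3,0]$.

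For the $I$-term I would apply Proposition \ref{upforI-ep-ga-et} directly to get
\beno
|\langle I^{\epsilon,\gamma,>}(g,h), f\rangle| \lesssim |g|_{L^{2}}|h|_{\epsilon,\gamma/2}|W^{\epsilon}f|_{L^{2}_{\gamma/2}},
\eeno
and then use the trivial inequality $|W^{\epsilon}f|_{L^{2}_{\gamma/2}} \leq |f|_{\epsilon,\gamma/2}$ which follows immediately from the definition \eqref{norm-definition} (the last of the three squared terms on the right-hand side of that definition). Adding the two bounds yields the desired estimate.

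I do not anticipate any genuine obstacle here: all of the hard analysis (the geometric decomposition handling the regular-region singularity of the kernel, the cancellation in the $\sigma$-integral via symmetrization, and the Taylor-expansion control of $\mu^{1/4}-(\mu^{1/4})'_*$ on the various regions $E_1,E_2,E_3$) has already been carried out in Propositions \ref{ubqepsilonnonsingular} and \ref{upforI-ep-ga-et}. The only point worth double-checking is that the $\mu^{1/2}$ in $Q^{\epsilon,\gamma,>}(\mu^{1/2}g,h)$ really is strong enough to convert $|\,\cdot\,|_{L^{1}_{|\gamma|+2}}$ into $|\,\cdot\,|_{L^{2}}$ uniformly in the relevant parameter range, which is straightforward since $|\gamma|+2\leq 5$.
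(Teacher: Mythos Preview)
Your proposal is correct and takes essentially the same approach as the paper: decompose via \eqref{Gamma-ep-ga-geq-eta-into-IQ-inner}, apply Proposition \ref{ubqepsilonnonsingular} to the $Q$-term together with the observation $|\mu^{\frac{1}{2}}g|_{L^{1}_{|\gamma|+2}}\lesssim |g|_{L^{2}}$, and apply Proposition \ref{upforI-ep-ga-et} to the $I$-term together with $|W^{\epsilon}f|_{L^{2}_{\gamma/2}}\leq |f|_{\epsilon,\gamma/2}$. The paper's proof is stated in exactly this way (in one sentence) immediately before the theorem.
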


\subsection{Upper bound in the singular region}
By \eqref{Gamma-ep-ga-leq-eta-into-IQ}, we have
\beno
\langle \Gamma^{\epsilon,\gamma,<}(g,h), f \rangle = \langle Q^{\epsilon,\gamma,<}(\mu^{\frac{1}{2}}g,h), f \rangle + \langle I^{\epsilon,\gamma,<}(g,h), f \rangle.
\eeno
We will give estimates on  $\langle Q^{\epsilon,\gamma,<}(\mu^{\frac{1}{2}}g,h), f \rangle$  and $\langle I^{\epsilon,\gamma,<}(g,h), f \rangle$ in Sections 3.2.1 and 3.2.2 respectively.
\subsubsection{Upper bound of $Q^{\epsilon,\gamma,<}$} Recall Lemma 1.1 from \cite{he2014asymptotic},
\ben \label{He-JSP-upper-bound}
|\langle Q^{\epsilon,\gamma}(g,h) , f \rangle| \lesssim (|g|_{L^{1}_{N_{1}}} + |g|_{L^{2}_{N_{1}}}) |W^{\epsilon}(D)W_{N_{2}}h|_{L^{2}}|W^{\epsilon}(D)W_{N_{3}}f|_{L^{2}},
\een
where $N_{1}, N_{2}, N_{3}$  satisfying $N_{1} \geq |N_{2}|+|N_{3}|$ and $N_{2} + N_{3} \geq \gamma +2$.
Note that estimate \eqref{He-JSP-upper-bound} requires $\gamma+2$ order weight on the latter two
functions,
while  \eqref{upper-bound-Gamma}  allows only $\gamma$ order.
On the other hand, we only need to consider $\langle Q^{\epsilon,\gamma,<}(\mu^{\frac{1}{2}}g,h), f \rangle$
for which there is a factor $\mu^{\frac{1}{2}}$ for $g$.
In addition, when $|v-v_{*}| \leq 1$ as in $Q^{\epsilon,\gamma,<}$, one has $\mu(v_{*}) \lesssim \mu^{\frac{1}{2}}(v)$, which means
weight can be exchanged between $v$ and $v_{*}$. By the above observation, we can apply \eqref{He-JSP-upper-bound} to get the
 following upper bound  on $ Q^{\epsilon,\gamma,<}$.
\begin{prop} \label{regularity-part-Q}
The estimate
$
\langle Q^{\epsilon,\gamma,<}(\mu^{\frac{1}{2}}g,h) , f \rangle \lesssim  |\mu^{\f38}g|_{L^{2}}|W^{\epsilon}(D)\mu^{\frac{1}{64}}h|_{L^{2}}|W^{\epsilon}(D)\mu^{\frac{1}{64}}f|_{L^{2}}
$ holds.
\end{prop}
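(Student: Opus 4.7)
The plan is to reduce the estimate to an application of the He--Jiang bound \eqref{He-JSP-upper-bound} via a weight--transfer argument that exploits the cutoff $\zeta(|v-v_{*}|)$ in $B^{\epsilon,\gamma,<}$. The key observation is that on the support of $B^{\epsilon,\gamma,<}$ we have $|v-v_{*}|\le 1$, and since $\sin(\theta/2)\le\epsilon\le 1$ this also forces $|v-v'|$ and $|v_{*}-v'_{*}|$ to be at most $1$. Hence all four velocities $v,v_{*},v',v'_{*}$ lie within bounded Euclidean distance of one another, so for any $a>0$ one has the pointwise comparisons
\begin{equation*}
\mu^{a}(v_{*})\lesssim_{a}\mu^{a/2}(v),\qquad \mu^{a}(v)\lesssim_{a}\mu^{a/2}(v'),\qquad \mu^{a}(v)\lesssim_{a}\mu^{a/2}(v'_{*}),
\end{equation*}
with analogous comparisons going the other direction. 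This is the content of the "weight--transferring" idea (cf.\ the remark preceding Lemma~\ref{mu-weight-transfer} in the strategy), and it lets us freely redistribute small exponential weights among the four arguments at the cost of multiplicative constants.

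The main step is to rewrite the expression so that exponential weight appears on the test functions $h$ and $f$ rather than on $g$. Writing $\mu^{1/2}(v_{*})=\mu^{3/8}(v_{*})\cdot\mu^{1/16}(v_{*})\cdot\mu^{1/16}(v_{*})$, I would transfer one $\mu^{1/16}(v_{*})$ into a factor $\mu^{1/64}$ attached to $h$ (respectively, its value at $v$ in the loss term and its value at $v'$ in the gain term) and the other $\mu^{1/16}(v_{*})$ into a factor $\mu^{1/64}$ attached to $f$. Whenever the natural argument of the transferred weight does not match the argument of the function being weighted (e.g.\ the gain term has $h'$ evaluated at $v'$ while the transfer produces $\mu^{1/64}(v)$), the remaining comparison $\mu^{1/64}(v)\lesssim\mu^{1/128}(v')$ absorbs the mismatch with a further harmless loss in constants. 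The outcome is a pointwise bound of the integrand by one in which the structure of a Boltzmann--type inner product $\langle Q^{\epsilon,\gamma,<}(\mu^{3/8}g,\,\mu^{1/64}h),\,\mu^{1/64}f\rangle$ is preserved.

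With this rearrangement in hand, I would apply \eqref{He-JSP-upper-bound} (whose proof clearly goes through unchanged in the presence of the extra cutoff $\zeta(|v-v_{*}|)$, since that only shrinks the kernel) with the admissible choice $N_{2}=N_{3}=(\gamma+2)/2$ and $N_{1}=|\gamma+2|$, which satisfies $N_{1}\ge|N_{2}|+|N_{3}|$ and $N_{2}+N_{3}=\gamma+2$. The first factor becomes $|\mu^{3/8}g|_{L^{1}_{|\gamma+2|}}+|\mu^{3/8}g|_{L^{2}_{|\gamma+2|}}\lesssim|\mu^{3/8}g|_{L^{2}}$, since $\mu^{1/8}W_{|\gamma+2|}$ is bounded. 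The remaining two factors are of the form $|W^{\epsilon}(D)W_{(\gamma+2)/2}(\mu^{1/64}h)|_{L^{2}}$ and $|W^{\epsilon}(D)W_{(\gamma+2)/2}(\mu^{1/64}f)|_{L^{2}}$; since $\gamma+2>-1$ is bounded and $\mu^{1/128}W_{(\gamma+2)/2}$ is $L^{\infty}$, the polynomial weight $W_{(\gamma+2)/2}$ is absorbed by $\mu^{1/128}$ (after commuting with $W^{\epsilon}(D)$ via Lemma~\ref{operatorcommutator1}, which produces only lower--order remainders), yielding the target bounds $|W^{\epsilon}(D)\mu^{1/64}h|_{L^{2}}$ and $|W^{\epsilon}(D)\mu^{1/64}f|_{L^{2}}$.

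The main difficulty I anticipate is the bookkeeping in the weight--transfer step: the transferred $\mu^{1/16}(v_{*})$ naturally wants to live at $v_{*}$ (in the loss term) and at $v'_{*}$ (in the gain term), whereas the target weights $\mu^{1/64}$ must be read off as acting on $h,f$ and $h',f'$. Reconciling these requires either (i) handling the gain and loss terms separately via the pre/post--collision change of variables $(v,v_{*},\sigma)\mapsto(v',v'_{*},\sigma')$, so that each term can be recast as a standard inner product; or (ii) using the Taylor--type estimate $|\mu^{a}(v)-\mu^{a}(v')|\lesssim|v-v_{*}|\sin(\theta/2)\,\mu^{a/2}$ on the support of $\zeta$ to collapse the difference $\mu^{a}(v')h'f(v)-\mu^{a}(v)hf(v)$ into a sum of a term with matched weights plus a strictly lower--order remainder that can be bounded by cancellation lemma (Lemma~\ref{cancellation-lemma-general-gamma}). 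Either route yields the stated estimate.
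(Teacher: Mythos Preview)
You have correctly identified both key ingredients—the weight-transfer principle (Lemma~\ref{mu-weight-transfer}) enabled by $|v-v_*|\le 1$ on the support of $\zeta$, and He's bound \eqref{He-JSP-upper-bound}—and these are exactly what the paper uses. The gap is in how you propose to combine them.

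Your plan is to perform a pointwise weight transfer on the integrand first, recasting the expression as $\langle Q^{\epsilon,\gamma,<}(\mu^{3/8}g,\mu^{1/64}h),\mu^{1/64}f\rangle$, and then apply \eqref{He-JSP-upper-bound} as a black box. But the integrand carries the signed difference $(\mu^{1/2}g)'_*h'-(\mu^{1/2}g)_*h$, and a one-sided pointwise inequality (which is all Lemma~\ref{mu-weight-transfer} provides) cannot be applied to such a difference without destroying the cancellation that keeps the angular integral finite. Your fix (i) does not help: after the pre/post change one has $\int B^{\epsilon,\gamma,<}(\mu^{1/2}g)_*h(f'-f)$, still a signed difference; treating $f'$ and $f$ separately is impossible since $\int b^{\epsilon}\,d\sigma=\infty$. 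Fix (ii) gains only one factor of $\sin(\theta/2)$ from the Taylor remainder, and in the regime $s>\tfrac12$ of this paper $\int b^{\epsilon}\sin(\theta/2)\,d\sigma=\infty$ as well, so the remainder is not lower order in the sense you need; pushing further forces symmetry arguments in $\sigma$ and second-order expansions that amount to reproducing the internal structure of He's proof.

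This is why the paper says to use ``the \emph{proof} of \eqref{He-JSP-upper-bound}'' rather than the bound itself: one opens that argument and applies Lemma~\ref{mu-weight-transfer} only at intermediate steps where Cauchy--Schwarz (or analogous manipulations) has already produced non-negative integrands, so that pointwise inequalities are legitimate. The full template for this interleaving is carried out in Lemma~\ref{I-less-1-some-preparation}: the negative $\mu$-weights are introduced by exact multiplication/division, and Lemma~\ref{mu-weight-transfer} is invoked to absorb them only \emph{after} Cauchy--Schwarz (see \eqref{absorb-mu-weight}). The present proposition follows the same pattern, threaded through He's argument.
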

\begin{proof}
We omit the detail  of the proof for brevity because we will use the weight exchange idea in Lemma \ref{I-less-1-some-preparation} and Proposition \ref{I-less-eta-upper-bound}
by using Lemma \ref{mu-weight-transfer}. With the weight exchange idea and the proof of \eqref{He-JSP-upper-bound}
in \cite{he2014asymptotic}, the proof for this proposition is straightforward.
\end{proof}
As a result of Proposition \ref{ubqepsilonnonsingular} and Proposition \ref{regularity-part-Q}, we have
the following theorem.
\begin{thm}\label{Q-full-up-bound} The estimate
$ |\langle Q^{\epsilon,\gamma}(\mu^{\frac{1}{2}}g,h), f\rangle| \lesssim |\mu^{\f38}g|_{L^{2}}|h|_{\epsilon,\gamma/2}|f|_{\epsilon,\gamma/2}$ holds.
\end{thm}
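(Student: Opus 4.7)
The strategy is to exploit the decomposition \eqref{Q-ep-ga-sep-eta} into the regular and singular parts $Q^{\epsilon,\gamma,>}$ and $Q^{\epsilon,\gamma,<}$, and then apply Proposition \ref{ubqepsilonnonsingular} and Proposition \ref{regularity-part-Q} respectively. The key observation is that the Gaussian factor $\mu^{1/2}$ multiplying $g$ makes all velocity weights harmless: whenever we need to move a polynomial weight from the latter two slots to the first slot, we can absorb it into $\mu^{1/2}$, paying only a slightly weaker Gaussian like $\mu^{3/8}$ in the final $L^{2}$-norm of $g$.

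For the regular piece $\langle Q^{\epsilon,\gamma,>}(\mu^{1/2}g,h),f\rangle$, I would simply invoke Proposition \ref{ubqepsilonnonsingular} to get
\beno
|\langle Q^{\epsilon,\gamma,>}(\mu^{1/2}g,h),f\rangle| \lesssim |\mu^{1/2}g|_{L^{1}_{|\gamma|+2}}\,|h|_{\epsilon,\gamma/2}\,|f|_{\epsilon,\gamma/2},
\eeno
and then apply Cauchy--Schwarz to the $L^{1}_{|\gamma|+2}$-norm: write $\mu^{1/2} = \mu^{1/8}\cdot\mu^{3/8}$, so that
$|\mu^{1/2}g|_{L^{1}_{|\gamma|+2}} \leq |\mu^{1/8}\langle v\rangle^{|\gamma|+2}|_{L^{2}}|\mu^{3/8}g|_{L^{2}} \lesssim |\mu^{3/8}g|_{L^{2}}$,
which gives the desired bound on the regular part.

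For the singular piece $\langle Q^{\epsilon,\gamma,<}(\mu^{1/2}g,h),f\rangle$, Proposition \ref{regularity-part-Q} yields
\beno
|\langle Q^{\epsilon,\gamma,<}(\mu^{1/2}g,h),f\rangle| \lesssim |\mu^{3/8}g|_{L^{2}}\,|W^{\epsilon}(D)\mu^{1/64}h|_{L^{2}}\,|W^{\epsilon}(D)\mu^{1/64}f|_{L^{2}}.
\eeno
It then suffices to prove the embedding $|W^{\epsilon}(D)\mu^{1/64}h|_{L^{2}} \lesssim |h|_{\epsilon,\gamma/2}$, and similarly for $f$. The idea is to factor $\mu^{1/64} = \Phi\cdot W_{\gamma/2}$ with $\Phi := \mu^{1/64}W_{-\gamma/2}$ a Schwartz function (hence a symbol in $S^{0}_{1,0}$ of arbitrarily low order); then Lemma \ref{operatorcommutator1} (the standard pseudo-differential commutator estimate between $W^{\epsilon}(D)\in S^{1}_{1,0}$ in the form used earlier in the paper, e.g. in \eqref{tvstartonovstar2}) gives
\beno
|W^{\epsilon}(D)\Phi\, W_{\gamma/2}h|_{L^{2}} \lesssim |\Phi\, W^{\epsilon}(D)W_{\gamma/2}h|_{L^{2}} + |W_{\gamma/2}h|_{L^{2}} \lesssim |W^{\epsilon}(D)W_{\gamma/2}h|_{L^{2}} + |W_{\gamma/2}h|_{L^{2}},
\eeno
both of which are controlled by $|h|_{\epsilon,\gamma/2}$ in view of \eqref{norm-definition}.

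The main obstacle (though a mild one) is precisely this last commutator step: one must verify that the symbol class of $\Phi$ is compatible with the symbol class of $W^{\epsilon}$ uniformly in $\epsilon$, so that the resulting constant is independent of $\epsilon$. This is ensured by the fact that $\Phi$ is Schwartz (giving symbol estimates at any order) while $W^{\epsilon}\in S^{1}_{1,0}$ uniformly in $\epsilon$, as already exploited elsewhere in the paper. Once this is in place, combining the two region estimates yields the theorem.
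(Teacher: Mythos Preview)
Your proposal is correct and follows exactly the route the paper indicates: decompose via \eqref{Q-ep-ga-sep-eta}, apply Proposition \ref{ubqepsilonnonsingular} on the regular part (with the Cauchy--Schwarz passage $|\mu^{1/2}g|_{L^{1}_{|\gamma|+2}}\lesssim|\mu^{3/8}g|_{L^{2}}$) and Proposition \ref{regularity-part-Q} on the singular part. The paper states the theorem as an immediate corollary without spelling out the embedding $|W^{\epsilon}(D)\mu^{1/64}h|_{L^{2}}\lesssim|h|_{\epsilon,\gamma/2}$; your commutator argument with $\Phi=\mu^{1/64}W_{-\gamma/2}\in S^{m}_{1,0}$ (for any $m$) and Lemma \ref{operatorcommutator1} is the standard way to justify it, consistent with how the paper handles the analogous step in \eqref{tvstartonovstar2}.
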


We turn to derive the upper bound on $\mathcal{N}^{\epsilon,\gamma}(\mu^{\frac{1}{2}}, f)$ by applying Theorem \ref{Q-full-up-bound}.
\begin{col} \label{functional-N-epsilon-gamma} The estimate
$\mathcal{N}^{\epsilon,\gamma}(\mu^{a}, f) \lesssim |f|^{2}_{\epsilon,\gamma/2}$ holds for any $a \geq \frac{1}{8}$.
\end{col}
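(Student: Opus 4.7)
The plan is to convert $\mathcal{N}^{\epsilon,\gamma}(\mu^a,f)$ into an inner product involving $Q^{\epsilon,\gamma}$ (to which Theorem \ref{Q-full-up-bound} can be applied) plus a lower-order term handled by the cancellation lemma. Specifically, I would use the algebraic identity $(f'-f)^2 = ((f^2)'-f^2)-2f(f'-f)$, which yields
\begin{equation*}
\mathcal{N}^{\epsilon,\gamma}(\mu^a,f) = \int B^{\epsilon}\mu^{2a}_{*}\bigl((f^2)'-f^2\bigr)\,d\sigma\,dv\,dv_{*} - 2\int B^{\epsilon}\mu^{2a}_{*}\,f(f'-f)\,d\sigma\,dv\,dv_{*}.
\end{equation*}

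For the first term, I would apply the cancellation lemma (Lemma \ref{cancellation-lemma-general-gamma}) with $g = \mu^{2a}$ and $h$ replaced by $f^2$ to obtain
\begin{equation*}
\int B^{\epsilon}\mu^{2a}_{*}\bigl((f^2)'-f^2\bigr)\,d\sigma\,dv\,dv_{*} = C(\epsilon)\int |v-v_*|^{\gamma}\mu^{2a}_{*}f^2\,dv\,dv_{*},
\end{equation*}
with $|C(\epsilon)|\lesssim 1$. Since $\mu^{2a}$ is Schwartz and $\gamma>-3$, a standard splitting of $v_*$ near and far from $v$ gives $\int|v-v_*|^{\gamma}\mu^{2a}_{*}\,dv_* \lesssim \langle v\rangle^{\gamma}$, so this term is bounded by $|f|^2_{L^2_{\gamma/2}}\leq |f|^2_{\epsilon,\gamma/2}$.

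For the second term, I would recognize it as $-2\langle Q^{\epsilon,\gamma}(\mu^{2a},f),f\rangle$ via the pre-post collisional change of variables. Applying Theorem \ref{Q-full-up-bound} with $g=\mu^{2a-1/2}$ (so that $\mu^{1/2}g=\mu^{2a}$), it follows that
\begin{equation*}
|\langle Q^{\epsilon,\gamma}(\mu^{2a},f),f\rangle| \lesssim |\mu^{2a-1/8}|_{L^2}\,|f|_{\epsilon,\gamma/2}\,|f|_{\epsilon,\gamma/2}.
\end{equation*}
The hypothesis $a\geq 1/8$ ensures that $2a-1/8>0$, hence $|\mu^{2a-1/8}|_{L^2}$ is a finite constant depending only on $a$. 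Combining the two estimates yields the stated bound. The only subtlety to watch is the bookkeeping on the exponent of $\mu$ when invoking Theorem \ref{Q-full-up-bound}; no step here is genuinely difficult since all the hard analysis is already absorbed into that theorem and into the cancellation lemma.
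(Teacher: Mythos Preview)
Your proposal is correct and follows essentially the same approach as the paper's proof: the identity $(f'-f)^2=(f^2)'-f^2-2f(f'-f)$ splits $\mathcal{N}^{\epsilon,\gamma}(\mu^a,f)$ into a cancellation-lemma term and $-2\langle Q^{\epsilon,\gamma}(\mu^{2a},f),f\rangle$, the latter controlled by Theorem \ref{Q-full-up-bound}. The only cosmetic difference is that the paper cites Corollary \ref{cancellation-to-Sob-norm} for the cancellation term, whereas you invoke Lemma \ref{cancellation-lemma-general-gamma} followed by the pointwise bound $\int|v-v_*|^\gamma\mu^{2a}_*\,dv_*\lesssim\langle v\rangle^\gamma$; these are equivalent.
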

\begin{proof} By \eqref{definition-of-functional-N} and the identity $(f^{\prime}-f)^{2} = (f^{2})^{\prime} - f^{2} - 2 f (f^{\prime}-f)$, we have
\beno
\mathcal{N}^{\epsilon,\gamma}(\mu^{a}, f) = - 2 \langle Q^{\epsilon,\gamma}(\mu^{2a}, f), f\rangle + \int B^{\epsilon} \mu^{2a}_{*} ((f^{2})^{\prime} - f^{2}) d\sigma dv_{*}dv.
\eeno
Since $2a \geq \frac{1}{4}$, then by Theorem \ref{Q-full-up-bound} and Corollary \ref{cancellation-to-Sob-norm}, the estimate follows directly.
\end{proof}

\subsubsection{Upper bound of $I^{\epsilon,\gamma,<}(g,h)$}
The weight exchange idea in Proposition \ref{regularity-part-Q} is based on the following more general result.
\begin{lem} \label{mu-weight-transfer} For $\kappa, \iota \in [0,1]$,
let $v(\kappa) := v + \kappa(v^{\prime} - v), v_{*}(\iota) := v_{*} + \iota(v^{\prime}_{*} - v_{*})$. Suppose $a,b,c,d \in \mathbb{R}$ with $a+b+c+d>0$. If $|v-v_{*}| \leq 1$, then
\beno
\mu^{a}(v) \mu^{b}(v_{*}) \mu^{c}(v(\kappa)) \mu^{d}(v_{*}(\iota)) \leq (2\pi)^{-\frac{3}{2}(a+b+c+d)} \exp (\frac{1}{2}C(a,b,c,d)),
\\
C(a,b,c,d) := \left(a(\frac{4a}{e}-1)\right)^{+}
+ \left(b(\frac{4b}{e}-1)\right)^{+} + \left(c(\frac{4c}{e}-1)\right)^{+} + \left(d(\frac{4d}{e}-1)\right)^{+}.
\eeno
with $e:=a+b+c+d, A^{+}:=\max\{A, 0\}$.
\end{lem}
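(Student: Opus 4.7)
The plan is to take logarithms and reduce the inequality to showing
\[ a|v|^2+b|v_*|^2+c|v(\kappa)|^2+d|v_*(\iota)|^2 \;\geq\; -C(a,b,c,d), \]
since $\mu(v)=(2\pi)^{-3/2}e^{-|v|^2/2}$ makes the $(2\pi)^{-3e/2}$ factors match exactly. The key geometric input, which is where the assumption $|v-v_*|\leq 1$ enters, is that all four velocities $v,v_*,v(\kappa),v_*(\iota)$ lie in the closed ball of radius $|v-v_*|/2\leq 1/2$ centered at the midpoint $w:=(v+v_*)/2$. This follows from $v-w=-(v_*-w)=(v-v_*)/2$ and $v'-w=|v-v_*|\sigma/2$ together with the convex combination structure of $v(\kappa)$ on $[v,v']$ and $v_*(\iota)$ on $[v_*,v'_*]$.

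Writing $\xi_i:=x_i-w$ for the four points $x_i\in\{v,v_*,v(\kappa),v_*(\iota)\}$ with $|\xi_i|\leq 1/2$, and expanding $|x_i|^2=|w|^2+2w\cdot\xi_i+|\xi_i|^2$, the quantity $\sum_i\alpha_i|x_i|^2$ becomes a quadratic form in $w\in\mathbb{R}^3$ with positive-definite Hessian $2eI_3$. Minimizing over $w$ unconstrainedly (a valid lower bound for the actual $w$ at hand) yields $\sum_i\alpha_i|x_i|^2\geq \sum_i\alpha_i|\xi_i|^2-|M|^2/e$, where $M:=\sum_i\alpha_i\xi_i$. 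A Cauchy--Schwarz estimate $|M|^2\leq S\sum_i|\alpha_i||\xi_i|^2$ with $S:=\sum_i|\alpha_i|$, together with $|\xi_i|^2\leq 1/4$ and the arithmetic relations $S-e=2N$, $S+e=2P$ for $P:=\sum_{\alpha_i>0}\alpha_i$ and $N:=\sum_{\alpha_i<0}|\alpha_i|$, leads after a short manipulation to the cleaner bound
\[ \frac{|M|^2}{e}-\sum_i\alpha_i|\xi_i|^2 \;\leq\; \frac{2N}{e}\!\!\sum_{\alpha_i>0}\!\alpha_i|\xi_i|^2 + \frac{2P}{e}\!\!\sum_{\alpha_i<0}\!|\alpha_i||\xi_i|^2 \;\leq\; \frac{PN}{e}. \]

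It remains to verify the purely combinatorial inequality $PN/e\leq C(a,b,c,d)$, which I expect to be the main obstacle -- although elementary, it is where the specific shape $(\alpha_i(4\alpha_i/e-1))^+$ of $C$ becomes essential. The case $N=0$ is trivial. For $N>0$, note that $e>0$ prevents all four $\alpha_i$ from being negative, so at most three indices are negative; Cauchy--Schwarz on these indices gives $\sum_{\alpha_i<0}\alpha_i^2\geq N^2/3$, whence
\[ C(a,b,c,d)\;\geq\;\sum_{\alpha_i<0}\alpha_i\bigl(\tfrac{4\alpha_i}{e}-1\bigr)\;=\;\frac{4}{e}\sum_{\alpha_i<0}\alpha_i^2+N\;\geq\;\frac{4N^2}{3e}+N\;\geq\;\frac{N^2}{e}+N\;=\;\frac{PN}{e}, \]
where the last identity uses $P=e+N$. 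The coefficient $4$ in $(\alpha_i(4\alpha_i/e-1))^+$ is tuned precisely so that this chain of inequalities closes in every sign configuration of $(a,b,c,d)$, which is why matching the exact form of $C$ is the delicate step rather than the geometric or variational preliminaries.
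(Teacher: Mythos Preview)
Your proof is correct and takes a genuinely different route from the paper's. The paper singles out one velocity (it picks $v$, after arranging without loss of generality that $a>0$) as a reference point, uses only the cruder distances $|v-v_*|,\,|v-v(\kappa)|,\,|v-v_*(\iota)|\leq 1$, and splits the quadratic form into three pairs of the form $(\tfrac{e}{4}-\beta)|v|^2+\beta|y|^2$ with $y\in\{v_*,v(\kappa),v_*(\iota)\}$, bounding each pair below via the elementary inequality $|x|^2\geq \alpha|y|^2-\tfrac{\alpha}{1-\alpha}|x-y|^2$; this in fact gives the bound with only three of the four $(\cdot)^+$ terms in $C$, which is slightly sharper and of course implies the stated estimate. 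Your approach instead takes the collision midpoint $w=(v+v_*)/2$ as center, exploits the cleaner geometric fact that all four points lie in the ball of radius $\tfrac12$ about $w$, and then minimizes the resulting positive-definite quadratic in $w$; the cost is the separate combinatorial inequality $PN/e\leq C(a,b,c,d)$, which you close correctly via Cauchy--Schwarz on the at most three negative coefficients. Your argument is more symmetric in the four points and conceptually tidier; the paper's is more hands-on and avoids the combinatorial afterthought, at the price of breaking the symmetry.
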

\begin{proof} Since $|v-v_{*}| \leq 1$, we have $|v-v(\kappa)| \leq 1, |v-v_{*}(\iota)| \leq 1$.
We assume without loss of generality $a>0$.
%and $a> b \mathrm{1}_{b>0}, a> c \mathrm{1}_{c>0}, a> d \mathrm{1}_{d>0}$. That is, $a$ is the largest one among positive values in $\{a,b,c,d\}$.
Let $e=a+b+c+d$,  without loss of generality, we assume $e=1$. Otherwise, one may consider $\frac{a}{e}, \frac{b}{e}, \frac{c}{e}, \frac{d}{e}$. Now $a+b+c+d=1$ and
\beno
&& a |v|^{2} + b |v_{*}|^{2} + c |v(\kappa)|^{2} + d |v_{*}(\iota)|^{2}
\\&=& \frac{1}{4}|v|^{2} +
(\frac{1}{4}-b)|v|^{2} + b |v_{*}|^{2} + (\frac{1}{4}-c)|v|^{2} + c |v(\kappa)|^{2} + (\frac{1}{4}-d)|v|^{2} + d |v_{*}(\iota)|^{2}.
\eeno
To estimate $(\frac{1}{4}-b)|v|^{2} + b |v_{*}|^{2}$, note that
 for any $ 0 \leq  \alpha < 1$,  it holds
\ben \label{general-inequality}
|x|^{2} \geq \alpha |y|^{2}- \frac{\alpha}{1-\alpha}|x-y|^{2}.
\een
If $b \leq 0$, by taking $\alpha = -b(\frac{1}{4}-b)^{-1}$ and  \eqref{general-inequality}, we have
\beno
(\frac{1}{4}-b)|v|^{2} + b |v_{*}|^{2} \geq 4b(\frac{1}{4}-b) |v-v_{*}|^{2} \geq 4b(\frac{1}{4}-b).
\eeno
If $0< b< \frac{1}{4}$, it is obvious that
\beno
(\frac{1}{4}-b)|v|^{2} + b |v_{*}|^{2} \geq 0.
\eeno
If $b\geq \frac{1}{4}$, by taking $\alpha = (b-\frac{1}{4})b^{-1}$  and  \eqref{general-inequality}, we have
\beno
(\frac{1}{4}-b)|v|^{2} + b |v_{*}|^{2} \geq 4b(\frac{1}{4}-b) |v-v_{*}|^{2} \geq -4b(b-\frac{1}{4}).
\eeno
In summary, we get
\beno
(\frac{1}{4}-b)|v|^{2} + b |v_{*}|^{2} \geq - \left(b(4b-1)\right)^{+}.
\eeno
Since $|v-v(\kappa)| \leq 1, |v-v_{*}(\iota)| \leq 1$, similarly, we have
\beno
(\frac{1}{4}-c)|v|^{2} + c |v(\kappa)|^{2} \geq - \left(c(4c-1)\right)^{+}, \quad (\frac{1}{4}-d)|v|^{2} + d |v_{*}(\iota)|^{2} \geq - \left(d(4d-1)\right)^{+}.
\eeno
Therefore
\beno
a |v|^{2} + b |v_{*}|^{2} + c |v(\kappa)|^{2} + d |v_{*}(\iota)|^{2} \geq - \left(b(4b-1)\right)^{+} - \left(c(4c-1)\right)^{+} - \left(d(4d-1)\right)^{+}.
\eeno
In the general case when $e \neq 1$ or  the case when $a \leq 0$, we have
\beno
&& a |v|^{2} + b |v_{*}|^{2} + c |v(\kappa)|^{2} + d |v_{*}(\iota)|^{2}
\\&\geq& - \left(a(4\frac{a}{e}-1)\right)^{+}
- \left(b(4\frac{b}{e}-1)\right)^{+} - \left(c(4\frac{c}{e}-1)\right)^{+} - \left(d(4\frac{d}{e}-1)\right)^{+}.
\eeno
By noting $\mu(v) = (2\pi)^{-\frac{3}{2}} \exp(-|v|^{2}/2)$, we have the desired estimate.
\end{proof}

To keep the proof of Proposition \ref{I-less-eta-upper-bound} in a reasonable length, we prepare some estimate in advance in the following Lemma \ref{I-less-1-some-preparation}. First, define
\ben \label{definition-mathcal-X}
\mathcal{X}(G, H, F) :=  \int B^{\epsilon,\gamma,<}((\mu^{\frac{1}{2}})_{*}^{\prime} - (\mu^{\frac{1}{2}})_{*})(\mu^{-\frac{1}{16}}G)_{*} \mu^{-\frac{1}{16}}H (\mu^{-\frac{1}{16}}F)^{\prime} d\sigma dv_{*} dv.
\een
According to \eqref{I-euquals-X}, we have $\langle I^{\epsilon,\gamma,<}(g,h), f\rangle = \mathcal{X}(G, H, F)$ if we set $G = \mu^{\frac{1}{16}}g, H = \mu^{\frac{1}{16}}h, F = \mu^{\frac{1}{16}}f$. We have two decompositions on  $\mathcal{X}(G, H, F)$.
The first is
\ben \label{decomposition-1}
\mathcal{X}(G, H, F) &=& \mathcal{A}(G, H, F) + \mathcal{B}(G, H, F),
\\ \nonumber
\mathcal{A}(G, H, F) &:=&  \int B^{\epsilon,\gamma,<}((\mu^{\frac{1}{2}})_{*}^{\prime} - (\mu^{\frac{1}{2}})_{*})(\mu^{-\frac{1}{16}}G)_{*} \left(\mu^{-\frac{1}{16}}H- (\mu^{-\frac{1}{16}}H)^{\prime}\right)(\mu^{-\frac{1}{16}}F)^{\prime} d\sigma dv_{*} dv,
\\ \label{definition-mathcal-B}
\mathcal{B}(G, H, F) &:=&  \int B^{\epsilon,\gamma,<}((\mu^{\frac{1}{2}})_{*}^{\prime} - (\mu^{\frac{1}{2}})_{*})(\mu^{-\frac{1}{16}}G)_{*} (\mu^{-1/8}H F)^{\prime} d\sigma dv_{*} dv.
\een
Note that decomposition \eqref{decomposition-1} uses regularity of $H$ since $\mathcal{A}(G, H, F)$ contains $\mu^{-\frac{1}{16}}H- (\mu^{-\frac{1}{16}}H)^{\prime}$.
The second one is
\ben \label{decomposition-2}
\mathcal{X}(G, H, F) &=& \mathcal{C}(G, H, F) + \mathcal{D}(G, H, F),
\\ \nonumber
\mathcal{C}(G, H, F) &:=&  \int B^{\epsilon,\gamma,<}((\mu^{\frac{1}{2}})_{*}^{\prime} - (\mu^{\frac{1}{2}})_{*})(\mu^{-\frac{1}{16}}G)_{*} \mu^{-\frac{1}{16}}H
 \left((\mu^{-\frac{1}{16}}F)^{\prime}- \mu^{-\frac{1}{16}}F\right) d\sigma dv_{*} dv,
\\ \label{definition-mathcal-D}
\mathcal{D}(G, H, F) &:=& \int B^{\epsilon,\gamma,<}((\mu^{\frac{1}{2}})_{*}^{\prime} - (\mu^{\frac{1}{2}})_{*})(\mu^{-\frac{1}{16}}G)_{*} \mu^{-\frac{1}{16}}H  \mu^{-\frac{1}{16}}F d\sigma dv_{*} dv.
\een
Note that decomposition \eqref{decomposition-2} uses  regularity of $F$ since $\mathcal{C}(G, H, F)$ contains $(\mu^{-\frac{1}{16}}F)^{\prime}- \mu^{-\frac{1}{16}}F$.

We now give some rough estimates of $\mathcal{X}(G, H, F)$ in the following lemma. Based on this, a refined estimate will be given in Proposition \ref{I-less-eta-upper-bound}.

\begin{lem} \label{I-less-1-some-preparation}
Let $j$ be an integer satisfying $2^{j} \geq \frac{1}{4\epsilon}$, then the following estimates hold.
\ben \label{estimate-mathcal-X-1}
|\mathcal{X}(G, H, F)| &\lesssim& |G|_{L^{2}}|H|_{H^{1}}|F|_{L^{2}},
\\ \label{estimate-mathcal-X-2}
|\mathcal{X}(G, H, F)| &\lesssim& |G|_{L^{2}}|H|_{L^{2}}|F|_{H^{1}}+|G|_{L^{2}}|H|_{H^{s}}|F|_{H^{s}},
\\ \label{estimate-mathcal-X-3}
|\mathcal{X}(G, H, F)| &\lesssim& \epsilon^{2s-2} 2^{(2s-2)j} |G|_{L^{2}}|H|_{H^{1}}|F|_{L^{2}} +
\epsilon^{2s-2} 2^{(2s-1)j} |G|_{L^{2}}|H|_{L^{2}}|F|_{L^{2}}.
\een
\end{lem}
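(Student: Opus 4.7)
The strategy is to use the two decompositions $\mathcal{X}=\mathcal{A}+\mathcal{B}$ from \eqref{decomposition-1} and $\mathcal{X}=\mathcal{C}+\mathcal{D}$ from \eqref{decomposition-2}, combined with Taylor expansions of the gaining factor $(\mu^{1/2})_*^\prime-(\mu^{1/2})_*$. First-order Taylor yields $|(\mu^{1/2})_*^\prime-(\mu^{1/2})_*|\lesssim |v_*^\prime-v_*|\sup_{\iota\in[0,1]}\mu^{1/4}(v_*(\iota))=|v-v_*|\sin(\theta/2)\sup_\iota\mu^{1/4}(v_*(\iota))$, contributing a $\sin(\theta/2)|v-v_*|$ factor together with a localized Maxwellian. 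Because $B^{\epsilon,\gamma,<}$ is supported on $|v-v_*|\leq 1$, the weight-transfer Lemma \ref{mu-weight-transfer} lets me absorb the $\mu^{-1/16}$ growth factors on $G,H,F$ into the positive Maxwellian weights, effectively reducing matters to unweighted estimates on $G,H,F$.

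For \eqref{estimate-mathcal-X-1}, I decompose $\mathcal{X}=\mathcal{A}+\mathcal{B}$. For $\mathcal{A}$, I also Taylor-expand $\mu^{-1/16}H-(\mu^{-1/16}H)^\prime$ to first order, producing a second factor of $\sin(\theta/2)|v-v_*|\cdot\nabla H$ at an intermediate point $v(\kappa)$. The resulting kernel $b^\epsilon(\cos\theta)\sin^2(\theta/2)\,\mathrm{1}_{|v-v_*|\leq 1}|v-v_*|^{\gamma+2}$ has $\sigma$-integral uniformly bounded by \eqref{order-2}, and $|v-v_*|^{\gamma+2}$ is locally integrable since $\gamma>-5$; Cauchy--Schwarz combined with weight transfer then yields $|\mathcal{A}|\lesssim|G|_{L^2}|H|_{H^1}|F|_{L^2}$. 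For $\mathcal{B}$, I further split $(\mu^{-1/8}HF)^\prime=\bigl[(\mu^{-1/8}HF)^\prime-\mu^{-1/8}HF\bigr]+\mu^{-1/8}HF$: the difference piece is handled exactly like $\mathcal{A}$ (producing $\nabla(HF)$), while for the other piece a second-order Taylor of $(\mu^{1/2})_*^\prime-(\mu^{1/2})_*$ combined with axial symmetry gives $\int b^\epsilon\bigl((\mu^{1/2})_*^\prime-(\mu^{1/2})_*\bigr)d\sigma=O(|v-v_*|)\sup_\iota\mu^{1/4}(v_*(\iota))$, after which a plain $L^2\times L^2$ bound suffices. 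Estimate \eqref{estimate-mathcal-X-2} is obtained analogously via $\mathcal{X}=\mathcal{C}+\mathcal{D}$: $\mathcal{C}$ produces the $|F|_{H^1}$ term by the same Taylor argument with $F$ playing the role of $H$, and for $\mathcal{D}$ the $\sigma$-integral argument above gives an $L^2\times L^2$ bound, which is trivially absorbed into $|H|_{H^s}|F|_{H^s}$.

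For \eqref{estimate-mathcal-X-3}, the hypothesis $2^j\geq(4\epsilon)^{-1}$ permits a dyadic split of the angular integral at $\sin(\theta/2)=2^{-j}$. On $\{\sin(\theta/2)\leq 2^{-j}\}$, I repeat the $\mathcal{A}$-argument, this time using the sharper bound
\beno
\int b^\epsilon(\cos\theta)\sin^2(\theta/2)\,\mathrm{1}_{\sin(\theta/2)\leq 2^{-j}}\,d\sigma\lesssim\epsilon^{2s-2}\cdot 2^{(2s-2)j}
\eeno
in place of \eqref{order-2}, producing the first term. On the complementary region $\{2^{-j}<\sin(\theta/2)\leq\epsilon\}$, I apply only first-order Taylor to $(\mu^{1/2})_*^\prime-(\mu^{1/2})_*$ and bound the product $H\cdot F^\prime$ by Cauchy--Schwarz, using
\beno
\int b^\epsilon(\cos\theta)\sin(\theta/2)\,\mathrm{1}_{2^{-j}<\sin(\theta/2)\leq\epsilon}\,d\sigma\lesssim\epsilon^{2s-2}\cdot 2^{(2s-1)j}
\eeno
valid for $s>1/2$, which produces the second term.

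The main obstacle is that $b^\epsilon$ is not $\sigma$-integrable on its own and only becomes so after pairing with $\sin^k(\theta/2)$ for $k\geq 2$; consequently two independent sources of $\sin(\theta/2)$-smoothness must be extracted at every stage, which is precisely what forces the introduction of both decompositions \eqref{decomposition-1}, \eqref{decomposition-2} and the further splitting inside $\mathcal{B}$ and $\mathcal{D}$. A secondary subtlety is the $\mu^{-1/16}$ growth on $G,H,F$ that would preclude any estimate at infinity if taken at face value; the localization $|v-v_*|\leq 1$ together with careful tracking of the Maxwellian budget via Lemma \ref{mu-weight-transfer} at each intermediate point $v_*(\iota), v(\kappa)$ produced by Taylor remainders is what ultimately closes the estimates.
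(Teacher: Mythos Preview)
Your outline identifies the right decompositions and the weight-transfer mechanism, but the treatment of $\mathcal{B}$ and $\mathcal{D}$ has genuine gaps. For \eqref{estimate-mathcal-X-1}, your further split of $(\mu^{-1/8}HF)'$ inside $\mathcal{B}$ is counterproductive: the difference piece produces $\nabla(HF)$ at an intermediate point, and Cauchy--Schwarz then gives at best $|G|_{L^2}|HF|_{H^1}$, which is not controlled by $|H|_{H^1}|F|_{L^2}$; the remaining ``$HF$ at $v$'' piece is exactly $\mathcal{D}$, which does \emph{not} obey a plain $L^2\times L^2$ bound (see below). The paper keeps $(\mu^{-1/8}HF)'$ intact, expands $(\mu^{1/2})'_*-\mu^{1/2}_*$ to second order, and observes that the first-order term $\mathcal{B}_1$ vanishes exactly: for fixed $v_*$ one has $\int B^{\epsilon,\gamma,<}(\mu^{-1/8}HF)'\,(v-v')\,d\sigma\,dv=0$, because $v-v'\perp v'-v_*$ so that for fixed $v'$ the vector $v-v'$ averages to zero while the rest of the integrand is rotation-invariant. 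Only the second-order remainder survives, carrying a full $\sin^2(\theta/2)|v-v_*|^2$, and gives $|G|_{L^2}|H|_{L^2}|F|_{L^2}$.

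For $\mathcal{D}$ the analogous cancellation fails, since $HF$ sits at $v$: axial symmetry only yields $\int b^\epsilon(v-v')\,d\sigma=4\pi(v-v_*)$, leaving (after weight transfer) an integrand comparable to $|v-v_*|^{\gamma+1}\mathrm{1}_{|v-v_*|\le1}|G_*||H||F|$. No Cauchy--Schwarz placement extracts $|G|_{L^2}|H|_{L^2}|F|_{L^2}$ once $\gamma\le-5/2$. The paper bounds $|v-v_*|^{\gamma+1}\le|v-v_*|^{-2s}$ (here $\gamma+2s>-1$ enters) and invokes Hardy's inequality, which is the genuine source of the $|H|_{H^s}|F|_{H^s}$ term in \eqref{estimate-mathcal-X-2}---it is not a throwaway upper bound. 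A parallel integrability issue affects your \eqref{estimate-mathcal-X-3}: splitting at the fixed level $\sin(\theta/2)=2^{-j}$ leaves, in the large-angle piece, the kernel $|v-v_*|^{\gamma+1}$ against $|G_*|$, and extracting $|G|_{L^2}$ requires $2(\gamma+1)>-3$, which fails for $\gamma\le-5/2$. The paper instead splits at the $|v-v_*|$-dependent threshold $C_j(v)=\min\{2^{-j}|v-v_*|^{-1},\epsilon\}$; the angular integrals then carry extra factors $|v-v_*|^{2s-2}$ and $|v-v_*|^{2s-1}$, upgrading the kernel to $|v-v_*|^{\gamma+2s}$, whose integrability is exactly the hypothesis $\gamma+2s>-1$.
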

\begin{proof}
The following is divided into 3 parts.

{\it {Part 1: Proof of \eqref{estimate-mathcal-X-1}.}} We first estimate $\mathcal{A}(G, H, F)$.
By applying Taylor expansion to $(\mu^{\frac{1}{2}})_{*}^{\prime} - (\mu^{\frac{1}{2}})_{*}$ and $\mu^{-\frac{1}{16}}H- (\mu^{-\frac{1}{16}}H)^{\prime}$ up to order 1, we get
\ben \label{mu-order-1-expansion}
|(\mu^{\frac{1}{2}})^{\prime}_{*}-\mu^{\frac{1}{2}}_{*}| = |\int_{0}^{1} (\nabla \mu^{\frac{1}{2}})(v_{*}(\iota)) \cdot (v^{\prime}-v) d\iota| \lesssim
\sin(\theta/2)|v-v_{*}| \int_{0}^{1} |\mu^{\frac{1}{4}}(v_{*}(\iota))| d\iota.
\\ \nonumber
|\mu^{-\frac{1}{16}}H- (\mu^{-\frac{1}{16}}H)^{\prime}|
= |\int_{0}^{1} (\nabla \mu^{-\frac{1}{16}}H )(v(\kappa)) \cdot (v^{\prime}-v) d\kappa|
\\ \lesssim
\sin(\theta/2)|v-v_{*}| \int_{0}^{1} \mu^{-\frac{1}{8}}(v(\kappa))(|H (v(\kappa))|+|\nabla H (v(\kappa))|) d\kappa. \nonumber
\een
This implies
\ben \label{absorb-mu-weight}
|\mathcal{A}(G, H, F)| &\lesssim&
\int B^{\epsilon,\gamma,<}\sin^{2}(\theta/2)|v-v_{*}|^{2} \mu^{\frac{1}{4}}(v_{*}(\iota))|(\mu^{-\frac{1}{16}}G)_{*}|\mu^{-\frac{1}{8}}(v(\kappa))
\nonumber \\&&\times(|H (v(\kappa))|+|\nabla H (v(\kappa))|)|(\mu^{-\frac{1}{16}}F)^{\prime}| d\sigma dv_{*} dv d \iota d \kappa
\nonumber \\&\lesssim&
\int B^{\epsilon,\gamma,<}\sin^{2}(\theta/2)|v-v_{*}|^{2} |G_{*}| (|H (v(\kappa))|+|\nabla H (v(\kappa))|) |F^{\prime}| d\sigma dv_{*} dv  d \kappa,
\een
where we have used Lemma \ref{mu-weight-transfer} in the last inequality. By Cauchy-Schwarz inequality and the change of variable $(v,\theta) \rightarrow (v(\kappa),\theta(\kappa))$, we get
\ben \label{Cauchy-Schwarz-plus-change}
|\mathcal{A}(G, H, F)|  &\lesssim& \left( \int B^{\epsilon,\gamma,<}\sin^{2}(\theta/2)|v-v_{*}|^{2} |G_{*}| (|H (v(\kappa))|^{2}+|\nabla H (v(\kappa))|^{2}) d\sigma dv_{*} dv  d \kappa
\right)^{\frac{1}{2}}
\nonumber\\&& \times \left(\int B^{\epsilon,\gamma,<}\sin^{2}(\theta/2)|v-v_{*}|^{2} |G_{*}| |F^{\prime}|^{2} d\sigma dv_{*} dv
\right)^{\frac{1}{2}}
\nonumber\\&\lesssim&
\bigg(\int b^{\epsilon}(\cos\theta)\sin^{2}(\theta/2)|v(\kappa)-v_{*}|^{\gamma+2} \mathrm{1}_{|v(\kappa)-v_{*}| \leq 1} |G_{*}|
\nonumber \\&&\quad\quad \times(|H (v(\kappa))|^{2}+|\nabla H (v(\kappa))|^{2}) \sin \theta  d\theta(\kappa) d \phi dv_{*} dv(\kappa)  d \kappa
\bigg)^{\frac{1}{2}}
\nonumber\\&& \times \left(\int b^{\epsilon}(\cos\theta)\sin^{2}(\theta/2)|v^{\prime}-v_{*}|^{\gamma+2} \mathrm{1}_{|v^{\prime}-v_{*}| \leq 1} |G_{*}| |F^{\prime}|^{2} \sin \theta  d\theta^{\prime} d \phi  dv_{*} dv^{\prime}
\right)^{\frac{1}{2}}\!\!.
\een
Here $\theta(\kappa)$ is the angle between $\sigma$  and $v(\kappa)-v_{*}$ and $\theta^{\prime} = \frac{\theta}{2}$ is the angle between $\sigma$  and $v^{\prime}-v_{*}$.
By noting $b^{\epsilon}(\cos\theta) = (1-s) \epsilon^{2s-2} \sin^{-2-2s}(\theta/2) \mathrm{1}_{\sin(\theta/2) \leq \epsilon}$, and the relation \eqref{change-of-variable-angle-1}, \eqref{change-of-variable-angle-2}, we have
\ben \label{compute-angular-integral}
\int_{0}^{\pi} b^{\epsilon}(\cos\theta)\sin^{2}(\theta/2) \sin\theta
d\theta(\kappa) &\leq&  2 \int_{0}^{\pi} (1-s) \epsilon^{2s-2} \sin^{-2s}(\theta/2) \mathrm{1}_{\sin(\theta/2) \leq \epsilon} \sin\theta(\kappa)
d\theta(\kappa)
\nonumber\\&\leq&  8 \int_{0}^{\pi} (1-s) \epsilon^{2s-2} \sin^{1-2s}(\theta(\kappa)/2) \mathrm{1}_{\sin(\theta(\kappa)/2) \leq \epsilon}
d \sin (\theta(\kappa)/2)
\nonumber \\&=& 8 \int_{0}^{\epsilon} (1-s) \epsilon^{2s-2} t^{1-2s}
d t = 4.
\een
Hence.
\beno
|\mathcal{A}(G, H, F)| &\lesssim&
\left(\int |v-v_{*}|^{\gamma+2} \mathrm{1}_{|v-v_{*}| \leq 1} |G_{*}| (|H|^{2} + |\nabla H |^{2}) dv_{*} dv
\right)^{\frac{1}{2}}
\\&& \times \left(\int |v-v_{*}|^{\gamma+2} \mathrm{1}_{|v-v_{*}| \leq 1} |G_{*}| |F|^{2}  dv_{*} dv
\right)^{\frac{1}{2}}.
\eeno
Since $\gamma >-3$, for any $v \in \mathbb{R}^{3}$, we have
\ben \label{G-l2-kills-singularity}
\int |v-v_{*}|^{\gamma+2} \mathrm{1}_{|v-v_{*}| \leq 1} |G_{*}|  dv_{*} \leq |G|_{L^{2}} \left(\int |v-v_{*}|^{2\gamma+4} \mathrm{1}_{|v-v_{*}| \leq 1} dv_{*}\right)^{\frac{1}{2}} \lesssim |G|_{L^{2}},
\een
which yields
\ben \label{estimate-mathcal-A}
|\mathcal{A}(G, H, F)| \lesssim |G|_{L^{2}}|H|_{H^{1}}|F|_{L^{2}}.
\een
Similarly, we also have
\ben \label{estimate-mathcal-C}
|\mathcal{C}(G, H, F)| \lesssim |G|_{L^{2}}|H|_{L^{2}}|F|_{H^{1}}.
\een
By Taylor expansion and $v^{\prime}_{*}-v_{*} = v-  v^{\prime}$,
\ben \label{Taylor-to-mu-order-2}
 (\mu^{\frac{1}{2}})^{\prime}_{*}-\mu^{\frac{1}{2}}_{*} = (\nabla \mu^{\frac{1}{2}})_{*} \cdot (v-  v^{\prime}) + \int_{0}^{1} (1-\kappa)  (\nabla^{2}\mu^{\frac{1}{2}})(v_{*}(\kappa)):(v^\prime_{*} - v_{*})\otimes (v^\prime_{*} - v_{*}) d\kappa. \een
Plugging this into \eqref{definition-mathcal-B}, we have $\mathcal{B}(G, H, F)= \mathcal{B}_{1}(G, H, F) + \mathcal{B}_{2}(G, H, F)$, where
\beno
\mathcal{B}_{1}(G, H, F) &:=& \int B^{\epsilon,\gamma,<} (\mu^{-\frac{1}{16}}G)_{*}(\mu^{-\frac{1}{8}}H F)^{\prime} (\nabla \mu^{\frac{1}{2}})_{*} \cdot (v-  v^{\prime}) d\sigma dv_{*} dv,
\\
\mathcal{B}_{2}(G, H, F) &:=& \int B^{\epsilon,\gamma,<} (\mu^{-\frac{1}{16}}G)_{*}(\mu^{-\frac{1}{8}}H F)^{\prime}
\\&&\times\left(\int_{0}^{1} (1-\kappa)  (\nabla^{2}\mu^{\frac{1}{2}})(v_{*}(\kappa)):(v^\prime_{*} - v_{*})\otimes (v^\prime_{*} - v_{*}) d\kappa\right) d\sigma dv_{*} dv.
\eeno
Note that for fixed $v_{*}$, one has $\int B^{\epsilon,\gamma,<} (\mu^{-\frac{1}{8}}H F)^{\prime} (v-  v^{\prime}) d\sigma  dv  = 0$, which gives
$
\mathcal{B}_{1}(G, H, F) = 0.
$
By using the arguments in  \eqref{absorb-mu-weight}, \eqref{Cauchy-Schwarz-plus-change}, \eqref{compute-angular-integral} and \eqref{G-l2-kills-singularity}, we get
\ben \label{estimate-mathcal-B}
|\mathcal{B}(G, H, F)|&=&|\mathcal{B}_{2}(G, H, F)|
\nonumber \\&\lesssim& \int B^{\epsilon,\gamma,<} \sin^{2}(\theta/2)|v-v_{*}|^{2} |G_{*}H^{\prime} F^{\prime}| d\sigma dv_{*} dv \lesssim |G|_{L^{2}}|H|_{L^{2}}|F|_{L^{2}}.
\een
Combining \eqref{estimate-mathcal-A} and \eqref{estimate-mathcal-B} and noting \eqref{decomposition-1}, we have  \eqref{estimate-mathcal-X-1}.

{\it {Part 2: Proof of \eqref{estimate-mathcal-X-2}.}}
Plugging \eqref{Taylor-to-mu-order-2} into  \eqref{definition-mathcal-D} gives  $\mathcal{D}(G, H, F)= \mathcal{D}_{1}(G, H, F) + \mathcal{D}_{2}(G, H, F)$, where
\beno
\mathcal{D}_{1}(G, H, F) &:=& \int B^{\epsilon,\gamma,<} (\mu^{-\frac{1}{16}}G)_{*} \mu^{-\frac{1}{16}}H  \mu^{-\frac{1}{16}}F (\nabla \mu^{\frac{1}{2}})_{*} \cdot (v-  v^{\prime}) d\sigma dv_{*} dv,
\\
\mathcal{D}_{2}(G, H, F) &:=& \int B^{\epsilon,\gamma,<} (\mu^{-\frac{1}{16}}G)_{*} \mu^{-\frac{1}{16}}H  \mu^{-\frac{1}{16}}F  \\&&\times(1-\kappa)  (\nabla^{2}\mu^{\frac{1}{2}})(v_{*}(\kappa)):(v^\prime_{*} - v_{*})\otimes (v^\prime_{*} - v_{*}) d\kappa d\sigma dv_{*} dv.
\eeno
By the symmetry of the $\sigma$ integral and \eqref{order-2}, we have
\ben \label{symmetry-of-sigma-integral}
\int b^{\epsilon} (v- v^{\prime}) d\sigma = (v-v_{*}) \int b^{\epsilon} \sin^{2}(\theta/2) d\sigma = 4 \pi (v-v_{*}),
\een
which gives
\beno
|\mathcal{D}_{1}(G, H, F)| &=& 4\pi |\int |v-v_{*}|^{\gamma} \zeta(|v-v_{*}|) (\mu^{-\frac{1}{16}}G)_{*} \mu^{-\frac{1}{16}}H  \mu^{-\frac{1}{16}}F (\nabla \mu^{\frac{1}{2}})_{*} \cdot (v-  v_{*})  dv_{*} dv|
\\&\lesssim&
\int |v-v_{*}|^{\gamma+1} \mathrm{1}_{|v-v_{*}| \leq 1} |(\mu^{\frac{1}{16}}G)_{*} H  F| dv_{*} dv
\\&\lesssim&
\int |v-v_{*}|^{-2s} \mathrm{1}_{|v-v_{*}| \leq 1} |(\mu^{\frac{1}{16}}G)_{*} H  F| dv_{*} dv,
\eeno
where we have used  $\gamma+2s > -1$.
By Hardy inequality, we have $\int |v-v_{*}|^{-2s} H^{2} dv \lesssim |H|_{H^{s}}$ and thus
\beno
|\mathcal{D}_{1}(G, H, F)| \lesssim |G|_{L^{2}} |H|_{H^{s}} |F|_{H^{s}}.
\eeno
Similar to $\mathcal{B}_{2}(G, H, F)$, we get $|\mathcal{D}_{2}(G, H, F)| \lesssim |G|_{L^{2}} |H|_{L^{2}} |F|_{L^{2}}.$
Therefore,
\ben \label{estimate-mathcal-D}
|\mathcal{D}(G, H, F)| \lesssim |G|_{L^{2}} |H|_{H^{s}} |F|_{H^{s}}.
\een
By combining \eqref{estimate-mathcal-C} and \eqref{estimate-mathcal-D} and noting \eqref{decomposition-2}, we have \eqref{estimate-mathcal-X-2}.

{\it {Part 3: Proof of \eqref{estimate-mathcal-X-3}.}} Let $C_{j}(v) := \min \{2^{-j}|v-v_{*}|^{-1} , \epsilon \}$.
We decompose
\ben \label{A-2-leq-geq}
\mathcal{A}(G,H,F) = \mathcal{A}_{\leq  }(G,H,F) + \mathcal{A}_{\geq  }(G,H,F),
\een
where $\mathcal{A}_{\leq  }(G,H,F)$ stands for the integration over
$\sin(\theta/2) \in [0,  C_{j}(v)]$ and  $\mathcal{A}_{\geq  }(G,H,F)$ stands for the integration over
$\sin(\theta/2) \in [C_{j}(v), \epsilon]$.  For $\mathcal{A}_{\leq  }(G,H,F)$, similar to \eqref{absorb-mu-weight}, we get
\beno
|\mathcal{A}_{\leq  }(G,H,F)| &\lesssim&
\int B^{\epsilon,\gamma,<}\sin^{2}(\theta/2) \mathrm{1}_{\sin(\theta/2) \leq C_{j}(v)}|v-v_{*}|^{2} |G_{*}|
 \\&&\times(|H (v(\kappa))|+|\nabla H (v(\kappa))|) |F^{\prime}| d\sigma dv_{*} dv  d \kappa.
\eeno
By Cauchy-Schwarz inequality and the change of variable $(v,\theta) \rightarrow (v(\kappa),\theta(\kappa))$, similar to \eqref{Cauchy-Schwarz-plus-change},
we get
\ben \label{Cauchy-Schwarz-plus-change-2}
|\mathcal{A}_{\leq  }(G,H,F)|  &\lesssim&
\bigg(\int b^{\epsilon}(\cos\theta)\sin^{2}(\theta/2)\mathrm{1}_{\sin(\theta/2) \leq C_{j}(v(\kappa))}|v(\kappa)-v_{*}|^{\gamma+2} \mathrm{1}_{|v(\kappa)-v_{*}| \leq 1} |G_{*}|
\nonumber \\&&\quad\quad \times(|H (v(\kappa))|^{2}+|\nabla H (v(\kappa))|^{2}) \sin \theta  d\theta(\kappa) d \phi dv_{*} dv(\kappa)  d \kappa
\bigg)^{\frac{1}{2}}
\nonumber\\&& \times \bigg(\int b^{\epsilon}(\cos\theta)\sin^{2}(\theta/2)\mathrm{1}_{\sin(\theta/2) \leq C_{j}(v^{\prime})}|v^{\prime}-v_{*}|^{\gamma+2} \mathrm{1}_{|v^{\prime}-v_{*}| \leq 1}
\nonumber \\&&\quad\quad |G_{*}| |F^{\prime}|^{2} \sin \theta  d\theta^{\prime} d \phi  dv_{*} dv^{\prime}
\bigg)^{\frac{1}{2}},
\een
where we have used the fact $C_{j}(v) \leq C_{j}(v(\kappa))$ since $|v(\kappa)-v_{*}| \leq |v-v_{*}|$ for $\kappa \in [0,1]$. Similar to \eqref{compute-angular-integral}, we have
\beno
&&\int_{0}^{\pi} b^{\epsilon}(\cos\theta)\sin^{2}(\theta/2) \mathrm{1}_{\sin(\theta/2) \leq C_{j}(v(\kappa))} \sin\theta
d\theta(\kappa) \\&\leq&  8 \int_{0}^{2^{-j}|v(\kappa)-v_{*}|^{-1}} (1-s) \epsilon^{2s-2} t^{1-2s}
d t = 4 \times \epsilon^{2s-2} 2^{(2s-2)j} |v(\kappa)-v_{*}|^{2s-2}.
\eeno
Plugging this into \eqref{Cauchy-Schwarz-plus-change-2} gives
\beno
|\mathcal{A}_{\leq  }(G,H,F)| &\lesssim& \epsilon^{2s-2} 2^{(2s-2)j}
\left(\int |v-v_{*}|^{\gamma+2s} \mathrm{1}_{|v-v_{*}| \leq 1} |G_{*}| (|H|^{2} + |\nabla H |^{2}) dv_{*} dv
\right)^{\frac{1}{2}}
\\&& \times \left(\int |v-v_{*}|^{\gamma+2s} \mathrm{1}_{|v-v_{*}| \leq 1} |G_{*}| |F|^{2}  dv_{*} dv
\right)^{\frac{1}{2}}.
\eeno
Since $\gamma+2s >-1$, similar to \eqref{G-l2-kills-singularity},
for any $v \in \mathbb{R}^{3}$, we have
\ben \label{G-l2-kills-singularity-2}
\int |v-v_{*}|^{\gamma+2s} \mathrm{1}_{|v-v_{*}| \leq 1} |G_{*}|  dv_{*} \lesssim |G|_{L^{2}},
\een
which yields
\ben \label{estimate-mathcal-A-leq}
|\mathcal{A}_{\leq  }(G,H,F)| \lesssim \epsilon^{2s-2} 2^{(2s-2)j} |G|_{L^{2}}|H|_{H^{1}}|F|_{L^{2}}.
\een
For $\mathcal{A}_{\geq  }(G,H,F)$, by \eqref{mu-order-1-expansion},
we get
\beno
|\mathcal{A}_{\geq  }(G,H,F)| &\lesssim&
\int B^{\epsilon,\gamma,<}\mathrm{1}_{C_{j}(v) \leq \sin(\theta/2) \leq \epsilon}\sin(\theta/2)|v-v_{*}| \mu^{\frac{1}{4}}(v_{*}(\iota))|(\mu^{-\frac{1}{16}}G)_{*}
\\ && \times
\left(|\mu^{-\frac{1}{16}}H|+ |(\mu^{-\frac{1}{16}}H)^{\prime}|\right)|(\mu^{-\frac{1}{16}}F)^{\prime}|
\\&\lesssim&
\int B^{\epsilon,\gamma,<}\mathrm{1}_{C_{j}(v) \leq \sin(\theta/2) \leq \epsilon}\sin(\theta/2)|v-v_{*}| |G_{*}|\left(|H|+ |(H)^{\prime}|\right)|(F)^{\prime}|,
\eeno
where we have used  Lemma \ref{mu-weight-transfer} in the last inequality. By Cauchy-Schwarz inequality and the change of variable $(v,\theta) \rightarrow (v(\kappa),\theta(\kappa))$, similar to \eqref{Cauchy-Schwarz-plus-change},
we get
\ben \label{Cauchy-Schwarz-plus-change-3}
&& |\mathcal{A}_{\geq  }(G,H,F)| \\ &\lesssim& \bigg[
\bigg(\int b^{\epsilon}(\cos\theta)\sin(\theta/2)\mathrm{1}_{C_{j}(v) \leq \sin(\theta/2) \leq \epsilon}|v-v_{*}|^{\gamma+1} \mathrm{1}_{|v-v_{*}| \leq 1} |G_{*}| |H|^{2} \sin \theta  d\theta d \phi  dv_{*} dv \bigg)^{\frac{1}{2}}
\nonumber\\&& +\bigg(\int b^{\epsilon}(\cos\theta)\sin(\theta/2)\mathrm{1}_{C_{j}(v) \leq \sin(\theta/2) \leq \epsilon}|v^{\prime}-v_{*}|^{\gamma+1} \mathrm{1}_{|v^{\prime}-v_{*}| \leq 1} |G_{*}| |H^{\prime}|^{2} \sin \theta  d\theta^{\prime} d \phi  dv_{*} dv^{\prime} \bigg)^{\frac{1}{2}}
\bigg]
\nonumber\\&& \times \left(\int b^{\epsilon}(\cos\theta)\sin(\theta/2)\mathrm{1}_{C_{j}(v) \leq \sin(\theta/2) \leq \epsilon}|v^{\prime}-v_{*}|^{\gamma+1} \mathrm{1}_{|v^{\prime}-v_{*}| \leq 1} |G_{*}| |F^{\prime}|^{2} \sin \theta  d\theta^{\prime} d \phi  dv_{*} dv^{\prime}
\right)^{\frac{1}{2}}. \nonumber
\een
Similar to \eqref{compute-angular-integral}, we have
\ben \label{compute-angular-integral-3}
&&\int_{0}^{\pi} b^{\epsilon}(\cos\theta)\sin(\theta/2) \mathrm{1}_{C_{j}(v) \leq \sin(\theta/2) \leq \epsilon} \sin\theta
d\theta
\nonumber \\&=&  4 \int_{C_{j}(v)}^{\epsilon} (1-s) \epsilon^{2s-2} t^{-2s}
d t = 4 \frac{1-s}{2s-1} \epsilon^{2s-2} ( (C_{j}(v))^{1-2s} - \epsilon^{1-2s}  )
\nonumber \\&\leq& 4 \frac{1-s}{2s-1} \epsilon^{2s-2} 2^{(2s-1)j} |v-v_{*}|^{2s-1},
\een
where we have used $2s>1$ and  $(C_{j}(v))^{1-2s} - \epsilon^{1-2s} \leq 2^{(2s-1)j} |v-v_{*}|^{2s-1}$ in the last inequality. Note that $C_{j}(v) = \min \{2^{-j}|v-v_{*}|^{-1} , \epsilon \} \geq \{2^{-j}|v^{\prime}-v_{*}|^{-1}2^{-1/2}  , \epsilon \} :=C^{\prime}_{j}(v^{\prime})$ and  $\theta^{\prime} = \frac{\theta}{2}$. Thus
\beno
C_{j}(v) \leq \sin(\theta/2) \leq \epsilon \Rightarrow C^{\prime}_{j}(v^{\prime})  \leq
\sin (\theta^{\prime})  \leq \epsilon.
\eeno
By this, similar to \eqref{compute-angular-integral} and \eqref{compute-angular-integral-3}
we have
\ben \label{compute-angular-integral-4}
&&\int_{0}^{\pi} b^{\epsilon}(\cos\theta)\sin(\theta/2) \mathrm{1}_{C_{j}(v) \leq \sin(\theta/2) \leq \epsilon} \sin\theta
d\theta^{\prime}
\nonumber \\&\leq&  4 \int_{C^{\prime}_{j}(v^{\prime})}^{\epsilon} (1-s) \epsilon^{2s-2} t^{-2s}
d t \lesssim \frac{1-s}{2s-1}  \epsilon^{2s-2} 2^{(2s-1)j} |v^{\prime}-v_{*}|^{2s-1}.
\een
Plugging \eqref{compute-angular-integral-3} and \eqref{compute-angular-integral-4} into \eqref{Cauchy-Schwarz-plus-change-3} gives
\beno
|\mathcal{A}_{\geq  }(G,H,F)| &\lesssim&
\epsilon^{2s-2} 2^{(2s-1)j} \bigg(\int |v-v_{*}|^{\gamma+2s} \mathrm{1}_{|v-v_{*}| \leq 1} |G_{*}| |H|^{2} dv_{*} dv \bigg)^{\frac{1}{2}}
\nonumber\\&& \times \left(\int |v-v_{*}|^{\gamma+2s} \mathrm{1}_{|v-v_{*}| \leq 1} |G_{*}| |F|^{2}  dv_{*} dv
\right)^{\frac{1}{2}}.
\eeno
By \eqref{G-l2-kills-singularity-2}, we obtain
\ben \label{estimate-mathcal-A-geq}
|\mathcal{A}_{\geq  }(G,H,F)| \lesssim
\epsilon^{2s-2} 2^{(2s-1)j} |G|_{L^{2}}|H|_{L^{2}}|F|_{L^{2}}.
\een
By combining  \eqref{estimate-mathcal-A-leq}, \eqref{estimate-mathcal-A-geq} and \eqref{estimate-mathcal-B}, and by noting \eqref{A-2-leq-geq}, \eqref{decomposition-1} and $\epsilon^{2s-2} 2^{(2s-1)j} \gtrsim \epsilon^{-1} \geq 1$ because $2^{j} \gtrsim \epsilon^{-1}$ and $2s>1$,
we obtain \eqref{estimate-mathcal-X-3}.
\end{proof}

We give the estimate of $\langle I^{\epsilon,\gamma,<}(g,h), f\rangle$ in the following proposition.
\begin{prop} \label{I-less-eta-upper-bound}
For suitable functions $g, h$ and $f$, it holds
\beno \langle I^{\epsilon,\gamma,<}(g,h), f\rangle  \lesssim |\mu^{\frac{1}{16}}g|_{L^{2}}|W^{\epsilon}(D)\mu^{\frac{1}{16}}h|_{L^{2}}|W^{\epsilon}(D)\mu^{\frac{1}{16}}f|_{L^{2}}. \eeno
\end{prop}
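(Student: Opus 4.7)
With the substitution $G = \mu^{1/16}g$, $H = \mu^{1/16}h$, $F = \mu^{1/16}f$ one has $\langle I^{\epsilon,\gamma,<}(g,h), f\rangle = \mathcal{X}(G,H,F)$ (as noted above \eqref{decomposition-1}), so the task reduces to proving
\[
|\mathcal{X}(G,H,F)| \lesssim |G|_{L^2}\,|W^\epsilon(D)H|_{L^2}\,|W^\epsilon(D)F|_{L^2}.
\]
The strategy is to run a Littlewood--Paley decomposition $H = \sum_{k\geq -1} H_k$, $F = \sum_{l\geq -1} F_l$ and to split into four regimes around the threshold $k_\epsilon$ defined by $2^{k_\epsilon} \sim \epsilon^{-1}$; this is precisely where the symbol $W^\epsilon$ transitions from $\langle\xi\rangle$-behavior to $\epsilon^{s-1}\langle\xi\rangle^s$-behavior according to \eqref{characteristic-function}. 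On each regime a different member of \eqref{estimate-mathcal-X-1}--\eqref{estimate-mathcal-X-3} from Lemma \ref{I-less-1-some-preparation} is applied, tuned to match the size of $W^\epsilon(D)$ on the relevant frequency band.

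\textbf{Low--low block.} Writing $H_< := \sum_{k\leq k_\epsilon} H_k$ and $F_< := \sum_{l\leq k_\epsilon} F_l$, estimate \eqref{estimate-mathcal-X-1} gives $|\mathcal{X}(G,H_<,F_<)| \lesssim |G|_{L^2}|H_<|_{H^1}|F_<|_{L^2}$. A direct inspection of \eqref{characteristic-function} shows $W^\epsilon(\xi) \geq \langle\xi\rangle$ on $|\xi|\leq \epsilon^{-1}$, hence $|H_<|_{H^1}\lesssim |W^\epsilon(D)H|_{L^2}$, and trivially $|F_<|_{L^2}\leq |W^\epsilon(D)F|_{L^2}$, closing this piece.

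\textbf{Mixed and high--high blocks.} For $\mathcal{X}(G,H_<,F_l)$ with $l\geq k_\epsilon$, I apply \eqref{estimate-mathcal-X-3} with auxiliary parameter $j = l$; using $|F_l|_{L^2} \sim \epsilon^{1-s}2^{-ls}|W^\epsilon(D) F_l|_{L^2}$, the two terms in \eqref{estimate-mathcal-X-3} reduce respectively to $(\epsilon 2^l)^{s-1}\cdot 2^{-l}|H_<|_{H^1}|W^\epsilon(D)F_l|_{L^2}$ and $(\epsilon 2^l)^{s-1}|H_<|_{L^2}|W^\epsilon(D)F_l|_{L^2}$. Summation in $l\geq k_\epsilon$ is then handled by Cauchy--Schwarz, since $\sum_{l\geq k_\epsilon}(\epsilon 2^l)^{2(s-1)}$ and $\sum_{l\geq k_\epsilon}(\epsilon 2^l)^{2(s-1)}\cdot 2^{-2l}$ are convergent geometric series (using $s<1$) of size $O(1)$ and $O(\epsilon^2)$ respectively; the symmetric piece $\mathcal{X}(G,H_>,F_<)$ is identical. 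For the high--high piece $\sum_{k,l\geq k_\epsilon}\mathcal{X}(G,H_k,F_l)$, apply \eqref{estimate-mathcal-X-3} with $j = \max(k,l)$; since $|H_k|_{H^1}\sim 2^k|H_k|_{L^2}$, both terms collapse to $\epsilon^{2s-2}2^{(2s-1)\max(k,l)}|H_k|_{L^2}|F_l|_{L^2}$, and converting both $L^2$-norms to $W^\epsilon$-norms yields
\[
|\mathcal{X}(G,H_k,F_l)| \lesssim c_{k,l}\,|G|_{L^2}|W^\epsilon(D)H_k|_{L^2}|W^\epsilon(D)F_l|_{L^2},\quad c_{k,l}:=2^{-(1-s)\max(k,l) - s\min(k,l)}.
\]
A direct geometric-series calculation gives $\sup_k\sum_{l\geq k_\epsilon}c_{k,l} + \sup_l\sum_{k\geq k_\epsilon}c_{k,l} \lesssim \epsilon$, so Schur's lemma (together with almost-orthogonality of the Littlewood--Paley blocks in $L^2$) closes the estimate.

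\textbf{Main obstacle.} The crux is tuning the free parameter $j$ in \eqref{estimate-mathcal-X-3} so that the two competing factors $\epsilon^{2s-2}2^{(2s-2)j}$ and $\epsilon^{2s-2}2^{(2s-1)j}$ mesh with the dyadic weights $W^\epsilon(2^k)$, $W^\epsilon(2^l)$ to produce a Schur-summable off-diagonal kernel $c_{k,l}$. The hypothesis $s>1/2$ (which is built into the derivation of \eqref{estimate-mathcal-X-3} via \eqref{compute-angular-integral-3}--\eqref{compute-angular-integral-4}) is what permits the high--high sum to close, while $s<1$ ensures the relevant geometric series converge; together with the weight-transfer device of Lemma \ref{mu-weight-transfer} already encoded in Lemma \ref{I-less-1-some-preparation}, they deliver the full $L^2$-gain on $G$.
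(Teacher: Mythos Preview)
Your argument is correct and reaches the same conclusion, but the organisation differs from the paper's. The paper splits only coarsely via $\mathfrak{F}_{\zeta}:=\zeta(\epsilon|D|)$ and $\mathfrak{F}^{\zeta}:=1-\zeta(\epsilon|D|)$, writing
\[
\mathcal{X}(G,H,F)=\mathcal{X}(G,\mathfrak{F}_{\zeta}H,F)+\mathcal{X}(G,\mathfrak{F}^{\zeta}H,\mathfrak{F}_{\zeta}F)+\mathcal{X}(G,\mathfrak{F}^{\zeta}H,\mathfrak{F}^{\zeta}F),
\]
and then applies \eqref{estimate-mathcal-X-1}, \eqref{estimate-mathcal-X-2}, \eqref{estimate-mathcal-X-3} to the three pieces respectively; in the last piece only $H$ is dyadically decomposed, the parameter $j$ is set equal to the dyadic index of $H$, and a single Cauchy--Schwarz in $j$ closes the sum. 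You instead run a full double Littlewood--Paley decomposition in both $H$ and $F$, use only \eqref{estimate-mathcal-X-1} and \eqref{estimate-mathcal-X-3} (never \eqref{estimate-mathcal-X-2}), choose $j=\max(k,l)$ in the high--high block, and appeal to Schur's test. This is a legitimate alternative and even produces an extra factor of $\epsilon$ in the high--high piece; the trade-off is that the paper's route is shorter and avoids the off-diagonal bookkeeping. One small point: the two mixed blocks are not literally ``identical'' since \eqref{estimate-mathcal-X-3} is asymmetric in $H$ and $F$, but your prescription ($j=l$ for $H_{<},F_{l}$ and $j=k$ for $H_{k},F_{<}$) handles both correctly, as your computation shows.
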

\begin{proof}
Recall
\beno
\langle I^{\epsilon,\gamma,<}(g,h), f\rangle =  \int B^{\epsilon,\gamma,<}((\mu^{\frac{1}{2}})_{*}^{\prime} - \mu^{\frac{1}{2}}_{*})g_{*} h f^{\prime} d\sigma dv_{*} dv.
\eeno
Let $G = \mu^{\frac{1}{16}}g, H = \mu^{\frac{1}{16}}h, F = \mu^{\frac{1}{16}}f$. By   \eqref{definition-mathcal-X}, we have
\ben \label{I-euquals-X}
\langle I^{\epsilon,\gamma,<}(g,h), f\rangle =  \int B^{\epsilon,\gamma,<}((\mu^{\frac{1}{2}})_{*}^{\prime} - (\mu^{\frac{1}{2}})_{*})(\mu^{-\frac{1}{16}}G)_{*} \mu^{-\frac{1}{16}}H (\mu^{-\frac{1}{16}}F)^{\prime} d\sigma dv_{*} dv = \mathcal{X}(G, H, F).
\een
By the function $\zeta$ in \eqref{zeta-property}, define $\mathfrak{F}_{\zeta} f := \zeta(\epsilon|D|)f$ and $\mathfrak{F}^{\zeta} f := f - \zeta(\epsilon|D|)f$. We then decompose
\beno
 \mathcal{X}(G, H, F)  =  \mathcal{X}(G, \mathfrak{F}_{\zeta}H, F) +  \mathcal{X}(G, \mathfrak{F}^{\zeta}H, \mathfrak{F}_{\zeta}F)
 +  \mathcal{X}(G, \mathfrak{F}^{\zeta}H, \mathfrak{F}^{\zeta}F).
\eeno
By \eqref{estimate-mathcal-X-1} and \eqref{low-frequency-lb-cf}, we have
\ben \label{X-1-up}
|\mathcal{X}(G, \mathfrak{F}_{\zeta}H, F)|  \lesssim |G|_{L^{2}}|\mathfrak{F}_{\zeta}H|_{H^{1}}|F|_{L^{2}}
\lesssim  |G|_{L^{2}}|W^{\epsilon}(D)H|_{L^{2}}|F|_{L^{2}}.
 \een
By \eqref{estimate-mathcal-X-2}, \eqref{low-frequency-lb-cf} and \eqref{high-frequency-lb-cf}, we have
\ben \label{X-2-up}
|\mathcal{X}(G, \mathfrak{F}^{\zeta}H, \mathfrak{F}_{\zeta}F)|  &\lesssim& |G|_{L^{2}}|\mathfrak{F}^{\zeta}H|_{L^{2}}|\mathfrak{F}_{\zeta}F|_{H^{1}}+|G|_{L^{2}}
|\mathfrak{F}^{\zeta}H|_{H^{s}}|\mathfrak{F}_{\zeta}F|_{H^{s}}
\nonumber \\&\lesssim&  |G|_{L^{2}}|W^{\epsilon}(D)H|_{L^{2}}|W^{\epsilon}(D)F|_{L^{2}}.
 \een

For $\mathcal{X}(G, \mathfrak{F}^{\zeta}H, \mathfrak{F}^{\zeta}F)$, by \eqref{decomposition-1}, the dyadic decomposition \eqref{dyadic-decomposition-def},
\eqref{estimate-mathcal-X-3}, Cauchy-Schwartz inequality and \eqref{high-frequency-lb-cf}, we have
\ben \nonumber
|\mathcal{X}(G,\mathfrak{F}^{\zeta}H,\mathfrak{F}^{\zeta}F)|
&=& |\sum_{j \geq [-\log_{2}\epsilon] -2} \mathcal{X}(G,\varphi_{j}(D)\mathfrak{F}^{\zeta}H,\mathfrak{F}^{\zeta}F)|
\\ \nonumber &\lesssim&
|G|_{L^{2}}|\mathfrak{F}^{\zeta}F|_{L^{2}} \sum_{j \geq [-\log_{2}\epsilon] -2  } \epsilon^{2s-2} 2^{(2s-1)j}|\varphi_{j}(D)\mathfrak{F}^{\zeta}H|_{L^{2}}
\\ \nonumber & \lesssim& |G|_{L^{2}}|F|_{L^{2}}
\big(
\sum_{j \geq [-\log_{2}\epsilon] -2  } \epsilon^{2s-2} 2^{2sj}|\varphi_{j}(D)\mathfrak{F}^{\zeta}H|^{2}_{L^{2}}  \big)^{\frac{1}{2}}
\big(
\sum_{j \geq [-\log_{2}\epsilon] -2  } \epsilon^{2s-2}  2^{(2s-2)j} \big)^{\frac{1}{2}}
\\ \label{X-3-up} &\lesssim& |G|_{L^{2}}|W^{\epsilon}(D)H|_{L^{2}}|F|_{L^{2}}.
\een
Combining  \eqref{X-1-up}, \eqref{X-2-up} and \eqref{X-3-up} completes  the proof.
\end{proof}

\subsubsection{Upper bound of $I^{\epsilon,\gamma}$}
Combining  Proposition \ref{upforI-ep-ga-et} and  Proposition \ref{I-less-eta-upper-bound} gives
the following theorem.
\begin{thm}\label{upforI-total}
The estimate  $|\langle I^{\epsilon,\gamma}(g,h) , f \rangle|  \lesssim |g|_{L^{2}}|h|_{\epsilon,\gamma/2}|f|_{\epsilon,\gamma/2}$ holds.
\end{thm}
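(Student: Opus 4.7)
\medskip

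\noindent\textbf{Proof plan for Theorem \ref{upforI-total}.} The plan is to deduce the estimate directly from the two preparatory propositions by using the splitting \eqref{I-ep-ga-sep-eta}, namely $I^{\epsilon,\gamma}=I^{\epsilon,\gamma,>}+I^{\epsilon,\gamma,<}$, which will handle the regular and singular regions of the relative velocity separately. First I would invoke Proposition \ref{upforI-ep-ga-et} to obtain
\[
|\langle I^{\epsilon,\gamma,>}(g,h),f\rangle|\lesssim |g|_{L^{2}}\,|h|_{\epsilon,\gamma/2}\,|W^{\epsilon}f|_{L^{2}_{\gamma/2}},
\]
and then bound the rightmost factor by $|f|_{\epsilon,\gamma/2}$; since $W^{\epsilon}$ is a radial multiplier on $\mathbb{R}^{3}$, one has $|W^{\epsilon}f|_{L^{2}_{\gamma/2}}=|W^{\epsilon}W_{\gamma/2}f|_{L^{2}}$, which is one of the three ingredients of $|f|_{\epsilon,\gamma/2}^{2}$ in \eqref{norm-definition}.

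Next I would invoke Proposition \ref{I-less-eta-upper-bound} for the singular part to get
\[
|\langle I^{\epsilon,\gamma,<}(g,h),f\rangle|\lesssim |\mu^{\frac{1}{16}}g|_{L^{2}}\,|W^{\epsilon}(D)\mu^{\frac{1}{16}}h|_{L^{2}}\,|W^{\epsilon}(D)\mu^{\frac{1}{16}}f|_{L^{2}}.
\]
The trivial bound $|\mu^{\frac{1}{16}}g|_{L^{2}}\le |g|_{L^{2}}$ handles the first factor. For the remaining two factors I would write, since $\gamma\le 0$,
\[
\mu^{\frac{1}{16}}h=\psi\cdot W_{\gamma/2}h,\qquad \psi:=\mu^{\frac{1}{16}}W_{-\gamma/2}\in\mathcal{S}(\mathbb{R}^{3}),
\]
and then use the standard splitting $W^{\epsilon}(D)\psi=\psi\,W^{\epsilon}(D)+[W^{\epsilon}(D),\psi]$ together with the pseudo-differential commutator Lemma \ref{operatorcommutator1} (valid because $W^{\epsilon}\in S^{1}_{1,0}$ uniformly in $\epsilon$ and $\psi\in S^{-N}_{1,0}$ for every $N$) to conclude
\[
|W^{\epsilon}(D)\mu^{\frac{1}{16}}h|_{L^{2}}\lesssim |W^{\epsilon}(D)W_{\gamma/2}h|_{L^{2}}+|W_{\gamma/2}h|_{L^{2}}\lesssim |h|_{\epsilon,\gamma/2},
\]
and analogously for $f$.

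Combining the two regions then yields $|\langle I^{\epsilon,\gamma}(g,h),f\rangle|\lesssim |g|_{L^{2}}\,|h|_{\epsilon,\gamma/2}\,|f|_{\epsilon,\gamma/2}$, as claimed. The only step that is not a matter of bookkeeping is the passage from $W^{\epsilon}(D)\mu^{\frac{1}{16}}$ to $W^{\epsilon}(D)W_{\gamma/2}$; I expect this to be the main (and in fact only non-trivial) obstacle, but the uniformity in $\epsilon$ is already encoded in the symbol class $S^{1}_{1,0}$ membership of $W^{\epsilon}$, so the commutator and product inequalities of Lemma \ref{operatorcommutator1} apply without any new work. No additional parameters or regularity assumptions beyond those already required by Propositions \ref{upforI-ep-ga-et} and \ref{I-less-eta-upper-bound} are needed, so the proof will be quite short.
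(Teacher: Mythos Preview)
Your proposal is correct and follows exactly the same route as the paper: the paper simply states that Theorem \ref{upforI-total} follows by combining Propositions \ref{upforI-ep-ga-et} and \ref{I-less-eta-upper-bound} via the decomposition \eqref{I-ep-ga-sep-eta}, without spelling out the passage from $|W^{\epsilon}(D)\mu^{\frac{1}{16}}h|_{L^{2}}$ to $|h|_{\epsilon,\gamma/2}$. Your commutator argument using $\psi=\mu^{\frac{1}{16}}W_{-\gamma/2}\in\mathcal{S}$ and Lemma \ref{operatorcommutator1} is the natural way to fill that gap, and indeed the paper uses the same lemma for analogous reductions elsewhere (e.g.\ \eqref{tvstartonovstar2}).
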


\subsection{Upper bound of the nonlinear term}
%Now we are ready to prove Theorem \ref{Gamma-full-up-bound}.
%\begin{proof}[Proof of Theorem \ref{Gamma-full-up-bound}]
Furthermore, by recalling
 \eqref{Gamma-ep-ga-into-IQ} and by Theorem \ref{Q-full-up-bound} and  Theorem \ref{upforI-total}, we conclude the proof for  Theorem \ref{Gamma-full-up-bound}.
%\end{proof}

Taking $g=\mu^{\frac{1}{2}}$ in Theorem \ref{Gamma-full-up-bound} and recalling $\mathcal{L}^{\epsilon,\gamma}_{1}h =-\Gamma^{\epsilon,\gamma}(\mu^{\frac{1}{2}},h)$ in \eqref{definition-L1-L2}, we have
\begin{col}\label{upgammamuff1-full}  The estimate  $|\langle \mathcal{L}^{\epsilon,\gamma}_{1}h, f\rangle| \lesssim |h|_{\epsilon,\gamma/2}|f|_{\epsilon,\gamma/2}$ holds.
\end{col}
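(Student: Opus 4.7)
The plan is to obtain the stated estimate as an immediate specialization of Theorem \ref{Gamma-full-up-bound}. Recalling the definition $\mathcal{L}^{\epsilon,\gamma}_{1}h = -\Gamma^{\epsilon,\gamma}(\mu^{\frac{1}{2}},h)$ from \eqref{definition-L1-L2}, we have the identity
\begin{equation*}
\langle \mathcal{L}^{\epsilon,\gamma}_{1}h, f\rangle = -\langle \Gamma^{\epsilon,\gamma}(\mu^{\frac{1}{2}}, h), f\rangle,
\end{equation*}
so the task reduces to bounding the right-hand side. I would apply Theorem \ref{Gamma-full-up-bound} directly with the choice $g = \mu^{\frac{1}{2}}$, which yields
\begin{equation*}
|\langle \Gamma^{\epsilon,\gamma}(\mu^{\frac{1}{2}}, h), f\rangle| \lesssim |\mu^{\frac{1}{2}}|_{L^{2}} \, |h|_{\epsilon,\gamma/2}\, |f|_{\epsilon,\gamma/2}.
\end{equation*}

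The only thing to check is that $|\mu^{\frac{1}{2}}|_{L^{2}}$ is a finite absolute constant independent of $\epsilon$, which is obvious since $\mu(v) = (2\pi)^{-3/2} e^{-|v|^{2}/2}$ gives $|\mu^{\frac{1}{2}}|_{L^{2}}^{2} = \int \mu \, dv = 1$. Absorbing this constant into the implicit $\lesssim$ symbol yields the claimed bound, and there is no real obstacle in the argument—the corollary is a clean specialization of the main nonlinear upper-bound theorem of the section, included for use in the later weighted and commutator estimates where the $|\mathcal{L}^{\epsilon}_{1}|$ part of $\mathcal{L}^{\epsilon}$ must be controlled in the norm $|\cdot|_{\epsilon,\gamma/2}$.
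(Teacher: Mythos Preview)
Your proposal is correct and is exactly the paper's approach: the corollary is obtained by taking $g=\mu^{\frac{1}{2}}$ in Theorem~\ref{Gamma-full-up-bound} and using the definition $\mathcal{L}^{\epsilon,\gamma}_{1}h = -\Gamma^{\epsilon,\gamma}(\mu^{\frac{1}{2}},h)$.
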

Recalling $\mathcal{L}^{\epsilon,\gamma}_{2}h =-\Gamma^{\epsilon,\gamma}(h,\mu^{\frac{1}{2}})$ in \eqref{definition-L1-L2}, we have the following lemma.
\begin{lem} \label{l2-full-estimate-geq-eta} The estimate  $|\langle \mathcal{L}^{\epsilon,\gamma}_{2}h , f\rangle| \lesssim |\mu^{\f18}h|_{L^{2}}|\mu^{\f18}f|_{L^{2}}$ holds.
\end{lem}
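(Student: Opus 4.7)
The plan is to combine a pre/post-collisional symmetrization with the cancellation lemma (Lemma~\ref{cancellation-lemma-general-gamma}) to reduce the trilinear form to an explicitly integrable bilinear expression in $(v,v_*)$, controlled by Gaussian-weighted $L^2$ norms. The crucial point is that the $\mu^{1/2}$ factor in the second argument of $\Gamma^{\epsilon,\gamma}$ provides Gaussian decay in both $v$ and $v'$, which together with $\mu_*^{1/2}$ lets every quantity be bounded by $\mu^{1/8}$-weighted norms uniformly in $\epsilon$.

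First I would apply the unit-Jacobian involution $(v,v_*,\sigma)\mapsto(v',v'_*,\sigma')$ with $\sigma'=(v-v_*)/|v-v_*|$ (which preserves $B^\epsilon$ since $|v-v_*|=|v'-v'_*|$ and $\cos\theta=\cos\theta'$) to the gain part of $\langle \mathcal{L}_2^{\epsilon,\gamma}h,f\rangle$, obtaining
$$
\langle \mathcal{L}_2^{\epsilon,\gamma}h,f\rangle = \int B^\epsilon\, h_*\mu^{1/2}\bigl[\mu_*^{1/2}f - (\mu'_*)^{1/2}f'\bigr]\, d\sigma dv_*dv.
$$
Via the algebraic identity
$$
\mu^{1/2}\bigl[\mu_*^{1/2}f - (\mu'_*)^{1/2}f'\bigr] = \mu_*^{1/2}\bigl[\mu^{1/2}f - (\mu^{1/2}f)'\bigr] + f'\bigl\{\mu_*^{1/2}(\mu^{1/2})' - \mu^{1/2}(\mu'_*)^{1/2}\bigr\},
$$
I would then split $\langle\mathcal{L}_2^{\epsilon,\gamma}h,f\rangle = S + R$, where $S$ carries a clean $v$-increment of $\mu^{1/2}f$ and $R$ contains the weight-difference remainder. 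For $S$, applying Lemma~\ref{cancellation-lemma-general-gamma} with $g_*(v_*)=h_*\mu_*^{1/2}$ and $H(v)=\mu^{1/2}(v)f(v)$ yields
$$
S = -C(\epsilon)\int|v-v_*|^\gamma h_*\mu_*^{1/2}\mu^{1/2}f\, dv_*dv,\qquad |C(\epsilon)|\lesssim 1,
$$
and Cauchy--Schwarz combined with $\int|v-v_*|^\gamma\mu^{1/2}(v)\,dv\lesssim \langle v_*\rangle^\gamma\lesssim 1$ for $-3<\gamma\leq 0$ then gives $|S|\lesssim|\mu^{1/4}h|_{L^2}|\mu^{1/4}f|_{L^2}\leq|\mu^{1/8}h|_{L^2}|\mu^{1/8}f|_{L^2}$, using $\mu\leq 1$ for the last inequality.

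The remainder $R$ requires extra care. Using $a^{1/2}-b^{1/2}=(a-b)/(a^{1/2}+b^{1/2})$ with $a=\mu_*\mu'$ and $b=\mu\mu'_*$, together with the energy-conservation identity $|v_*|^2+|v'|^2-|v|^2-|v'_*|^2 = 2\sin(\theta/2)|v-v_*|(v+v_*)\cdot\hat w$ for some unit vector $\hat w$, one derives the pointwise bound
$$
\bigl|\mu_*^{1/2}(\mu^{1/2})' - \mu^{1/2}(\mu'_*)^{1/2}\bigr|\lesssim \sin(\theta/2)|v-v_*|(1+|v|+|v_*|)\mu^{1/4}\mu_*^{1/4}.
$$
The main obstacle is that a direct Cauchy--Schwarz on $R$ would produce a spurious $\epsilon^{-2}$ from $\int b^\epsilon d\sigma$, so the squared cancellation $\sin^2(\theta/2)$ must match the uniform bound $\int b^\epsilon\sin^2(\theta/2)\,d\sigma = 4\pi$ from~\eqref{order-2}; I would achieve this by performing the change of variable $v\to v'$ (with bounded Jacobian $4/\cos^2(\theta/2)\leq 8$ under $\sin(\theta/2)\leq\epsilon\leq \epsilon_0$) to replace $f'$ by $f$ before the Cauchy--Schwarz, which closes the estimate and yields $|R|\lesssim|\mu^{1/8}h|_{L^2}|\mu^{1/8}f|_{L^2}$.
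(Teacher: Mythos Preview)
Your decomposition $\langle\mathcal{L}_2^{\epsilon,\gamma}h,f\rangle=S+R$ and your treatment of $S$ via Lemma~\ref{cancellation-lemma-general-gamma} are correct and match the spirit of the paper's reference (Lemma~2.15 in \cite{alexandre2012boltzmann} together with Corollary~\ref{cancellation-to-Sob-norm}, which the paper cites without spelling out the details).

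The gap is in your treatment of $R$. Your pointwise bound on the weight difference is correct (indeed one can show $\mu_*^{1/2}(\mu^{1/2})'-\mu^{1/2}(\mu'_*)^{1/2}=2(\mu\mu_*)^{1/2}\sinh\bigl(-\tfrac14(v+v_*)\cdot(v'-v)\bigr)$ and use $|A-B|\le \tfrac12\sin(\theta/2)(|v|^2+|v_*|^2)$ to absorb the $\sinh$ growth into $(\mu\mu_*)^{1/4}$). But this gives only \emph{one} power of $\sin(\theta/2)$, and the change $v\to v'$ merely relocates $f'$ to $f$ --- it does not manufacture a second power. If you then Cauchy--Schwarz in $(v',v_*,\sigma)$, either the $f$-factor carries $\int b^\epsilon\,d\sigma\sim\epsilon^{-2}$, or each factor carries $\int b^\epsilon\sin(\theta/2)\,d\sigma$, which diverges for $s\ge\tfrac12$ (the regime of this section). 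So the estimate does not close as written.

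The fix is not to Cauchy--Schwarz in $\sigma$ at all. After $v\to v'$, the functions $h_*$ and $f$ are $\sigma$-independent, so one first evaluates $K(v',v_*):=\int b^\epsilon \tilde D\,J\,d\sigma$. In these coordinates $v-v'=|v'-v_*|\tan(\theta/2)\,\hat n$ with $\hat n\perp(v'-v_*)$, and the leading (first-order in $\tan(\theta/2)$) contribution to $\tilde D$ is proportional to $(v'+v_*)\cdot\hat n$, which integrates to zero over the azimuth $\phi$. This is the same symmetry mechanism used in Step~3 of Proposition~\ref{upforI-ep-ga-et} (the $\mathcal{I}_3$ estimate) via~\eqref{symmetry-of-sigma-integral}. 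What survives is second-order in $\sin(\theta/2)$ and pairs with $\int b^\epsilon\sin^2(\theta/2)\,d\sigma=4\pi$ from~\eqref{order-2}; the Gaussian factors then give $|K|\lesssim |v'-v_*|^2\mu^{1/8}(v')\mu_*^{1/8}$ (up to polynomial weights absorbed by the remaining $\mu^{1/8}$), and a Cauchy--Schwarz in $(v',v_*)$ only finishes the bound.
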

For brevity, we omit the proof of Lemma \ref{l2-full-estimate-geq-eta}. In fact, with  Corollary \ref{cancellation-to-Sob-norm},
one can refer to Lemma 2.15 in \cite{alexandre2012boltzmann} for proving Lemma \ref{l2-full-estimate-geq-eta}.

Noting $\mathcal{L}^{\epsilon,\gamma}h = \mathcal{L}^{\epsilon,\gamma}_{1}h + \mathcal{L}^{\epsilon,\gamma}_{2}h$,
by Corollary \ref{upgammamuff1-full} and Lemma \ref{l2-full-estimate-geq-eta}, we have the following lemma.
\begin{lem} \label{l-full-estimate-geq-eta}  The estimate  $|\langle \mathcal{L}^{\epsilon,\gamma}h , f\rangle| \lesssim |h|_{\epsilon,\gamma/2}|f|_{\epsilon,\gamma/2}$ holds.
\end{lem}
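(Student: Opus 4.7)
The plan is to prove Lemma \ref{l-full-estimate-geq-eta} by directly combining the two building blocks established immediately before it: Corollary \ref{upgammamuff1-full} for $\mathcal{L}_{1}^{\epsilon,\gamma}$ and Lemma \ref{l2-full-estimate-geq-eta} for $\mathcal{L}_{2}^{\epsilon,\gamma}$, after using the linear decomposition $\mathcal{L}^{\epsilon,\gamma}=\mathcal{L}_{1}^{\epsilon,\gamma}+\mathcal{L}_{2}^{\epsilon,\gamma}$ recorded in \eqref{definition-L1-L2}.

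First I would write
\[
\langle \mathcal{L}^{\epsilon,\gamma}h,f\rangle=\langle \mathcal{L}_{1}^{\epsilon,\gamma}h,f\rangle+\langle \mathcal{L}_{2}^{\epsilon,\gamma}h,f\rangle,
\]
apply Corollary \ref{upgammamuff1-full} to the first piece to obtain $|\langle \mathcal{L}_{1}^{\epsilon,\gamma}h,f\rangle|\lesssim |h|_{\epsilon,\gamma/2}|f|_{\epsilon,\gamma/2}$, and apply Lemma \ref{l2-full-estimate-geq-eta} to the second piece to obtain $|\langle \mathcal{L}_{2}^{\epsilon,\gamma}h,f\rangle|\lesssim |\mu^{\frac{1}{8}}h|_{L^{2}}|\mu^{\frac{1}{8}}f|_{L^{2}}$. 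The only remaining task is to absorb the latter product into the desired anisotropic norms.

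To do this I would use the simple pointwise bound $\mu^{\frac{1}{8}}(v)\lesssim \langle v\rangle^{\gamma/2}$ (valid for any $\gamma \leq 0$ with a constant depending on $\gamma$, since $\mu^{1/8}$ decays faster than any polynomial), which gives $|\mu^{\frac{1}{8}}h|_{L^{2}}\lesssim |h|_{L^{2}_{\gamma/2}}$. Then, since $W^{\epsilon}\geq 1$ pointwise by \eqref{lower-bound-when-small-cf}--\eqref{lower-bound-when-large-cf}, the definition \eqref{norm-definition} yields $|h|_{L^{2}_{\gamma/2}}\leq |W^{\epsilon}W_{\gamma/2}h|_{L^{2}}\leq |h|_{\epsilon,\gamma/2}$, and likewise for $f$. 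Combining these two reductions gives $|\mu^{\frac{1}{8}}h|_{L^{2}}|\mu^{\frac{1}{8}}f|_{L^{2}}\lesssim |h|_{\epsilon,\gamma/2}|f|_{\epsilon,\gamma/2}$, which completes the argument after adding the two contributions.

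Since both ingredients are already in place, there is essentially no obstacle here; the only thing to be careful about is the domination $|\mu^{1/8}\varphi|_{L^{2}}\lesssim |\varphi|_{\epsilon,\gamma/2}$, which is harmless because $\mu^{1/8}$ beats any polynomial weight and $W^{\epsilon}\geq 1$, so that the bound is uniform in $\epsilon$.
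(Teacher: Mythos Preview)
Your proposal is correct and follows exactly the same approach as the paper: the paper simply writes ``Noting $\mathcal{L}^{\epsilon,\gamma}h = \mathcal{L}^{\epsilon,\gamma}_{1}h + \mathcal{L}^{\epsilon,\gamma}_{2}h$, by Corollary \ref{upgammamuff1-full} and Lemma \ref{l2-full-estimate-geq-eta}, we have the following lemma,'' and you have spelled out precisely this, together with the implicit absorption $|\mu^{1/8}\varphi|_{L^{2}}\lesssim |\varphi|_{L^{2}_{\gamma/2}}\leq |\varphi|_{\epsilon,\gamma/2}$ that the paper leaves to the reader.
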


\subsection{Weighted upper bound of the nonlinear term}
In this subsection, we give upper bound estimate of $\Gamma^{\epsilon,\gamma}$ with weight.

We will  consider both polynomial and exponential weights together.
For $l, q \geq 0$, let
\beno
W_{l,q}(v) := \langle v \rangle^{l} \exp(q \langle v \rangle).
\eeno
Since $\langle v + u \rangle \leq \langle v \rangle + \langle u \rangle$ and $\langle v + u \rangle \leq \langle v \rangle \langle u \rangle$, we have
\ben \label{factor-out}
W_{l,q}(v+u) \leq W_{l,q}(v)W_{l,q}(u).
\een
In addition, the following estimates hold:
\beno
\nabla W_{l,q} &=& l W_{l,q} \langle v \rangle^{-2} v + q W_{l,q} \langle v \rangle^{-1} v,
\\
\nabla^{2} W_{l,q} &=& l W_{l,q} \langle v \rangle^{-2} I_{3} + q W_{l,q} \langle v \rangle^{-1} I_{3}
- 2 l W_{l,q} \langle v \rangle^{-4} v \otimes v - q W_{l,q} \langle v \rangle^{-3} v \otimes v
\\&&+ l^{2} W_{l,q} \langle v \rangle^{-4} v \otimes v + q^{2} W_{l,q} \langle v \rangle^{-2} v \otimes v + 2 l q W_{l,q} \langle v \rangle^{-3} v \otimes v.
\eeno
Hence
\ben \label{derivative-order-1}
|\nabla W_{l,q}| &\lesssim& (l+q) W_{l,q},
\\ \label{derivative-order-2}
\nabla^{2} W_{l,q} &\lesssim& (l^{2}+q^{2}+l+q) W_{l,q}.
\een

We first estimate the commutator $[Q^{\epsilon}(\mu^{\frac{1}{2}}g, \cdot), W_{l,q}]$.
\begin{lem}\label{commutatorQepsilon} Let $l, q \geq 0$.  It holds
		 \beno |\langle Q^{\epsilon}(\mu^{\frac{1}{2}}g,W_{l,q}h)-W_{l,q}Q^{\epsilon}(\mu^{\frac{1}{2}}g,h), f\rangle| \lesssim |\mu^{\frac{1}{16}}g|_{L^{2}}|W_{l,q}h|_{\epsilon,\gamma/2}|f|_{\epsilon,\gamma/2}. \eeno
\end{lem}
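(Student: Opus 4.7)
The plan is to extract the commutator explicitly. Since $Q^{\epsilon}(\mu^{1/2}g,\cdot)$ is linear in its second argument and $W_{l,q}(v)$ cancels its own self-contribution in the loss term, a direct computation yields
\begin{equation*}
Q^{\epsilon}(\mu^{1/2}g, W_{l,q}h)(v) - W_{l,q}(v)Q^{\epsilon}(\mu^{1/2}g, h)(v) = \int B^{\epsilon}(\mu^{1/2}g)'_{*}[W_{l,q}(v')-W_{l,q}(v)]h(v')d\sigma dv_{*},
\end{equation*}
so the quantity to bound is
\begin{equation*}
D := \int B^{\epsilon}(\mu^{1/2}g)'_{*}[W_{l,q}(v')-W_{l,q}(v)]h(v')f(v) d\sigma dv dv_{*}.
\end{equation*}
The key structural observation is that $D$ parallels $\langle I^{\epsilon,\gamma}(g,h),f\rangle$ from Theorem \ref{upforI-total}: the weight difference $W_{l,q}(v')-W_{l,q}(v)$ involves the same shift magnitude $|v'-v|=|v'_*-v_*|=|v-v_*|\sin(\theta/2)$ as the Maxwellian difference $(\mu^{1/2})'_*-\mu^{1/2}_*$ appearing in $I^{\epsilon,\gamma}$, while $(\mu^{1/2}g)'_{*}$ replaces $g'_{*}$ with a bonus Gaussian factor $\mu^{1/2}(v'_*)$ that is precisely what is needed to control the growth of $W_{l,q}$.

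I will split $B^\epsilon = B^{\epsilon,\gamma,>}+B^{\epsilon,\gamma,<}$ and accordingly $D=D^{>}+D^{<}$, treating each piece by the template of its $I^{\epsilon,\gamma}$ counterpart: Proposition \ref{upforI-ep-ga-et} for $D^{>}$, and Proposition \ref{I-less-eta-upper-bound} together with Lemma \ref{I-less-1-some-preparation} for $D^{<}$. Concretely, I apply the second-order Taylor expansion
\begin{equation*}
W_{l,q}(v')-W_{l,q}(v) = \nabla W_{l,q}(v)\cdot(v'-v)+\int_0^1(1-\kappa)\nabla^2 W_{l,q}(v(\kappa)):(v'-v)^{\otimes 2}d\kappa
\end{equation*}
with $v(\kappa)=v+\kappa(v'-v)$. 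The second-order remainder carries a free $\sin^2(\theta/2)|v-v_*|^2$ gain, while the first-order term is handled via the splitting $(\mu^{1/2}g)'_{*}h(v') = (\mu^{1/2}g)_{*}h(v) + (\mu^{1/2}g)_{*}[h(v')-h(v)] + [(\mu^{1/2}g)'_{*}-(\mu^{1/2}g)_{*}]h(v')$: the leading piece benefits from the $\sigma$-cancellation identity \eqref{symmetry-of-sigma-integral}, which converts $\int b^\epsilon(v'-v)d\sigma$ into $(v_*-v)\int b^\epsilon\sin^2(\theta/2)d\sigma$, while the two correction pieces carry an extra $|v'-v|$ factor, both restoring the needed $\sin^2(\theta/2)$ gain. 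Combined with the derivative bounds $|\nabla W_{l,q}|,|\nabla^2 W_{l,q}|\lesssim W_{l,q}$ from \eqref{derivative-order-1}-\eqref{derivative-order-2}, these reductions parallel the decompositions \eqref{I-into-3-parts} and \eqref{decomposition-1}-\eqref{decomposition-2}, and deliver bounds of the schematic type
\begin{equation*}
\int B^\epsilon \sin^2(\theta/2)|v-v_*|^2 W_{l,q}(v(\kappa))(\mu^{1/2}|g|)'_{*}|h(v')||f(v)|d\sigma dv dv_* d\kappa.
\end{equation*}

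The final step is a weight transfer: the sub-multiplicativity \eqref{factor-out} gives $W_{l,q}(v(\kappa))\leq W_{l,q}(v')W_{l,q}(\epsilon|v-v_*|)$, and absorbing $W_{l,q}(v')$ into $h(v')$ produces $H:=W_{l,q}h$. The residual factor $W_{l,q}(\epsilon|v-v_*|)=\langle\epsilon|v-v_*|\rangle^l \exp(q\epsilon|v-v_*|)$ is then absorbed by splitting $\mu^{1/2}(v'_*)=\mu^{1/16}(v'_*)\cdot\mu^{7/16}(v'_*)$: completing the square on $\mu^{7/16}(v'_*)$, together with the elementary bound $|v'_*|\geq|v_*|-\epsilon|v-v_*|$, yields the uniform estimate $W_{l,q}(\epsilon|v-v_*|)\mu^{7/16}(v'_*)\lesssim 1$ for $\epsilon\in(0,1]$, leaving $\mu^{1/16}(v'_*)\lesssim\mu^{1/16}(v_*)$ (up to a universal constant) on the $g$-factor. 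The resulting integral then falls squarely into the scope of Theorem \ref{upforI-total} applied to $(\mu^{1/16}g, H, f)$, yielding the claimed bound $|D|\lesssim|\mu^{1/16}g|_{L^2}|W_{l,q}h|_{\epsilon,\gamma/2}|f|_{\epsilon,\gamma/2}$.

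The main technical obstacle is precisely this uniform-in-$\epsilon$ weight-absorption step. It is clean when $|v_*|$ dominates $|v|$, since then $|v'_*|\sim|v_*|\sim|v-v_*|$ and the Gaussian beats any polynomial-exponential weight of $\epsilon|v-v_*|$. The delicate regime is $|v_*|\ll|v|\sim 1/\epsilon$, where $v'_*$ may be small so that $\mu^{1/2}(v'_*)$ provides no useful decay; there the growth $W_{l,q}(\epsilon|v-v_*|)$ must instead be traded against the $W^\epsilon(v)$-gain at $|v|\geq 1/\epsilon$ supplied by the $|f|_{\epsilon,\gamma/2}$-norm, cf.~\eqref{high-frequency-lb-cf}, and the argument requires an auxiliary splitting of the velocity domain to marry the two gain mechanisms.
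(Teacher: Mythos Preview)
Your commutator formula is correct, but you have written it in the form that makes the weight transfer hardest. Apply the pre-post collisional change of variables $(v,v_*)\leftrightarrow(v',v'_*)$ at the outset; your $D$ becomes exactly
\[
\int B^{\epsilon}(W_{l,q}-W'_{l,q})\,\mu^{1/2}_{*}g_{*}\,h\,f'\,d\sigma\,dv\,dv_*,
\]
which is the representation the paper works with. Now the Gaussian sits at the \emph{fixed} point $v_*$, and the bound $|W_{l,q}-W'_{l,q}|\lesssim W_{l,q}(v)W_{l,q}(v_*)\min\{|v-v_*|\sin(\theta/2),1\}$ lets $W_{l,q}(v_*)$ be absorbed by $\mu^{1/2}_*$ for free, while $W_{l,q}(v)$ lands directly on $h=h(v)$. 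The paper then splits $f'=(f'-f)+f$: the first piece is Cauchy--Schwarzed so that one factor is $\mathcal{N}^{\epsilon,\gamma}(\mu^{1/4},f)\lesssim|f|^2_{\epsilon,\gamma/2}$ and the other contains $(W_{l,q}-W'_{l,q})^2\mu^{1/2}_*g_*^2h^2$; the second piece is handled by integrating in $\sigma$ first and using the Taylor/symmetry argument you cite. No velocity-domain splitting or $I^{\epsilon,\gamma}$-machinery is needed.

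Your weight-absorption step, as written, does not hold. The claim $W_{l,q}(\epsilon|v-v_*|)\mu^{7/16}(v'_*)\lesssim 1$ fails: take $v_*=0$, $|v|=N\gg 1/\epsilon$, and $\sin(\theta/2)=1/N\,(\leq\epsilon)$. Then $|v'_*|=|v-v_*|\sin(\theta/2)=1$, so $\mu^{7/16}(v'_*)\sim 1$, while $W_{l,q}(\epsilon|v-v_*|)=W_{l,q}(\epsilon N)\sim e^{q\epsilon N}\to\infty$. The follow-up claim $\mu^{1/16}(v'_*)\lesssim\mu^{1/16}(v_*)$ is also false in general. And your proposed remedy---trading the residual growth against the $W^\epsilon(v)$-gain from $|f|_{\epsilon,\gamma/2}$---cannot work when $q>0$, since $W^\epsilon(v)\lesssim\epsilon^{s-1}\langle v\rangle^s$ is only polynomial in $|v|$ and cannot beat the exponential $e^{q\epsilon|v|}$. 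The source of all this difficulty is that you kept the Gaussian at the moving point $v'_*$; once you put it at $v_*$ via the pre-post change, the problem evaporates.
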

\begin{proof}
Note that
\beno &&\langle Q^{\epsilon}(\mu^{\frac{1}{2}}g,W_{l,q}h)-W_{l,q}Q^{\epsilon}(\mu^{\frac{1}{2}}g,h), f\rangle  = \int B^{\epsilon}(W_{l,q}-W^{\prime}_{l,q})\mu_{*}^{\frac{1}{2}}g_{*} h f^{\prime} d\sigma dv_{*} dv
\\&=& \int B^{\epsilon}(W_{l,q}-W^{\prime}_{l,q})\mu_{*}^{\frac{1}{2}}g_{*} h (f^{\prime}-f) d\sigma dv_{*} dv
 + \int B^{\epsilon}(W_{l,q}-W^{\prime}_{l,q})\mu_{*}^{\frac{1}{2}}g_{*} h f d\sigma dv_{*} dv
:=\mathcal{A}_{1} + \mathcal{A}_{2}. \eeno

{\it Step 1: Estimate of $\mathcal{A}_{1}$.}
By Cauchy-Schwartz inequality, we have
\beno |\mathcal{A}_{1}| \leq \{\int B^{\epsilon} \mu_{*}^{\frac{1}{2}}(f^{\prime}-f)^{2} d\sigma dv_{*} dv\}^{\frac{1}{2}}
\{\int B^{\epsilon}(W_{l,q}-W^{\prime}_{l,q})^{2}\mu_{*}^{\frac{1}{2}}g^{2}_{*} h^{2}  d\sigma dv_{*} dv\}^{\frac{1}{2}}
:=(\mathcal{A}_{1,1})^{\frac{1}{2}}(\mathcal{A}_{1,2})^{\frac{1}{2}}. \eeno
Note that $\mathcal{A}_{1,1} = \mathcal{N}^{\epsilon,\gamma}(\mu^{\frac{1}{4}},f) \lesssim |f|^{2}_{\epsilon,\gamma/2}$ by Corollary \ref{functional-N-epsilon-gamma}.
It remains to estimate $\mathcal{A}_{1,2}$.
By Taylor expansion,
\beno
W^{\prime}_{l,q} - W_{l,q} = \int_{0}^{1} \nabla W_{l,q}(v(\kappa)) \cdot (v^{\prime} - v) d \kappa.
\eeno
Since $|v(\kappa)| \leq |v| + |v_{*}|$, together with \eqref{derivative-order-1} and \eqref{factor-out}, we have $
|\nabla W_{l,q}(v(\kappa))| \lesssim W_{l,q}(v)W_{l,q}(v_{*}) $ and thus
\beno % \label{difference-up-bound}
|W^{\prime}_{l,q} - W_{l,q}| \lesssim W_{l,q}(v)W_{l,q}(v_{*}) |v-v_{*}|\sin \frac{\theta}{2}.
\eeno
By \eqref{factor-out} and  $|v^{\prime}| \leq |v|+|v_{*}|$ , we also have $
|W_{l,q}-W^{\prime}_{l,q}| \lesssim W_{l,q}(v)W_{l,q}(v_{*})$. Combining the above two estimates
gives
\beno
|W^{\prime}_{l,q} - W_{l,q}|^{2} \lesssim W_{l,q}^{2}(v)W_{l,q}^{2}(v_{*}) \min\{|v-v_{*}|^{2} \sin^{2}\frac{\theta}{2},1\}.
\eeno
By this and Proposition \ref{symbol}, we obtain
\ben \nonumber
\int B^{\epsilon}(W_{l,q}-W^{\prime}_{l,q})^{2}\mu_{*}^{\frac{1}{2}} d\sigma
&\lesssim& \mathrm{1}_{|v-v_{*}| \geq 1} \langle v - v_{*}\rangle^{\gamma} W_{l,q}^{2}(v)W_{l,q}^{2}(v_{*}) (W^{\epsilon})^{2}(v-v_{*}) \mu_{*}^{\frac{1}{2}}
\\ \nonumber &&+
\mathrm{1}_{|v-v_{*}| \leq 1} |v - v_{*}|^{\gamma+2} W_{l,q}^{2}(v)W_{l,q}^{2}(v_{*}) \mu_{*}^{\frac{1}{2}}
\\ \label{sigma-integral-Wlq-square}
&\lesssim&  \mathrm{1}_{|v-v_{*}| \geq 1} \langle v\rangle^{\gamma} W_{l,q}^{2}(v) (W^{\epsilon})^{2}(v) \mu_{*}^{\frac{1}{8}} + \mathrm{1}_{|v-v_{*}| \leq 1} |v - v_{*}|^{-1} \mu^{\frac{1}{8}}\mu_{*}^{\frac{1}{8}}.
\een
Here, when $|v-v_{*}| \geq 1$, we  use $\langle v - v_{*}\rangle^{\gamma} \lesssim \langle v \rangle^{\gamma} \langle v_{*} \rangle^{|\gamma|}$ and $W^{\epsilon}(v-v_{*}) \lesssim W^{\epsilon}(v)W^{\epsilon}(v_{*})$ by \eqref{separate-into-2-cf}. When $|v-v_{*}| \leq 1$, we use Lemma \ref{mu-weight-transfer} to get $\mu_{*}^{\frac{1}{2}} \lesssim \mu^{\frac{1}{6}}\mu_{*}^{\frac{1}{6}}$. Then in both cases, the additional weights can be absorbed by the exponential decay in $\mu$. Plugging \eqref{sigma-integral-Wlq-square} into  $\mathcal{A}_{1,2}$ gives
\ben \nonumber
\mathcal{A}_{1,2} \lesssim \int \langle v\rangle^{\gamma} W_{l,q}^{2}(v) (W^{\epsilon})^{2}(v) \mu_{*}^{\frac{1}{8}}  g^{2}_{*} h^{2}  dv_{*} dv + \int  |v - v_{*}|^{-1} \mu^{\frac{1}{8}}\mu_{*}^{\frac{1}{8}} g^{2}_{*} h^{2}  dv_{*} dv
\\ \label{A-1-2-two-parts} \lesssim |\mu^{\frac{1}{16}}g|^{2}_{L^{2}} |W_{\gamma/2}W^{\epsilon} W_{l,q} h|^{2}_{L^{2}} + |\mu^{\frac{1}{16}}g|^{2}_{L^{2}} |\mu^{\frac{1}{16}}h|^{2}_{H^{\frac{1}{2}}} \lesssim |\mu^{\frac{1}{16}}g|_{L^{2}}^{2} |W_{l,q} h|_{\epsilon,\gamma/2}^{2},
\een
where we have used  Hardy inequality and $|\cdot|_{\epsilon, \gamma/2} \geq |\cdot|_{H^{1/2}_{\gamma/2}}$ because  $s \geq \frac{1}{2}$. Combining the estimates for $\mathcal{A}_{1,1}$ and $\mathcal{A}_{1,2}$,  we have
\ben \label{result-A1}
|\mathcal{A}_{1}| \lesssim |\mu^{\frac{1}{16}}g|_{L^{2}} |W_{l,q} h|_{\epsilon,\gamma/2} |f|_{\epsilon,\gamma/2}.
\een

{\it Step 2: Estimate of $\mathcal{A}_{2}$.} We want to show that
\ben \label{sigma-integral-Wlq-order-1}
|\int B^{\epsilon}(W^{\prime}_{l,q}-W_{l,q})\mu_{*}^{\frac{1}{2}} d\sigma|
\lesssim  \mathrm{1}_{|v-v_{*}| \geq 1} \langle v\rangle^{\gamma}  (W^{\epsilon})^{2}(v) W_{l,q}(v) \mu_{*}^{\frac{1}{8}} + \mathrm{1}_{|v-v_{*}| \leq 1} |v - v_{*}|^{\gamma+1} \mu^{\frac{1}{8}}\mu_{*}^{\frac{1}{8}}.
\een
By Taylor expansion, one has
\ben \label{order-2-Taylor-to-Wlq}
W^{\prime}_{l,q} - W_{l,q} = (\nabla W_{l,q})(v)\cdot(v^{\prime}-v) +\int_{0}^{1}(1-\kappa)(\nabla^{2}W_{l,q})(v(\kappa)):(v^{\prime}-v)\otimes(v^{\prime}-v)d\kappa. \een
We first consider the case $|v-v_{*}| \leq 1$.
By \eqref{order-2-Taylor-to-Wlq},  \eqref{symmetry-of-sigma-integral}, \eqref{derivative-order-1}, \eqref{derivative-order-2} and \eqref{factor-out} and Lemma \ref{mu-weight-transfer} with $|v-v_{*}| \leq 1$,
we have
\ben \nonumber
&&|\int B^{\epsilon}(W^{\prime}_{l,q}-W_{l,q})\mu_{*}^{\frac{1}{2}} d\sigma|
\\ \nonumber &\lesssim&
|\int B^{\epsilon}(\nabla W_{l,q})(v)\cdot(v^{\prime}-v)\mu_{*}^{\frac{1}{2}} d\sigma| +
\int B^{\epsilon}|(\nabla^{2}W_{l,q})(v(\kappa))| |v^{\prime}-v|^{2}\mu_{*}^{\frac{1}{2}} d\kappa d\sigma
\\ \label{case-leq-1} &\lesssim& |v-v_{*}|^{\gamma+1} \mu_{*}^{\frac{1}{2}} W_{l,q}(v) W_{l,q}(v_{*}) \lesssim |v - v_{*}|^{\gamma+1} \mu^{\frac{1}{8}}\mu_{*}^{\frac{1}{8}}.
\een
We next consider the case $|v-v_{*}| \geq 1$. %Using the expansion \eqref{order-2-Taylor-to-Wlq},
Similar to \eqref{case-leq-1}, since
%$|v-v_{*}| \geq 1$ and thus
 $|v-v_{*}| \sim \langle v-v_{*}\rangle$, we have
\ben \label{case-geq-1-general}
|\int B^{\epsilon}(W^{\prime}_{l,q}-W_{l,q})\mu_{*}^{\frac{1}{2}} d\sigma|
\lesssim \langle v-v_{*}\rangle^{\gamma+2} \mu_{*}^{\frac{1}{2}} W_{l,q}(v) W_{l,q}(v_{*}).
\een
If  $|v| \leq \epsilon^{-1}$, then $W^{\epsilon}(v) \gtrsim \langle v \rangle$. By \eqref{case-geq-1-general},
we have directly
\ben \label{case-geq-1-general-sub1}
|\int B^{\epsilon}(W^{\prime}_{l,q}-W_{l,q})\mu_{*}^{\frac{1}{2}} d\sigma|
\lesssim \langle v \rangle^{\gamma+2} \mu_{*}^{\frac{1}{8}} W_{l,q}(v) \lesssim \langle v \rangle^{\gamma} (W^{\epsilon})^{2}(v) W_{l,q}(v) \mu_{*}^{\frac{1}{8}}.
\een
If  $|v| > \epsilon^{-1}, |v-v_{*}| \leq \epsilon^{-1}$, then $W^{\epsilon}(v) \gtrsim \epsilon^{s-1}\langle v \rangle^{s}$. By \eqref{case-geq-1-general}, we  have
\ben \label{case-geq-1-general-sub2}
|\int B^{\epsilon}(W^{\prime}_{l,q}-W_{l,q})\mu_{*}^{\frac{1}{2}} d\sigma|
\lesssim \epsilon^{2s-2} \langle v -v_{*} \rangle^{\gamma+2s} \mu_{*}^{\frac{1}{4}} W_{l,q}(v) \lesssim \langle v \rangle^{\gamma} (W^{\epsilon})^{2}(v) W_{l,q}(v) \mu_{*}^{\frac{1}{8}}.
\een
It remains to consider the last case  $|v| > \epsilon^{-1}, |v-v_{*}| \geq \epsilon^{-1}$. We divide the angle $\theta$ into two parts:
\beno
\int B^{\epsilon}(W^{\prime}_{l,q}-W_{l,q})\mu_{*}^{\frac{1}{2}} d\sigma = \mathcal{B}_{1} + \mathcal{B}_{2},
\\
\mathcal{B}_{1} := \int B^{\epsilon}\mathrm{1}_{\sin\frac{\theta}{2} \leq |v-v_{*}|^{-1}}(W^{\prime}_{l,q}-W_{l,q})\mu_{*}^{\frac{1}{2}} d\sigma,
\\ \mathcal{B}_{2} := \int B^{\epsilon} \mathrm{1}_{\sin\frac{\theta}{2} \geq |v-v_{*}|^{-1}} (W^{\prime}_{l,q}-W_{l,q})\mu_{*}^{\frac{1}{2}} d\sigma.
\eeno
For $\mathcal{B}_{1}$,
by using the expansion \eqref{order-2-Taylor-to-Wlq}, similar to \eqref{case-leq-1},  since $\int B^{\epsilon} \mathrm{1}_{\sin\frac{\theta}{2} \leq |v-v_{*}|^{-1}} \sin^{2}\frac{\theta}{2} d\sigma \lesssim \epsilon^{2s-2} |v -v_{*}|^{\gamma+2s-2}$,
we have
\beno
|\mathcal{B}_{1}| \lesssim \epsilon^{2s-2}  |v -v_{*}|^{\gamma+2s} \mu_{*}^{\frac{1}{2}} W_{l,q}(v) W_{l,q}(v_{*}).
\eeno
For $\mathcal{B}_{2}$, by  the fact that  $
|W_{l,q}-W^{\prime}_{l,q}| \lesssim W_{l,q}(v)W_{l,q}(v_{*})$, since $\int B^{\epsilon} \mathrm{1}_{\sin\frac{\theta}{2} \geq |v-v_{*}|^{-1}} d\sigma \lesssim \epsilon^{2s-2} |v -v_{*}|^{\gamma+2s}$,
we have
\beno
|\mathcal{B}_{2}| \lesssim \epsilon^{2s-2}  |v -v_{*}|^{\gamma+2s} \mu_{*}^{\frac{1}{2}} W_{l,q}(v) W_{l,q}(v_{*}).
\eeno
Similar to \eqref{case-geq-1-general-sub2},  the estimates  $\mathcal{B}_{1}$ and $\mathcal{B}_{2}$ give
\ben \label{case-geq-1-general-sub3}
|\int B^{\epsilon}(W^{\prime}_{l,q}-W_{l,q})\mu_{*}^{\frac{1}{2}} d\sigma|
\lesssim \epsilon^{2s-2}  |v -v_{*}|^{\gamma+2s} \mu_{*}^{\frac{1}{2}} W_{l,q}(v) W_{l,q}(v_{*}) \lesssim \langle v \rangle^{\gamma} (W^{\epsilon})^{2}(v) W_{l,q}(v) \mu_{*}^{\frac{1}{8}}.
\een
By combining \eqref{case-leq-1},  \eqref{case-geq-1-general-sub1}, \eqref{case-geq-1-general-sub2} and \eqref{case-geq-1-general-sub3}, we have \eqref{sigma-integral-Wlq-order-1}. Then \eqref{sigma-integral-Wlq-order-1} implies
\beno
|\mathcal{A}_{2}| \lesssim \int  \langle v\rangle^{\gamma}  (W^{\epsilon})^{2}(v) W_{l,q}(v) \mu_{*}^{\frac{1}{8}} |g_{*} h f| dv_{*} dv + \int \mathrm{1}_{|v - v_{*}| \leq 1} |v - v_{*}|^{\gamma+1} \mu^{\frac{1}{8}}\mu_{*}^{\frac{1}{8}} |g_{*} h f| dv_{*} dv.
\eeno
Obviously,
\beno
 &&\int  \langle v\rangle^{\gamma}  (W^{\epsilon})^{2}(v) W_{l,q}(v) \mu_{*}^{\frac{1}{8}} |g_{*} h f| dv_{*} dv
  \\&\lesssim& |\mu^{\frac{1}{8}} g|_{L^{1}} |W^{\epsilon} W_{l,q} h|_{L^{2}_{\gamma/2}} |W^{\epsilon} f|_{L^{2}_{\gamma/2}} \lesssim |\mu^{\frac{1}{16}}g|_{L^{2}} |W_{l,q} h|_{\epsilon,\gamma/2} |f|_{\epsilon,\gamma/2}.
\eeno
By Cauchy-Schwartz inequality, when $\gamma > -3$, similar to \eqref{A-1-2-two-parts}, we have
\beno
&& \int |v - v_{*}|^{\gamma+1} \mu^{\frac{1}{8}}\mu_{*}^{\frac{1}{8}} |g_{*} h f| dv_{*} dv
\\ &\lesssim&
(\int |v - v_{*}|^{-1} \mu^{\frac{1}{8}}\mu_{*}^{\frac{1}{8}} |g_{*} h|^{2} dv_{*} dv)^{\frac{1}{2}} (\int |v - v_{*}|^{\gamma} \mu^{\frac{1}{8}}\mu_{*}^{\frac{1}{8}}  f^{2} dv_{*} dv)^{\frac{1}{2}}
\lesssim |\mu^{\frac{1}{16}}g|_{L^{2}} |W_{l,q} h|_{\epsilon,\gamma/2} |f|_{\epsilon,\gamma/2}.
\eeno
Then the above two estimates give
\ben \label{result-A2}
|\mathcal{A}_{2}| \lesssim |\mu^{\frac{1}{16}}g|_{L^{2}} |W_{l,q} h|_{\epsilon,\gamma/2} |f|_{\epsilon,\gamma/2}.
\een
Finally,  \eqref{result-A1} and \eqref{result-A2} complete the proof.
\end{proof}
The next lemma gives estimate of the commutator $[I^{\epsilon,\gamma}(g, \cdot), W_{l,q}] $.
\begin{lem}\label{commutatorforI} Let $l, q \geq 0$. If $-2 \leq \gamma \leq 0$, it holds
		\beno
		|\langle [I^{\epsilon,\gamma}(g, \cdot), W_{l,q}]h, f \rangle
| \lesssim |\mu^{1/32}g|_{L^{2}} |W_{l,q} h|_{\epsilon,\gamma/2}|W^{\epsilon}f|_{L^{2}_{\gamma/2}} + |W_{l,q} g|_{L^{2}}|W_{l,q} h|_{L^{2}_{\gamma/2}}|W^{\epsilon}f|_{L^{2}_{\gamma/2}}.
		\eeno	
If $q =0$, it holds
\beno
		|\langle [I^{\epsilon,\gamma}(g, \cdot), W_{l,0}]h, f \rangle
| \lesssim |g|_{L^{2}} |W_{l,0} h|_{\epsilon,\gamma/2}|W^{\epsilon}f|_{L^{2}_{\gamma/2}}.
		\eeno
\end{lem}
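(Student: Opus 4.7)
The plan is to imitate the proofs of Propositions \ref{upforI-ep-ga-et} and \ref{I-less-eta-upper-bound} (upper bounds for $I^{\epsilon,\gamma,>}$ and $I^{\epsilon,\gamma,<}$), now carrying along the additional difference factor that comes from the commutator. First I would unfold
\[
\langle [I^{\epsilon,\gamma}(g,\cdot), W_{l,q}]h, f\rangle = \int B^{\epsilon} (\mu_*^{1/2}-(\mu^{1/2})'_*)\, g'_*\,(W'_{l,q}-W_{l,q})\, h'\, f\, d\sigma dv_* dv,
\]
and decompose $B^{\epsilon}=B^{\epsilon,\gamma,<}+B^{\epsilon,\gamma,>}$ so the singular and regular regions can be analysed separately. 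The input that was not present before is the extra difference $W'_{l,q}-W_{l,q}$, which by Taylor expansion and \eqref{derivative-order-1}--\eqref{derivative-order-2} combined with \eqref{factor-out} satisfies
\[
|W'_{l,q}-W_{l,q}|^{2}\lesssim (l+q)^{2} W_{l,q}^{2}(v)\,W_{l,q}^{2}(v_{*})\,\min\{1,|v-v_{*}|^{2}\sin^{2}(\theta/2)\}.
\]
An analogous min-bound, namely $|\mu_*^{1/2}-(\mu^{1/2})'_*|^{2}\lesssim (\mu_*^{1/8}+(\mu^{1/8})'_*)^{2}(\mu_*^{1/4}+(\mu^{1/4})'_*)^{2}\min\{1,|v-v_{*}|^{2}\sin^{2}(\theta/2)\}$, is already used in Step~1 of Proposition \ref{upforI-ep-ga-et}.

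For the regular piece I would apply Cauchy--Schwarz in $d\sigma dv_{*}dv$ to split the integrand into a factor built from $\mu_*^{1/2}-(\mu^{1/2})'_*$ multiplying $g'_{*}(h')^{2}$ and a factor built from $W'_{l,q}-W_{l,q}$ multiplying $g'_{*}f^{2}$. The first factor, after the change of variables $(v,v_{*},\sigma)\to(v',v'_{*},-\sigma)$, becomes exactly the type of integral that the argument of Step~1 of Proposition~\ref{upforI-ep-ga-et} handles, producing $|\mu^{1/16}g|_{L^{2}}|W^{\epsilon}W_{l,q}h|^{2}_{L^{2}_{\gamma/2}}$ and thus the $|W_{l,q}h|_{\epsilon,\gamma/2}$ contribution. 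For the second factor I would estimate $\int B^{\epsilon,\gamma,>}|W'_{l,q}-W_{l,q}|^{2}\mu_{*}^{1/2}\,d\sigma$ exactly as in \eqref{sigma-integral-Wlq-square}, distributing cases $|v-v_{*}|\lessgtr 1$, $|v|\lessgtr\epsilon^{-1}$, and obtaining bounds of the form $\langle v\rangle^{\gamma}(W^{\epsilon})^{2}(v)W_{l,q}^{2}(v)\mu_{*}^{1/8}$. Whenever $\mu_{*}^{1/8}$ dominates $W_{l,q}^{2}(v_{*})$, that is when the exponential decay defeats the exponential weight, the $g$-factor reduces to $|\mu^{1/32}g|_{L^{2}}$ (or simply $|g|_{L^{2}}$ in the $q=0$ case). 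Otherwise the exponential weight must be kept on $g$, producing the $|W_{l,q}g|_{L^{2}}$ term but then paying only a plain $L^{2}_{\gamma/2}$-norm on $h$ (since no further sub-$\sin(\theta/2)$ cancellation is available).

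For the singular piece $|v-v_{*}|\le 1$ I would recycle the $\mathcal{X}$-framework of Lemma~\ref{I-less-1-some-preparation}, setting $G=\mu^{1/16}g$, $H=\mu^{1/16}W_{l,q}h$, $F=\mu^{1/16}f$ and absorbing the remaining $W'_{l,q}-W_{l,q}$ by Lemma~\ref{mu-weight-transfer}, which in the range $|v-v_{*}|\le 1$ trades $\mu_{*}^{a}\mu^{b}\cdots$ for a single Gaussian and thereby neutralises the growth $W_{l,q}(v_{*})$ of the commutator weight against $W_{l,q}(v)$. Retracing the proofs of estimates \eqref{estimate-mathcal-X-1}--\eqref{estimate-mathcal-X-3} with this extra weight gives the same dyadic gain in $\epsilon$, hence the $|W^{\epsilon}(D)|$-norm part of $|\cdot|_{\epsilon,\gamma/2}$. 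The assumption $-2\le\gamma\le 0$ is used here through $\gamma+2\ge 0$, which allows the $|v-v_{*}|^{\gamma+2}$ coming from two-Taylor cancellations to be integrated against $|g_{*}|$ in $L^{2}$ without any extra regularity on $h$. The $q=0$ special case is cleaner: $W_{l,0}$ grows only polynomially, so $W_{l,0}(v_{*})$ can always be swallowed by any $\mu_{*}^{\alpha}$ factor, which collapses the two terms into the single $|g|_{L^{2}}|W_{l,0}h|_{\epsilon,\gamma/2}|W^{\epsilon}f|_{L^{2}_{\gamma/2}}$ estimate.

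The principal obstacle I expect is bookkeeping the \emph{two} small-angle cancellations simultaneously. Each of $\mu_{*}^{1/2}-(\mu^{1/2})'_{*}$ and $W'_{l,q}-W_{l,q}$ carries a $\sin(\theta/2)$, so naively one would spend both cancellations against $b^{\epsilon}\sim\sin^{-2-2s}(\theta/2)$ and be left with a divergent angular integral at the non-cutoff scale; the correct order of bounds must spend only one cancellation, keeping the other in reserve to control $|v-v_{*}|^{\gamma}$ in the singular region or to provide the $(W^{\epsilon})^{2}$ gain in the regular region via Proposition~\ref{symbol}. Additionally, in the $q>0$ case one must arrange the Cauchy--Schwarz so that the exponential growth $e^{q\langle v_{*}\rangle}$ coming from $W_{l,q}(v_{*})$ is dominated by $\mu_{*}^{c}$ whenever an anisotropic norm on $h$ is needed, and is otherwise shifted onto $g$ as $|W_{l,q}g|_{L^{2}}$; this dichotomy is precisely what produces the two-term structure of the first estimate.
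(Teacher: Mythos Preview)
Your Cauchy--Schwarz split has a genuine gap. You propose to send $(\mu^{1/2}_*-(\mu^{1/2})'_*)^2$ to a factor with $g'_*(h')^2$ and $(W'_{l,q}-W_{l,q})^2$ to a factor with $g'_*f^2$. But then the $h$-factor carries no $W_{l,q}$ at all, so it can only produce $|h|_{\epsilon,\gamma/2}$, not $|W_{l,q}h|_{\epsilon,\gamma/2}$; and the $f$-factor carries no $\mu_*$-decay, so the $W_{l,q}^{2}(v_*)$ coming from your Taylor bound on $(W'_{l,q}-W_{l,q})^2$ has nothing to absorb it. Your reference to \eqref{sigma-integral-Wlq-square} for the second factor presupposes an explicit $\mu_*^{1/2}$ that simply is not there once you have shipped the $\mu$-difference to the other side.

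The paper's route avoids this and also avoids the $</>$ decomposition entirely. After the change of variables it writes the commutator as $\int B^{\epsilon}((\mu^{1/2})'_*-\mu^{1/2}_*)(W_{l,q}-W'_{l,q})\,g_* h f'$ and uses the algebraic identity $(\mu^{1/2})'_*-\mu^{1/2}_*=((\mu^{1/4})'_*-\mu^{1/4}_*)^2+2\mu^{1/4}_*((\mu^{1/4})'_*-\mu^{1/4}_*)$ to produce $\mathcal A_1+2\mathcal A_2$. In both pieces Cauchy--Schwarz keeps a \emph{$\mu$-difference squared} on the $f'$-side (which by Remark~\ref{also-hold-for-a} gives $|W^\epsilon f|_{L^2_{\gamma/2}}$ cleanly) and puts the weight difference $(W_{l,q}-W'_{l,q})^2$ on the $g_*^2 h^2$-side. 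For $\mathcal A_2$ the explicit $\mu^{1/4}_*$ kills all $v_*$-growth and the argument reduces to the one behind \eqref{sigma-integral-Wlq-square}. For $\mathcal A_1$ one further splits the residual $((\mu^{1/4})'_*+\mu^{1/4}_*)$ and the dangerous piece is the one with $(\mu^{1/4})'_*$. Here the key input you are missing is the refined Taylor bound \eqref{weight-to-vstarprime},
\[
|W_{l,q}-W'_{l,q}|\ \lesssim\ |v'-v|\,\langle v\rangle^{-1}\,\langle v'_*\rangle^{2}\,W_{l,q}(v)\,W_{l,q}(v_*),
\]
which transfers the extraneous growth onto $v'_*$ (where $(\mu^{1/4})'_*$ lives) and, crucially, gains $\langle v\rangle^{-1}$. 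Together with $|v-v'_*|^{\gamma+2}(\mu^{1/4})'_*\langle v'_*\rangle^{2}\lesssim C$ (this is exactly where $\gamma\ge -2$ enters) one obtains the $|W_{l,q}g|_{L^2}|W_{l,q}h|_{L^2_{\gamma/2}}$ term. For $q=0$ your heuristic ``$W_{l,0}(v_*)$ is swallowed by $\mu_*^\alpha$'' fails in this piece because there is no $\mu_*^\alpha$, only $(\mu^{1/4})'_*$; instead the paper invokes a pointwise bound on $\int B^\epsilon (\mu^{1/4})'_*((\mu^{1/8})'_*-\mu^{1/8}_*)^2(W_{l,0}-W'_{l,0})^2 d\sigma\lesssim \langle v\rangle^{2l+\gamma}$ to recover $|g|_{L^2}$.
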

\begin{proof}
By recalling $I^{\epsilon,\gamma}$ in \eqref{I-ep-ga-sep-eta}, the structure \eqref{I-ep-ga-geq-eta}, and the identity
$(\mu^{\frac{1}{2}})_{*}^{\prime} - \mu^{\frac{1}{2}}_{*} =((\mu^{\frac{1}{4}})_{*}^{\prime} - \mu^{\frac{1}{4}}_{*})^{2}+2\mu^{\frac{1}{4}}_{*}((\mu^{\frac{1}{4}})_{*}^{\prime} - \mu^{\frac{1}{4}}_{*})$, we have
\beno
\langle [I^{\epsilon,\gamma}(g, \cdot), W_{l,q}]h, f \rangle &=& \int B^{\epsilon}((\mu^{\frac{1}{2}})_{*}^{\prime} - \mu^{\frac{1}{2}}_{*}) (W_{l,q}-W^{\prime}_{l,q})g_{*} h f^{\prime} d\sigma dv_{*} dv
\\&=&  \int B^{\epsilon}((\mu^{\frac{1}{4}})_{*}^{\prime} - \mu^{\frac{1}{4}}_{*})^{2}(W_{l,q}-W^{\prime}_{l,q})g_{*} h f^{\prime} d\sigma dv_{*} dv
\\&&
 + 2 \int B^{\epsilon}\mu^{\frac{1}{4}}_{*}((\mu^{\frac{1}{4}})_{*}^{\prime} - \mu^{\frac{1}{4}}_{*})(W_{l,q}-W^{\prime}_{l,q})g_{*} h f^{\prime} d\sigma dv_{*} dv
:= \mathcal{A}_{1} + 2\mathcal{A}_{2}.
\eeno

{\it Step 1: Estimate of $\mathcal{A}_{1}$.}
By Cauchy-Schwartz inequality, we have
\beno |\mathcal{A}_{1}| &\leq& \{\int B^{\epsilon} ((\mu^{\frac{1}{4}})^{\prime}_{*} - \mu^{\frac{1}{4}}_{*})^{2} f^{\prime 2} d\sigma dv_{*} dv\}^{\frac{1}{2}}
\\&&\times\{\int B^{\epsilon}((\mu^{\frac{1}{4}})^{\prime}_{*} - \mu^{\frac{1}{4}}_{*})^{2}(W_{l,q}-W^{\prime}_{l,q})^{2}g^{2}_{*} h^{2}  d\sigma dv_{*} dv\}^{\frac{1}{2}}
:=(\mathcal{A}_{1,1})^{\frac{1}{2}}(\mathcal{A}_{1,2})^{\frac{1}{2}}. \eeno
By the change of variables $(v,v_{*}) \rightarrow (v_{*}^{\prime},v^{\prime})$ and Remark \ref{also-hold-for-a}, we have
\ben \label{estimate-of-A11} \mathcal{A}_{1,1} = \int B^{\epsilon} ((\mu^{\frac{1}{4}})^{\prime} - \mu^{\frac{1}{4}})^{2} f^{2}_{*} d\sigma dv_{*} dv
\lesssim |W^{\epsilon}f|^{2}_{L^{2}_{\gamma/2}}. \een
Since  $((\mu^{\frac{1}{4}})^{\prime}_{*} - \mu^{\frac{1}{4}}_{*})^{2} = ((\mu^{\frac{1}{8}})_{*}^{\prime}  + \mu^{\frac{1}{8}}_{*})^{2}((\mu^{\frac{1}{8}})_{*}^{\prime}  - \mu^{\frac{1}{8}}_{*})^{2} \leq 2 ((\mu^{\frac{1}{4}})^{\prime}_{*} + \mu^{\frac{1}{4}}_{*})((\mu^{\frac{1}{8}})_{*}^{\prime}  - \mu^{\frac{1}{8}}_{*})^{2}$, we have
\beno \mathcal{A}_{1,2} &\lesssim& \int B^{\epsilon}\mu^{\frac{1}{4}}_{*}((\mu^{\frac{1}{8}})_{*}^{\prime}  - \mu^{\frac{1}{8}}_{*})^{2}(W_{l,q}-W^{\prime}_{l,q})^{2}g^{2}_{*} h^{2}  d\sigma dv_{*} dv \\&&+ \int B^{\epsilon}(\mu^{\frac{1}{4}})^{\prime}_{*}((\mu^{\frac{1}{8}})^{\prime}_{*} - \mu^{\frac{1}{8}}_{*})^{2}(W_{l,q}-W^{\prime}_{l,q})^{2}g^{2}_{*} h^{2}  d\sigma dv_{*} dv
:= \mathcal{A}_{1,2,1} + \mathcal{A}_{1,2,2}.\eeno
We first estimate $\mathcal{A}_{1,2,2}$.
Referring to \cite{duan2013stability}(more  precisely, eq.(2.10) on page 170), we get
\ben \label{weight-to-vstarprime}
|W_{l,q}-W^{\prime}_{l,q}| \lesssim |v^{\prime} -v| \langle v \rangle^{-1} \langle v_{*}^{\prime} \rangle^{2}
W_{l,q}(v) W_{l,q}(v_{*}).
\een
This together with the assumption that $\gamma \geq -2$ give
\beno
&& \int B^{\epsilon}(\mu^{\frac{1}{4}})^{\prime}_{*}((\mu^{\frac{1}{8}})^{\prime}_{*} - \mu^{\frac{1}{8}}_{*})^{2}(W_{l,q}-W^{\prime}_{l,q})^{2}  d\sigma
\\&\lesssim&  \langle v \rangle^{-2} W^{2}_{l,q}(v) W^{2}_{l,q}(v_{*}) \int |v-v_{*}^{\prime}|^{\gamma+2} (\mu^{\frac{1}{4}})^{\prime}_{*} \langle v_{*}^{\prime} \rangle^{2} b^{\epsilon}(\cos\theta) \sin^{2} \frac{\theta}{2}  d\sigma
\lesssim \langle v \rangle^{\gamma} W^{2}_{l,q}(v) W^{2}_{l,q}(v_{*})
\eeno
so that $ \mathcal{A}_{1,2,2} \lesssim |W_{l,q} g|^{2}_{L^{2}}|W_{l,q} h|^{2}_{L^{2}_{\gamma/2}}.$ If $q=0$, by the proof of (2.84) in \cite{he2018asymptotic}, we have
\beno
\int B^{\epsilon}(\mu^{\frac{1}{4}})^{\prime}_{*}((\mu^{\frac{1}{8}})^{\prime}_{*} - \mu^{\frac{1}{8}}_{*})^{2}(W_{l,0}-W^{\prime}_{l,0})^{2} d\sigma \lesssim \langle v \rangle^{2l+\gamma},
\eeno
which implies $ \mathcal{A}_{1,2,2} \lesssim |g|^{2}_{L^{2}}|W_{l,0} h|^{2}_{L^{2}_{\gamma/2}}.$

Similar to the estimate of $\mathcal{A}_{1,2}$ in
{\it{Step 1}} of  Lemma \ref{commutatorQepsilon}, we obtain
$\mathcal{A}_{1,2,1} \lesssim |\mu^{\frac{1}{16}}g|^{2}_{L^{2}} |W_{l,q} h|^{2}_{\epsilon,\gamma/2}$.

In summary,  the estimates of $\mathcal{A}_{1,2,1}$ and $\mathcal{A}_{1,2,2}$ imply that
 $\mathcal{A}_{1,2} \lesssim |W_{l,q} g|^{2}_{L^{2}}|W_{l,q} h|^{2}_{L^{2}_{\gamma/2}}$ when $q>0$, and
 $\mathcal{A}_{1,2} \lesssim |g|^{2}_{L^{2}}|W_{l,0} h|^{2}_{L^{2}_{\gamma/2}}$ when $q=0$.
%Patching together the estimates of $\mathcal{A}_{1,1}$ and $\mathcal{A}_{1,2}$,
Hence,  we conclude that when $q>0$,
\beno |\mathcal{A}_{1}| \lesssim |W_{l,q} g|_{L^{2}}|W_{l,q} h|_{L^{2}_{\gamma/2}}|W^{\epsilon}f|_{L^{2}_{\gamma/2}}.\eeno
When $q=0$,
\beno |\mathcal{A}_{1}| \lesssim |g|_{L^{2}}|W_{l,0} h|_{L^{2}_{\gamma/2}}|W^{\epsilon}f|_{L^{2}_{\gamma/2}}.\eeno

{\it Step 2: Estimate of $\mathcal{A}_{2}$.} By Cauchy-Schwartz inequality, we have
\beno |\mathcal{A}_{2}| &\leq& \{\int B^{\epsilon}((\mu^{\frac{1}{4}})^{\prime}_{*} - \mu^{\frac{1}{4}}_{*})^{2} f^{\prime 2} d\sigma dv_{*} dv\}^{\frac{1}{2}}
\\&&\times\{\int B^{\epsilon}\mu^{\frac{1}{2}}_{*}(W_{l,q}-W^{\prime}_{l,q})^{2} g^{2}_{*} h^{2}  d\sigma dv_{*} dv\}^{\frac{1}{2}}
:=(\mathcal{A}_{2,1})^{\frac{1}{2}}(\mathcal{A}_{2,2})^{\frac{1}{2}}. \eeno
Note that $\mathcal{A}_{2,1}=\mathcal{A}_{1,1}$, then by \eqref{estimate-of-A11}, we have  $\mathcal{A}_{2,1}  \lesssim |W^\epsilon f|_{L^2_{\gamma/2}}^2.$ Similar to the estimate of $\mathcal{A}_{1,2}$ in
{\it{Step 1}} of  Lemma \ref{commutatorQepsilon}, we get
\beno \mathcal{A}_{2,2} \lesssim |\mu^{\frac{1}{16}}g|^{2}_{L^{2}} |W_{l,q} h|^{2}_{\epsilon,\gamma/2}.\eeno
The estimates of $\mathcal{A}_{2,1}$ and $\mathcal{A}_{2,2}$ give
\beno |\mathcal{A}_{2}| \lesssim |\mu^{\frac{1}{16}}g|_{L^{2}} |W_{l,q} h|_{\epsilon,\gamma/2}|W^{\epsilon}f|_{L^{2}_{\gamma/2}}.\eeno
Then the proof of the lemma is completed  by combining the estimates of $\mathcal{A}_{1}$ and $\mathcal{A}_{2}$.
\end{proof}

Recalling \eqref{Gamma-ep-ga-into-IQ}, the  following lemma  is a direct consequence of Lemma \ref{commutatorQepsilon} and Lemma \ref{commutatorforI}.
\begin{lem}\label{commutatorgamma}
Let $l, q \geq 0$. If $-2 \leq \gamma \leq 0$, then
\beno |\langle \Gamma^{\epsilon}(g,W_{l,q}h)-W_{l,q} \Gamma^{\epsilon}(g,h), f\rangle| \lesssim |\mu^{1/32}g|_{L^{2}}|W_{l,q}h|_{\epsilon,\gamma/2}|f|_{\epsilon,\gamma/2}
+ |W_{l,q} g|_{L^{2}}|W_{l,q} h|_{L^{2}_{\gamma/2}}|W^{\epsilon}f|_{L^{2}_{\gamma/2}}.\eeno
If $q = 0$,  then
\beno |\langle \Gamma^{\epsilon}(g,W_{l,0}h)-W_{l,0} \Gamma^{\epsilon}(g,h), f\rangle| \lesssim |g|_{L^{2}}|W_{l,0}h|_{\epsilon,\gamma/2}|f|_{\epsilon,\gamma/2}.\eeno
\end{lem}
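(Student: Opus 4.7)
The plan is to exploit the splitting $\Gamma^{\epsilon}(g,h) = Q^{\epsilon}(\mu^{1/2}g,h) + I^{\epsilon}(g,h)$ from \eqref{Gamma-ep-ga-into-IQ} and reduce the commutator with $W_{l,q}$ to the two pieces estimated in the preceding lemmas. Writing
\[
\Gamma^{\epsilon}(g, W_{l,q}h) - W_{l,q}\Gamma^{\epsilon}(g,h) = \bigl(Q^{\epsilon}(\mu^{1/2}g, W_{l,q}h) - W_{l,q}Q^{\epsilon}(\mu^{1/2}g, h)\bigr) + \bigl(I^{\epsilon}(g, W_{l,q}h) - W_{l,q}I^{\epsilon}(g, h)\bigr),
\]
I will pair both sides with $f$, apply Lemma \ref{commutatorQepsilon} to the first bracket and Lemma \ref{commutatorforI} to the second, and finally absorb the two pieces into the claimed right-hand side via elementary norm comparisons.

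For the general case $l, q \geq 0$ with $-2 \leq \gamma \leq 0$, Lemma \ref{commutatorQepsilon} supplies a bound of the form $|\mu^{1/16}g|_{L^2}|W_{l,q}h|_{\epsilon,\gamma/2}|f|_{\epsilon,\gamma/2}$, while Lemma \ref{commutatorforI} contributes the two terms $|\mu^{1/32}g|_{L^2}|W_{l,q}h|_{\epsilon,\gamma/2}|W^{\epsilon}f|_{L^2_{\gamma/2}}$ and $|W_{l,q}g|_{L^2}|W_{l,q}h|_{L^2_{\gamma/2}}|W^{\epsilon}f|_{L^2_{\gamma/2}}$. Using the trivial pointwise domination $\mu^{1/16}\lesssim\mu^{1/32}$, together with the inequalities $|W^{\epsilon}f|_{L^2_{\gamma/2}} \leq |f|_{\epsilon,\gamma/2}$ and $|W_{l,q}h|_{L^2_{\gamma/2}} \leq |W_{l,q}h|_{\epsilon,\gamma/2}$ that follow directly from definition \eqref{norm-definition}, the $Q$-part and the first piece of the $I$-part are both absorbed into $|\mu^{1/32}g|_{L^{2}}|W_{l,q}h|_{\epsilon,\gamma/2}|f|_{\epsilon,\gamma/2}$, while the second piece matches $|W_{l,q}g|_{L^{2}}|W_{l,q}h|_{L^{2}_{\gamma/2}}|W^{\epsilon}f|_{L^{2}_{\gamma/2}}$ verbatim.

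For the special case $q=0$, the second half of Lemma \ref{commutatorforI} provides the sharper bound $|g|_{L^2}|W_{l,0}h|_{\epsilon,\gamma/2}|W^{\epsilon}f|_{L^2_{\gamma/2}}$, and the contribution from Lemma \ref{commutatorQepsilon} is $|\mu^{1/16}g|_{L^2}|W_{l,0}h|_{\epsilon,\gamma/2}|f|_{\epsilon,\gamma/2}$. Since $|\mu^{1/16}g|_{L^2} \lesssim |g|_{L^2}$ and $|W^{\epsilon}f|_{L^2_{\gamma/2}} \leq |f|_{\epsilon,\gamma/2}$, both contributions collapse into the single term $|g|_{L^{2}}|W_{l,0}h|_{\epsilon,\gamma/2}|f|_{\epsilon,\gamma/2}$ claimed in the statement.

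No genuine obstacle is expected in carrying out this plan: the estimate is essentially a bookkeeping corollary of Lemmas \ref{commutatorQepsilon} and \ref{commutatorforI}. The only minor point worth recording explicitly is that the hypothesis $\gamma \geq -2$ is inherited through Lemma \ref{commutatorforI} (needed there in the treatment of the term $\mathcal{A}_{1,2,2}$ via \eqref{weight-to-vstarprime}), while the $q=0$ improvement relies on the stronger polynomial-weight estimate available in that same lemma; both hypotheses are already built into the statement of Lemma \ref{commutatorgamma}.
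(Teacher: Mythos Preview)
Your proposal is correct and matches the paper's approach exactly: the paper states that Lemma~\ref{commutatorgamma} is a direct consequence of Lemmas~\ref{commutatorQepsilon} and~\ref{commutatorforI} via the decomposition \eqref{Gamma-ep-ga-into-IQ}, and you have simply spelled out the bookkeeping (the comparisons $\mu^{1/16}\lesssim\mu^{1/32}$ and $|W^{\epsilon}f|_{L^2_{\gamma/2}}\le |f|_{\epsilon,\gamma/2}$) that makes the conclusion immediate.
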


Then  Lemma \ref{commutatorgamma} and Theorem \ref{Gamma-full-up-bound} give the following
lemma.
\begin{lem}\label{general-weight-upper-bound}
Let $l, q \geq 0$. If $-2 \leq \gamma \leq 0$, then
\beno |\langle \Gamma^{\epsilon}(g, h), W_{l,q}^{2} f\rangle| \lesssim |W_{l,q} g|_{L^{2}}|W_{l,q}h|_{\epsilon,\gamma/2}|W_{l,q} f|_{\epsilon,\gamma/2}. \eeno
If $q = 0$,  then
\ben \label{polynomial-commutator-nonlinear}
|\langle \Gamma^{\epsilon}(g, h), W_{l,0}^{2} f\rangle| \lesssim |g|_{L^{2}}|W_{l,0}h|_{\epsilon,\gamma/2}|W_{l,0}f|_{\epsilon,\gamma/2}.\een
\end{lem}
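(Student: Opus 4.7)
\medskip

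\noindent\textbf{Proof plan for Lemma \ref{general-weight-upper-bound}.}

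The plan is to distribute one copy of $W_{l,q}$ from the test function $W_{l,q}^{2} f$ onto $h$ inside the bilinear operator $\Gamma^{\epsilon}$, at the price of a commutator term. More precisely, I would write
\begin{equation*}
\langle \Gamma^{\epsilon}(g,h), W_{l,q}^{2} f\rangle \;=\; \langle W_{l,q}\,\Gamma^{\epsilon}(g,h),\, W_{l,q} f\rangle \;=\; \langle \Gamma^{\epsilon}(g,\, W_{l,q} h),\, W_{l,q} f\rangle \;+\; \langle R^{\epsilon}_{l,q}(g,h),\, W_{l,q} f\rangle,
\end{equation*}
where $R^{\epsilon}_{l,q}(g,h):= W_{l,q}\,\Gamma^{\epsilon}(g,h) - \Gamma^{\epsilon}(g, W_{l,q} h)$ is precisely the commutator controlled by Lemma \ref{commutatorgamma}.

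For the first term on the right-hand side I would apply the upper bound in Theorem \ref{Gamma-full-up-bound} directly with $W_{l,q}h$ in the second slot and $W_{l,q}f$ as test function, which yields
\begin{equation*}
|\langle \Gamma^{\epsilon}(g,W_{l,q}h),W_{l,q}f\rangle| \;\lesssim\; |g|_{L^{2}}\, |W_{l,q}h|_{\epsilon,\gamma/2}\, |W_{l,q}f|_{\epsilon,\gamma/2}.
\end{equation*}
Since $W_{l,q}\geq 1$, one has $|g|_{L^{2}}\leq |W_{l,q}g|_{L^{2}}$, which already matches both claimed bounds (the $q=0$ case keeps $|g|_{L^{2}}$).

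For the commutator term, setting $F:=W_{l,q}f$ and applying Lemma \ref{commutatorgamma} gives
\begin{equation*}
|\langle R^{\epsilon}_{l,q}(g,h),F\rangle|\lesssim |\mu^{1/32}g|_{L^{2}}|W_{l,q}h|_{\epsilon,\gamma/2}|F|_{\epsilon,\gamma/2}+|W_{l,q}g|_{L^{2}}|W_{l,q}h|_{L^{2}_{\gamma/2}}|W^{\epsilon}F|_{L^{2}_{\gamma/2}}
\end{equation*}
when $q>0$, and the cleaner bound $|g|_{L^{2}}|W_{l,0}h|_{\epsilon,\gamma/2}|F|_{\epsilon,\gamma/2}$ when $q=0$. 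Plugging in $F=W_{l,q}f$ and using the trivial comparisons $|\mu^{1/32}g|_{L^{2}}\lesssim|W_{l,q}g|_{L^{2}}$, $|W_{l,q}h|_{L^{2}_{\gamma/2}}\leq|W_{l,q}h|_{\epsilon,\gamma/2}$, and $|W^{\epsilon}W_{l,q}f|_{L^{2}_{\gamma/2}}\leq|W_{l,q}f|_{\epsilon,\gamma/2}$ (the last two following immediately from the definition \eqref{norm-definition} of $|\cdot|_{\epsilon,\gamma/2}$) collapses both contributions to the same target quantity $|W_{l,q}g|_{L^{2}}|W_{l,q}h|_{\epsilon,\gamma/2}|W_{l,q}f|_{\epsilon,\gamma/2}$, and likewise with $|g|_{L^{2}}$ in place of $|W_{l,q}g|_{L^{2}}$ when $q=0$.

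There is no serious obstacle here: the proof is essentially a two-line bookkeeping argument once Theorem \ref{Gamma-full-up-bound} and Lemma \ref{commutatorgamma} are in hand. The only mild subtlety is that in the $q=0$ case one must use the sharper commutator bound from Lemma \ref{commutatorgamma} (which does not involve an $|W_{l,0}g|_{L^{2}}$ factor) in order to keep the $|g|_{L^{2}}$ normalization on the left factor; for $q>0$ this sharpening is unavailable, consistent with the weaker $|W_{l,q}g|_{L^{2}}$ on the right-hand side of the stated inequality. Combining the two estimates above completes the proof.
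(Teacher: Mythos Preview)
Your proposal is correct and follows exactly the approach indicated by the paper, which simply states that the lemma is a consequence of Lemma \ref{commutatorgamma} and Theorem \ref{Gamma-full-up-bound} without spelling out the details. Your decomposition into the main term $\langle \Gamma^{\epsilon}(g,W_{l,q}h),W_{l,q}f\rangle$ plus the commutator $\langle [W_{l,q},\Gamma^{\epsilon}(g,\cdot)]h,W_{l,q}f\rangle$, together with the elementary norm comparisons you list, is precisely the intended argument.
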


Recalling $\mathcal{L}^{\epsilon,\gamma}h = \mathcal{L}^{\epsilon,\gamma}_{1}h + \mathcal{L}^{\epsilon,\gamma}_{2}h = -\Gamma^{\epsilon}(\mu^{\frac{1}{2}}, h) + \mathcal{L}^{\epsilon,\gamma}_{2}h$,
the following lemma follows from  \eqref{polynomial-commutator-nonlinear} and Lemma \ref{l2-full-estimate-geq-eta}.
\begin{lem}\label{commutator-linear-g-h}
Let $l \geq 0$. The estimate $|\langle \mathcal{L}^{\epsilon,\gamma}h, W_{l,0}^{2}f\rangle| \lesssim |W_{l,0}h|_{\epsilon,\gamma/2}|W_{l,0}f|_{\epsilon,\gamma/2}$ holds.
\end{lem}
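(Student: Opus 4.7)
The plan follows the hint immediately preceding the lemma: use the splitting $\mathcal{L}^{\epsilon,\gamma} = \mathcal{L}^{\epsilon,\gamma}_1 + \mathcal{L}^{\epsilon,\gamma}_2$ from \eqref{definition-L1-L2} and treat the two pieces by completely different mechanisms. The piece $\mathcal{L}^{\epsilon,\gamma}_1 h = -\Gamma^{\epsilon}(\mu^{1/2}, h)$ is the only genuinely singular one and needs the weighted bound for $\Gamma^{\epsilon}$. The piece $\mathcal{L}^{\epsilon,\gamma}_2 h = -\Gamma^{\epsilon}(h,\mu^{1/2})$ is already controlled by a Gaussian weight on both sides, so the polynomial factor $W_{l,0}^2$ will simply be absorbed for free.

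For $\mathcal{L}^{\epsilon,\gamma}_1$, the plan is to apply \eqref{polynomial-commutator-nonlinear} of Lemma \ref{general-weight-upper-bound} with $g = \mu^{1/2}$ and the test function $f$, which gives
\[
|\langle \Gamma^{\epsilon}(\mu^{1/2},h), W_{l,0}^{2} f\rangle| \;\lesssim\; |\mu^{1/2}|_{L^{2}}\, |W_{l,0} h|_{\epsilon,\gamma/2}\, |W_{l,0} f|_{\epsilon,\gamma/2} \;\lesssim\; |W_{l,0} h|_{\epsilon,\gamma/2}\, |W_{l,0} f|_{\epsilon,\gamma/2}.
\]
This is precisely the desired bound on the first piece; the key point is that \eqref{polynomial-commutator-nonlinear} is exactly the $q=0$ version of the weighted estimate and puts no derivative or Sobolev loss on the first argument.

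For $\mathcal{L}^{\epsilon,\gamma}_2$, I would invoke Lemma \ref{l2-full-estimate-geq-eta} directly with $h$ and the test function $W_{l,0}^2 f$:
\[
|\langle \mathcal{L}^{\epsilon,\gamma}_2 h, W_{l,0}^{2} f\rangle| \;\lesssim\; |\mu^{1/8} h|_{L^{2}}\, |\mu^{1/8} W_{l,0}^{2} f|_{L^{2}}.
\]
Since the Gaussian $\mu^{1/8}$ beats any polynomial $W_{l,0}^{2} = \langle v\rangle^{2l}$, we have $\mu^{1/8} W_{l,0}^{2} \lesssim \mu^{1/16}$, so the right side is bounded by $|\mu^{1/16} h|_{L^{2}} |\mu^{1/16} f|_{L^{2}} \lesssim |h|_{L^{2}_{\gamma/2}} |f|_{L^{2}_{\gamma/2}}$. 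The trivial chain $|h|_{L^{2}_{\gamma/2}} \leq |W_{l,0} h|_{L^{2}_{\gamma/2}} \leq |W_{l,0} h|_{\epsilon,\gamma/2}$, and similarly for $f$, then finishes this piece.

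Summing the two contributions yields the claim. There is no real obstacle here: the only thing worth checking is that the polynomial weight $W_{l,0}^{2}$ can safely be tucked into the test function in both applications, which is the reason one must invoke the $q=0$ sharper form of Lemma \ref{general-weight-upper-bound} for $\mathcal{L}^{\epsilon,\gamma}_1$ (to avoid paying an extra $|W_{l,0} \mu^{1/2}|_{L^2}$, although even that would be harmless) and one may freely let the Maxwellian absorb $W_{l,0}^{2}$ for $\mathcal{L}^{\epsilon,\gamma}_2$.
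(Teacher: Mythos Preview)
Your proposal is correct and follows exactly the approach indicated by the paper: split $\mathcal{L}^{\epsilon,\gamma} = \mathcal{L}^{\epsilon,\gamma}_1 + \mathcal{L}^{\epsilon,\gamma}_2$, apply \eqref{polynomial-commutator-nonlinear} with $g=\mu^{1/2}$ for the first piece, and apply Lemma~\ref{l2-full-estimate-geq-eta} with test function $W_{l,0}^2 f$ for the second piece, absorbing the polynomial weight into the Gaussian. The details you supply (in particular the chain $|\mu^{1/16}h|_{L^2}\lesssim |h|_{L^2_{\gamma/2}}\le |W_{l,0}h|_{\epsilon,\gamma/2}$) are all correct.
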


In the following, we give an estimate of the commutator between $\mathcal{L}^{\epsilon,\gamma}$ and $W_{l,q}$ as a special case.
\begin{lem}\label{commutator-linear}
Let $l, q \geq 0, -2 \leq \gamma \leq 0$. The estimate $|\langle [\mathcal{L}^{\epsilon,\gamma}, W_{l,q}]f, W_{l,q}f\rangle| \lesssim |W_{l,q}f|_{L^{2}_{\gamma/2}}^{2}$ holds.
\end{lem}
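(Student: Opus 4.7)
\medskip
\noindent\textbf{Proof plan for Lemma \ref{commutator-linear}.}

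The plan is to split $\mathcal{L}^{\epsilon,\gamma}=\mathcal{L}^{\epsilon,\gamma}_1+\mathcal{L}^{\epsilon,\gamma}_2$ as in \eqref{definition-L1-L2} and treat the two commutators separately. For $\mathcal{L}^{\epsilon,\gamma}_2$, Lemma \ref{l2-full-estimate-geq-eta} shows that $\mathcal{L}^{\epsilon,\gamma}_2$ is controlled by Gaussian-weighted $L^2$ norms, so both $|\langle \mathcal{L}^{\epsilon,\gamma}_2(W_{l,q}f),W_{l,q}f\rangle|$ and $|\langle \mathcal{L}^{\epsilon,\gamma}_2 f, W_{l,q}^2 f\rangle|$ reduce to expressions of the form $|\mu^{1/8}W_{l,q}^{a}f|_{L^2}|\mu^{1/8}W_{l,q}^{b} f|_{L^2}$; since the Gaussian dominates any polynomial/exponential weight, these are at most $|W_{l,q}f|_{L^{2}_{\gamma/2}}^2$, yielding the desired bound on $[\mathcal{L}^{\epsilon,\gamma}_2,W_{l,q}]$ immediately.

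The main work is the first piece $[\mathcal{L}^{\epsilon,\gamma}_1,W_{l,q}]=-[\Gamma^\epsilon(\mu^{1/2},\cdot),W_{l,q}]$. A direct computation from the definition of $\Gamma^\epsilon$ gives the cancellation
\[
[\mathcal{L}^{\epsilon,\gamma}_1,W_{l,q}]f(v)=-\int B^\epsilon\mu_*^{1/2}(\mu^{1/2})'_*(W'_{l,q}-W_{l,q})\,f'\,d\sigma dv_*.
\]
Pairing this with $W_{l,q}f$ gives an expression $(A)$. Under the involutive change of variable $(v,v_*,\sigma)\leftrightarrow(v',v'_*,(v-v_*)/|v-v_*|)$, which preserves $B^\epsilon\,d\sigma dv dv_*$ and swaps primed and unprimed quantities, $(A)$ transforms into a new expression $(B)$ with $W_{l,q}$ replaced by $W'_{l,q}$ on the rightmost factor and an overall sign flip. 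Since $(A)=(B)$, averaging yields the key symmetrization identity
\[
\langle[\mathcal{L}^{\epsilon,\gamma}_1,W_{l,q}]f,W_{l,q}f\rangle=\frac{1}{2}\int B^\epsilon\mu_*^{1/2}(\mu^{1/2})'_*(W'_{l,q}-W_{l,q})^2 f f'\,d\sigma dv_*dv,
\]
in which the weight difference now enters quadratically rather than linearly.

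One then applies $|ff'|\leq \frac{1}{2}(f^2+(f')^2)$ and a further change of variable to collapse the $(f')^2$ piece onto the $f^2$ piece, reducing matters to estimating $\int B^\epsilon\mu_*^{1/2}(\mu^{1/2})'_*(W'_{l,q}-W_{l,q})^2 f^2\,d\sigma dv_*dv$. Next one invokes the sharp weight-difference estimate \eqref{weight-to-vstarprime}, namely $|W'_{l,q}-W_{l,q}|\lesssim|v'-v|\langle v\rangle^{-1}\langle v'_*\rangle^2 W_{l,q}(v)W_{l,q}(v_*)$, together with $|v'-v|^2=|v-v_*|^2\sin^2(\theta/2)$ and the pointwise bound $(\mu^{1/2})'_*\langle v'_*\rangle^4\lesssim 1$. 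The angular integral $\int b^\epsilon\sin^2(\theta/2)(\mu^{1/2})'_*\langle v'_*\rangle^4\,d\sigma$ is then uniformly controlled via \eqref{order-2}, leaving
\[
|\langle[\mathcal{L}^{\epsilon,\gamma}_1,W_{l,q}]f,W_{l,q}f\rangle|\lesssim\int |v-v_*|^{\gamma+2}\langle v\rangle^{-2}W_{l,q}^2(v)W_{l,q}^2(v_*)\mu_*^{1/2}f^2\,dv_*dv.
\]
Since $-2\leq\gamma\leq 0$ makes $|v-v_*|^{\gamma+2}$ non-singular with $|v-v_*|^{\gamma+2}\lesssim\langle v\rangle^{\gamma+2}+\langle v_*\rangle^{\gamma+2}$, integration in $v_*$ against $\mu_*^{1/2}W_{l,q}^2(v_*)$ yields $\lesssim\langle v\rangle^{\gamma+2}$; combining with the $\langle v\rangle^{-2}$ factor produces exactly $\langle v\rangle^\gamma W_{l,q}^2(v)$, hence $|W_{l,q}f|_{L^{2}_{\gamma/2}}^2$.

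The main obstacle is the symmetrization: a direct application of the existing commutator estimates in Lemma \ref{commutatorgamma} with $g=\mu^{1/2}$ would only yield the stronger norm $|W_{l,q}f|_{\epsilon,\gamma/2}^2$ on the right-hand side. The essential new insight is that pairing with $W_{l,q}f$ specifically allows one to symmetrize via the collisional change of variable and square the weight difference, thereby producing an extra $|v-v_*|\sin(\theta/2)$ factor; this extra power is exactly what reduces the grazing singularity $b^\epsilon\sim\epsilon^{2s-2}\sin^{-2-2s}(\theta/2)$ to the $O(1)$ angular integral \eqref{order-2}, while the $\langle v\rangle^{-1}$ decay in \eqref{weight-to-vstarprime} together with the hypothesis $\gamma\geq-2$ conspire to yield the sharper $L^{2}_{\gamma/2}$ weight in $v$ claimed in the lemma.
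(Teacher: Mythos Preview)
Your proof is correct and follows essentially the same route as the paper: split $\mathcal{L}^{\epsilon,\gamma}=\mathcal{L}^{\epsilon,\gamma}_1+\mathcal{L}^{\epsilon,\gamma}_2$, dispose of the $\mathcal{L}^{\epsilon,\gamma}_2$ commutator via Lemma~\ref{l2-full-estimate-geq-eta}, and for $\mathcal{L}^{\epsilon,\gamma}_1$ exploit the pre/post-collisional symmetry to rewrite the commutator inner product as $\tfrac12\int B^\epsilon \mu_*^{1/2}(\mu^{1/2})'_* (W_{l,q}-W'_{l,q})^2 ff'\,d\sigma dv_*dv$, then bound $|ff'|\le \tfrac12(f^2+(f')^2)$, apply \eqref{weight-to-vstarprime}, use $(\mu^{1/2})'_*\langle v'_*\rangle^4\lesssim 1$ and \eqref{order-2} on the angular integral, and integrate out $v_*$ using $\gamma+2\ge 0$. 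Your write-up is in fact slightly more careful than the paper in tracking the $W_{l,q}^2(v_*)$ factor that must be absorbed by $\mu_*^{1/2}$.
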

\begin{proof} Recall $\mathcal{L}^{\epsilon,\gamma} = \mathcal{L}^{\epsilon,\gamma}_{1} + \mathcal{L}^{\epsilon,\gamma}_{2}$, where $\mathcal{L}^{\epsilon,\gamma}_{1}f=  - \Gamma^{\epsilon,\gamma}(\mu^{\frac{1}{2}},f), \mathcal{L}^{\epsilon,\gamma}_{2}f= - \Gamma^{\epsilon,\gamma}(f,\mu^{\frac{1}{2}})$. Direct computation gives
\beno
\langle [\mathcal{L}^{\epsilon,\gamma}_{1}, W_{l,q}]f, W_{l,q}f\rangle &=& \int B^{\epsilon} \mu^{\frac{1}{2}}_{*}
(\mu^{\frac{1}{2}})^{\prime}_{*} f f^{\prime} W_{l,q}( W_{l,q} - W_{l,q}^{\prime})  d v d v_{*} d\sigma
\\&=& \frac{1}{2} \int B^{\epsilon} \mu^{\frac{1}{2}}_{*}
(\mu^{\frac{1}{2}})^{\prime}_{*} f f^{\prime} ( W_{l,q} - W_{l,q}^{\prime})^{2}  d v d v_{*} d\sigma.
\eeno
By using
%$2 |f f^{\prime}| \leq f^{2} + (f^{2})^{\prime}$ and
change of variable $(v,v_{*}) \rightarrow (v^{\prime},v^{\prime}_{*})$, one has
\beno
|\langle [\mathcal{L}^{\epsilon,\gamma}_{1}, W_{l,q}]f, W_{l,q}f\rangle| \leq  \frac{1}{2} \int B^{\epsilon} \mu^{\frac{1}{2}}_{*}
(\mu^{\frac{1}{2}})^{\prime}_{*} f^{2} ( W_{l,q} - W_{l,q}^{\prime})^{2}  d v d v_{*} d\sigma.
\eeno
Then \eqref{weight-to-vstarprime} gives
\beno
\int B^{\epsilon} \mu^{\frac{1}{2}}_{*}
(\mu^{\frac{1}{2}})^{\prime}_{*} ( W_{l,q} - W_{l,q}^{\prime})^{2}  d\sigma \lesssim
|v -v_{*}|^{\gamma+2} \langle v \rangle^{-2}
W_{l,q}^{2}(v) \mu^{\frac{1}{2}}_{*}
\int b^{\epsilon} (\cos\theta) \sin^{2}(\theta/2) d\sigma,
\eeno
which implies
\beno
|\langle [\mathcal{L}^{\epsilon,\gamma}_{1}, W_{l,q}]f, W_{l,q}f\rangle| \lesssim   \int |v -v_{*}|^{\gamma+2} \langle v \rangle^{-2}
W_{l,q}^{2}(v) \mu^{\frac{1}{2}}_{*} f^{2}   d v d v_{*}
 \lesssim |W_{l,q}f|^{2}_{L^{2}_{\gamma/2}}.
\eeno
By Lemma \ref{l2-full-estimate-geq-eta}, we have
\beno
|\langle [\mathcal{L}^{\epsilon,\gamma}_{2}, W_{l,q}]f, W_{l,q}f\rangle|
 \lesssim |\mu^{\frac{1}{16}}f|^{2}_{L^{2}}.
\eeno
Then the above two estimates complete  the proof.
\end{proof}

\section{Propagation of regularity and asymptotic formula} \label{propagation-asymptotic}
In this section, we will give the proof to Theorem \ref{asymptotic-result}.  With coercivity estimate in Theorem \ref{coercivity-structure}, spectral gap estimate in Theorem \ref{micro-dissipation}, upper bound estimate in Theorem \ref{Gamma-full-up-bound}, we can derive the global well-posedness result \eqref{lowest-regularity-bounded-by-initial} in Theorem \ref{asymptotic-result} as in \cite{duan2021global}.

Then it remains to show the  propagation of regularity and asymptotic formula. We will derive propagation of regularity in section \ref{propagation} and
asymptotic formula in section \ref{asymptotic}.

\subsection{Propagation of regularity}\label{propagation}
In this subsection, we prove \eqref{general-propagation-f-epsilon} as stated in Theorem \ref{high-order-propagation}. We recall  \eqref{Sobolev-norm}, \eqref{coercivity-norm}, \eqref{general-L1k-norm}, \eqref{general-L1k-norm-for-initial-data},  \eqref{energy-and-dissipation-for-propagation} and \eqref{initial-dependence-for-propagation}
about  the norms used.

We first consider propagation of spatial regularity with polynomial weight, i.e. the norm $\|\cdot\|_{L^{1}_{k,m}L^{2}_{l}}$.
\begin{thm} \label{lowest-order-propagation}
Let $m, l \geq 0$. Suppose $f$ is a solution to the Boltzmann equation \eqref{Cauchy-linearizedBE-grazing} with initial data $f_{0}$ satisfying
$ \|f_{0}\|_{L^{1}_{k,m}L^{2}_{l}} < \infty.
$
There is a constant $\delta>0$ such that if
$
\|f_{0}\|_{L^{1}_{k}L^{2}} < \delta,
$
then
\ben \label{a-priori-q-weight-x-derivative}
\|f\|_{L^{1}_{k,m}L^{\infty}_{T}L^{2}_{l}} + \|f\|_{L^{1}_{k,m}L^{2}_{T}L^{2}_{\epsilon,l+\gamma/2}} \lesssim
 \|f_{0}\|_{L^{1}_{k,m}L^{2}_{l}}(1 +\|f_{0}\|_{L^{1}_{k,m}L^{2}_{l}}).
\een
\end{thm}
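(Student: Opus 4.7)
The plan is to perform a weighted $L^2_v$ energy estimate mode-by-mode in the Fourier variable $k$, and then assemble the mode estimates into the $L^1_k$-norm via Young's convolution inequality. First, taking the Fourier transform in $x$ of \eqref{Cauchy-linearizedBE-grazing} gives, for each $k \in \mathbb{Z}^3$,
\beno
\partial_t \hat f(t,k,v) + i(v\cdot k)\hat f + \mathcal{L}^\epsilon \hat f = \widehat{\Gamma^\epsilon(f,f)}(t,k,v).
\eeno
Pairing with $W_l^2 \hat f(t,k,\cdot)$ in $L^2_v$ and taking real parts, the transport term vanishes and one obtains
\beno
\tfrac{1}{2}\tfrac{d}{dt} |W_l \hat f(t,k)|_{L^2}^2 + \RE \langle \mathcal{L}^\epsilon \hat f, W_l^2\hat f\rangle = \RE\langle \widehat{\Gamma^\epsilon(f,f)}(k), W_l^2 \hat f(k)\rangle.
\eeno

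For the linear contribution, I would split
\beno
\langle \mathcal{L}^\epsilon \hat f, W_l^2 \hat f\rangle = \langle \mathcal{L}^\epsilon (W_l \hat f), W_l \hat f\rangle + \langle [W_l, \mathcal{L}^\epsilon]\hat f, W_l \hat f\rangle,
\eeno
apply the coercivity estimate of Theorem \ref{coercivity-structure} to the first term, and control the commutator by Lemma \ref{commutator-linear}. Modulo standard commutator estimates between $W_l$ and the pseudodifferential symbols $W^\epsilon(D)$, $W^\epsilon((-\Delta_{\mathbb{S}^2})^{1/2})$ that enter \eqref{coercivity-norm}, this produces a lower bound of the form $c|\hat f(k)|_{\epsilon,l+\gamma/2}^2 - C|\hat f(k)|_{L^2_l}^2$, where the lower-order term will eventually be absorbed by Gronwall.

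For the nonlinear term, the key input is the polynomial-weighted upper bound \eqref{polynomial-commutator-nonlinear} from Lemma \ref{general-weight-upper-bound}, which applied per mode through the convolution structure of $\widehat{\Gamma^\epsilon(f,f)}(k)$ yields
\beno
|\langle \widehat{\Gamma^\epsilon(f,f)}(k), W_l^2 \hat f(k)\rangle| \lesssim \sum_{k_1+k_2=k} |\hat f(k_1)|_{L^2}\, |\hat f(k_2)|_{\epsilon, l+\gamma/2}\, |\hat f(k)|_{\epsilon, l+\gamma/2}.
\eeno
Integrating in $t$, taking a square root, multiplying by $\langle k\rangle^m$, summing in $k$, and using Peetre's inequality $\langle k\rangle^m \lesssim \langle k_1\rangle^m + \langle k_2\rangle^m$ together with Young's convolution inequality, one arrives at a closed inequality of the schematic form
\beno
\|f\|_{L^1_{k,m}L^\infty_T L^2_l} + \|f\|_{L^1_{k,m}L^2_T L^2_{\epsilon,l+\gamma/2}} \lesssim \|f_0\|_{L^1_{k,m}L^2_l} + \|f\|_{L^1_k L^\infty_T L^2}\, \|f\|_{L^1_{k,m}L^2_T L^2_{\epsilon,l+\gamma/2}}.
\eeno
The crucial feature of \eqref{polynomial-commutator-nonlinear} that makes this work is that its first factor $|g|_{L^2}$ carries neither velocity weight nor regularity, so it is bounded by $\|f\|_{L^1_k L^\infty_T L^2}$, which by the already-established \eqref{lowest-regularity-bounded-by-initial} is controlled by $\|f_0\|_{L^1_k L^2} \le \delta$; choosing $\delta$ small enough absorbs the last term into the left-hand side, giving \eqref{a-priori-q-weight-x-derivative}.

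The main obstacle is therefore not any new operator estimate but rather the bookkeeping needed to ensure that the \emph{only} smallness invoked is the one on $\|f_0\|_{L^1_k L^2}$, while arbitrary (finite) $\|f_0\|_{L^1_{k,m}L^2_l}$ is allowed; this dictates the particular splitting into ``one $L^2$-factor plus two $|\cdot|_{\epsilon,l+\gamma/2}$ factors'' in the nonlinear estimate and is precisely the point where \eqref{polynomial-commutator-nonlinear} (as opposed to its weighted cousin with $q>0$) is indispensable. A secondary technicality is tracking the equivalence between $|W_l \hat f|_{\epsilon,\gamma/2}$ produced by the commutator procedure and the intrinsic weighted norm $|\hat f|_{\epsilon,l+\gamma/2}$ defined in \eqref{coercivity-norm}, which follows from routine pseudodifferential commutator bounds analogous to those used in Section \ref{coercivity-spectral}.
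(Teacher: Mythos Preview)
Your single-step energy argument has two genuine gaps that prevent it from closing globally in $T$.

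\textbf{First, Gronwall does not close the linear part.} When you apply Theorem~\ref{coercivity-structure} to $W_l\hat f$ and Lemma~\ref{commutator-linear}, the lower-order remainder is $|\hat f(k)|^2_{L^2_{l+\gamma/2}}$ (not $L^2_l$, but that is a minor slip). Gronwall on this term would produce an $e^{CT}$ factor, destroying the uniform-in-$T$ bound that \eqref{a-priori-q-weight-x-derivative} asserts. The paper avoids this entirely: it works with the spectral-gap estimate Theorem~\ref{micro-dissipation} on the microscopic part $(\mathbb{I}-\mathbb{P})W_l\hat f$, and bounds the macroscopic part $\mathbb{P}W_l\hat f$ directly through the explicit kernel of $\mathbb{P}$ together with the already-known unweighted dissipation. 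Even if you tried to replace Gronwall by the interpolation $|f|_{L^2_{l+\gamma/2}}\le \eta|f|_{\epsilon,l+\gamma/2}+C_\eta|f|_{\epsilon,\gamma/2}$ (which the paper does use, see \eqref{commutator-weight-and-linear-op-no-m}), you would still need a prior uniform bound on $\|f\|_{L^1_{k,m}L^2_T L^2_{\epsilon,\gamma/2}}$, i.e.\ the $m>0$, $l=0$ case.

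\textbf{Second, the nonlinear term does not reduce to the single product you wrote.} After Peetre's inequality, the case where $\langle k\rangle^m$ lands on the \emph{first} factor of the convolution produces
\[
\|f\|_{L^1_{k,m}L^\infty_T L^2}\,\|f\|_{L^1_k L^2_T L^2_{\epsilon,l+\gamma/2}},
\]
and neither factor here is controlled by the smallness of $\|f_0\|_{L^1_kL^2}$. This term cannot be absorbed into the left-hand side; it is precisely the origin of the quadratic $\|f_0\|_{L^1_{k,m}L^2_l}(1+\|f_0\|_{L^1_{k,m}L^2_l})$ in the statement. The paper handles it by a three-step hierarchy: first prove propagation of $\|\cdot\|_{L^1_{k,m}L^2}$ (Step~1, using the macro equations as in \cite{duan2021global}), then of $\|\cdot\|_{L^1_kL^2_l}$ (Step~2), and only then combine them for $\|\cdot\|_{L^1_{k,m}L^2_l}$ (Step~3), so that each factor of the cross term is already bounded. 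Your one-shot scheme omits this bootstrap and therefore cannot close.
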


Note that we will show propagation of norm $\|\cdot\|_{L^{1}_{k,m}L^{2}_{l}}$ only under smallness assumption on $\|f_{0}\|_{L^{1}_{k}L^{2}}$ and finiteness on $\|f_{0}\|_{L^{1}_{k,m}L^{2}_{l}}$.
\begin{proof} Consider
\ben \label{Boltzmann-equation-epsilon}
 \partial_{t}f + v \cdot \nabla_{x} f + \mathcal{L}^{\epsilon}f = \Gamma^{\epsilon}(f,f), \een
with the initial condition $f_{0}$.
For simplicity, denote $\mathcal{H}:=\Gamma^{\epsilon}(f,f)$.

Recall \eqref{lowest-regularity-bounded-by-initial} as
\ben \label{a-priori-no-weight-no-derivative}
\|f\|_{L^{1}_{k}L^{\infty}_{T}L^{2}} + \|f\|_{L^{1}_{k}L^{2}_{T}L^{2}_{\epsilon,\gamma/2}}
\lesssim \|f_{0}\|_{L^{1}_{k}L^{2}}.
\een
The proof of the theorema is divided into three steps where $\|\cdot\|_{L^{1}_{k,m}L^{2}}, \|\cdot\|_{L^{1}_{k}L^{2}_{l}}, \|\cdot\|_{L^{1}_{k,m}L^{2}_{l}}$ are considered respectively.

{\it{Step 1: $\|\cdot\|_{L^{1}_{k,m}L^{2}}.$}}
Following the proof of  Theorem 5.1 in \cite{duan2021global}, we have
\ben
\|[a,b,c]\|_{L^{1}_{k,m}L^{2}_{T}} &\lesssim& \|(\mathbb{I}-\mathbb{P})f\|_{L^{1}_{k,m}L^{2}_{T}L^{2}_{\epsilon,\gamma/2}} + \|f\|_{L^{1}_{k,m}L^{\infty}_{T}L^{2}}
+ \|f_{0}\|_{L^{1}_{k,m}L^{2}}
\nonumber \\&&+ \sum_{j}\sum_{k \in \mathbb{Z}^{3}} \left( \int_{0}^{T} |\langle \widehat{\mathcal{H}} \langle k \rangle^{m},  P_{j}\mu^{\frac{1}{2}} \rangle|^{2} d t \right)^{\frac{1}{2}}, \label{macro-no-weight-x-derivative}
\een
where $\{P_{j}\}_{j}$ is a set of polynomials with degree $\leq 4$.

Taking Fourier transform of \eqref{Boltzmann-equation-epsilon} with respect to $x$, at mode $k \in \mathbb{Z}^{3}$, we have
\ben \label{equation-after-Fourier}
 \partial_{t} \widehat{f}(k) + \mathrm{i} v \cdot k  \widehat{f}(k) + \mathcal{L}^{\epsilon}  \widehat{f}(k)= \widehat{\mathcal{H}}(k). \een
Taking inner product with $\langle k \rangle^{2m} \widehat{f}$, similar to Eq.(3.7) in
\cite{duan2021global}, we have
\ben \label{micro-no-weight-x-derivative}
\|f\|_{L^{1}_{k,m}L^{\infty}_{T}L^{2}} + \|(\mathbb{I}-\mathbb{P})f\|_{L^{1}_{k,m}L^{2}_{T}L^{2}_{\epsilon,\gamma/2}} \lesssim
\|f_{0}\|_{L^{1}_{k,m}L^{2}} + \sum_{k \in \mathbb{Z}^{3}} \left( \int_{0}^{T} |\langle \widehat{\mathcal{H}}(k) \langle k \rangle^{m}, \langle k \rangle^{m}\widehat{f} \rangle| d t \right)^{\frac{1}{2}}.
\een
By suitably combining  \eqref{macro-no-weight-x-derivative} and \eqref{micro-no-weight-x-derivative}, and noting
\beno
\|[a,b,c]\|_{L^{1}_{k,m}L^{2}_{T}} \sim \|\mathbb{P}f\|_{L^{1}_{k,m}L^{2}_{T}L^{2}_{\epsilon,\gamma/2}},
\eeno
we have
\ben \nonumber
&&\|f\|_{L^{1}_{k,m}L^{\infty}_{T}L^{2}} + \|f\|_{L^{1}_{k,m}L^{2}_{T}L^{2}_{\epsilon,\gamma/2}}
\\&\lesssim&
 \|f_{0}\|_{L^{1}_{k,m}L^{2}}  + \sum_{j}\sum_{k \in \mathbb{Z}^{3}} \left( \int_{0}^{T} |\langle \widehat{\mathcal{H}} \langle k \rangle^{m}, P_{j}\mu^{\frac{1}{2}} \rangle|^{2} d t \right)^{\frac{1}{2}} + \sum_{k \in \mathbb{Z}^{3}} \left( \int_{0}^{T} |\langle \widehat{\mathcal{H}}(k) \langle k \rangle^{m}, \langle k \rangle^{m}\widehat{f} \rangle| d t \right)^{\frac{1}{2}}\!\!.
 \label{macro-micro-no-weight-x-derivative}
\een

Recalling $\widehat{\mathcal{H}}(k)=\sum_{p \in \mathbb{Z}^{3}} \Gamma^{\epsilon}(\widehat{f}(k-p), \widehat{f}(p))$, by Theorem \ref{Gamma-full-up-bound} and
\ben \label{factor-out-into-2-sum}
\langle k \rangle^{m} \lesssim \langle k-p \rangle^{m} + \langle p \rangle^{m},
\een
we have
\beno
\sum_{k \in \mathbb{Z}^{3}} \left( \int_{0}^{T} |\langle \langle k \rangle^{m} \sum_{p \in \mathbb{Z}^{3}} \Gamma^{\epsilon}(\widehat{f}(k-p), \widehat{f}(p)), P_{j}\mu^{\frac{1}{2}} \rangle|^{2} d t \right)^{\frac{1}{2}}
\\ \lesssim  \|f\|_{L^{1}_{k,m}L^{\infty}_{T}L^{2}} \|f\|_{L^{1}_{k}L^{2}_{T}L^{2}_{\epsilon,\gamma/2}} +
\|f\|_{L^{1}_{k}L^{\infty}_{T}L^{2}} \|f\|_{L^{1}_{k,m}L^{2}_{T}L^{2}_{\epsilon,\gamma/2}},
\eeno
and (similar to the estimate of upper bound of Eq.(3.8) in \cite{duan2021global})
\beno
&&\sum_{k \in \mathbb{Z}^{3}} \left( \int_{0}^{T} |\langle \langle k \rangle^{m} \sum_{p \in \mathbb{Z}^{3}} \Gamma^{\epsilon}(\widehat{f}(k-p), \widehat{f}(p)), \langle k \rangle^{m} \widehat{f} \rangle| d t \right)^{\frac{1}{2}}
\\ &\lesssim& \eta \|f\|_{L^{1}_{k,m}L^{2}_{T}L^{2}_{\epsilon,\gamma/2}} + \frac{1}{4 \eta} \|f\|_{L^{1}_{k,m}L^{\infty}_{T}L^{2}} \|f\|_{L^{1}_{k}L^{2}_{T}L^{2}_{\epsilon,\gamma/2}} + \frac{1}{4 \eta} \|f\|_{L^{1}_{k}L^{\infty}_{T}L^{2}} \|f\|_{L^{1}_{k,m}L^{2}_{T}L^{2}_{\epsilon,\gamma/2}}.
\eeno

Plugging the above two inequalities into \eqref{macro-micro-no-weight-x-derivative}, for $0<\eta \leq 1$, we get
\beno
&&\|f\|_{L^{1}_{k,m}L^{\infty}_{T}L^{2}} + \|f\|_{L^{1}_{k,m}L^{2}_{T}L^{2}_{\epsilon,\gamma/2}}
\\&\lesssim& \|f_{0}\|_{L^{1}_{k,m}L^{2}} + \eta \|f\|_{L^{1}_{k,m}L^{2}_{T}L^{2}_{\epsilon,\gamma/2}}
+ \frac{1}{\eta} \|f\|_{L^{1}_{k,m}L^{\infty}_{T}L^{2}} \|f\|_{L^{1}_{k}L^{2}_{T}L^{2}_{\epsilon,\gamma/2}} + \frac{1}{\eta} \|f\|_{L^{1}_{k}L^{\infty}_{T}L^{2}} \|f\|_{L^{1}_{k,m}L^{2}_{T}L^{2}_{\epsilon,\gamma/2}}.
\eeno
By choosing $\eta$ small, recalling \eqref{a-priori-no-weight-no-derivative}, under smallness assumption on $\|f_{0}\|_{L^{1}_{k}L^{2}}$, we arrive at
\ben \label{a-priori-no-weight-x-derivative}
\|f\|_{L^{1}_{k,m}L^{\infty}_{T}L^{2}} + \|f\|_{L^{1}_{k,m}L^{2}_{T}L^{2}_{\epsilon,\gamma/2}}
\lesssim \|f_{0}\|_{L^{1}_{k,m}L^{2}}.
\een

{\it{Step 2: $\|\cdot\|_{L^{1}_{k}L^{2}_{l}}.$}}
We now consider propagation of polynomial moments. Starting from \eqref{equation-after-Fourier},  taking inner product with $W_{2l}\widehat{f}$, similar to Eq.(3.7) in \cite{duan2021global}, we have
\ben \label{micro-q-weight-no-derivative}
\|f\|_{L^{1}_{k}L^{\infty}_{T}L^{2}_{l}} + \|(\mathbb{I}-\mathbb{P})W_{l}f\|_{L^{1}_{k}L^{2}_{T}L^{2}_{\epsilon,\gamma/2}} \lesssim
\|f_{0}\|_{L^{1}_{k}L^{2}_{l}}
+ \sum_{k \in \mathbb{Z}^{3}} \left( \int_{0}^{T} |\langle \widehat{\mathcal{H}}, W_{l} \widehat{f} \rangle| d t \right)^{\frac{1}{2}}.
\een
In this step, $\widehat{\mathcal{H}}(k) = [\mathcal{L}^{\epsilon}, W_{l}]\widehat{f}(k) + W_{l}\sum_{p \in \mathbb{Z}^{3}} \Gamma^{\epsilon}(\widehat{f}(k-p), \widehat{f}(p))$.
By \eqref{DefProj} and \eqref{a-priori-no-weight-no-derivative}, we have
\ben \label{macro-q-weight-no-derivative}
\|\mathbb{P}W_{l}f\|_{L^{1}_{k}L^{2}_{T}L^{2}_{\epsilon,\gamma/2}} \lesssim \|f\|_{L^{1}_{k}L^{2}_{T}L^{2}_{\epsilon,\gamma/2}}
\lesssim \|f_{0}\|_{L^{1}_{k}L^{2}}.
\een
A suitable combination of \eqref{micro-q-weight-no-derivative} and \eqref{macro-q-weight-no-derivative} gives
\ben
\|f\|_{L^{1}_{k}L^{\infty}_{T}L^{2}_{l}} + \|f\|_{L^{1}_{k}L^{2}_{T}L^{2}_{\epsilon,l+\gamma/2}}
 \lesssim
 \|f_{0}\|_{L^{1}_{k}L^{2}_{l}}
 + \sum_{k \in \mathbb{Z}^{3}} \left( \int_{0}^{T} |\langle \widehat{\mathcal{H}}, W_{l}\widehat{f} \rangle| d t \right)^{\frac{1}{2}}.
 \label{macro-micro-q-weight-no-derivative}
\een

For the term involving $[\mathcal{L}^{\epsilon}, W_{l}]\widehat{f}(k)$, by Lemma \ref{commutator-linear}, we get
\beno
\sum_{k \in \mathbb{Z}^{3}} \left( \int_{0}^{T} |\langle [\mathcal{L}^{\epsilon}, W_{l}]\widehat{f}, W_{l}\widehat{f} \rangle| d t \right)^{\frac{1}{2}}
\lesssim \|f\|_{L^{1}_{k}L^{2}_{T}L^{2}_{l+\gamma/2}}.
\eeno
Since $|\cdot|_{\epsilon,l} \geq |\cdot|_{L^{2}_{l+s}} \geq |\cdot|_{L^{2}_{l+1/2}}$, we have
$
|f|_{L^{2}_{l+\gamma/2}} \leq  \eta |f|_{L^{2}_{\epsilon, l+\gamma/2}} + C(\eta,l) |f|_{L^{2}_{\epsilon, \gamma/2}}
$
which gives
\ben \label{commutator-weight-and-linear-op-no-m}
\sum_{k \in \mathbb{Z}^{3}} \left( \int_{0}^{T} |\langle [\mathcal{L}^{\epsilon}, W_{l}]\widehat{f},  W_{l}\widehat{f} \rangle| d t \right)^{\frac{1}{2}}
 \lesssim  \eta \|f\|_{L^{1}_{k}L^{2}_{T}L^{2}_{\epsilon,l+\gamma/2}} + C(\eta,l) \|f\|_{L^{1}_{k}L^{2}_{T}L^{2}_{\epsilon,\gamma/2}}.
\een

For the term involving $W_{l}\sum_{p \in \mathbb{Z}^{3}} \Gamma^{\epsilon}(\widehat{f}(k-p), \widehat{f}(p))$,
by \eqref{polynomial-commutator-nonlinear} in Lemma  \ref{general-weight-upper-bound}, we get
\ben \nonumber
&& \sum_{k \in \mathbb{Z}^{3}} \left( \int_{0}^{T} |\langle W_{l} \sum_{p \in \mathbb{Z}^{3}} \Gamma^{\epsilon}(\widehat{f}(k-p), \widehat{f}(p)), W_{l} \widehat{f} \rangle| d t \right)^{\frac{1}{2}}
\\ \label{non-linear-term-q-weight} &\lesssim& \eta \|f\|_{L^{1}_{k}L^{2}_{T}L^{2}_{\epsilon,l+\gamma/2}} + \frac{1}{4 \eta} \|f\|_{L^{1}_{k}L^{\infty}_{T}L^{2}} \|f\|_{L^{1}_{k}L^{2}_{T}L^{2}_{\epsilon,l+\gamma/2}}.
\een
Plugging \eqref{commutator-weight-and-linear-op-no-m} and \eqref{non-linear-term-q-weight} into \eqref{macro-micro-q-weight-no-derivative},
by choosing $\eta$ small and using \eqref{a-priori-no-weight-no-derivative}, under the smallness assumption on $\|f_{0}\|_{L^{1}_{k}L^{2}}$,
we have
\ben \label{a-priori-q-weight-no-derivative}
\|f\|_{L^{1}_{k}L^{\infty}_{T}L^{2}_{l}} + \|f\|_{L^{1}_{k}L^{2}_{T}L^{2}_{\epsilon,l+\gamma/2}} \lesssim
 \|f_{0}\|_{L^{1}_{k}L^{2}_{l}} + \|f\|_{L^{1}_{k}L^{2}_{T}L^{2}_{\epsilon,\gamma/2}} \lesssim  \|f_{0}\|_{L^{1}_{k}L^{2}_{l}},
\een
where we have used \eqref{a-priori-no-weight-no-derivative} in the last inequality.

{\it{Step 3: $\|\cdot\|_{L^{1}_{k,m}L^{2}_{l}}.$}} We now show the  propagation of spatial regularity with polynomial moment. Starting from \eqref{equation-after-Fourier}, taking inner product with $\langle k \rangle^{2m} W_{2l}\widehat{f}$, similar to Eq.(3.7) in \cite{duan2021global}, we have
\ben  \label{micro-q-weight-x-derivative}
\|f\|_{L^{1}_{k,m}L^{\infty}_{T}L^{2}_{l}} + \|(\mathbb{I}-\mathbb{P})W_{l}f\|_{L^{1}_{k,m}L^{2}_{T}L^{2}_{\epsilon,\gamma/2}} \lesssim
\|f_{0}\|_{L^{1}_{k,m}L^{2}_{l}}
+ \sum_{k \in \mathbb{Z}^{3}} \left( \int_{0}^{T} |\langle \widehat{\mathcal{H}}, \langle k \rangle^{2m} W_{l} \widehat{f} \rangle| d t \right)^{\frac{1}{2}}.
\een
In this step, $\widehat{\mathcal{H}}(k) = [\mathcal{L}^{\epsilon}, W_{l}]\widehat{f}(k) + W_{l}\sum_{p \in \mathbb{Z}^{3}} \Gamma^{\epsilon}(\widehat{f}(k-p), \widehat{f}(p))$.
By \eqref{DefProj} and \eqref{a-priori-no-weight-x-derivative}, we have
\ben \label{macro-q-weight-x-derivative}
\|\mathbb{P}W_{l}f\|_{L^{1}_{k,m}L^{2}_{T}L^{2}_{\epsilon,\gamma/2}} \lesssim \|f\|_{L^{1}_{k,m}L^{2}_{T}L^{2}_{\epsilon,\gamma/2}} \lesssim \|f_{0}\|_{L^{1}_{k,m}L^{2}}.
\een
A suitable combination of \eqref{micro-q-weight-x-derivative} and \eqref{macro-q-weight-x-derivative} gives
\ben
\|f\|_{L^{1}_{k,m}L^{\infty}_{T}L^{2}_{l}} + \|f\|_{L^{1}_{k,m}L^{2}_{T}L^{2}_{\epsilon,l+\gamma/2}}
 \lesssim
 \|f_{0}\|_{L^{1}_{k,m}L^{2}_{l}} + \sum_{k \in \mathbb{Z}^{3}} \left( \int_{0}^{T} |\langle \widehat{\mathcal{H}}, \langle k \rangle^{2m} W_{2l}\widehat{f} \rangle| d t \right)^{\frac{1}{2}}.
 \label{macro-micro-m-q-weight-no-derivative}
\een

Similar to \eqref{commutator-weight-and-linear-op-no-m}, we have
\ben \label{commutator-weight-and-linear-op}
\sum_{k \in \mathbb{Z}^{3}} \left( \int_{0}^{T} |\langle [\mathcal{L}^{\epsilon}, W_{l}]\widehat{f}, \langle k \rangle^{2m} W_{l}\widehat{f} \rangle| d t \right)^{\frac{1}{2}}
 \lesssim  \eta \|f\|_{L^{1}_{k,m}L^{2}_{T}L^{2}_{\epsilon,l+\gamma/2}} + C(\eta,l) \|f\|_{L^{1}_{k,m}L^{2}_{T}L^{2}_{\epsilon,\gamma/2}}.
\een

For the term involving $W_{l}\sum_{p \in \mathbb{Z}^{3}} \Gamma^{\epsilon}(\widehat{f}(k-p), \widehat{f}(p))$,
by \eqref{polynomial-commutator-nonlinear} in Lemma  \ref{general-weight-upper-bound}  with \eqref{factor-out-into-2-sum},
we get
\ben \nonumber
&&\sum_{k \in \mathbb{Z}^{3}} \left( \int_{0}^{T} |\langle \langle k \rangle^{m} W_{l} \sum_{p \in \mathbb{Z}^{3}} \Gamma^{\epsilon}(\widehat{f}(k-p), \widehat{f}(p)), \langle k \rangle^{m} W_{l} \widehat{f} \rangle| d t \right)^{\frac{1}{2}}
\\ \label{weight-nonlinear-term} &\lesssim& \eta \|f\|_{L^{1}_{k,m}L^{2}_{T}L^{2}_{\epsilon,l+\gamma/2}}
+ \frac{1}{4 \eta} \|f\|_{L^{1}_{k}L^{\infty}_{T}L^{2}} \|f\|_{L^{1}_{k,m}L^{2}_{T}L^{2}_{\epsilon,l+\gamma/2}} + \frac{1}{4 \eta} \|f\|_{L^{1}_{k,m}L^{\infty}_{T}L^{2}} \|f\|_{L^{1}_{k}L^{2}_{T}L^{2}_{\epsilon,l+\gamma/2}}\!\!.
\een
Plugging \eqref{commutator-weight-and-linear-op}  and \eqref{weight-nonlinear-term} into \eqref{macro-micro-m-q-weight-no-derivative},
by choosing $\eta$ small, recalling \eqref{a-priori-no-weight-no-derivative}, under the smallness assumption on $\|f_{0}\|_{L^{1}_{k}L^{2}}$,
we have
\beno
\|f\|_{L^{1}_{k,m}L^{\infty}_{T}L^{2}_{l}} + \|f\|_{L^{1}_{k,m}L^{2}_{T}L^{2}_{\epsilon,l+\gamma/2}} \lesssim
 \|f_{0}\|_{L^{1}_{k,m}L^{2}_{l}} + \|f\|_{L^{1}_{k,m}L^{2}_{T}L^{2}_{\epsilon,\gamma/2}} +  \|f\|_{L^{1}_{k,m}L^{\infty}_{T}L^{2}} \|f\|_{L^{1}_{k}L^{2}_{T}L^{2}_{\epsilon,l+\gamma/2}}.
\eeno
By  \eqref{a-priori-no-weight-x-derivative}, \eqref{a-priori-q-weight-no-derivative} and  \eqref{a-priori-q-weight-x-derivative} we complete the proof of the theorem.
\end{proof}

We now turn to the propagation of spatial and velocity regularity with polynomial moment.
Taking $v$ derivative $\partial_{\beta}$ of \eqref{equation-after-Fourier}, we get
\beno \partial_{t} \partial_{\beta} \widehat{f}(k) + \mathrm{i} v \cdot k \partial_{\beta} \widehat{f}(k) + \mathcal{L}^{\epsilon} \partial_{\beta} \widehat{f}(k)= \mathrm{i} [v \cdot k, \partial_{\beta}] \widehat{f}(k) +
[\mathcal{L}^{\epsilon},\partial_{\beta}] \widehat{f}(k) + \partial_{\beta} \widehat{\Gamma^{\epsilon}(f,f)}(k). \eeno

Taking inner product with $\langle k \rangle^{2m} \langle v \rangle^{2l} \partial_{\beta}\widehat{f}$ where $\beta \in \mathbb{Z}^{3}, m, l \geq 0$,
similar to Eq.(3.7) in \cite{duan2021global}, we have
\ben \nonumber
&&\|\partial_{\beta} f\|_{L^{1}_{k,m}L^{\infty}_{T}L^{2}_{l}} + \|(\mathbb{I}-\mathbb{P})W_{l}\partial_{\beta}f\|_{L^{1}_{k,m}L^{2}_{T}L^{2}_{\epsilon,\gamma/2}}
\\ \label{micro-q-weight-x-v-derivative} &\lesssim&
\|\partial_{\beta}f_{0}\|_{L^{1}_{k,m}L^{2}_{l}}
 + \sum_{k \in \mathbb{Z}^{3}} \left( \int_{0}^{T} |\langle \widehat{\mathcal{H}}, \langle k \rangle^{2m}
\langle v \rangle^{l} \partial_{\beta}\widehat{f} \rangle| d t \right)^{\frac{1}{2}},
\een
where \ben \label{general-right-hand-side}
\widehat{\mathcal{H}}(k) = \mathrm{i} W_{l} [v \cdot k, \partial_{\beta}] \widehat{f}(k) + [\mathcal{L}^{\epsilon}, W_{l}] \partial_{\beta} \widehat{f}(k)
+ W_{l} [\mathcal{L}^{\epsilon},\partial_{\beta}] \widehat{f}(k)
+ W_{l} \partial_{\beta}\sum_{p \in \mathbb{Z}^{3}} \Gamma^{\epsilon}(\widehat{f}(k-p), \widehat{f}(p)).\een

By \eqref{DefProj} and integrating by parts, it holds
\ben \label{macro-q-weight-x-v-derivative}
\|\mathbb{P}W_{l}\partial_{\beta}f\|_{L^{1}_{k,m}L^{2}_{T}L^{2}_{\epsilon,\gamma/2}} \lesssim \|f\|_{L^{1}_{k,m}L^{2}_{T}L^{2}_{\epsilon,\gamma/2}}
\lesssim \|f_{0}\|_{L^{1}_{k,m}L^{2}},
\een
where we have used \eqref{a-priori-no-weight-x-derivative} in the last inequality.

A suitable combination of \eqref{micro-q-weight-x-v-derivative} and \eqref{macro-q-weight-x-v-derivative} gives
\ben
\|\partial_{\beta}f\|_{L^{1}_{k,m}L^{\infty}_{T}L^{2}_{l}} + \|\partial_{\beta}f\|_{L^{1}_{k,m}L^{2}_{T}L^{2}_{\epsilon,l+\gamma/2}} \lesssim
 \|f_{0}\|_{L^{1}_{k,m}H^{|\beta|}_{l}}  + \sum_{k \in \mathbb{Z}^{3}} \left( \int_{0}^{T} |\langle \widehat{\mathcal{H}},
 \langle k \rangle^{2m} \langle v \rangle^{l} \partial_{\beta}\widehat{f} \rangle| d t \right)^{\frac{1}{2}}.
 \label{macro-micro-q-weight-x-v-derivative-general}
\een

We now estimate the last term in \eqref{macro-micro-q-weight-x-v-derivative-general}. Recalling \eqref{general-right-hand-side}, we will estimate term by term.

For the term involving $\mathrm{i} [v \cdot k, \partial_{\beta}] \widehat{f}(k)$, if $\beta=(\beta_{1},\beta_{2},\beta_{3})$, then
\beno
[v \cdot k, \partial_{\beta}] \widehat{f}(k)= - \sum_{j=1}^{3} \beta_{j}  k_{j} \partial_{\beta-e_{j}}\widehat{f}(k).
\eeno
Hence
\ben \label{commutator-transport-general}
&& \sum_{k \in \mathbb{Z}^{3}} \left( \int_{0}^{T} |\langle [v \cdot k, \partial_{\beta}] \widehat{f}(k),
\langle k \rangle^{2m} \langle v \rangle^{2l} \partial_{\beta}\widehat{f} \rangle| d t \right)^{\frac{1}{2}}
\nonumber \\ &\lesssim&
\sum_{k \in \mathbb{Z}^{3}} \left( \int_{0}^{T} \sum_{j=1}^{3} \beta_{j} |\langle k \rangle^{m+1}
\partial_{\beta-e_{j}} \widehat{f}(k)|_{L^{2}_{l-(\gamma/2+s)}}
|\langle k \rangle^{m} \partial_{\beta} \widehat{f}(k)|_{L^{2}_{l+\gamma/2+s}}
 d t \right)^{\frac{1}{2}}
\nonumber  \\ &\lesssim& \eta \|\partial_{\beta}f\|_{L^{1}_{k,m}L^{2}_{T}L^{2}_{\epsilon,l+\gamma/2}}
 + \frac{1}{\eta} \|f\|_{L^{1}_{k,m+1}L^{2}_{T} \dot{H}^{N}_{\epsilon,\gamma/2+l-(\gamma+2s)}}.
\een

Similar to \eqref{commutator-weight-and-linear-op}, we have
\ben \label{commutator-weight-and-linear-v-derivative}
&&\sum_{k \in \mathbb{Z}^{3}} \left( \int_{0}^{T} |\langle [\mathcal{L}^{\epsilon}, W_{l}]\partial_{\beta}\widehat{f}, \langle k \rangle^{2m} W_{l}\partial_{\beta}\widehat{f} \rangle| d t \right)^{\frac{1}{2}}
\nonumber \\ &\lesssim&  \eta \|\partial_{\beta}f\|_{L^{1}_{k,m}L^{2}_{T}L^{2}_{\epsilon,l+\gamma/2}} + C(\eta,l) \|\partial_{\beta}f\|_{L^{1}_{k,m}L^{2}_{T}L^{2}_{\epsilon,\gamma/2}}.
\een

By Lemma \ref{commutator-linear-g-h}, we get
\ben \label{commutator-with-linear-q-weight-v-derivative}
&&\sum_{k \in \mathbb{Z}^{3}} \left( \int_{0}^{T} |\langle W_{l}[\mathcal{L}^{\epsilon},\partial_{\beta}] \widehat{f}(k), \langle k \rangle^{2m} W_{l} \partial_{\beta} \widehat{f} \rangle| d t \right)^{\frac{1}{2}}
\nonumber \\ &\lesssim& \sum_{k \in \mathbb{Z}^{3}} \left( \int_{0}^{T} \sum_{\beta_{1}<\beta}|\langle k \rangle^{m} \partial_{\beta_{1}}\widehat{f}|_{L^{2}_{\epsilon, l+\gamma/2}}|\langle k \rangle^{m} \partial_{\beta}\widehat{f}|_{L^{2}_{\epsilon,l+\gamma/2}}
 d t \right)^{\frac{1}{2}}
\nonumber \\ &\lesssim&  \eta \|\partial_{\beta}f\|_{L^{1}_{k,m}L^{2}_{T}L^{2}_{\epsilon,l+\gamma/2}} + \frac{1}{\eta} \|f\|_{L^{1}_{k,m}L^{2}_{T}H^{N}_{\epsilon, l+\gamma/2}}.
\een

For the term involving $\partial_{\beta}\sum_{p \in \mathbb{Z}^{3}} \Gamma^{\epsilon}(\widehat{f}(k-p), \widehat{f}(p))$,
by \eqref{polynomial-commutator-nonlinear} in Lemma  \ref{general-weight-upper-bound}, with \eqref{factor-out-into-2-sum},
we get
\ben \label{non-linear-term-general}
\sum_{k \in \mathbb{Z}^{3}} \left( \int_{0}^{T} |\langle \partial_{\beta}
\sum_{p \in \mathbb{Z}^{3}} \Gamma^{\epsilon}(\widehat{f}(k-p),\widehat{f}(p)),
\langle k \rangle^{2m} \langle v \rangle^{2l} \partial_{\beta} \widehat{f} \rangle| d t \right)^{\frac{1}{2}}
 \lesssim \eta \|\partial_{\beta}f\|_{L^{1}_{k,m}L^{2}_{T}L^{2}_{\epsilon,l+\gamma/2}}
\nonumber \\ + \frac{1}{4 \eta} \|f\|_{L^{1}_{k}L^{\infty}_{T}L^{2}} \|\partial_{\beta}f\|_{L^{1}_{k,m}L^{2}_{T}L^{2}_{\epsilon,l+\gamma/2}}
+ \frac{1}{4 \eta} \|f\|_{L^{1}_{k,m}L^{\infty}_{T}L^{2}} \|\partial_{\beta}f\|_{L^{1}_{k}L^{2}_{T}L^{2}_{\epsilon,l+\gamma/2}}
\nonumber \\+ \frac{1}{4 \eta} \|\partial_{\beta}f\|_{L^{1}_{k}L^{\infty}_{T}L^{2}} \|f\|_{L^{1}_{k,m}L^{2}_{T}L^{2}_{\epsilon,l+\gamma/2}}
+ \frac{1}{4 \eta} \|\partial_{\beta}f\|_{L^{1}_{k,m}L^{\infty}_{T}L^{2}} \|f\|_{L^{1}_{k}L^{2}_{T}L^{2}_{\epsilon,l+\gamma/2}}
\nonumber \\+ \frac{1}{4 \eta} \|f\|_{L^{1}_{k}L^{\infty}_{T}H^{N}} \|f\|_{L^{1}_{k,m}L^{2}_{T}H^{N}_{\epsilon,l+\gamma/2}}
+ \frac{1}{4 \eta} \|f\|_{L^{1}_{k,m}L^{\infty}_{T}H^{N}} \|f\|_{L^{1}_{k}L^{2}_{T}H^{N}_{\epsilon,l+\gamma/2}}.
\een

With the above preparation, we are ready to prove the propagation of both spatial and velocity regularity with polynomial weight in the following theorem.

\begin{thm} \label{high-order-propagation}
Let $n \in \mathbb{N}, m, l \geq 0$. Suppose $f$ is a solution to the Boltzmann equation \eqref{Cauchy-linearizedBE-grazing} with initial data $f_{0}$ verifying $ \|f_{0}\|_{m,n,l} < \infty.$
There is a constant $\delta>0$ and a polynomial $P_{n}$ with $P_{n}(0)=0$ such that if
$
\|f_{0}\|_{L^{1}_{k}L^{2}} < \delta,
$
then
\ben \label{general-propagation}
E_{T}(f; m,n,l) + D_{T}^{\epsilon}(f; m,n,l) \lesssim  P_{n}(\|f_{0}\|_{m,n,l}).
\een
\end{thm}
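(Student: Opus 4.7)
I would prove \eqref{general-propagation} by strong induction on $n$, with Theorem~\ref{lowest-order-propagation} supplying the base case $n=0$ (one can take $P_0(x) := C x(1+x)$). Assume the conclusion at level $n-1$ holds for every pair $(m',l') \in [0,\infty)^2$ with the same smallness threshold $\delta$ as in Theorem~\ref{lowest-order-propagation}.

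The first key observation is that the inductive hypothesis, applied to $(m',l') = (m+1,\,l-(\gamma+2s))$ (using monotonicity in $l'$ if $l-(\gamma+2s)<0$), already controls
\[
\sum_{j=1}^{n}\Big(\|f\|_{L^{1}_{k,m+j}L^{\infty}_{T}\dot{H}^{n-j}_{l-j(\gamma+2s)}} + \|f\|_{L^{1}_{k,m+j}L^{2}_{T}\dot{H}^{n-j}_{\epsilon,\gamma/2+l-j(\gamma+2s)}}\Big) \lesssim P_{n-1}\big(\|f_0\|_{m+1,n-1,l-(\gamma+2s)}\big),
\]
because these are precisely the $j'=j-1 \in \{0,\ldots,n-1\}$ terms in $E_T(\cdot;m+1,n-1,l-(\gamma+2s)) + D_T^{\epsilon}(\cdot;m+1,n-1,l-(\gamma+2s))$. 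Hence only the top-order ($j=0$) contributions $\|f\|_{L^1_{k,m}L^\infty_T \dot H^n_l}$ and $\|f\|_{L^1_{k,m}L^2_T \dot H^n_{\epsilon,\gamma/2+l}}$ remain, and it suffices to fix any $\beta$ with $|\beta|=n$, apply the energy identity \eqref{macro-micro-q-weight-x-v-derivative-general} with parameters $m$ and $l$, and sum over such $\beta$.

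The four source contributions listed in \eqref{general-right-hand-side} are then disposed of in turn. The transport commutator bound \eqref{commutator-transport-general} yields, after choosing $\eta$ small enough to absorb the top-order dissipation on the left, a term of the form $\|f\|_{L^1_{k,m+1}L^2_T \dot H^{n-1}_{\epsilon,\gamma/2+l-(\gamma+2s)}}$, which is exactly the $j=1$ entry of $D_T^\epsilon(f;m,n,l)$ and therefore already dominated by the inductive hypothesis. The weight commutator bound \eqref{commutator-weight-and-linear-v-derivative} and the linear commutator bound \eqref{commutator-with-linear-q-weight-v-derivative} likewise produce either an $\eta$-small top-order term (absorbable) or strictly lower-order velocity derivatives at possibly higher modes/weights that the inductive hypothesis handles.

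The main obstacle is the nonlinear term \eqref{non-linear-term-general}. I would expand $\partial_\beta$ by Leibniz inside $\Gamma^\epsilon$, writing $\partial_\beta\Gamma^\epsilon(g,h) = \sum_{\beta_1+\beta_2=\beta} C^\beta_{\beta_1}\Gamma^\epsilon(\partial_{\beta_1}g,\partial_{\beta_2}h)$, and apply the weighted upper bound \eqref{polynomial-commutator-nonlinear} of Lemma~\ref{general-weight-upper-bound} together with the Fourier-side mode-splitting \eqref{factor-out-into-2-sum} to each summand. Summands with $\min\{|\beta_1|,|\beta_2|\}\geq 1$ pair two derivatives of order $\leq n-1$ and are controlled by products of $E_T$ and $D_T^\epsilon$ at level $n-1$, hence by the inductive hypothesis. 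The two borderline summands ($\beta_1=0$ or $\beta_2=0$) pair a top-order factor with $f$ itself, so one factor is $\|f\|_{L^1_kL^\infty_TL^2} \lesssim \|f_0\|_{L^1_kL^2} \leq \delta$ via \eqref{a-priori-no-weight-no-derivative}, and for $\delta$ sufficiently small the remaining top-order factor is absorbed into the left-hand side. Combining all the estimates, taking $\eta$ and $\delta$ small, and using \eqref{a-priori-no-weight-x-derivative} on the macroscopic piece gives the desired bound with a polynomial $P_n$ built recursively from $P_{n-1}$, completing the induction.
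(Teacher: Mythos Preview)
Your inductive framework and treatment of the commutator terms \eqref{commutator-transport-general}--\eqref{commutator-with-linear-q-weight-v-derivative} are correct, but the handling of the nonlinear borderline summands has a genuine gap. You assert that when $\beta_1=0$ or $\beta_2=0$ ``one factor is $\|f\|_{L^{1}_{k}L^{\infty}_{T}L^{2}} \le \delta$'', which is not what the estimate actually produces. The weighted bound \eqref{polynomial-commutator-nonlinear} places the $l$--weight on the $h$--slot only (so for $\beta_2=0$ the top-order factor $\partial_\beta f$ carries \emph{no} $l$--weight), and the mode splitting \eqref{factor-out-into-2-sum} forces you to keep \emph{both} placements of $\langle k\rangle^m$. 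Concretely, the right-hand side of \eqref{non-linear-term-general} contains, besides the absorbable term, the three cross terms
\[
\|f\|_{L^{1}_{k,m}L^{\infty}_{T}L^{2}}\,\|\partial_\beta f\|_{L^{1}_{k}L^{2}_{T}L^{2}_{\epsilon,l+\gamma/2}},\quad
\|\partial_\beta f\|_{L^{1}_{k}L^{\infty}_{T}L^{2}}\,\|f\|_{L^{1}_{k,m}L^{2}_{T}L^{2}_{\epsilon,l+\gamma/2}},\quad
\|\partial_\beta f\|_{L^{1}_{k,m}L^{\infty}_{T}L^{2}}\,\|f\|_{L^{1}_{k}L^{2}_{T}L^{2}_{\epsilon,l+\gamma/2}}.
\]
In each of these the zero-derivative factor carries either the $m$--weight or the $l$--weight and is therefore only bounded (by Theorem~\ref{lowest-order-propagation}), not small; the accompanying top-order factor carries only a partial weight configuration ($m=0$ with $l$, or $m$ with $l=0$, or $m=l=0$), so it is neither the left-hand side of your estimate nor available from the level-$(n-1)$ hypothesis.

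This is exactly why the paper's proof performs, \emph{within} the inductive step at level $n=N+1$, a four-stage bootstrap: it first closes the estimate for $(m,l)=(0,0)$ (where all zero-derivative factors are genuinely small by \eqref{a-priori-no-weight-no-derivative}), then for $(m_*,0)$, then for $(0,l_*)$, and only then for $(m_*,l_*)$, each stage supplying the missing top-order factors with reduced weights needed in the next. Your single-pass argument must be replaced by this graded scheme (or an equivalent device that controls the top-order norms with partial weights before attacking the general case).
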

\begin{proof}
Recall the notations $E_{T}(f; m,n,l), D_{T}^{\epsilon}(f; m,n,l)$ in \eqref{energy-and-dissipation-for-propagation},
$\|f_{0}\|_{m,n,l}$ in \eqref{initial-dependence-for-propagation}.
We will prove \eqref{general-propagation} by induction. First, by \eqref{a-priori-q-weight-x-derivative},
we see that \eqref{general-propagation} is valid for $n=0$ with $P_{0}(x)=x(1+x)$.

Let $N \geq 0$ be an integer.
Let us assume that \eqref{general-propagation} is valid for any $0 \leq n \leq N$ and $m, l \geq 0$. We will prove the above statement
\eqref{general-propagation}  is also valid for $n=N+1, m, l \geq 0$. To be clear, we fix two parameters $m_{*}, l_{*} \geq 0$ and prove
\eqref{general-propagation} for $n=N+1, m=m_{*}, l= l_{*}$.

We  concentrate on $\|\cdot\|_{L^{1}_{k,m_{*}}\dot{H}^{N+1}_{l_{*}}}$.
We divide the proof in four steps for the estimation on
$\|\cdot\|_{L^{1}_{k}\dot{H}^{N+1}}$,  $\|\cdot\|_{L^{1}_{k,m_{*}}\dot{H}^{N+1}}$,
 $\|\cdot\|_{L^{1}_{k}\dot{H}^{N+1}_{l_{*}}}$  and $\|\cdot\|_{L^{1}_{k,m_{*}}\dot{H}^{N+1}_{l_{*}}}$ respectively.

{\it {Step 1. $\|\cdot\|_{L^{1}_{k}\dot{H}^{N+1}}$.}}
We start from \eqref{macro-micro-q-weight-x-v-derivative-general} by taking $|\beta|=N+1, m=l=0$. In this case,
$[\mathcal{L}^{\epsilon}, W_{l}]=0$ in \eqref{general-right-hand-side}.

Plugging \eqref{commutator-transport-general}, \eqref{commutator-with-linear-q-weight-v-derivative} and \eqref{non-linear-term-general} for the case $m=l=0$
into \eqref{macro-micro-q-weight-x-v-derivative-general}, taking sum over $|\beta|=N+1$,
by choosing $\eta$ small and under the smallness assumption on $\|f_{0}\|_{L^{1}_{k}L^{2}}$ that implies
$\|f\|_{L^{1}_{k}L^{\infty}_{T}L^{2}}$ and $\|f\|_{L^{1}_{k}L^{2}_{T}L^{2}_{\epsilon,\gamma/2}}$ are small,
and by using \eqref{general-propagation} with $n=N, m=1, l=-(\gamma+2s)$,
we have
\ben \nonumber
&&\|f\|_{L^{1}_{k}L^{\infty}_{T}\dot{H}^{N+1}} + \|f\|_{L^{1}_{k}L^{2}_{T}\dot{H}^{N+1}_{\epsilon,\gamma/2}}
\\ &\lesssim&
 \|f_{0}\|_{L^{1}_{k}H^{N+1}}  + P_{N}(\|f_{0}\|_{1,N,-(\gamma+2s)}) (1+ P_{N}(\|f_{0}\|_{1,N,-(\gamma+2s)})). \label{pure-highest-order}
\een
Let us denote $P^{0,N+1,0}(x) := P_{N}(x)(1+P_{N}(x))$.
Adding \eqref{pure-highest-order} and \eqref{general-propagation} with $n=N, m=1,  l=-(\gamma+2s)$, we have
\ben
E_{T}(f; 0,N+1, 0) + D_{T}^{\epsilon}(f; 0,N+1, 0)
 \lesssim P^{0,N+1,0}(\|f_{0}\|_{0,N+1,0}). \label{a-priori-no-weight-v-derivative-order-N+1}
\een

{\it {Step 2. $\|\cdot\|_{L^{1}_{k, m_{*}}\dot{H}^{N+1}}$.}}
We start from \eqref{macro-micro-q-weight-x-v-derivative-general} by taking $|\beta|=N+1, m=m_{*}, l=0$. In this case,
$[\mathcal{L}^{\epsilon}, W_{l}]=0$ in \eqref{general-right-hand-side}.

Plugging \eqref{commutator-transport-general}, \eqref{commutator-with-linear-q-weight-v-derivative} and \eqref{non-linear-term-general} for the case $m=m_{*}, l=0$
into \eqref{macro-micro-q-weight-x-v-derivative-general}, taking sum over $|\beta|=N+1$,
by choosing $\eta$ small and under the smallness assumption on $\|f_{0}\|_{L^{1}_{k}L^{2}}$ that implies
$\|f\|_{L^{1}_{k}L^{\infty}_{T}L^{2}}$ and $\|f\|_{L^{1}_{k}L^{2}_{T}L^{2}_{\epsilon,\gamma/2}}$ are small,
using \eqref{general-propagation} with $n=N, m=m_{*}+1, l=-(\gamma+2s)$ and \eqref{a-priori-no-weight-v-derivative-order-N+1},
we have
\ben \label{pure-m-N+1}
\|f\|_{L^{1}_{k,m_{*}}L^{\infty}_{T}\dot{H}^{N+1}} + \|f\|_{L^{1}_{k,m_{*}}L^{2}_{T}\dot{H}^{N+1}_{\epsilon,\gamma/2}}
 \lesssim \|f_{0}\|_{L^{1}_{k,m_{*}}H^{N+1}} + P_{0}(\|f_{0}\|_{m_{*},0,0}) P^{0,N+1,0}(\|f_{0}\|_{0,N+1,0})
\nonumber   \\  + P_{N}(\|f_{0}\|_{m_{*}+1,N,-(\gamma+2s)}) (1+ P_{N}(\|f_{0}\|_{m_{*}+1,N,-(\gamma+2s)})).
\een
Let us define $P^{m_{*},N+1,0}(x):= P_{0}(x) P^{0,N+1,0}(x) + P_{N}(x) (1+ P_{N}(x))$.
Adding \eqref{pure-m-N+1} and \eqref{general-propagation} with $n=N, m=m_{*}+1, l=-(\gamma+2s)$, we have
\ben
E_{T}(f; m_{*},N+1, 0) + D_{T}^{\epsilon}(f; m_{*},N+1, 0)
 \lesssim P^{m_{*},N+1,0}(\|f_{0}\|_{m_{*},N+1,0}). \label{a-priori-no-weight-x-v-derivative}
\een

{\it {Step 3. $\|\cdot\|_{L^{1}_{k}\dot{H}^{N+1}_{l_{*}}}$.}}
We  start from \eqref{macro-micro-q-weight-x-v-derivative-general} by taking $|\beta|=N+1, m=0, l=l_{*}$.

Plugging \eqref{commutator-transport-general}, \eqref{commutator-weight-and-linear-v-derivative},  \eqref{commutator-with-linear-q-weight-v-derivative} and \eqref{non-linear-term-general} for the case $m=0, l=l_{*}$
into \eqref{macro-micro-q-weight-x-v-derivative-general}, taking sum over $|\beta|=N+1$,
by choosing $\eta$ small and under the smallness assumption on $\|f_{0}\|_{L^{1}_{k}L^{2}}$ that implies
$\|f\|_{L^{1}_{k}L^{\infty}_{T}L^{2}}$ is small,
using \eqref{general-propagation} with $n=N, m=1, l=l_{*}-(\gamma+2s)$ and \eqref{a-priori-no-weight-v-derivative-order-N+1},
we have
\ben \label{pure-N+1-q}
\|f\|_{L^{1}_{k}L^{\infty}_{T}\dot{H}^{N+1}_{l_{*}}} + \|f\|_{L^{1}_{k}L^{2}_{T}\dot{H}^{N+1}_{\epsilon,l_{*}+\gamma/2}}
\lesssim  \|f_{0}\|_{L^{1}_{k}H^{N+1}_{l_{*}}} +P_{0}(\|f_{0}\|_{0,0,l_{*}}) P^{0,N+1,0}(\|f_{0}\|_{0,N+1,0})
\nonumber  \\ + P_{N}(\|f_{0}\|_{1,N,l_{*}-(\gamma+2s)})(1+P_{N}(\|f_{0}\|_{1,N,l_{*}-(\gamma+2s)})).
\een
Let us define $P^{0,N+1,l_{*}}(x):= P_{0}(x) P^{0,N+1,0}(x) + P_{N}(x) (1+ P_{N}(x))$.
Adding \eqref{pure-N+1-q} and \eqref{general-propagation} with $n=N, m=1, l=l_{*}-(\gamma+2s)$, we have
\ben
E_{T}(f; 0,N+1,l_{*}) + D_{T}^{\epsilon}(f; 0,N+1,l_{*})
 \lesssim P^{0,N+1,l_{*}}(x)(\|f_{0}\|_{0,N+1,l_{*}}). \label{a-priori-q-weight-v-derivative-order-N+1}
\een

{\it {Step 4. $\|\cdot\|_{L^{1}_{k,m_{*}}\dot{H}^{N+1}_{l_{*}}}$.}}
We  start from \eqref{macro-micro-q-weight-x-v-derivative-general} by taking $|\beta|=N+1, m=m_{*}, l=l_{*}$.

Plugging \eqref{commutator-transport-general}, \eqref{commutator-weight-and-linear-v-derivative} \eqref{commutator-with-linear-q-weight-v-derivative} and \eqref{non-linear-term-general} for the case $m=m_{*}, l=l_{*}$
into \eqref{macro-micro-q-weight-x-v-derivative-general}, taking sum over $|\beta|=N+1$,
by choosing $\eta$ small and under the smallness assumption on $\|f_{0}\|_{L^{1}_{k}L^{2}}$ that implies
$\|f\|_{L^{1}_{k}L^{\infty}_{T}L^{2}}$ is small, and by using \eqref{a-priori-no-weight-v-derivative-order-N+1},
\eqref{a-priori-no-weight-x-v-derivative}, \eqref{a-priori-q-weight-v-derivative-order-N+1} and \eqref{general-propagation} with
$n=N, m=m_{*}+1, l=l_{*}-(\gamma+2s)$,
we have
\ben \label{pure-m-N+1-q}
\|f\|_{L^{1}_{k,m_{*}}L^{\infty}_{T}\dot{H}^{N+1}_{l_{*}}}
+ \|f\|_{L^{1}_{k,m_{*}}L^{2}_{T}\dot{H}^{N+1}_{\epsilon,l_{*}+\gamma/2}} \lesssim
 \|f_{0}\|_{L^{1}_{k,m_{*}}H^{N+1}_{l_{*}}}  + P_{0}(\|f_{0}\|_{m_{*},0,l_{*}})P^{0,N+1,0}(\|f_{0}\|_{0,N+1,0})
\nonumber \\+ P_{0}(\|f_{0}\|_{m_{*},0,0})P^{0,N+1,l_{*}}(\|f_{0}\|_{0,N+1,l_{*}})
 + P_{0}(\|f_{0}\|_{0,0,l_{*}})P^{m_{*},N+1,0}(\|f_{0}\|_{m_{*},N+1,0})
\nonumber \\ + P_{N}(\|f_{0}\|_{m_{*}+1,N,l_{*}-(\gamma+2s)})(1+P_{N}(\|f_{0}\|_{m_{*}+1,N,l_{*}-(\gamma+2s)})).
\een
Define
\beno
P_{N+1}(x):= P_{0}(x)P^{0,N+1,l_{*}}(x)
 + P_{0}(x)P^{0,N+1,0}(x)
 + P_{0}(x)P^{m_{*},N+1,0}(x)
 + P_{N}(x)(1+P_{N}(x)).
\eeno
Note that $P_{N+1}$ is independent of $m_{*}, l_{*}$.
Summing \eqref{pure-m-N+1-q} and \eqref{general-propagation} with
$n=N, m=m_{*}+1, l=l_{*}-(\gamma+2s)$ gives
\beno
E_{T}(f; m_{*},N+1,l_{*}) + D_{T}^{\epsilon}(f; m_{*},N+1,l_{*})
 \lesssim P_{N+1}(\|f_{0}\|_{m_{*},N+1,l_{*}}).
\eeno
And this completes the proof of the theorem.
\end{proof}

\subsection{Global asymptotics} \label{asymptotic}
We will prove \eqref{global-in-time-error} in this subsection.
We first give an estimate on the operator $\Gamma^{\epsilon}-\Gamma^{L}$.
\begin{lem}\label{estimate-operator-difference} For suitable functions $g,h,f$, there holds
\beno|\langle (\Gamma^{\epsilon}-\Gamma^{L})(g,h), f \rangle| \lesssim \epsilon |g|_{H^{3}}|h|_{H^{3}_{9+\gamma/2}}|f|_{L^{2}_{\gamma/2}}.\eeno
\end{lem}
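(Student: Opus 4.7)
The plan is to compare the weak forms of $\Gamma^{\epsilon}$ and $\Gamma^{L}$ via a careful Taylor expansion in the collisional displacement $v'-v$, in the spirit of \cite{he2014asymptotic,zhou2020refined}. Using the standard pre-post collisional change of variables, one writes
\[
\langle\Gamma^{\epsilon}(g,h),f\rangle = \int B^{\epsilon}(v-v_*,\sigma)\,\mu^{1/2}_*\,(g'_*h' - g_*h)\,f\,d\sigma\,dv\,dv_*,
\]
and I would split the collisional difference as
\[
g'_*h' - g_*h = (g'_*-g_*)\,h + g_*\,(h'-h) + (g'_*-g_*)(h'-h),
\]
so that Taylor expansions in $v'-v$ place the velocity derivatives on $g$ and $h$ rather than on $f$, matching the asymmetric distribution of regularity in the claimed bound, in which $f$ appears only in $L^{2}_{\gamma/2}$.

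First I would Taylor expand $g'_*-g_*$ in $v'_*-v_*=-(v'-v)$ and $h'-h$ in $v'-v$ up to order two with integral third-order remainders. Integrating against $b^{\epsilon}$ over $\sigma$, the sphere moment identities
\[
\int b^{\epsilon}(v'-v)\,d\sigma = -4\pi\,(v-v_*),\qquad \int b^{\epsilon}(v'-v)^{\otimes 2}\,d\sigma = 2\pi|v-v_*|^{2}\bigl(I_{3}-\widehat{v-v_*}\otimes\widehat{v-v_*}\bigr) + O(\epsilon^{2}),
\]
where the $O(\epsilon^{2})$ correction arises from the vanishing grazing moment $\int b^{\epsilon}\sin^{4}(\theta/2)\,d\sigma\lesssim\epsilon^{2}$, show that the sum of the first- and second-order Taylor contributions matches the Landau coefficient $a(v-v_*)$ paired with derivatives of $g_*$, $h$ and $\mu^{1/2}_*$. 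One integration by parts in $v_*$, justified by the Gaussian decay of $\mu^{1/2}_*$, aligns these leading pieces with the divergence structure of $Q^{L}$ and reproduces $\langle\Gamma^{L}(g,h),f\rangle$ up to an explicit $O(\epsilon^{2})$ matrix correction.

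The resulting third-order Taylor remainders and the $O(\epsilon^{2})$ matrix correction are controlled by the grazing-gain inequality
\[
\int b^{\epsilon}(\cos\theta)\sin^{n}(\theta/2)\,d\sigma\lesssim\epsilon^{n-2}\quad\text{for } n\ge 3,
\]
which yields in particular $\int b^{\epsilon}\sin^{3}(\theta/2)\,d\sigma\lesssim\epsilon$. Combined with the pointwise Taylor remainder $|R^{3}_{g_*}|\lesssim |v-v_*|^{3}\sin^{3}(\theta/2)|\nabla^{3}g|(v_*)$ and the analogous bound for $h$, the polynomial decomposition $|v-v_*|^{\gamma+3}\lesssim\langle v\rangle^{|\gamma|+3}\langle v_*\rangle^{|\gamma|+3}$, and the absorption of $v_*$-weights by $\mu^{1/2}_*$, a Cauchy--Schwarz argument gives
\[
\epsilon\int \mu^{1/4}_*|\nabla^{3}g|(v_*)\,\langle v\rangle^{|\gamma|+3}|h|(v)\,|f|(v)\,dv_*\,dv \lesssim \epsilon\,|g|_{H^{3}}\,|h|_{H^{3}_{9+\gamma/2}}\,|f|_{L^{2}_{\gamma/2}},
\]
with the same type of bound for the other remainder pieces. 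The generous weight $9+\gamma/2$ on $h$ is needed to absorb the polynomial losses coming from the repeated $|v-v_*|^{\gamma+k}$ factors and from the extra weights produced by derivatives of $\mu^{1/2}_*$ after integration by parts.

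The main obstacle is the algebraic verification that the leading Taylor contributions reconstruct $\langle\Gamma^{L}(g,h),f\rangle$ exactly, rather than some other Landau-type expression. This hinges on the cancellation $a_{ij}(z)z_{j}=0$, which erases the drift contribution produced by the first-order moment $\int b^{\epsilon}(v'-v)d\sigma=-4\pi(v-v_*)$ against the $-\tfrac{1}{2}v_*$ term in $\nabla_{v_*}\mu^{1/2}_*$, and on matching the second-order symmetric moment with the Landau matrix $a$ up to the $O(\epsilon^{2})$ remainder identified above. This bookkeeping is essentially the same as in the classical grazing derivation of Landau from Boltzmann in \cite{desvillettes1992asymptotics,he2014asymptotic}; once the algebraic matching is in place, the quantitative $O(\epsilon)$ bound follows directly from the grazing-gain estimate.
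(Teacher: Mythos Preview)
Your approach is correct and is precisely the Taylor-expansion/grazing-moments argument that the paper invokes through \cite{zhou2020refined}. The paper's own proof is essentially a citation: it rewrites the difference as $\langle(Q^{\epsilon}-Q^{L})(\mu^{1/2}g,\mu^{1/2}h),\mu^{-1/2}f\rangle$, appeals to the \emph{proof} in \cite{zhou2020refined} for the $O(\epsilon)$ bound on $Q^{\epsilon}-Q^{L}$, and then observes that the factor $\mu^{1/2}$ in front of $g$ absorbs all polynomial weights while the $\mu^{1/2}$ in front of $h$ is spent against the $\mu^{-1/2}$ on $f$, so that the extra $\langle v\rangle^{3}$ from $|\nabla^{3}(\mu^{1/2}h)|\lesssim\langle v\rangle^{3}\mu^{1/2}\sum_{j\le 3}|\nabla^{j}h|$ produces the weight $9+\gamma/2$ on $h$. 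What you sketch is exactly the content of that cited computation, carried out directly on the $\mu^{1/2}_{*}$ formulation of $\Gamma^{\epsilon}$ rather than by first passing to $Q^{\epsilon}$.
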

\begin{proof} Note that
\beno
\langle (\Gamma^{\epsilon}-\Gamma^{L})(g,h), f \rangle = \langle Q^{\epsilon}(\mu^{\frac{1}{2}}g, \mu^{\frac{1}{2}}h) - Q^{L}(\mu^{\frac{1}{2}}g, \mu^{\frac{1}{2}}h) , \mu^{-\frac{1}{2}}f \rangle.
\eeno
By the proof in \cite{zhou2020refined}, it holds
\beno
|\langle Q^{\epsilon}(G, H) - Q^{L}(G, H) , F \rangle| \lesssim \epsilon |G|_{H^{3}_{8+\gamma}}|H|_{H^{3}_{6+\gamma/2}}|F|_{L^{2}_{\gamma/2}}.
\eeno
By using the fact
\ben \label{order-3-more}
|\nabla^{3} (\mu^{\frac{1}{2}} h)| \lesssim \langle v \rangle^{3} \mu^{\frac{1}{2}}(|h|+|\nabla h|+|\nabla^{2} h|+|\nabla^{3} h|),
\een
and the proof in \cite{zhou2020refined}, the estimate in the lemma follows.
Note that the additional $3$ weight (from $6+\gamma/2$ to $9+\gamma/2$) for the  function $h$ comes from $\langle v \rangle^{3}$ in \eqref{order-3-more}. On the other hand,  the factor $\mu^{\frac{1}{2}}$ before $g$ absorbs any polynomial weight.
\end{proof}
We are ready to prove \eqref{global-in-time-error} in Theorem \ref{asymptotic-result}.
\begin{proof}[Proof of \eqref{global-in-time-error}]
Let $f^{\epsilon}$ and $f^{L}$ be the solutions to  \eqref{Cauchy-linearizedBE-grazing} and \eqref{Cauchy-linearizedLE} respectively with the initial data $f_0$. Set $F^{\epsilon}_{R} :=  \epsilon^{-1} (f^{\epsilon}-f^{L})$, then it solves
\beno \partial_{t}F^{\epsilon}_{R} + v \cdot \nabla_{x} F^{\epsilon}_{R} + \mathcal{L}^{L}F^{\epsilon}_{R} = \epsilon^{-1} [(\mathcal{L}^{L}-\mathcal{L}^{\epsilon})f^{\epsilon}+(\Gamma^{\epsilon}-\Gamma^{L})(f^{\epsilon},f^{L})]
+\Gamma^{\epsilon}(f^{\epsilon},F^{\epsilon}_{R})+\Gamma^{L}(F^{\epsilon}_{R},f^{L}). \eeno
For simplicity, we denote the right hand side by $\mathcal{H}$,
\beno
\mathcal{H} := \epsilon^{-1} [(\mathcal{L}^{L}-\mathcal{L}^{\epsilon})f^{\epsilon}+(\Gamma^{\epsilon}-\Gamma^{L})(f^{\epsilon},f^{L})]
+\Gamma^{\epsilon}(f^{\epsilon},F^{\epsilon}_{R})+\Gamma^{L}(F^{\epsilon}_{R},f^{L}).
\eeno
Taking Fourier transform with respect to $x$, for the mode $k \in \mathbb{Z}^{3}$, we have
\ben \label{error-equation} \partial_{t} \widehat{F^{\epsilon}_{R}}(k) + \mathrm{i} v \cdot k \widehat{F^{\epsilon}_{R}}(k) + \mathcal{L}^{L} \widehat{F^{\epsilon}_{R}}(k)= \widehat{\mathcal{H}}(k),  \een
where
\ben \label{definition-of-hat-H}
\widehat{\mathcal{H}}(k) &:=&\epsilon^{-1} (\mathcal{L}^{L}-\mathcal{L}^{\epsilon}) \widehat{f^{\epsilon}}(k)
+ \sum_{p \in \mathbb{Z}^{3}} \epsilon^{-1}(\Gamma^{\epsilon}-\Gamma^{L})( \widehat{f^{\epsilon}}(k-p), \widehat{f^{L}}(p))
\nonumber \\&&+ \sum_{p \in \mathbb{Z}^{3}} \Gamma^{\epsilon}(\widehat{f^{\epsilon}}(k-p), \widehat{F^{\epsilon}_{R}}(p))+ \sum_{p \in \mathbb{Z}^{3}}\Gamma^{L}( \widehat{F^{\epsilon}_{R}}(k-p), \widehat{f^{L}}(p)).
\een

We divide the proof into three steps.

{\it{Step 1: Macroscopic part.}}
For the estimate of $[a,b,c]$  of   $F^{\epsilon}_{R} $ defined by \eqref{DefProj}, by Theorem 5.1 in \cite{duan2021global}, since $F^{\epsilon}_{R}(0)=0$, it  holds
\beno
\|[a,b,c]\|_{L^{1}_{k}L^{2}_{T}} \lesssim \|(\mathbb{I}-\mathbb{P})F^{\epsilon}_{R}\|_{L^{1}_{k}L^{2}_{T}L^{2}_{0,\gamma/2}} + \|F^{\epsilon}_{R}\|_{L^{1}_{k}L^{\infty}_{T}L^{2}} + \sum_{j}\sum_{k \in \mathbb{Z}^{3}} \left( \int_{0}^{T} |\langle \widehat{\mathcal{H}}(k), P_{j}\mu^{\frac{1}{2}} \rangle|^{2} d t \right)^{\frac{1}{2}},
\eeno
where $\{P_{j}\}_{j}$ is a set of polynomials with degree $\leq 4$.

Note that $\widehat{\mathcal{H}}(k)$ has four terms in \eqref{definition-of-hat-H}.
For the term $\sum_{p \in \mathbb{Z}^{3}} \Gamma^{\epsilon}(\widehat{f^{\epsilon}}(k-p), \widehat{F^{\epsilon}_{R}}(p))$, by Theorem \ref{Gamma-full-up-bound}, we have
\beno
\sum_{k \in \mathbb{Z}^{3}} \left( \int_{0}^{T} |\langle \sum_{p \in \mathbb{Z}^{3}} \Gamma^{\epsilon}(\widehat{f^{\epsilon}}(k-p), \widehat{F^{\epsilon}_{R}}(p)), P_{j}\mu^{\frac{1}{2}} \rangle|^{2} d t \right)^{\frac{1}{2}}
 \lesssim  \|f^{\epsilon}\|_{L^{1}_{k}L^{\infty}_{T}L^{2}} \|F^{\epsilon}_{R}\|_{L^{1}_{k}L^{2}_{T}L^{2}_{0,\gamma/2}}.
\eeno
Similarly, for the term $\sum_{p \in \mathbb{Z}^{3}} \Gamma^{L}(\widehat{F^{\epsilon}_{R}}(k-p), \widehat{f^{L}}(p))$,
we have
\beno
\sum_{k \in \mathbb{Z}^{3}} \left( \int_{0}^{T} |\langle \sum_{p \in \mathbb{Z}^{3}} \Gamma^{L}(\widehat{F^{\epsilon}_{R}}(k-p), \widehat{f^{L}}(p)), P_{j}\mu^{\frac{1}{2}} \rangle|^{2} d t \right)^{\frac{1}{2}} \lesssim \|F^{\epsilon}_{R}\|_{L^{1}_{k}L^{\infty}_{T}L^{2}} \|f^{L}\|_{L^{1}_{k}L^{2}_{T}L^{2}_{0,\gamma/2}}.
\eeno
For the term
$\sum_{p \in \mathbb{Z}^{3}} \epsilon^{-1}(\Gamma^{\epsilon}-\Gamma^{L})( \widehat{f^{\epsilon}}(k-p), \widehat{f^{L}}(p))$, by Lemma \ref{estimate-operator-difference},
we have
\beno
\sum_{k \in \mathbb{Z}^{3}} \left( \int_{0}^{T} |\langle \sum_{p \in \mathbb{Z}^{3}} \epsilon^{-1}(\Gamma^{\epsilon}-\Gamma^{L})( \widehat{f^{\epsilon}}(k-p), \widehat{f^{L}}(p)), P_{m}\mu^{\frac{1}{2}} \rangle|^{2} d t \right)^{\frac{1}{2}}
 \lesssim  \|f^{\epsilon}\|_{L^{1}_{k}L^{\infty}_{T}H^{3}} \|f^{L}\|_{L^{1}_{k}L^{2}_{T}H^{3}_{9+\gamma/2}}.
\eeno
Similarly, for the term $\epsilon^{-1} (\mathcal{L}^{L}-\mathcal{L}^{\epsilon}) \widehat{f^{\epsilon}}(k)$, by Lemma \ref{estimate-operator-difference},
we have
\beno
\sum_{k \in \mathbb{Z}^{3}} \left( \int_{0}^{T} |\langle \epsilon^{-1} (\mathcal{L}^{L}-\mathcal{L}^{\epsilon}) \widehat{f^{\epsilon}}(k), P_{m}\mu^{\frac{1}{2}} \rangle|^{2} d t \right)^{\frac{1}{2}}
\lesssim  \|f^{\epsilon}\|_{L^{1}_{k}L^{2}_{T}H^{3}_{9+\gamma/2}}.
\eeno
In summary, we have
\ben
\|[a,b,c]\|_{L^{1}_{k}L^{2}_{T}} \leq C_{1}\|(\mathbb{I}-\mathbb{P})F^{\epsilon}_{R}\|_{L^{1}_{k}L^{2}_{T}L^{2}_{0,\gamma/2}} + C_{1}\|F^{\epsilon}_{R}\|_{L^{1}_{k}L^{\infty}_{T}L^{2}} +
C_{1}\|f^{\epsilon}\|_{L^{1}_{k}L^{\infty}_{T}L^{2}} \|F^{\epsilon}_{R}\|_{L^{1}_{k}L^{2}_{T}L^{2}_{0,\gamma/2}}
\nonumber \\+C_{1}\|F^{\epsilon}_{R}\|_{L^{1}_{k}L^{\infty}_{T}L^{2}} \|f^{L}\|_{L^{1}_{k}L^{2}_{T}L^{2}_{0,\gamma/2}}
+ C_{1}\|f^{\epsilon}\|_{L^{1}_{k}L^{\infty}_{T}H^{3}} \|f^{L}\|_{L^{1}_{k}L^{2}_{T}H^{3}_{9+\gamma/2}}
+C_{1}\|f^{\epsilon}\|_{L^{1}_{k}L^{2}_{T}H^{3}_{9+\gamma/2}}. \label{mirco-part}
\een

{\it{Step 2: Microscopic part.}} From \eqref{error-equation},
taking inner product with $\widehat{F^{\epsilon}_{R}}(k)$, similar to Eq.(3.7) in \cite{duan2021global}, with $F^{\epsilon}_{R}(0)=0$,
we have
\beno
\|F^{\epsilon}_{R}\|_{L^{1}_{k}L^{\infty}_{T}L^{2}} + \|(\mathbb{I}-\mathbb{P})F^{\epsilon}_{R}\|_{L^{1}_{k}L^{2}_{T}L^{2}_{0,\gamma/2}} \lesssim
\sum_{k \in \mathbb{Z}^{3}} \left( \int_{0}^{T} |\langle \widehat{\mathcal{H}}(k), \widehat{F^{\epsilon}_{R}}(k) \rangle| d t \right)^{\frac{1}{2}}.
\eeno

We estimate $\widehat{\mathcal{H}}(k)$ term by term as follows.
For the term $\sum_{p \in \mathbb{Z}^{3}} \Gamma^{\epsilon}(\widehat{f^{\epsilon}}(k-p), \widehat{F^{\epsilon}_{R}}(p))$, by Theorem \ref{Gamma-full-up-bound}, we have
\beno
&&\sum_{k \in \mathbb{Z}^{3}} \left( \int_{0}^{T} |\langle \sum_{p \in \mathbb{Z}^{3}} \Gamma^{\epsilon}(\widehat{f^{\epsilon}}(k-p), \widehat{F^{\epsilon}_{R}}(p)), \widehat{F^{\epsilon}_{R}}(k) \rangle| d t \right)^{\frac{1}{2}}
\\ &\lesssim& \eta \|F^{\epsilon}_{R}\|_{L^{1}_{k}L^{2}_{T}L^{2}_{0,\gamma/2}} + \frac{1}{4 \eta} \|f^{\epsilon}\|_{L^{1}_{k}L^{\infty}_{T}L^{2}} \|F^{\epsilon}_{R}\|_{L^{1}_{k}L^{2}_{T}L^{2}_{0,\gamma/2}}.
\eeno
Similarly, for the term $\sum_{p \in \mathbb{Z}^{3}} \Gamma^{L}(\widehat{F^{\epsilon}_{R}}(k-p), \widehat{f^{L}}(p))$,
we have
\beno
&&\sum_{k \in \mathbb{Z}^{3}} \left( \int_{0}^{T} |\langle \sum_{p \in \mathbb{Z}^{3}} \Gamma^{L}(\widehat{F^{\epsilon}_{R}}(k-p), \widehat{f^{L}}(p)), \widehat{F^{\epsilon}_{R}}(k) \rangle| d t \right)^{\frac{1}{2}}
\\ &\lesssim& \eta \|F^{\epsilon}_{R}\|_{L^{1}_{k}L^{2}_{T}L^{2}_{0,\gamma/2}} + \frac{1}{4 \eta} \|F^{\epsilon}_{R}\|_{L^{1}_{k}L^{\infty}_{T}L^{2}} \|f^{L}\|_{L^{1}_{k}L^{2}_{T}L^{2}_{0,\gamma/2}}.
\eeno
For the term
$\sum_{p \in \mathbb{Z}^{3}} \epsilon^{-1}(\Gamma^{\epsilon}-\Gamma^{L})( \widehat{f^{\epsilon}}(k-p), \widehat{f^{L}}(p))$, by Lemma \ref{estimate-operator-difference},
we have
\beno
&&\sum_{k \in \mathbb{Z}^{3}} \left( \int_{0}^{T} |\langle \sum_{p \in \mathbb{Z}^{3}} \epsilon^{-1}(\Gamma^{\epsilon}-\Gamma^{L})( \widehat{f^{\epsilon}}(k-p), \widehat{f^{L}}(p)), \widehat{F^{\epsilon}_{R}}(k) \rangle| d t \right)^{\frac{1}{2}}
\\ &\lesssim& \eta \|F^{\epsilon}_{R}\|_{L^{1}_{k}L^{2}_{T}L^{2}_{0,\gamma/2}} + \frac{1}{4 \eta} \|f^{\epsilon}\|_{L^{1}_{k}L^{\infty}_{T}H^{3}} \|f^{L}\|_{L^{1}_{k}L^{2}_{T}H^{3}_{9+\gamma/2}}.
\eeno
Finally, for the term $\epsilon^{-1} (\mathcal{L}^{L}-\mathcal{L}^{\epsilon}) \widehat{f^{\epsilon}}(k)$, by Lemma \ref{estimate-operator-difference},
we have
\beno
\sum_{k \in \mathbb{Z}^{3}} \left( \int_{0}^{T} |\langle \epsilon^{-1} (\mathcal{L}^{L}-\mathcal{L}^{\epsilon}) \widehat{f^{\epsilon}}(k), \widehat{F^{\epsilon}_{R}}(k) \rangle| d t \right)^{\frac{1}{2}}
\lesssim \eta \|F^{\epsilon}_{R}\|_{L^{1}_{k}L^{2}_{T}L^{2}_{0,\gamma/2}} + \frac{1}{4 \eta}  \|f^{\epsilon}\|_{L^{1}_{k}L^{2}_{T}H^{3}_{9+\gamma/2}}.
\eeno
Combining the above estimates gives
\ben
\|F^{\epsilon}_{R}\|_{L^{1}_{k}L^{\infty}_{T}L^{2}} + \|(\mathbb{I}-\mathbb{P})F^{\epsilon}_{R}\|_{L^{1}_{k}L^{2}_{T}L^{2}_{0,\gamma/2}} \leq
\eta \|F^{\epsilon}_{R}\|_{L^{1}_{k}L^{2}_{T}L^{2}_{0,\gamma/2}} +
\frac{C_{2}}{\eta}\|f^{\epsilon}\|_{L^{1}_{k}L^{\infty}_{T}L^{2}} \|F^{\epsilon}_{R}\|_{L^{1}_{k}L^{2}_{T}L^{2}_{0,\gamma/2}}
\nonumber \\+\frac{C_{2}}{\eta}\|F^{\epsilon}_{R}\|_{L^{1}_{k}L^{\infty}_{T}L^{2}} \|f^{L}\|_{L^{1}_{k}L^{2}_{T}L^{2}_{0,\gamma/2}}
+ \frac{C_{2}}{\eta}\|f^{\epsilon}\|_{L^{1}_{k}L^{\infty}_{T}H^{3}} \|f^{L}\|_{L^{1}_{k}L^{2}_{T}H^{3}_{9+\gamma/2}}
+ \frac{C_{2}}{\eta}\|f^{\epsilon}\|_{L^{1}_{k}L^{2}_{T}H^{3}_{9+\gamma/2}}. \label{marco-part}
\een

{\it{Step 3: Micro-Macro components.}}
The combination $\eqref{mirco-part} \times \frac{1}{2C_{1}} + \eqref{marco-part}$ gives
\beno
&&\frac{1}{2}\|F^{\epsilon}_{R}\|_{L^{1}_{k}L^{\infty}_{T}L^{2}} + \frac{1}{2}\|(\mathbb{I}-\mathbb{P})F^{\epsilon}_{R}\|_{L^{1}_{k}L^{2}_{T}L^{2}_{0,\gamma/2}} + \frac{1}{2C_{1}} \|[a,b,c]\|_{L^{1}_{k}L^{2}_{T}}
\\ &\leq& \eta \|F^{\epsilon}_{R}\|_{L^{1}_{k}L^{2}_{T}L^{2}_{0,\gamma/2}}
 +
(\frac{C_{2}}{\eta} + \frac{1}{2})\|f^{\epsilon}\|_{L^{1}_{k}L^{\infty}_{T}L^{2}} \|F^{\epsilon}_{R}\|_{L^{1}_{k}L^{2}_{T}L^{2}_{0,\gamma/2}}
+(\frac{C_{2}}{\eta} + \frac{1}{2})\|F^{\epsilon}_{R}\|_{L^{1}_{k}L^{\infty}_{T}L^{2}} \|f^{L}\|_{L^{1}_{k}L^{2}_{T}L^{2}_{0,\gamma/2}}
 \\ &&+ (\frac{C_{2}}{\eta} + \frac{1}{2})\|f^{\epsilon}\|_{L^{1}_{k}L^{\infty}_{T}H^{3}} \|f^{L}\|_{L^{1}_{k}L^{2}_{T}H^{3}_{9+\gamma/2}}
+ (\frac{C_{2}}{\eta} + \frac{1}{2})\|f^{\epsilon}\|_{L^{1}_{k}L^{2}_{T}H^{3}_{9+\gamma/2}}.
\eeno
Note that
\beno
\frac{1}{2}\|(\mathbb{I}-\mathbb{P})F^{\epsilon}_{R}\|_{L^{1}_{k}L^{2}_{T}L^{2}_{0,\gamma/2}} + \frac{1}{2C_{1}} \|[a,b,c]\|_{L^{1}_{k}L^{2}_{T}}
\geq c_{1} \|F^{\epsilon}_{R}\|_{L^{1}_{k}L^{2}_{T}L^{2}_{0,\gamma/2}}.
\eeno
Then by choosing $\eta = \frac{c_{1}}{2}$, we have
\beno
&&\frac{1}{2}\|F^{\epsilon}_{R}\|_{L^{1}_{k}L^{\infty}_{T}L^{2}} +  \frac{c_{1}}{2}\|F^{\epsilon}_{R}\|_{L^{1}_{k}L^{2}_{T}L^{2}_{0,\gamma/2}}
 \\ &\leq&
(\frac{2C_{2}}{c_{1}} + \frac{1}{2})\|f^{\epsilon}\|_{L^{1}_{k}L^{\infty}_{T}L^{2}} \|F^{\epsilon}_{R}\|_{L^{1}_{k}L^{2}_{T}L^{2}_{0,\gamma/2}}
+(\frac{2C_{2}}{c_{1}} + \frac{1}{2})\|F^{\epsilon}_{R}\|_{L^{1}_{k}L^{\infty}_{T}L^{2}} \|f^{L}\|_{L^{1}_{k}L^{2}_{T}L^{2}_{0,\gamma/2}}
\\&&+ (\frac{2C_{2}}{c_{1}} + \frac{1}{2})\|f^{\epsilon}\|_{L^{1}_{k}L^{\infty}_{T}H^{3}} \|f^{L}\|_{L^{1}_{k}L^{2}_{T}H^{3}_{9+\gamma/2}}
+ (\frac{2C_{2}}{c_{1}} + \frac{1}{2})\|f^{\epsilon}\|_{L^{1}_{k}L^{2}_{T}H^{3}_{9+\gamma/2}}.
\eeno

Under the smallness assumption on $\|f_{0}\|_{L^{1}_{k}L^{2}}$, by \eqref{lowest-regularity-bounded-by-initial},
we have
\beno
&&\|F^{\epsilon}_{R}\|_{L^{1}_{k}L^{\infty}_{T}L^{2}} + \|F^{\epsilon}_{R}\|_{L^{1}_{k}L^{2}_{T}L^{2}_{0,\gamma/2}}
\\&\lesssim&
\|f^{\epsilon}\|_{L^{1}_{k}L^{2}_{T}H^{3}_{9+\gamma/2}} + \|f^{\epsilon}\|_{L^{1}_{k}L^{\infty}_{T}H^{3}} \|f^{L}\|_{L^{1}_{k}L^{2}_{T}H^{3}_{9+\gamma/2}}
\lesssim P_{3}(\|f_{0}\|_{0,3,9})(1+P_{3}(\|f_{0}\|_{0,3,9})),
\eeno
where we have used  the propagation estimates  \eqref{general-propagation-f-epsilon} and \eqref{general-propagation-f-epsilon-landau} with $m=0,n=3,l=9$. Note that
 $F^{\epsilon}_{R} =  \epsilon^{-1} (f^{\epsilon}-f^{L})$ and this gives \eqref{global-in-time-error}.
\end{proof}

\section{Propagation of moment and decay transition} \label{decay-rate}
With Lemma \ref{general-weight-upper-bound}
% (which is stronger than Lemma 4.1 in \cite{duan2021global})
and the proof of Theorem 2.1 in \cite{duan2021global}, we have the propagation of moment
stated in the following theorem.
\begin{thm}\label{propagation-of-moment} Under the assumptions in Theorem \ref{asymptotic-result},
	let $l,q \geq 0, -2 \leq \gamma \leq 0$. There is a constant $\delta_{1}>0$, such that if
$\|W_{l,q}f_{0}\|_{L^{1}_{k}L^{2}} \leq \delta_{1},$
then the solution $f^{\epsilon}$ to the Boltzmann equation \eqref{Cauchy-linearizedBE-grazing} satisfies
\beno
\|W_{l,q}f^{\epsilon}\|_{L^{1}_{k}L^{\infty}_{T}L^{2}} \lesssim \|W_{l,q}f_{0}\|_{L^{1}_{k}L^{2}}.
\eeno
\end{thm}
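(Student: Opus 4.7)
The plan is to mimic the energy method of \cite{duan2021global} (Theorem 2.1 there), but carried out in the weighted $L^1_k L^\infty_T L^2$ space with weight $W_{l,q}$. I would take the Fourier transform of \eqref{Cauchy-linearizedBE-grazing} in $x$ at frequency $k\in\mathbb{Z}^3$, obtaining
\begin{equation*}
\partial_t \widehat{f^\epsilon}(k) + \mathrm{i} v\cdot k\, \widehat{f^\epsilon}(k) + \mathcal{L}^\epsilon \widehat{f^\epsilon}(k) = \sum_{p\in \mathbb{Z}^3}\Gamma^\epsilon(\widehat{f^\epsilon}(k-p),\widehat{f^\epsilon}(p)),
\end{equation*}
and then take the $L^2_v$ inner product with $W_{l,q}^2\,\widehat{f^\epsilon}(k)$. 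The transport term vanishes and we get a differential identity for $|W_{l,q}\widehat{f^\epsilon}(k)|_{L^2}^2$. The linear part is rewritten as
$\langle \mathcal{L}^\epsilon \widehat{f^\epsilon}, W_{l,q}^2\widehat{f^\epsilon}\rangle = \langle \mathcal{L}^\epsilon (W_{l,q}\widehat{f^\epsilon}), W_{l,q}\widehat{f^\epsilon}\rangle + \langle [W_{l,q},\mathcal{L}^\epsilon]\widehat{f^\epsilon}, W_{l,q}\widehat{f^\epsilon}\rangle$.

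For the first piece I invoke the spectral gap of Theorem \ref{micro-dissipation} on $W_{l,q}\widehat{f^\epsilon}$ to produce a coercive term $\lambda_0 |(\mathbb{I}-\mathbb{P})(W_{l,q}\widehat{f^\epsilon})|_{\epsilon,\gamma/2}^2$; the commutator $\langle [W_{l,q},\mathcal{L}^\epsilon]\widehat f^\epsilon,W_{l,q}\widehat f^\epsilon\rangle$ is controlled by $|W_{l,q}\widehat{f^\epsilon}|_{L^2_{\gamma/2}}^2$ via Lemma \ref{commutator-linear}; this weighted-$L^2_{\gamma/2}$ norm is a fraction of $|\cdot|_{\epsilon,\gamma/2}^2$ plus a $\mu^{1/8}$-localized piece, the latter being absorbed by the macroscopic conservation-law estimate (as in \cite{duan2021global}). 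For the nonlinear term, I use Lemma \ref{general-weight-upper-bound} to obtain
\begin{equation*}
|\langle \Gamma^\epsilon(\widehat{f^\epsilon}(k-p),\widehat{f^\epsilon}(p)),\, W_{l,q}^2 \widehat{f^\epsilon}(k)\rangle| \lesssim |W_{l,q}\widehat{f^\epsilon}(k-p)|_{L^2}\,|W_{l,q}\widehat{f^\epsilon}(p)|_{\epsilon,\gamma/2}\,|W_{l,q}\widehat{f^\epsilon}(k)|_{\epsilon,\gamma/2}.
\end{equation*}

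Next I integrate on $[0,T]$, take the square root, and sum in $k\in\mathbb{Z}^3$. The convolution-in-$k$ structure turns the nonlinear term into $\|W_{l,q}f^\epsilon\|_{L^1_k L^\infty_T L^2}\,\|W_{l,q}f^\epsilon\|_{L^1_k L^2_T L^2_{\epsilon,\gamma/2}}^2$, which is the defining advantage of the Wiener-algebra setting. Combining with a macro-micro decomposition (i.e. complementing the micro dissipation by a macro bound on $[a,b,c](k)$ coming from the conservation laws, exactly as done for the unweighted problem in Theorem \ref{lowest-order-propagation} and following the argument in \cite{duan2021global}), one obtains
\begin{equation*}
\|W_{l,q}f^\epsilon\|_{L^1_k L^\infty_T L^2} + \|W_{l,q}f^\epsilon\|_{L^1_k L^2_T L^2_{\epsilon,\gamma/2}} \leq C\|W_{l,q}f_0\|_{L^1_k L^2} + C\|W_{l,q}f^\epsilon\|_{L^1_k L^\infty_T L^2}\|W_{l,q}f^\epsilon\|_{L^1_k L^2_T L^2_{\epsilon,\gamma/2}}.
\end{equation*}
A standard continuity argument under the smallness $\|W_{l,q}f_0\|_{L^1_k L^2}\leq \delta_1$ (with $\delta_1\leq \delta_0$ so that the unweighted result applies) then closes the estimate and yields the conclusion.

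The main obstacle is handling the case $q>0$: Lemma \ref{commutator-linear} for exponential weights requires $-2\leq\gamma\leq 0$ (consistent with the hypothesis) and its proof is delicate because $(W_{l,q}^\prime/W_{l,q})$ grows linearly in $v$, which is why the extra $\langle v_*^\prime\rangle^2$ factor in the key pointwise bound \eqref{weight-to-vstarprime} is needed to keep $\gamma+2\geq 0$. Also, one must verify that the macro estimate of $[a,b,c](k)$ transfers to the weighted norm; this follows because $W_{l,q}\mathbb{P}g$ sits in a finite-dimensional subspace where the polynomial/exponential weights are absorbed by $\mu^{1/2}$, so the macro component is comparable to the unweighted one and is already under control by \eqref{lowest-regularity-bounded-by-initial}.
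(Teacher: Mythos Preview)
Your proposal is correct and follows essentially the same route as the paper: the paper's proof is simply to invoke Lemma~\ref{general-weight-upper-bound} as the weighted trilinear bound and then rerun the energy method of Theorem~2.1 in \cite{duan2021global} in the $L^1_kL^\infty_TL^2$ framework, which is precisely the scheme you outline (Fourier in $x$, weighted $L^2_v$ pairing, micro coercivity from Theorem~\ref{micro-dissipation}, commutator control from Lemma~\ref{commutator-linear}, macro part from the conservation-law estimate, and closure by smallness). Your remark on why the restriction $-2\le\gamma\le 0$ enters through \eqref{weight-to-vstarprime} is also on point.
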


We are now ready  to prove Theorem \ref{decay-rate-consistency}.

\begin{proof}[Proof of Theorem \ref{decay-rate-consistency}] Consider
\ben \label{solution-into-equqation}
\partial_{t} f^{\epsilon} + v \cdot \nabla_{x} f^{\epsilon} + \mathcal{L}^{\epsilon} f^{\epsilon} = \Gamma^{\epsilon}(f^{\epsilon}, f^{\epsilon}).
\een
Recall $T_{\epsilon}=(\frac{1}{\epsilon})^{\frac{2(1-s)}{|\gamma+2s|}}, \kappa=\frac{1}{1+|\gamma+2s|}$,
and the function $A_{\epsilon}(t)$ defined in \eqref{auxiliary-function-t} as
\beno
A_{\epsilon}(t) = \zeta(T_{\epsilon}^{-1}t) t + (1-\zeta(T_{\epsilon}^{-1}t))(\frac{t}{\epsilon^{2(1-s)}})^{\kappa}.
\eeno
Here the function $\zeta$ is defined in \eqref{zeta-property}. Multiplying \eqref{solution-into-equqation} by $g(t):= \exp(\lambda A_{\epsilon}(t))$, with $h^{\epsilon}(t) := g(t) f^{\epsilon}(t)$, we have
\beno
\partial_{t} h^{\epsilon} + v \cdot \nabla_{x} h^{\epsilon} + \mathcal{L}^{\epsilon} h^{\epsilon} = \Gamma^{\epsilon}(f^{\epsilon},h^{\epsilon}) + \lambda A_{\epsilon}^{\prime}(t) h.
\eeno
Similar to Eq.(6.3) in \cite{duan2021global},
we get
\beno
&&\sum_{k \in \mathbb{Z}^{3}} \sup _{0 \leq t \leq T}\|\widehat{h}(t, k)\|_{L^{2}} + \sqrt{\lambda_{0}}\sum_{k \in \mathbb{Z}^{3}}\left(\int_{0}^{T}|\widehat{h}(t, k)|_{\epsilon,\gamma/2}^{2} d t\right)^{\frac{1}{2}}  \\
\\&\lesssim& \sum_{k \in \mathbb{Z}^{3}} \|\widehat{f}_{0}(k)\|_{L^{2}} + \sqrt{\lambda} \sum_{k \in \mathbb{Z}^{3}}\left(\int_{0}^{T} A_{\epsilon}^{\prime}(t) \|\widehat{h}(t, k)\|_{L^{2}}^{2} d t\right)^{\frac{1}{2}} .
\eeno

Now we use dissipation and propagation of moment to cancel the last term. Note $0 \leq A_{\epsilon}^{\prime}(t) \lesssim 1$ and
\beno
|\widehat{h}(t, k)|_{\epsilon,\gamma/2}^{2} \geq |\langle v \rangle^{1+\gamma/2} \zeta \widehat{h}(t, k)|_{L^{2}}^{2} \geq |\zeta \widehat{h}(t, k)|_{L^{2}}^{2},
\eeno
which gives for $\lambda/\lambda_{0} \ll 1$,
\ben \nonumber
&&\sum_{k \in \mathbb{Z}^{3}} \sup _{0 \leq t \leq T}\|\widehat{h}(t, k)\|_{L^{2}} + \sqrt{\lambda_{0}}\sum_{k \in \mathbb{Z}^{3}}\left(\int_{0}^{T}|\widehat{h}(t, k)|_{\epsilon,\gamma/2}^{2} d t\right)^{\frac{1}{2}}
\\&\lesssim& \sum_{k \in \mathbb{Z}^{3}} \|\widehat{f}_{0}(k)\|_{L^{2}} + \sqrt{\lambda} \sum_{k \in \mathbb{Z}^{3}}\left(\int_{0}^{T} A_{\epsilon}^{\prime}(t) \|(1-\zeta)\widehat{h}(t, k)\|_{L^{2}}^{2} d t\right)^{\frac{1}{2}} . \label{starting-point-of-decay}
\een

We claim that for any $T>0$,
\ben \label{conclusion}
\sum_{k \in \mathbb{Z}^{3}} \sup _{0 \leq t \leq T}\|\widehat{h}(t, k)\|_{L^{2}} + \sqrt{\lambda_{0}} \sum_{k \in \mathbb{Z}^{3}}\left(\int_{0}^{T}|\widehat{h}(t, k)|_{\epsilon,\gamma/2}^{2} d t\right)^{\frac{1}{2}}  \lesssim \|W_{0,q}f_{0}\|_{L^{1}_{k}L^{2}}.
\een

Note that the support of $(1-\zeta)$ is $|v| \geq \frac{1}{2\epsilon}$. We prove the Claim by considering three cases.

{\it { Case 1, $T \leq \frac{1}{2\epsilon}$.}} In this case,   $t \leq T \leq \frac{1}{2\epsilon} \leq |v|$. With the fact
that  $A_{\epsilon}(t) \leq 2t$,
%(which can be easily proved from its definition), we
by the propagation of moment in Theorem \ref{propagation-of-moment}, we have
\ben
&& \sum_{k \in \mathbb{Z}^{3}}\left(\int_{0}^{T} A_{\epsilon}^{\prime}(t) \|(1-\zeta)\widehat{h}(t, k)\|_{L^{2}}^{2} d t\right)^{\frac{1}{2}}
\nonumber \\ &\lesssim&
\sum_{k \in \mathbb{Z}^{3}}\left(\int_{0}^{T} \exp(4\lambda t-2qt)\|(1-\zeta)\exp(q\langle v \rangle)\widehat{f}(t, k)\|_{L^{2}}^{2} d t\right)^{\frac{1}{2}}
\nonumber \\ &\lesssim& C(q,\lambda)
\sum_{k \in \mathbb{Z}^{3}} \|(1-\zeta)\exp(q\langle v \rangle)\widehat{f}(t, k)\|_{L^{\infty}_{T}L^{2}}^{2}
\lesssim C(q,\lambda) \|\exp(q\langle v \rangle)f_{0}\|_{L^{1}_{k}L^{2}}, \label{T-case1}
\een
because $q>2\lambda$. Plugging \eqref{T-case1} into \eqref{starting-point-of-decay},
we have \eqref{conclusion} for $T \leq \frac{1}{2\epsilon}$.

{\it { Case 2, $\frac{1}{2\epsilon} \leq T \leq T_{\epsilon}$.}} In view of \eqref{T-case1}, we only need to consider
\beno
\sum_{k \in \mathbb{Z}^{3}}\left(\int_{\frac{1}{2\epsilon}}^{T} A_{\epsilon}^{\prime}(t) \|(1-\zeta)\widehat{h}(t, k)\|_{L^{2}}^{2} d t\right)^{\frac{1}{2}}.
\eeno

The domain of $\frac{1}{2 \epsilon} \leq t \leq T, v \in \mathbb{R}^{3}$ can be divided into two parts:
\beno
\mathrm{D}_{1} :=\{ \langle v \rangle \leq T_{\epsilon}\}, \quad \mathrm{D}_{2} :=\{ \langle v \rangle > T_{\epsilon}\}.
\eeno
In $\mathrm{D}_{1}$, recalling $T_{\epsilon}=(\frac{1}{\epsilon})^{\frac{2(1-s)}{|\gamma+2s|}}$, we have
$\langle v \rangle^{\gamma+2s} \epsilon^{2(1-s)} \geq 1$ so that
\ben
\sqrt{\lambda} \sum_{k \in \mathbb{Z}^{3}}\left(\int_{\frac{1}{2\epsilon}}^{T}  A_{\epsilon}^{\prime}(t) \|\mathrm{1}_{\mathrm{D}_{1}}(1-\zeta)\widehat{h}(t, k)\|_{L^{2}}^{2} d t\right)^{\frac{1}{2}}
 \lesssim \sqrt{\lambda} \sum_{k \in \mathbb{Z}^{3}}\left(\int_{\frac{1}{2\epsilon}}^{T}   \|\widehat{h}(t, k)\|_{\epsilon,\gamma/2}^{2} d t\right)^{\frac{1}{2}} . \label{T-case2-dissipation}
\een
Note that by taking $\lambda$ small enough such that $\lambda/\lambda_{0} \ll 1$,  this can be absorbed by the dissipation. In $\mathrm{D}_{2}$, since $\langle v \rangle > T_{\epsilon} \geq T \geq t$, similar to \eqref{T-case1}, by using Theorem \ref{propagation-of-moment},
we have
\ben
&& \sum_{k \in \mathbb{Z}^{3}}\left(\int_{\frac{1}{2 \epsilon}}^{T} A_{\epsilon}^{\prime}(t) \|\mathrm{1}_{\mathrm{D}_{2}}(1-\zeta)\widehat{h}(t, k)\|_{L^{2}}^{2} d t\right)^{\frac{1}{2}}
\nonumber \\ &\lesssim&
\sum_{k \in \mathbb{Z}^{3}}\left(\int_{\frac{1}{2 \epsilon}}^{T} \exp(4\lambda t-2qt)\|(1-\zeta)\exp(q\langle v \rangle)\widehat{f}(t, k)\|_{L^{2}}^{2} d t\right)^{\frac{1}{2}}
 \lesssim C(q,\lambda) \|\exp(q\langle v \rangle)f_{0}\|_{L^{1}_{k}L^{2}}. \label{T-case2-v-for-t}
\een
Plugging \eqref{T-case2-dissipation} and \eqref{T-case2-v-for-t} into \eqref{starting-point-of-decay},
we have \eqref{conclusion} for $\frac{1}{2\epsilon} \leq T \leq T_{\epsilon}$.

{\it {Case 3, $T > T_{\epsilon}$.}}
In view of \eqref{T-case1}, \eqref{T-case2-dissipation} and \eqref{T-case2-v-for-t}, we only need to consider
\beno
\sum_{k \in \mathbb{Z}^{3}}\left(\int_{T_{\epsilon}}^{T} A_{\epsilon}^{\prime}(t) \|(1-\zeta)\widehat{h}(t, k)\|_{L^{2}}^{2} d t\right)^{\frac{1}{2}} ,
\eeno
where $A_{\epsilon}^{\prime}(t)$ gives some decay since $0< \kappa <1$,
\beno
A_{\epsilon}^{\prime}(t) = \kappa (\frac{t}{\epsilon^{2(1-s)}})^{\kappa-1} \frac{1}{\epsilon^{2(1-s)}}.
\eeno
Note that when $t \geq T_{\epsilon}, A_{\epsilon}(t)=(\frac{t}{\epsilon^{2(1-s)}})^{\kappa}$.

Divide the domain  $T_{\epsilon} \leq t \leq T, v \in \mathbb{R}^{3}$  into two parts,
\beno
\mathrm{D}_{3} :=\{ \langle v \rangle^{\gamma+2s} \geq (\frac{t}{\epsilon^{2(1-s)}})^{\kappa-1} \}, \quad \mathrm{D}_{4} :=\{ \langle v \rangle^{\gamma+2s} < (\frac{t}{\epsilon^{2(1-s)}})^{\kappa-1} \}.
\eeno
In $\mathrm{D}_{3}$, we have
\beno
\langle v \rangle^{\gamma+2s} \frac{1}{\epsilon^{2(1-s)}} \geq (\frac{t}{\epsilon^{2(1-s)}})^{\kappa-1} \frac{1}{\epsilon^{2(1-s)}} \geq A_{\epsilon}^{\prime}(t)
\eeno
so that
\ben
\sqrt{\lambda} \sum_{k \in \mathbb{Z}^{3}}\left(\int_{T_{\epsilon}}^{T}  A_{\epsilon}^{\prime}(t) \|\mathrm{1}_{\mathrm{D}_{3}}(1-\zeta)\widehat{h}(t, k)\|_{L^{2}}^{2} d t\right)^{\frac{1}{2}}
\lesssim \sqrt{\lambda} \sum_{k \in \mathbb{Z}^{3}}\left(\int_{T_{\epsilon}}^{T}   \|\widehat{h}(t, k)\|_{\epsilon,\gamma/2}^{2} d t\right)^{\frac{1}{2}} . \label{T-case3-dissipation}
\een
Note that by taking $\lambda$ small enough such that $\lambda/\lambda_{0} \ll 1$,  this  can be absorbed by the dissipation.
In $\mathrm{D}_{4}$, since $\langle v \rangle > (\frac{t}{\epsilon^{2(1-s)}})^{\kappa}$, similar to \eqref{T-case1},   by using Theorem \ref{propagation-of-moment},
we have
\ben
&&\sum_{k \in \mathbb{Z}^{3}}\left(\int_{T_{\epsilon}}^{T} A_{\epsilon}^{\prime}(t) \|\mathrm{1}_{\mathrm{D}_{4}}(1-\zeta)\widehat{h}(t, k)\|_{L^{2}}^{2} d t\right)^{\frac{1}{2}}
\nonumber \\ &=&
\sum_{k \in \mathbb{Z}^{3}}\left(\int_{T_{\epsilon}}^{T} A_{\epsilon}^{\prime}(t) \exp(2\lambda(\frac{t}{\epsilon^{2(1-s)}})^{\kappa}) \|\mathrm{1}_{\mathrm{D}_{4}}(1-\zeta)\widehat{f}(t, k)\|_{L^{2}}^{2} d t\right)^{\frac{1}{2}}
\nonumber \\ &\lesssim&
\sum_{k \in \mathbb{Z}^{3}}\left(\int_{T_{\epsilon}}^{T} A_{\epsilon}^{\prime}(t) \exp(2(\lambda-q)(\frac{t}{\epsilon^{2(1-s)}})^{\kappa})\|(1-\zeta)\exp(q\langle v \rangle)\widehat{f}(t, k)\|_{L^{2}}^{2} d t\right)^{\frac{1}{2}}
\nonumber \\ &\lesssim& C(q,\lambda,\kappa)
\sum_{k \in \mathbb{Z}^{3}} \|(1-\zeta)\exp(q\langle v \rangle)\widehat{f}(t, k)\|_{L^{\infty}_{T}L^{2}}^{2}
 \lesssim C(q,\lambda,\kappa) \|\exp(q\langle v \rangle)f_{0}\|_{L^{1}_{k}L^{2}}, \label{T-case3-v-for-t}
\een
where for $0<\kappa <1,  \lambda< q$, we have used the following estimate
\beno
\int_{T_{\epsilon}}^{T} A_{\epsilon}^{\prime}(t) \exp(2(\lambda-q)(\frac{t}{\epsilon^{2(1-s)}})^{\kappa})  d t
&=& \int_{T_{\epsilon}}^{T} \kappa (\frac{t}{\epsilon^{2(1-s)}})^{\kappa-1} \frac{1}{\epsilon^{2(1-s)}} \exp(2(\lambda-q)(\frac{t}{\epsilon^{2(1-s)}})^{\kappa})  d t
\\&\leq& \int_{0}^{\infty} z^{\kappa-1} \exp(2(\lambda-q)z^{\kappa})  d t \lesssim C(q,\lambda,\kappa).
\eeno
Plugging \eqref{T-case3-dissipation} and \eqref{T-case3-v-for-t} into \eqref{starting-point-of-decay}, we have \eqref{conclusion} for $T > T_{\epsilon}$.

Since $h^{\epsilon}(t) := \exp(\lambda A_{\epsilon}(t)) f^{\epsilon}(t)$, by \eqref{conclusion}, we get
\beno
\|f^{\epsilon}(t)\|_{L^{1}_{k}L^{2}} \lesssim \exp(-\lambda A_{\epsilon}(t))\|\exp(q\langle v \rangle)f_{0}\|_{L^{1}_{k}L^{2}},
\eeno
By the definition of $A_{\epsilon}$, we obtain \eqref{nice-decay-connection}.
\end{proof}

\section{Appendix} \label{appendix}
\begin{lem}[Lemma 4.1 of \cite{he2018sharp}] \label{aniso-from-He-Sharp-Bounds} For any function $f$ defined on $\mathbb{S}^2$, it holds
\beno
(1-s)\epsilon^{2s-2}\int_{\mathbb{S}^2\times\mathbb{S}^2}\frac{|f(\sigma)-f(\tau)|^{2}}{|\sigma-\tau|^{2+2s}}\mathrm{1}_{|\sigma-\tau| \leq \epsilon} d\sigma d\tau + |f|^{2}_{L^{2}(\mathbb{S}^{2})}
\sim |W^{\epsilon}((-\Delta_{\mathbb{S}^{2}})^{\frac{1}{2}})f|^{2}_{L^{2}(\mathbb{S}^{2})} + |f|^{2}_{L^{2}(\mathbb{S}^{2})}.
\eeno
As consequence, for any function $f$ defined on $\mathbb{R}^3$, it holds
\ben\label{anisotropic-R-3}
(1-s) \epsilon^{2s-2}\int_{\mathbb{R}_{+}\times\mathbb{S}^2\times\mathbb{S}^2}\frac{|f(r\sigma)-f(r\tau)|^{2}}{|\sigma-\tau|^{2+2s}} \mathrm{1}_{|\sigma-\tau| \leq \epsilon} r^{2}d\sigma d\tau dr + |f|^{2}_{L^{2}}
\sim |W^{\epsilon}((-\Delta_{\mathbb{S}^{2}})^{\frac{1}{2}})f|^{2}_{L^{2}} + |f|^{2}_{L^{2}}.
\een
\end{lem}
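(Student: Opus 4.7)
The lemma is a quoted result (Lemma 4.1 of \cite{he2018sharp}), but let me outline how I would prove the equivalence. The key observation is that both sides are rotationally invariant, so both diagonalize in the spherical harmonic basis $\{Y_l^m\}$. My plan is to reduce the equivalence to matching the eigenvalues term-by-term.

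First I would expand $f=\sum_{l,m} f_l^m Y_l^m$. Since the kernel $|\sigma-\tau|^{-2-2s}\mathrm{1}_{|\sigma-\tau|\le\epsilon}$ depends only on $\sigma\cdot\tau$, the Funk--Hecke formula gives that the bilinear form on the LHS acts diagonally: it equals $\sum_{l,m} \mu_l^\epsilon |f_l^m|^2$, where, writing $|\sigma-\tau|=2\sin(\alpha/2)$ and $d\tau=\sin\alpha\,d\alpha\,d\phi$,
\begin{equation*}
\mu_l^\epsilon \;=\; (1-s)\epsilon^{2s-2}\cdot 2\pi\int_0^{2\arcsin(\epsilon/2)} \frac{1-P_l(\cos\alpha)}{(2\sin(\alpha/2))^{2+2s}}\,\sin\alpha\,d\alpha,
\end{equation*}
with $P_l$ the Legendre polynomial. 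Simultaneously, the RHS equals $\sum_{l,m}\bigl((W^\epsilon(\sqrt{l(l+1)}))^2+1\bigr)|f_l^m|^2$ by the spectral definition of $W^\epsilon((-\Delta_{\mathbb{S}^2})^{1/2})$. Thus it suffices to establish $\mu_l^\epsilon+1 \sim (W^\epsilon(\sqrt{l(l+1)}))^2+1$ uniformly in $l$ and $\epsilon$.

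Next I would analyze $\mu_l^\epsilon$ in two regimes determined by the ratio $l\epsilon$. In the \emph{low-frequency} regime $l\lesssim 1/\epsilon$, the Taylor bound $0\le 1-P_l(\cos\alpha)\lesssim \min\{l(l+1)\alpha^2,1\}$ applies uniformly on the integration interval $\alpha\lesssim\epsilon$, so that, using $l(l+1)\alpha^2\le l(l+1)\epsilon^2\lesssim 1$, the integrand is $\lesssim l(l+1)\alpha^{-2s}$; carrying out the integral gives $\mu_l^\epsilon\sim l(l+1)\sim \langle\sqrt{l(l+1)}\rangle^2$, matching the definition of $W^\epsilon$ in the range $|y|\le 1/(2\epsilon)$ via \eqref{lower-bound-when-small-cf}. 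In the \emph{high-frequency} regime $l\gtrsim 1/\epsilon$, I would split the $\alpha$-integral at the scale $\alpha\sim 1/l$: on $\alpha\lesssim 1/l$ one still has $1-P_l(\cos\alpha)\sim l^2\alpha^2$ and the integral contributes $\epsilon^{2s-2} l^{2s}$, while on $1/l\lesssim \alpha\lesssim \epsilon$ one uses the classical Hilb/Mehler--Heine asymptotic $|P_l(\cos\alpha)|\lesssim (l\alpha)^{-1/2}$ together with an oscillation cancellation to bound the integrand so that this region contributes at most the same order. This yields $\mu_l^\epsilon\sim \epsilon^{2s-2}l^{2s}$, matching the definition of $W^\epsilon$ in the range $|y|\ge 1/\epsilon$ via \eqref{lower-bound-when-large-cf}. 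The intermediate range is interpolated by monotonicity.

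Finally, the $\mathbb{R}^3$ statement \eqref{anisotropic-R-3} follows immediately by writing $v=r\sigma$ in polar coordinates and applying the spherical estimate fiber-wise on each sphere of radius $r$, then integrating against $r^2\,dr$; the operator $W^\epsilon((-\Delta_{\mathbb{S}^2})^{1/2})$ commutes with the radial variable, so Plancherel in spherical harmonics at each fixed $r$ converts the norm into the one-dimensional expression already controlled. The main technical obstacle is the large-$l$ asymptotic of $1-P_l(\cos\alpha)$ for $\alpha$ not tiny: one must exploit the oscillatory cancellation of $P_l$ rather than the trivial bound $|P_l|\le 1$, otherwise one loses a factor of $\log(l\epsilon)$ and the sharpness of the equivalence (in particular the precise prefactor $(1-s)\epsilon^{2s-2}$) is destroyed. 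This sharp analysis is exactly the content of \cite{he2018sharp}, which I would invoke directly.
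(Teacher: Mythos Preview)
The paper does not prove this lemma; it is stated in the Appendix as a direct citation of Lemma 4.1 in \cite{he2018sharp}, so there is no in-paper argument to compare against. Your outline via spherical-harmonic diagonalization and Funk--Hecke is the natural route and is essentially what the cited reference does: reduce to the eigenvalue comparison $\mu_l^\epsilon+1\sim (W^\epsilon(\sqrt{l(l+1)}))^2+1$ and treat the regimes $l\lesssim 1/\epsilon$ and $l\gtrsim 1/\epsilon$ separately.

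One small correction: your claim that the trivial bound $|P_l|\le 1$ loses a factor $\log(l\epsilon)$ is not accurate here. In the high-frequency regime, on the range $1/l\lesssim\alpha\lesssim\epsilon$ the trivial bound $1-P_l(\cos\alpha)\le 2$ already yields $\int_{1/l}^{\epsilon}\alpha^{-1-2s}\,d\alpha\sim l^{2s}/s$, which after the prefactor $(1-s)\epsilon^{2s-2}$ gives exactly the order $\epsilon^{2s-2}l^{2s}$; a logarithm would only appear at $s=0$. Since the paper works with fixed $s\in(0,1)$ and allows the equivalence constants to depend on $s$ (cf.\ Proposition \ref{symbol}), no oscillatory cancellation of $P_l$ is needed. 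The lower bound in that regime comes cleanly from the inner piece $\alpha\lesssim c/l$ where $1-P_l(\cos\alpha)\sim l^2\alpha^2$, as you indicate.
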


\begin{rmk} \label{another-factor-equivalent}
Lemma \ref{aniso-from-He-Sharp-Bounds} also  holds  if we replace $\mathrm{1}_{|\sigma-\tau| \leq \epsilon}$ by $\mathrm{1}_{|\sigma-\tau| \leq 2\epsilon}$.
\end{rmk}

Similar to Lemma 5.8 in \cite{he2018sharp}, we have
\begin{lem}\label{comWep} Let $\mathcal{F}$ be Fourier transform operator, then $\mathcal{F}W^\epsilon((-\triangle_{\mathbb{S}^2})^{\frac{1}{2}})=
W^\epsilon((-\triangle_{\mathbb{S}^2})^{\frac{1}{2}})\mathcal{F}$.
\end{lem}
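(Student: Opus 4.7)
The plan is to exploit the fact that $W^\epsilon$ is a radial symbol on $\mathbb{R}^3$, so the operator $W^\epsilon((-\Delta_{\mathbb{S}^2})^{1/2})$ acts diagonally on the spherical harmonic decomposition of a function on $\mathbb{R}^3$ written in spherical coordinates. Since the Fourier transform on $\mathbb{R}^3$ preserves the spherical harmonic index $(l,m)$ (transforming only the radial factor via a Hankel-type integral), the two operators must commute.

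Concretely, for $f \in \mathcal{S}(\mathbb{R}^3)$ write $v = r\sigma$ with $r\geq 0$ and $\sigma\in\mathbb{S}^2$ and expand
\[
f(r\sigma) = \sum_{l=0}^\infty \sum_{m=-l}^{l} f_l^m(r)\, Y_l^m(\sigma), \qquad f_l^m(r) = \int_{\mathbb{S}^2} Y_l^m(\sigma) f(r\sigma)\, d\sigma.
\]
By definition of $W^\epsilon((-\Delta_{\mathbb{S}^2})^{1/2})$ given in the text,
\[
\bigl(W^\epsilon((-\Delta_{\mathbb{S}^2})^{1/2}) f\bigr)(r\sigma) = \sum_{l,m} W^\epsilon(\sqrt{l(l+1)})\, f_l^m(r)\, Y_l^m(\sigma).
\]
The first step of the proof will be to recall the classical Bochner/Funk--Hecke identity: if $g(v) = h(r) Y_l^m(\sigma)$ then
\[
\widehat{g}(\xi) = (-i)^l\,(2\pi)^{3/2}\,\tilde h_l(\rho)\, Y_l^m(\omega),\qquad \xi=\rho\omega,
\]
where $\tilde h_l(\rho) = \rho^{-1/2}\int_0^\infty h(r) J_{l+1/2}(r\rho) r^{3/2}\, dr$ is the Hankel transform of order $l+1/2$. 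The essential content is that $\mathcal{F}$ preserves the index $(l,m)$ of the spherical harmonic component and only alters the radial profile.

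The second step is to apply this identity termwise. Writing $\xi = \rho\omega$,
\[
\widehat{f}(\rho\omega) = \sum_{l,m} (-i)^l (2\pi)^{3/2}\,\widetilde{f_l^m}(\rho)\, Y_l^m(\omega),
\]
so the angular decomposition of $\widehat f$ has coefficients $(-i)^l (2\pi)^{3/2}\widetilde{f_l^m}(\rho)$ attached to the same $Y_l^m$. Applying $W^\epsilon((-\Delta_{\mathbb{S}^2})^{1/2})$ to $\widehat f$ multiplies the $(l,m)$ term by $W^\epsilon(\sqrt{l(l+1)})$, and applying $W^\epsilon((-\Delta_{\mathbb{S}^2})^{1/2})$ to $f$ first and then taking the Fourier transform produces exactly the same factor on exactly the same $(l,m)$ component. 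Hence the two results agree.

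The third step will be to justify passage to the limit. For Schwartz $f$ the spherical harmonic series converges absolutely in every Sobolev norm, and since $W^\epsilon(\sqrt{l(l+1)}) \lesssim \langle l\rangle$ uniformly in $\epsilon$, the formal manipulation above is rigorous on $\mathcal{S}(\mathbb{R}^3)$; the identity then extends by density to the larger function spaces (e.g.\ $L^2$ and weighted Sobolev spaces) in which it is used in the paper. There is no serious obstacle here: the only thing to be careful about is keeping the $(-i)^l$ Bochner factor on both sides, which cancels because it appears symmetrically in $\mathcal{F}W^\epsilon((-\Delta_{\mathbb{S}^2})^{1/2})f$ and $W^\epsilon((-\Delta_{\mathbb{S}^2})^{1/2})\mathcal{F}f$.
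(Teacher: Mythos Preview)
Your argument is correct and is the standard route: the Funk--Hecke/Bochner identity shows that $\mathcal{F}$ on $\mathbb{R}^3$ preserves the spherical harmonic index $(l,m)$ while acting on the radial profile by a Hankel transform, so any operator that is diagonal in the $Y_l^m$ basis commutes with $\mathcal{F}$. The paper itself gives no proof of this lemma and simply refers to Lemma~5.8 of \cite{he2018sharp}; your sketch is exactly the argument one finds there, so there is nothing to add.
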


\begin{prop}\label{symbol}
Suppose $ E^\epsilon(\xi):= \frac{1}{4 \pi}\int_{\mathbb{S}^2} b^\epsilon(\f{\xi}{|\xi|}\cdot \sigma)\min\{ |\xi|^2\sin^2(\theta/2),1\} d\sigma$. Then we have
\beno E^\epsilon(\xi)  = \mathrm{1}_{|\xi| \leq \epsilon^{-1}} |\xi|^{2} + \mathrm{1}_{|\xi| > \epsilon^{-1}} \epsilon^{2s-2} [ \frac{1}{s}(|\xi|^{2s} - \epsilon^{-2s}) +\epsilon^{-2s} ].\eeno
As a result, we have
\beno E^\epsilon(\xi) + 1  \sim  (W^{\epsilon})^{2}(\xi),\eeno
where $W^{\epsilon}$ is defined in \eqref{characteristic-function}. Here the constants in $\sim$ may depend on  $s$.
\end{prop}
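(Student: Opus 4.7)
The plan is to reduce the spherical integral defining $E^\epsilon(\xi)$ to a one-dimensional integral in the half-angle variable $u=\sin(\theta/2)$, evaluate it explicitly in the two regimes $|\xi|\le\epsilon^{-1}$ and $|\xi|>\epsilon^{-1}$, and then compare the resulting piecewise expression to $(W^\epsilon)^2(\xi)$ in a few overlapping frequency bands.

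First I would exploit the fact that the integrand depends on $\sigma$ only through $\cos\theta=\frac{\xi}{|\xi|}\cdot\sigma$, so $d\sigma=2\pi\sin\theta\,d\theta$; the substitution $u=\sin(\theta/2)$, which gives $\sin\theta\,d\theta=4u\,du$, combined with the explicit form $b^\epsilon(\cos\theta)=(1-s)\epsilon^{2s-2}u^{-2-2s}\mathrm{1}_{u\le\epsilon}$ from \eqref{definition-angular-part}, reduces the problem to
\[
E^\epsilon(\xi)=2(1-s)\epsilon^{2s-2}\int_0^\epsilon u^{-1-2s}\min\{|\xi|^2u^2,1\}\,du.
\]
When $|\xi|\le\epsilon^{-1}$ one has $|\xi|^2u^2\le1$ on the whole interval of integration, so the $\min$ is $|\xi|^2u^2$ and a direct evaluation of $\int_0^\epsilon u^{1-2s}du$ collapses the prefactors to give $E^\epsilon(\xi)=|\xi|^2$. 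When $|\xi|>\epsilon^{-1}$, I would split at the critical value $u_*=|\xi|^{-1}\in(0,\epsilon)$: on $[0,u_*]$ the minimum is $|\xi|^2u^2$ and on $[u_*,\epsilon]$ it is $1$, and combining the two explicit primitives yields after simplification the advertised formula $E^\epsilon(\xi)=\epsilon^{2s-2}[\frac{1}{s}(|\xi|^{2s}-\epsilon^{-2s})+\epsilon^{-2s}]$.

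For the equivalence $E^\epsilon(\xi)+1\sim(W^\epsilon)^2(\xi)$ I would check three bands determined by the definition of $W^\epsilon$ and the bounds \eqref{middle-lower-upper}--\eqref{lower-bound-when-large-cf}. On $|\xi|\le\frac{1}{2\epsilon}$ one has $W^\epsilon(\xi)=\langle\xi\rangle$ and $E^\epsilon+1=\langle\xi\rangle^2$, yielding the exact identity. On the transition band $\frac{1}{2\epsilon}\le|\xi|\le\epsilon^{-1}$, each of $\langle\xi\rangle$, $\langle\epsilon^{-1}\rangle$, and $\langle\epsilon^{-1}\rangle^{1-s}\langle\xi\rangle^s$ is comparable to $\epsilon^{-1}$, so $(W^\epsilon)^2\sim\langle\xi\rangle^2\sim E^\epsilon+1$. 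On the high-frequency band $|\xi|>\epsilon^{-1}$, the leading term of the explicit formula from Case~2 is $\frac{1}{s}\epsilon^{2s-2}|\xi|^{2s}$, which matches $(W^\epsilon)^2=\langle\epsilon^{-1}\rangle^{2-2s}\langle\xi\rangle^{2s}$ up to a purely $s$-dependent factor, and the added constant $1$ is swallowed because $\epsilon^{2s-2}|\xi|^{2s}\ge\epsilon^{-2}\ge1$ for small $\epsilon$.

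The only place where some care is needed is the high-frequency band, where one must verify that the negative correction $-\frac{1-s}{s}\epsilon^{-2s}$ inside the bracket does not spoil the comparison. This reduces to the elementary two-sided bound $\epsilon^{-2s}\le\frac{1}{s}(|\xi|^{2s}-\epsilon^{-2s})+\epsilon^{-2s}\le\frac{1}{s}|\xi|^{2s}$, whose lower bound uses exactly $|\xi|^{2s}\ge\epsilon^{-2s}$, i.e.\ the regime assumption. Apart from this bookkeeping, no step presents a genuine obstacle.
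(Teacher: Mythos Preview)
Your approach is the same as the paper's: reduce the spherical integral to a one-dimensional integral in $u=\sin(\theta/2)$, evaluate explicitly in the two regimes, and then read off the comparison with $(W^\epsilon)^2$. The paper in fact stops after the explicit formula and simply asserts the equivalence, so your band-by-band verification is more detailed than what the paper writes.

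There is one small imprecision to fix. In the high-frequency band $|\xi|>\epsilon^{-1}$ you state the two-sided bound
\[
\epsilon^{-2s}\ \le\ \frac{1}{s}\bigl(|\xi|^{2s}-\epsilon^{-2s}\bigr)+\epsilon^{-2s}\ \le\ \frac{1}{s}\,|\xi|^{2s}.
\]
The upper bound suffices for $E^\epsilon(\xi)+1\lesssim (W^\epsilon)^2(\xi)$, but the lower bound $\epsilon^{-2s}$ only gives $E^\epsilon(\xi)\ge\epsilon^{-2}$, which is far weaker than $(W^\epsilon)^2(\xi)\sim\epsilon^{2s-2}|\xi|^{2s}$ when $|\xi|\gg\epsilon^{-1}$. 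The remedy is immediate: rewrite the bracket as
\[
|\xi|^{2s}+\frac{1-s}{s}\bigl(|\xi|^{2s}-\epsilon^{-2s}\bigr)\ \ge\ |\xi|^{2s},
\]
using $|\xi|^{2s}\ge\epsilon^{-2s}$. This gives $E^\epsilon(\xi)\ge\epsilon^{2s-2}|\xi|^{2s}\gtrsim (W^\epsilon)^2(\xi)$ as needed, and also shows $E^\epsilon(\xi)\ge\epsilon^{-2}\ge 1$ so that the added $+1$ is harmless. With this correction your argument is complete.
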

\begin{proof} Recalling \eqref{definition-angular-part} and $d\sigma = 4 \sin(\theta/2) d \mathbb{S} d \sin(\theta/2)$,
we have
\beno E^\epsilon(\xi)=2(1-s)\epsilon^{2s-2} \int_0^{\pi} \sin^{-1-2s}(\theta / 2) \sin\theta \mathrm{1}_{\sin(\theta/2) \leq \epsilon} \min\{|\xi|^2\sin^2(\theta/2),1\} d \sin(\theta/2). \eeno
By the change of variable: $t=\sin(\theta/2)$, we have
\beno E^\epsilon(\xi) = 2 (1-s)\epsilon^{2s-2} \int_0^{\epsilon} t^{-1-2s} \min\{ |\xi|^2t^2,1\}dt.
\eeno
When $|\xi| \leq \epsilon^{-1}$, we have
\beno E^\epsilon(\xi) = 2 (1-s)\epsilon^{2s-2} |\xi|^2 \int_0^{\epsilon} t^{1-2s} dt = |\xi|^2.
\eeno
When $|\xi| > \epsilon^{-1}$, we have
\beno E^\epsilon(\xi) &=& 2 (1-s)\epsilon^{2s-2} |\xi|^2 \int_0^{|\xi|^{-1}} t^{1-2s} dt +
2 (1-s)\epsilon^{2s-2} \int_{|\xi|^{-1}}^{\epsilon} t^{-1-2s} dt
\\&=& \epsilon^{2s-2} [ \frac{1}{s}(|\xi|^{2s} - \epsilon^{-2s}) +\epsilon^{-2s} ].
\eeno
The proof is completed by combining the above two cases.
\end{proof}

We now recall the definition of the symbol class $S^{m}_{1,0}$.
\begin{defi} A smooth function $a(v,\xi)$ is a symbol of type $S^{m}_{1,0}$ if   $a(v,\xi)$  satisfies for any multi-indices $\alpha$ and $\beta$,
\beno |(\pa^\alpha_\xi\pa^\beta_v a)(v,\xi)|\le C_{\alpha,\beta} \langle \xi\rangle^{m-|\alpha|}, \eeno
where $C_{\alpha,\beta}$ is a constant depending only on   $\alpha$ and $\beta$.
\end{defi}

\begin{lem}[Lemma 5.3 in \cite{he2018sharp}]\label{operatorcommutator1}
Let $l, s, r \in \R, M \in S^{r}_{1,0}, \Phi \in S^{l}_{1,0}$. The estimate
$|[M(D), \Phi]f|_{H^{s}} \lesssim |f|_{H^{r+s-1}_{l-1}}$ holds.
\end{lem}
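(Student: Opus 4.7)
The plan is to reduce the commutator to a pseudodifferential operator of lower order via the symbolic calculus. Since $M(D)$ has a symbol $M(\xi) \in S^{r}_{1,0}$ that is independent of $v$, while $\Phi = \Phi(v) \in S^{l}_{1,0}$ is a pure multiplication (independent of $\xi$), one composition is trivial: $\Phi \circ M(D)$ has left symbol $\Phi(v) M(\xi)$. For the other composition $M(D) \circ \Phi$, I would apply the standard asymptotic expansion
\[
\sigma\bigl(M(D)\circ \Phi\bigr)(v,\xi) \;\sim\; \sum_{\alpha\ge 0}\frac{1}{\alpha!}\bigl(\partial_\xi^\alpha M\bigr)(\xi)\bigl(D_v^\alpha \Phi\bigr)(v).
\]
The zeroth order term is exactly $M(\xi)\Phi(v)$, so it cancels against $\Phi\circ M(D)$. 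Consequently the symbol of the commutator $[M(D),\Phi]$ starts at $|\alpha|=1$:
\[
\sigma\bigl([M(D),\Phi]\bigr)(v,\xi) \;\sim\; \sum_{|\alpha|\ge 1}\frac{1}{\alpha!}\bigl(\partial_\xi^\alpha M\bigr)(\xi)\bigl(D_v^\alpha \Phi\bigr)(v).
\]
Since $\partial_\xi^\alpha M \in S^{r-|\alpha|}_{1,0}$ and $D_v^\alpha \Phi$ has polynomial growth of order $l-|\alpha|$ in $v$, each term in the expansion lies in a symbol class of combined order $(r-|\alpha|, l-|\alpha|)$, and the leading $|\alpha|=1$ term controls the remainder.

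The next step is to invoke the Calder\'on--Vaillancourt type $L^2$-boundedness for pseudodifferential operators in weighted Sobolev spaces, which yields that any symbol of type $(r-1, l-1)$ sends $H^{r+s-1}_{l-1}$ continuously into $H^s$. Applied to each term in the expansion (the tail being more regular), this gives exactly the desired bound
\[
|[M(D),\Phi] f|_{H^s} \lesssim |f|_{H^{r+s-1}_{l-1}}.
\]
In practice I would handle the asymptotic expansion by splitting into a finite principal part plus a smoothing remainder using a Littlewood--Paley decomposition in $\xi$: on each dyadic shell $|\xi|\sim 2^j$, the operator $[M(D),\Phi]$ behaves like a symbol localized to that shell with one derivative of $M$, producing a factor $2^{j(r-1)}$, while the weight cost $\langle v\rangle^{l-1}$ comes from $\nabla_v \Phi$. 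Summing the dyadic contributions recovers the claimed estimate.

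The main obstacle is making the symbolic manipulation rigorous in the presence of the $v$-weight from $\Phi$: unlike the classical $S^m_{1,0}$ class with bounded $v$-derivatives, here $\Phi$ has polynomial growth in $v$, so one must track both the $\xi$-order (gaining one via differentiation) and the $v$-weight (losing one correspondingly). This is precisely why the conclusion involves $H^{r+s-1}_{l-1}$ rather than $H^{r+s-1}$. The cleanest route, which is the one taken in Lemma 5.3 of \cite{he2018sharp}, is to work on the Fourier side: write
\[
[M(D),\Phi]f(v) = \int e^{iv\cdot\xi}\bigl(M(\xi)-M(\eta)\bigr)\widehat{\Phi}(\xi-\eta)\widehat{f}(\eta)\, d\xi\, d\eta,
\]
exploit $|M(\xi)-M(\eta)| \lesssim |\xi-\eta|\,(\langle\xi\rangle+\langle\eta\rangle)^{r-1}$ from the $S^r_{1,0}$ bound on $\partial M$, and absorb the factor $|\xi-\eta|$ into $\widehat{\Phi}$ to reconstruct $\widehat{\nabla\Phi}$. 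A Schur-test argument, together with the rapid decay of $\widehat{\nabla\Phi}$ up to the polynomial weight in $v$, then closes the estimate.
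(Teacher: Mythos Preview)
The paper does not actually prove this lemma; it is quoted as Lemma 5.3 of \cite{he2018sharp} and used as a black box, so there is no in-paper argument to compare against. Your high-level strategy via the symbolic calculus---the leading symbols of $M(D)\Phi$ and $\Phi M(D)$ coincide, so the commutator drops one order in both the $\xi$- and the $v$-variable---is the standard and correct route.

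However, the concrete ``cleanest route'' you sketch on the Fourier side has two genuine gaps. First, the pointwise inequality
\[
|M(\xi)-M(\eta)|\lesssim |\xi-\eta|\,(\langle\xi\rangle+\langle\eta\rangle)^{r-1}
\]
fails for $r<1$: the mean value theorem only yields $|M(\xi)-M(\eta)|\le |\xi-\eta|\,\sup_{\tilde\xi\in[\eta,\xi]}\langle\tilde\xi\rangle^{r-1}$, and when $r-1<0$ the supremum can be much larger than $(\langle\xi\rangle+\langle\eta\rangle)^{r-1}$ (take $\xi=-\eta$ large, so the segment passes through the origin). One must first localize to the region $|\xi-\eta|\le\tfrac12|\eta|$ before Taylor expanding; the complementary region is handled without using the difference at all. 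Second, the appeal to ``rapid decay of $\widehat{\nabla\Phi}$'' is unjustified: $\Phi(v)$ is allowed polynomial growth of order $l$, so $\nabla\Phi$ is not integrable and $\widehat{\nabla\Phi}$ is a genuine tempered distribution, not a kernel to which a Schur test applies. The weight in $v$ has to be handled by conjugation (reduce to $\Phi W_{-l}\in S^{0}_{1,0}$ and track the extra commutators with $W_{l}$) or by a parallel dyadic decomposition in the $v$-variable, not by Fourier decay of $\Phi$. Your earlier Littlewood--Paley sketch, carried out simultaneously in $\xi$ and in $v$, is the way to make the argument rigorous.
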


We now recall the dyadic
decomposition. Let $B_{4/3} := \{x\in\R^{3}: |x| \leq 4/3\}$ and $C := \{x\in\R^{3}: 3/4 \leq |x| \leq 8/3\}$.  Denote two
radial functions $\phi \in C_{0}^{\infty}(B_{4/3})$ and $\psi \in C_{0}^{\infty}(C)$ which satisfy
\ben \label{dyadic-decomposition-def}
 0\leq \phi, \psi \leq 1, \text{ and } \phi(x) + \sum_{j \geq 0} \psi(2^{-j}x) =1, \text{ for all } x \in \R^{3}. \een
Set $\varphi_{-1}(x) :=  \phi(x)$ and $\varphi_{j}(x) :=  \psi(2^{-j}x)$ for any $x \in \R^{3}$ and $j \geq 0$. Then the  dyadic decomposition
$ f = \sum_{j=-1}^{\infty} \varphi_{j}f $ holds
for any function defined on $\R^{3}$.

\begin{prop}(Proposition 5.2 in \cite{he2018asymptotic}) \label{fourier-transform-cross-term} It holds
	\beno
	\int_{\R^3\times\mathbb{S}^2} b(\f{u}{|u|}\cdot \sigma) h(u)(f(u^+)-f(\f{|u|}{|u^+|}u^+)) d\sigma du
	=\int_{\R^3\times\mathbb{S}^2} b(\f{\xi}{|\xi|}\cdot \sigma)  (\hat{h}(\xi^+)-\hat{h}(\f{|\xi|}{|\xi^+|}\xi^+))\bar{\hat{f}}(\xi) d\sigma d\xi.
	\eeno
\end{prop}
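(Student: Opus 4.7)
The identity is a Parseval-type pairing between the physical-space bisector-difference operator $D_\sigma f(u):= f(u^+) - f(\frac{|u|}{|u^+|}u^+)$ and its Fourier analogue $D_\sigma \hat h(\xi):= \hat h(\xi^+) - \hat h(\frac{|\xi|}{|\xi^+|}\xi^+)$. My plan is to prove it by combining Fourier inversion on the LHS with a careful change of variables in the angular integral.

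First, I would insert Fourier inversion into the LHS: write $f(u^+)=(2\pi)^{-3}\int\hat f(\xi)\, e^{iu^+\cdot\xi}\,d\xi$ and analogously for $f(\frac{|u|}{|u^+|}u^+)$, then exchange the order of integration. This reduces matters to computing, for each $\xi$, the inner integral $I_\sigma(\xi):=\int_{\mathbb{R}^3} b(\frac{u}{|u|}\cdot\sigma)\, h(u)\bigl[e^{iu^+\cdot\xi}-e^{i\frac{|u|}{|u^+|}u^+\cdot\xi}\bigr]\,du$, and showing that after integration over $\sigma\in\mathbb{S}^2$ it produces the factor $\overline{\hat h(\xi^+)-\hat h(\frac{|\xi|}{|\xi^+|}\xi^+)}$ with the correct sign convention (so that $\hat f$ conjugates into $\bar{\hat f}$ on the RHS).

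Second, I would pass to spherical coordinates $u=r\tau$ with $r>0$, $\tau\in\mathbb{S}^2$, and apply the substitution $\sigma\mapsto\varsigma:=(\tau+\sigma)/|\tau+\sigma|$, which has Jacobian $d\sigma = 4(\tau\cdot\varsigma)\,d\varsigma$ on the hemisphere $\tau\cdot\varsigma\geq 0$ and yields the explicit formulas $u^+=r(\tau\cdot\varsigma)\varsigma$, $\frac{|u|}{|u^+|}u^+=r\varsigma$, together with $b(\tau\cdot\sigma)=b(2(\tau\cdot\varsigma)^2-1)$; this mirrors the change of variables used in the proof of Lemma \ref{spherical-part}. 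The inner exponential then becomes $e^{ir(\tau\cdot\varsigma)\varsigma\cdot\xi}-e^{ir\varsigma\cdot\xi}$, in which $\tau$ enters only through the scalar $\alpha=\tau\cdot\varsigma$. Crucially, the weight $(\tau\cdot\varsigma)\,b(2(\tau\cdot\varsigma)^2-1)$ and the constraint $\tau\cdot\varsigma\geq 0$ are invariant under the swap $\tau\leftrightarrow\varsigma$, which is the geometric heart of the identity.

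Third, using this symmetry I would interchange $\tau$ and $\varsigma$ inside the $(\tau,\varsigma)$-integral. After the swap, $r\tau$ assumes the role previously played by $u$ while $r\varsigma$ assumes the role of the $\xi$-direction; the exponential transforms into one of the shape $e^{i\xi^+\cdot u}-e^{i\frac{|\xi|}{|\xi^+|}\xi^+\cdot u}$, which pairs with $h(u)$ to produce exactly $\hat h(\xi^+)-\hat h(\frac{|\xi|}{|\xi^+|}\xi^+)$ (up to the expected complex conjugation) after reversing the spherical substitution on the $\xi$-side. The main obstacle will be the bookkeeping of Jacobians, hemispheric domains, and complex conjugations so that they match between the two sides; once the symmetry $\tau\leftrightarrow\varsigma$ and the spherical-polar substitution are in place, the identity reduces to the observation that $h$ and $f$ are paired through the same bisector geometry in physical and frequency space.
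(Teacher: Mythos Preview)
The paper does not supply its own proof of this proposition; it is quoted from \cite{he2018asymptotic}. Your first two steps---inserting Fourier inversion for $f$ and passing to the bisector parameterization $\sigma\to\varsigma$---are sound, but Step~3 has a genuine gap. The swap $\tau\leftrightarrow\varsigma$ is only a symmetry of the \emph{physical-side} parameterization: carrying it out yields
\[
(2\pi)^{-3}\int d\xi\,\hat f(\xi)\cdot 4\int r^2\,dr\,d\tau\,d\varsigma\,(\tau\cdot\varsigma)\,b\bigl(2(\tau\cdot\varsigma)^2-1\bigr)\,h(r\varsigma)\bigl[e^{ir(\tau\cdot\varsigma)\,\tau\cdot\xi}-e^{ir\,\tau\cdot\xi}\bigr],
\]
and if you now undo the bisector substitution (with $\tau$ playing the bisector role for $\varsigma$ and a new $\sigma$) you land exactly back at the expression you started from. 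Nothing in the swap converts the exponent $r(\tau\cdot\varsigma)\,\tau\cdot\xi$ into $u\cdot\xi^+$; the variable $\xi$ sits passively throughout.

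The missing ingredient is a Bobylev-type change of variable in $\sigma$ that genuinely couples $u$ and $\xi$. For each fixed $(u,\xi)$ let $R=R_{u,\xi}$ be the orthogonal reflection of $\R^3$ exchanging $\hat u:=u/|u|$ and $\hat\xi:=\xi/|\xi|$, and substitute $\sigma\to R\sigma$. Then $d\sigma$ is preserved, $b(\hat u\cdot R\sigma)=b(R\hat u\cdot\sigma)=b(\hat\xi\cdot\sigma)$, and the algebraic identity
\[
(u+|u|R\sigma)\cdot\xi=u\cdot\xi+|u|\,\sigma\cdot R\xi=u\cdot\xi+|\xi|\,\sigma\cdot u=u\cdot(\xi+|\xi|\sigma)
\]
gives $u^+\cdot\xi\mapsto u\cdot\xi^+$; combined with $|u+|u|R\sigma|=\tfrac{|u|}{|\xi|}\,|\xi+|\xi|\sigma|$ one also gets $\tfrac{|u|}{|u^+|}u^+\cdot\xi\mapsto u\cdot\tfrac{|\xi|}{|\xi^+|}\xi^+$. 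After this rotation the $b$-factor no longer depends on $u$, so the $u$-integral $\int h(u)e^{-iu\cdot(\cdot)}\,du$ produces $\hat h(\xi^+)-\hat h(\tfrac{|\xi|}{|\xi^+|}\xi^+)$ directly (after the harmless reflection $(\xi,\sigma)\to(-\xi,-\sigma)$, which turns $\hat f(-\xi)$ into $\bar{\hat f}(\xi)$ for real $f$). It is this reflection in $\sigma$---not the $\tau\leftrightarrow\varsigma$ swap---that transfers the bisector geometry from the $u$-side to the $\xi$-side.
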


\begin{lem}\label{crosstermsimilar}
	Let
	$
	\mathcal{Y}^{\epsilon,\gamma}(h,f) := \int b^{\epsilon}(\frac{u}{|u|}\cdot\sigma)\langle u \rangle^{\gamma} h(u)[f(u^{+}) - f(|u|\frac{u^{+}}{|u^{+}|})] du d\sigma,
	$ where $u^{+} = \frac{u + |u|\sigma}{2}$,
	then
	\beno
	|\mathcal{Y}^{\epsilon,\gamma}(h,f)| \lesssim (|W^{\epsilon}W_{\gamma/2}h|_{L^{2}}+|W^{\epsilon}(D)W_{\gamma/2}h|_{L^{2}})
	(|W^{\epsilon}W_{\gamma/2}f|_{L^{2}}+|W^{\epsilon}(D)W_{\gamma/2}f|_{L^{2}}).
	\eeno
\end{lem}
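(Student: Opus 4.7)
The strategy is to apply Cauchy--Schwarz in conjunction with the Fourier duality provided by Proposition~\ref{fourier-transform-cross-term} and the radial-difference bound of Lemma~\ref{gammanonzerotozero}. The target estimate is bilinear and symmetric between the two factors of norms on $h$ and $f$, which signals that one must exploit both the direct physical representation of $\mathcal{Y}^{\epsilon,\gamma}$ (so that Lemma~\ref{gammanonzerotozero} gives the $W^{\epsilon}, W^{\epsilon}(D)$ structure on $f$) \emph{and} its Fourier-dual form (so that the analogous bound gives the same structure on $h$).

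I would proceed as follows. First, set $H:=\langle u\rangle^{\gamma}h$ (so that $W_{-\gamma/2}H=W_{\gamma/2}h$) and invoke Proposition~\ref{fourier-transform-cross-term} to write
\beno
\mathcal{Y}^{\epsilon,\gamma}(h,f)=\int b^{\epsilon}\bigl[\hat H(\xi^{+})-\hat H(\tfrac{|\xi|}{|\xi^{+}|}\xi^{+})\bigr]\bar{\hat f}(\xi)\,d\sigma d\xi.
\eeno
Then I would split $\bar{\hat f}(\xi)=\bar{\hat f}(\xi^{+})+[\bar{\hat f}(\xi)-\bar{\hat f}(\xi^{+})]$. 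On the second piece, both factors carry a geometric difference, so a single Cauchy--Schwarz bounds it by $\mathcal{Z}^{\epsilon,-\gamma}(\hat H)^{1/2}$ times the analogous spherical-type energy of $\hat f$; by Lemma~\ref{gammanonzerotozero} (and its Fourier-side variant, coupled with Plancherel and Lemma~\ref{comWep}) the first quantity is bounded by $|W^{\epsilon}W_{\gamma/2}h|_{L^{2}}+|W^{\epsilon}(D)W_{\gamma/2}h|_{L^{2}}$, while the second is bounded by $|W^{\epsilon}W_{\gamma/2}f|_{L^{2}}+|W^{\epsilon}(D)W_{\gamma/2}f|_{L^{2}}$ via Proposition~\ref{lowerboundpart2-general-g} (or directly from Lemma~\ref{spherical-part} plus Lemma~\ref{gammanonzerotozero} on the $\xi\to\xi^{+}$ part). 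On the first piece, the factor $\bar{\hat f}(\xi^{+})$ depends on $\xi$ only through $\xi^{+}$; after the change of variable $\xi\mapsto\xi^{+}$ (with Jacobian $\sim\cos(\theta/2)\sim 1$ on the support of $b^{\epsilon}$), Cauchy--Schwarz yields a bound by $\mathcal{Z}^{\epsilon,-\gamma}(\hat H)^{1/2}$ times a weighted $L^{2}$-norm of $\hat f$ that is dominated by $|W^{\epsilon}W_{\gamma/2}f|_{L^{2}}^{2}$.

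\textbf{Main obstacle.} The chief difficulty is that $\int b^{\epsilon}d\sigma$ diverges at $\theta=0$, so a naive Cauchy--Schwarz pairing $h(u)$ against $f(u^{+})-f(|u|\tfrac{u^{+}}{|u^{+}|})$ yields an uncontrolled first factor $\int b^{\epsilon}\langle u\rangle^{\gamma}h^{2}du\,d\sigma$. Overcoming this is precisely why one routes through Proposition~\ref{fourier-transform-cross-term}: the radial difference then sits on the $\hat H$-factor and produces enough powers of $\sin(\theta/2)$ (through $|\xi^{+}-\tfrac{|\xi|}{|\xi^{+}|}\xi^{+}|\sim|\xi|\sin^{2}(\theta/2)$) to render the angular integral finite while simultaneously yielding the $\mathcal{Z}^{\epsilon,-\gamma}$ structure needed to invoke Lemma~\ref{gammanonzerotozero}. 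A secondary technical point will be bookkeeping the weight $\langle u\rangle^{\gamma}$ under the shift between $\xi$ and $\xi^{+}$ (using $\langle\xi\rangle\sim\langle\xi^{+}\rangle$ on the support of $b^{\epsilon}$) and verifying that the intertwining $W^{\epsilon}\leftrightarrow W^{\epsilon}(D)$ under Fourier transform is compatible with the weight-shift $W_{-\gamma/2}H=W_{\gamma/2}h$; these are handled by Lemma~\ref{operatorcommutator1} and the standard commutator estimates already invoked in the proof of Proposition~\ref{ubqepsilonnonsingular}.
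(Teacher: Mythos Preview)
Your proposal has a genuine gap in the handling of the ``first piece''
\[
I_1=\int b^{\epsilon}\Bigl[\hat H(\xi^{+})-\hat H\bigl(\tfrac{|\xi|}{|\xi^{+}|}\xi^{+}\bigr)\Bigr]\,\overline{\hat f}(\xi^{+})\,d\sigma\,d\xi.
\]
A direct Cauchy--Schwarz here, regardless of whether you first change variables $\xi\mapsto\xi^{+}$, leaves the factor $\int b^{\epsilon}\langle\xi\rangle^{\gamma}|\hat f(\xi^{+})|^{2}\,d\sigma\,d\xi$ on the $f$-side, and this is infinite because $\int b^{\epsilon}\,d\sigma=+\infty$. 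The radial difference $\hat H(\xi^{+})-\hat H(|\xi|\xi^{+}/|\xi^{+}|)$ is of order $\sin^{2}(\theta/2)$, but under Cauchy--Schwarz that cancellation stays entirely on the $H$-side. If instead you Taylor-expand first to extract $|\xi|\sin^{2}(\theta/2)\,\nabla\hat H$ (so that $\int b^{\epsilon}\sin^{2}(\theta/2)\,d\sigma\sim1$ is finite), the resulting bound is of the form $|vH|_{L^{2}}\,|f|_{L^{2}}=|W_{\gamma+1}h|_{L^{2}}\,|f|_{L^{2}}$; since $\gamma\le 0$, neither $|f|_{L^{2}}\lesssim |W^{\epsilon}W_{\gamma/2}f|_{L^{2}}$ nor the $h$-side estimate holds uniformly in $\epsilon$ across all $|v|$. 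There is also a secondary issue on the ``second piece'': controlling $\int b^{\epsilon}\langle\xi\rangle^{\gamma}|\hat f(\xi)-\hat f(\xi^{+})|^{2}$ via Lemma~\ref{spherical-part} or Proposition~\ref{lowerboundpart2-general-g} introduces the anisotropic norm $|W^{\epsilon}((-\Delta_{\mathbb S^{2}})^{1/2})\cdot|_{L^{2}}$, which is absent from the target estimate.

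The paper's proof avoids both problems by a fundamentally different route. It first reduces $\gamma\ne0$ to $\gamma=0$ (absorbing $W_{\gamma/2}$ symmetrically into $h$ and $f$, plus a harmless remainder), then performs a phase-space microlocalization: split $f$ by $\zeta(\epsilon v)$, split $h$ by $\zeta(\epsilon D)$, and apply dyadic Littlewood--Paley decompositions in both velocity and frequency. On each dyadic block the $\theta$-integral is split at a block-adapted threshold ($2^{-l}$, $2^{-k}$, or $2^{-(k+l)/2}$); on the small-$\theta$ part Taylor expansion of the radial difference gives the needed $\theta^{2}$, while on the large-$\theta$ part the remaining angular measure is $\lesssim\epsilon^{2s-2}2^{2s\ell}$, which is precisely the block value of $(W^{\epsilon})^{2}$. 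Summing over blocks (using Lemma~\ref{operatorcommutator1} to commute $W^{\epsilon}(D)$ with the dyadic cutoffs) reassembles the $W^{\epsilon}$ and $W^{\epsilon}(D)$ norms. This localized angle-splitting is what captures the $\epsilon$-dependent crossover in $W^{\epsilon}$ between $\langle\cdot\rangle$ and $\epsilon^{s-1}\langle\cdot\rangle^{s}$; a single global Cauchy--Schwarz cannot see it.
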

\begin{proof}We divide the proof into two steps.
	
{\it Step 1: $\gamma = 0$.} Since the support of  $b^{\epsilon}$ is  in $\frac{u}{|u|}\cdot\sigma \geq 0$, we get $|u|/\sqrt{2} \leq |u^{+}| \leq |u|$. Recalling the function $\zeta$ in \eqref{zeta-property}, we define $\zeta_{4}(\cdot):=\zeta(\frac{\cdot}{4})$.
We apply  the following decomposition
\ben \label{decomposition-f}
 \mathcal{Y}^{\epsilon, 0}(h,f) &=& \mathcal{Y}^{\epsilon, 0}(h, \zeta(\epsilon v)f) + \mathcal{Y}^{\epsilon, 0}(h, (1-\zeta(\epsilon v))f)
\nonumber \\&=& \mathcal{Y}^{\epsilon, 0}(\zeta_{4}(\epsilon v)h, \zeta(\epsilon v)f) + \mathcal{Y}^{\epsilon, 0}(h, (1-\zeta(\epsilon v))f).
\een
Note that the second equality is ensured by the definition of $\zeta$ and the fact $|u|/\sqrt{2} \leq |u^{+}| \leq |u|$.
The first term in \eqref{decomposition-f} can be decomposed further as
\ben \label{decomposition-h}
\mathcal{Y}^{\epsilon, 0}(\zeta_{4}(\epsilon v)h, \zeta(\epsilon v)f)
= \mathcal{Y}^{\epsilon, 0}(\zeta(\epsilon D) \zeta_{4}(\epsilon v)h, \zeta(\epsilon v)f)
+\mathcal{Y}^{\epsilon, 0}((1-\zeta(\epsilon D))\zeta_{4}(\epsilon v)h, \zeta(\epsilon v)f).
\een

{\it Step 1.1: Estimate of $\mathcal{Y}^{\epsilon, 0}(\zeta(\epsilon D)\zeta_{4}(\epsilon v)h, \zeta(\epsilon v)f)$.} By Proposition \ref{fourier-transform-cross-term} and the fact $|\xi|/\sqrt{2} \leq |\xi^{+}| \leq |\xi|$, we have
\beno
&&\mathcal{Y}^{\epsilon, 0}(\zeta(\epsilon D)\zeta_{4}(\epsilon v)h, \zeta(\epsilon v)f) =\mathcal{Y}^{\epsilon, 0}(\zeta(\epsilon D)\zeta_{4}(\epsilon v)h, \zeta_{4}(\epsilon D)\zeta(\epsilon v)f)
\\&=& \int b^{\epsilon}(\frac{u}{|u|}\cdot\sigma) (\zeta(\epsilon D)\zeta_{4}(\epsilon v)h)(u) [(\zeta_{4}(\epsilon D)\zeta(\epsilon v)f)(u^{+})- (\zeta_{4}(\epsilon D)\zeta(\epsilon v)f)(|u|\frac{u^{+}}{|u^{+}|})] du d\sigma.
	\eeno
By Taylor expansion,
	\beno
	(\zeta_{4}(\epsilon D)\zeta(\epsilon v)f)(u^{+})- (\zeta_{4}(\epsilon D)\zeta(\epsilon v)f)(|u|\frac{u^{+}}{|u^{+}|}) = (1-\frac{1}{\cos\theta})\int_{0}^{1}
	(\nabla (\zeta_{4}(\epsilon D)\zeta(\epsilon v)f))(u^{+}(\kappa))\cdot u^{+} d\kappa,
	\eeno
	where $u^{+}(\kappa) = (1-\kappa)|u|\frac{u^{+}}{|u^{+}|} + \kappa u^{+}$. Then by Cauchy-Schwartz inequality, we get
\ben \label{low-xi-low-v--low-xi-low-v}
&&\mathcal{Y}^{\epsilon, 0}(\zeta(\epsilon D)\zeta_{4}(\epsilon v)h, \zeta(\epsilon v)f)
\nonumber \\&=& \int b^{\epsilon}(\frac{u}{|u|}\cdot\sigma)(1-\frac{1}{\cos\theta})
 (\zeta(\epsilon D)\zeta(\epsilon v)h)(u)
\int_{0}^{1}
	(\nabla (\zeta_{4}(\epsilon D)\zeta(\epsilon v)f))(u^{+}(\kappa))\cdot u^{+} d\kappa du d\sigma.
\nonumber \\&\lesssim& \left(\int b^{\epsilon}(\frac{u}{|u|}\cdot\sigma)\sin^{2}\frac{\theta}{2}
| (\zeta(\epsilon D)\zeta_{4}(\epsilon v)h)(u)|^{2} |u^{+}|^{2}  du d\sigma \right)^{\frac{1}{2}}
\nonumber \\ &&\times
\left(\int b^{\epsilon}(\frac{u}{|u|}\cdot\sigma)\sin^{2}\frac{\theta}{2}
	|(\nabla (\zeta_{4}(\epsilon D)\zeta(\epsilon v)f))(u^{+}(\kappa))|^{2}  du d\sigma \right)^{\frac{1}{2}}
\nonumber \\&\lesssim& |\zeta(\epsilon D)\zeta_{4}(\epsilon v)h|_{L^{2}_{1}}|\zeta_{4}(\epsilon D)\zeta(\epsilon v)f|_{H^{1}}
 \lesssim |W^{\epsilon}h|_{L^{2}} |W^{\epsilon}(D)f|_{L^{2}},
	\een
where we have used the fact that $|u^{+}| \sim |u|$,
the change of variable $u \rightarrow u^{+}(\kappa)$ and the estimate \eqref{order-2}.

{\it Step 1.2: Estimate of $\mathcal{Y}^{\epsilon, 0}( (1-\zeta(\epsilon D))\zeta_{4}(\epsilon v)h, \zeta(\epsilon v)f)$.}
By Proposition \ref{fourier-transform-cross-term} and  the dyadic decomposition in the frequency space, we have
	\ben \nonumber
&&\mathcal{Y}^{\epsilon, 0}((1-\zeta(\epsilon D))\zeta_{4}(\epsilon v)h, \zeta(\epsilon v)f)
 \\&=& \int b^{\epsilon}(\frac{\xi}{|\xi|}\cdot\sigma)  [((1-\zeta(\epsilon \xi))\widehat{\zeta_{4}(\epsilon \cdot)h})(\xi^{+})- ((1-\zeta(\epsilon \xi))\widehat{\zeta_{4}(\epsilon \cdot)h})(|\xi|\frac{\xi^{+}}{|\xi^{+}|})]
 \overline{\widehat{\zeta(\epsilon \cdot)f}}(\xi)  d\xi d\sigma
\nonumber	\\ \label{decomposition-in-frequency-space} &=& \sum_{l \geq [-\log_{2} \epsilon] - 4}  \mathcal{Y}_{l},
 \\ \mathcal{Y}_{l} &:=& \int b^{\epsilon}(\frac{\xi}{|\xi|}\cdot\sigma)[({\varphi}_{l}\widehat{\zeta_{4}(\epsilon \cdot)h})(\xi^{+})- ({\varphi}_{l}\widehat{\zeta_{4}(\epsilon \cdot)h})(|\xi|\frac{\xi^{+}}{|\xi^{+}|})] (\tilde{\varphi}_{l}\overline{\widehat{\zeta(\epsilon \cdot)f}})(\xi)  d\xi d\sigma , \nonumber
	\een
where $\tilde{\varphi}_{l} := \sum_{|j-l| \leq 2, j \geq -1} {\varphi}_{j}$.
Decompose $\mathcal{Y}_{l}= \mathcal{Y}_{l, \leq }  + \mathcal{Y}_{l, \geq }$ according to $\sin(\theta/2) \leq 2^{-l}$ and $\sin(\theta/2) \geq 2^{-l}$.
By Taylor expansion,
	$
	({\varphi}_{l}\widehat{\zeta_{4}(\epsilon \cdot)h})(\xi^{+})- ({\varphi}_{l}\widehat{\zeta_{4}(\epsilon \cdot)h})(|\xi|\frac{\xi^{+}}{|\xi^{+}|}) = (1-\frac{1}{\cos\theta})\int_{0}^{1}
	(\nabla {\varphi}_{l}\widehat{\zeta_{4}(\epsilon \cdot)h})(\xi^{+}(\kappa))\cdot \xi^{+} d\kappa,
	$
where $\xi^{+}(\kappa) = (1-\kappa)|\xi|\frac{\xi^{+}}{|\xi^{+}|} + \kappa \xi^{+}$. Then by Cauchy-Schwartz inequality, we obtain
	\ben \label{theta-leq-2-l}
\mathcal{Y}_{l, \leq} &=& \int b^{\epsilon}(\frac{\xi}{|\xi|}\cdot\sigma)\mathrm{1}_{\sin(\theta/2) \leq 2^{-l}}
(1-\frac{1}{\cos\theta}) (\int_{0}^{1}
	(\nabla {\varphi}_{l}\widehat{\zeta_{4}(\epsilon \cdot)h})(\xi^{+}(\kappa))\cdot \xi^{+} d\kappa )(\tilde{\varphi}_{l}\overline{\widehat{\zeta(\epsilon \cdot)f}})(\xi)  d\xi d\sigma
\nonumber \\&\lesssim& 2^{l} \bigg(\int b^{\epsilon}(\frac{\xi}{|\xi|}\cdot\sigma)\mathrm{1}_{\sin(\theta/2) \leq 2^{-l}}
\sin^{2}\frac{\theta}{2}
|(\nabla {\varphi}_{l}\widehat{\zeta_{4}(\epsilon \cdot)h})(\xi^{+}(\kappa))|^{2}   d\xi d\sigma \bigg)^{\frac{1}{2}}
\nonumber \\ &&\times
\bigg(\int b^{\epsilon}(\frac{\xi}{|\xi|}\cdot\sigma)\mathrm{1}_{\sin(\theta/2) \leq 2^{-l}}
\sin^{2}\frac{\theta}{2}
	|(\tilde{\varphi}_{l}\widehat{\zeta(\epsilon \cdot)f})(\xi)|^{2}  d\xi d\sigma \bigg)^{\frac{1}{2}}
\nonumber \\&\lesssim& \epsilon^{2s-2}2^{l(2s-1)}  |\nabla {\varphi}_{l}\widehat{\zeta_{4}(\epsilon \cdot)h}|_{L^{2}}
|\tilde{\varphi}_{l}\widehat{\zeta(\epsilon \cdot)f}|_{L^{2}}
\lesssim \epsilon^{s-1}2^{ls}  |\nabla {\varphi}_{l}\widehat{\zeta_{4}(\epsilon \cdot)h}|_{L^{2}}
|\tilde{\varphi}_{l}\widehat{\zeta(\epsilon \cdot)f}|_{L^{2}},\een
where we have used   the fact that $|\xi^{+}| \sim |\xi| \sim 2^{l}, 2^{l} \gtrsim \epsilon^{-1}$,
the change of variable $\xi \rightarrow \xi^{+}(\kappa)$ and the estimate
\beno
\int b^{\epsilon}(\frac{\xi}{|\xi|}\cdot\sigma)\sin^{2}\frac{\theta}{2} \mathrm{1}_{\sin(\theta/2) \leq 2^{-l}} d\sigma \lesssim (1-s) \epsilon^{2s-2} \int_{0}^{2^{-l}} t^{1-2s} dt \lesssim \epsilon^{2s-2}2^{l(2s-2)}.
\eeno
Since $|\nabla {\varphi}_{l}\widehat{\zeta_{4}(\epsilon \cdot)h}| =
|{\varphi}_{l}\widehat{\zeta_{4}(\epsilon \cdot)v h} + (\nabla {\varphi}_{l})\widehat{\zeta_{4}(\epsilon \cdot)h}| \lesssim
|{\varphi}_{l}\widehat{\zeta_{4}(\epsilon \cdot)v h}| + 2^{-l}|\tilde{\varphi}_{l}\widehat{\zeta_{4}(\epsilon \cdot)h}|
$, we have
\ben \label{sum-y-leq}
&&\sum_{l \geq [-\log_{2} \epsilon] - 4} \mathcal{Y}_{l, \leq} \lesssim
\sum_{l \geq [-\log_{2} \epsilon] - 4}
\epsilon^{s-1}2^{ls} (|{\varphi}_{l}\widehat{\zeta_{4}(\epsilon \cdot)v h}|_{L^{2}} + 2^{-2l}|\tilde{\varphi}_{l}\widehat{\zeta_{4}(\epsilon \cdot)h}|_{L^{2}})
|\tilde{\varphi}_{l}\widehat{\zeta(\epsilon \cdot)f}|_{L^{2}}
\nonumber \\ &\lesssim&
\bigg(\sum_{l \geq [-\log_{2} \epsilon] - 4}( |{\varphi}_{l}\widehat{\zeta_{4}(\epsilon \cdot)v h}|^{2}_{L^{2}} + 2^{-l}|\tilde{\varphi}_{l}\widehat{\zeta_{4}(\epsilon \cdot)h}|^{2}_{L^{2}}) \bigg)^{\frac{1}{2}}
\bigg(\sum_{l \geq [-\log_{2} \epsilon] - 4} \epsilon^{2s-2}2^{2ls}|\tilde{\varphi}_{l}\widehat{\zeta(\epsilon \cdot)f}|^{2}_{L^{2}} \bigg)^{\frac{1}{2}}
\nonumber\\ &\lesssim& |W^{\epsilon}h|_{L^{2}} |W^{\epsilon}(D)f|_{L^{2}}.
\een
By Cauchy-Schwartz inequality,
the change of variable $\xi \rightarrow \xi^{+}$ and $\xi \rightarrow |\xi|\frac{\xi^{+}}{|\xi^{+}|}$ and the estimate
\beno
\int b^{\epsilon}(\frac{\xi}{|\xi|}\cdot\sigma) \mathrm{1}_{\sin(\theta/2) \geq 2^{-l}} d\sigma \lesssim (1-s) \epsilon^{2s-2} \int_{2^{-l}}^{\epsilon} t^{-1-2s} dt \lesssim \epsilon^{2s-2}2^{2sl},
\eeno
we  have similarly
	\ben \label{y-l-geq}
\mathcal{Y}_{l, \geq} &\leq& \int b^{\epsilon}(\frac{\xi}{|\xi|}\cdot\sigma)\mathrm{1}_{\sin(\theta/2) \geq 2^{-l}}
[|({\varphi}_{l}\widehat{\zeta_{4}(\epsilon \cdot)h})(\xi^{+})|+|({\varphi}_{l}\widehat{\zeta_{4}(\epsilon \cdot)h})(|\xi|\frac{\xi^{+}}{|\xi^{+}|})|] |(\tilde{\varphi}_{l}\overline{\widehat{\zeta(\epsilon \cdot)f}})(\xi)| d\xi d\sigma
\nonumber \\&\lesssim& \epsilon^{2s-2} 2^{2ls} \|{\varphi}_{l}\widehat{\zeta_{4}(\epsilon \cdot)h}|_{L^{2}}
|\tilde{\varphi}_{l}\widehat{\zeta(\epsilon \cdot)f}|_{L^{2}}.\een
By taking the sum, we get
\ben \label{sum-y-geq}
\sum_{l \geq [-\log_{2} \epsilon] - 4} \mathcal{Y}_{l, \geq}  &\lesssim&
\bigg(\sum_{l \geq [-\log_{2} \epsilon] - 4} \epsilon^{2s-2} 2^{2ls} |{\varphi}_{l}\widehat{\zeta_{4}(\epsilon \cdot)h}|^{2}_{L^{2}} \bigg)^{\frac{1}{2}}
\bigg(\sum_{l \geq [-\log_{2} \epsilon] - 4} \epsilon^{2s-2}2^{2ls} |\tilde{\varphi}_{l}\widehat{\zeta(\epsilon \cdot)f}|_{L^{2}} \bigg)^{\frac{1}{2}}
\nonumber\\&\lesssim&  |W^{\epsilon}(D)h|_{L^{2}} |W^{\epsilon}(D)f|_{L^{2}}.
\een
By combining  \eqref{sum-y-leq} and \eqref{sum-y-geq}, \eqref{decomposition-in-frequency-space} gives
\ben \label{high-xi-low-v--low-v}
\mathcal{Y}^{\epsilon, 0}((1-\zeta(\epsilon D))\zeta_{4}(\epsilon v)h, \zeta(\epsilon v)f) \lesssim  (|W^{\epsilon}h|_{L^{2}} + |W^{\epsilon}(D)h|_{L^{2}})|W^{\epsilon}(D)f|_{L^{2}}.
\een

{\it Step 1.3: Estimate of $\mathcal{Y}^{\epsilon, 0}(h, (1-\zeta(\epsilon v))f)$.}
Note that
\ben \label{decomposition-low-high-frequency}
&&\mathcal{Y}^{\epsilon, 0}(h, (1-\zeta(\epsilon v))f) = \sum_{k \geq [-\log_{2}\epsilon]-4}
\mathcal{Y}^{\epsilon, 0}(\tilde{\varphi}_{k}h, \varphi_{k}f)
\nonumber \\
&&=\sum_{k \geq [-\log_{2}\epsilon]-4}
\mathcal{Y}^{\epsilon, 0}(\tilde{\varphi}_{k}h, \zeta(\epsilon D)\varphi_{k}f)+\sum_{k \geq [-\log_{2}\epsilon]-4}
\mathcal{Y}^{\epsilon, 0}(\tilde{\varphi}_{k}h, (1-\zeta(\epsilon D))\varphi_{k}f).
\een

We first consider $\sum_{k \geq [-\log_{2}\epsilon]-4} \mathcal{Y}^{\epsilon, 0}(\tilde{\varphi}_{k}h, \zeta(\epsilon D)\varphi_{k}f)$.
Decompose $\mathcal{Y}^{\epsilon, 0}(\tilde{\varphi}_{k}h, \zeta(\epsilon D)\varphi_{k}f) = \mathcal{Y}_{k, \leq}  + \mathcal{Y}_{k, \geq}$ according to $\sin(\theta/2) \leq 2^{-k}$ and $\sin(\theta/2) \geq 2^{-k}$.

For $\mathcal{Y}_{k, \leq}$,
by Taylor expansion to $\zeta(\epsilon D)\varphi_{k}f$, similar to \eqref{theta-leq-2-l}, we have
\beno
\mathcal{Y}_{k, \leq} &=& \int b^{\epsilon}(\frac{\xi}{|\xi|}\cdot\sigma)\mathrm{1}_{\sin(\theta/2) \leq 2^{-k}}
(1-\frac{1}{\cos\theta})  (\tilde{\varphi}_{k}h)(u)
\int_{0}^{1}
	(\nabla \zeta(\epsilon D)\varphi_{k}f (u^{+}(\kappa))\cdot u^{+} d\kappa  du d\sigma
\\&\lesssim& \epsilon^{s-1}2^{ks} |\tilde{\varphi}_{k}h|_{L^{2}}
|\nabla \zeta(\epsilon D)\varphi_{k}f|_{L^{2}}.
\eeno
By taking the sum, since $|\nabla \zeta(\epsilon D)\varphi_{k}f|_{L^{2}} \lesssim |W^{\epsilon}(D)\varphi_{k}f|_{L^{2}}$, we have
\ben \label{sum-y-k-leq}
\sum_{k \geq [-\log_{2}\epsilon]-4}\mathcal{Y}_{k, \leq} \lesssim  \sum_{k \geq [-\log_{2}\epsilon]-4} \epsilon^{s-1}2^{ks} |\tilde{\varphi}_{k}h|_{L^{2}}
|W^{\epsilon}(D) \varphi_{k}f|_{L^{2}}  \lesssim  |W^{\epsilon}h|_{L^{2}} |W^{\epsilon}(D)f|_{L^{2}},
\een
where we have used
\ben \nonumber
\sum_{k \geq -1}^{\infty}|W^{\epsilon}(D)\varphi_{k}f|^{2}_{L^{2}} &=& \sum_{k \geq -1}^{\infty}2^{-2k}|W^{\epsilon}(D)2^{k}\varphi_{k}f|^{2}_{L^{2}}
\\ \label{pa-together-norm} &\lesssim&\sum_{k \geq -1}^{\infty}2^{-2k}(|2^{k}\varphi_{k}W^{\epsilon}(D)f|^{2}_{L^{2}}+|f|^{2}_{H^{0}})
\lesssim |W^{\epsilon}(D)f|^{2}_{L^{2}},
\een
because $W^{\epsilon}\in S^{1}_{1,0}, 2^{k}\varphi_{k} \in S^{1}_{1,0}$ and Lemma \ref{operatorcommutator1}.

For $\mathcal{Y}_{k, \geq}$,  similar to \eqref{y-l-geq}, we get
$
\mathcal{Y}_{k, \geq} \lesssim  \epsilon^{2s-2}2^{2ks} |\tilde{\varphi}_{k}h |_{L^{2}} |\zeta(\epsilon D)\varphi_{k}f |_{L^{2}},
$
and
\ben \label{sum-y-k-geq}
\sum_{k \geq [-\log_{2}\epsilon]-4} \mathcal{Y}_{k, \geq} \lesssim \sum_{k \geq [-\log_{2}\epsilon]-4}  \epsilon^{2s-2}2^{2ks} |\tilde{\varphi}_{k}h |_{L^{2}} |\varphi_{k}f |_{L^{2}}
  \lesssim  |W^{\epsilon}h|_{L^{2}} |W^{\epsilon}f|_{L^{2}}.
\een
Combining \eqref{sum-y-k-leq} and \eqref{sum-y-k-geq}, we get
\ben \label{part1}
\sum_{k \geq [-\log_{2}\epsilon]-4} \mathcal{Y}^{\epsilon, 0}(\tilde{\varphi}_{k}h, \zeta(\epsilon D)\varphi_{k}f) \lesssim
|W^{\epsilon}h|_{L^{2}} |W^{\epsilon}(D)f|_{L^{2}} + |W^{\epsilon}h|_{L^{2}} |W^{\epsilon}f|_{L^{2}}.
\een

Now we consider $\sum_{k \geq [-\log_{2}\epsilon]-4}
\mathcal{Y}^{\epsilon, 0}(\tilde{\varphi}_{k}h, (1-\zeta(\epsilon D))\varphi_{k}f)$. By Proposition \ref{fourier-transform-cross-term} and  the dyadic decomposition in the frequency space, we have
\beno
&&\mathcal{Y}^{\epsilon, 0}(\tilde{\varphi}_{k}h, (1-\zeta(\epsilon D))\varphi_{k}f)
\\&=& \int b^{\epsilon}(\frac{\xi}{|\xi|}\cdot\sigma)  [(\widehat{\tilde{\varphi}_{k}h})(\xi^{+})- (\widehat{\tilde{\varphi}_{k}h})(|\xi|\frac{\xi^{+}}{|\xi^{+}|})]
 (1-\zeta(\epsilon \xi))\overline{\widehat{\varphi_{k}f}}(\xi)  d\xi d\sigma
\\&=& \sum_{l \geq [-\log_{2} \epsilon] - 4} \int b^{\epsilon}(\frac{\xi}{|\xi|}\cdot\sigma)  [(\tilde{\varphi}_{l}\widehat{\tilde{\varphi}_{k}h})(\xi^{+})- (\tilde{\varphi}_{l}\widehat{\tilde{\varphi}_{k}h})(|\xi|\frac{\xi^{+}}{|\xi^{+}|})]
 \varphi_{l}\overline{\widehat{\varphi_{k}f}}(\xi)  d\xi d\sigma
:= \sum_{l \geq [-\log_{2} \epsilon] - 4} \mathcal{Y}_{k,l}.
\eeno
Decompose $\mathcal{Y}_{k,l} = \mathcal{Y}_{k, l, \leq}  + \mathcal{Y}_{k, l, \geq}$ according to $\sin(\theta/2) \leq 2^{-(k+l)/2}$ and $\sin(\theta/2) \geq 2^{-(k+l)/2}$.

For $\mathcal{Y}_{k, l, \leq} $, by Taylor expansion for $\tilde{\varphi}_{l}\widehat{\tilde{\varphi}_{k}h}$, similar to \eqref{theta-leq-2-l}, we have
\beno
\mathcal{Y}_{k, l, \leq}   &=& \int b^{\epsilon}(\frac{\xi}{|\xi|}\cdot\sigma)\mathrm{1}_{\sin(\theta/2) \leq 2^{-(k+l)/2}}
(1-\frac{1}{\cos\theta}) (\int_{0}^{1}
	(\nabla \tilde{\varphi}_{l}\widehat{\tilde{\varphi}_{k}h} )(\xi^{+}(\kappa))\cdot \xi^{+} d\kappa )\varphi_{l}\overline{\widehat{\varphi_{k}f}}(\xi)  d\xi d\sigma
\\&\lesssim& 2^{l}\epsilon^{2s-2}2^{(s-1)(k+l)}  |\nabla \tilde{\varphi}_{l}\widehat{\tilde{\varphi}_{k}h} |_{L^{2}}
|\varphi_{l}\widehat{\varphi_{k}f}|_{L^{2}}
= \epsilon^{2s-2}2^{s(k+l)} 2^{-k}  |\nabla \tilde{\varphi}_{l}\widehat{\tilde{\varphi}_{k}h} |_{L^{2}}
|\varphi_{l}\widehat{\varphi_{k}f}|_{L^{2}}.
\eeno
Since $|\nabla \tilde{\varphi}_{l}\widehat{\tilde{\varphi}_{k}h}| =
|\tilde{\varphi}_{l}\widehat{v \tilde{\varphi}_{k}h}+ (\nabla \tilde{\varphi}_{l})\widehat{\tilde{\varphi}_{k}h}| \lesssim
|\tilde{\varphi}_{l}\widehat{v \tilde{\varphi}_{k}h}| + 2^{-l}|\tilde{\tilde{\varphi}}_{l}\widehat{\tilde{\varphi}_{k}h}|
$ where $\tilde{\tilde{\varphi}}_{l} := \sum_{|j-l| \leq 4, j \geq -1} {\varphi}_{j}$, we have
\ben \label{sum-k-l-leq}
\sum_{k,l \geq [-\log_{2}\epsilon] -4} \mathcal{Y}_{k, l, \leq} &\lesssim&
\sum_{k,l \geq [-\log_{2}\epsilon] -4}
\epsilon^{2s-2}2^{s(k+l)} 2^{-k}  (|\tilde{\varphi}_{l}\widehat{v \tilde{\varphi}_{k}h}|_{L^{2}} + 2^{-l}|\tilde{\tilde{\varphi}}_{l}\widehat{\tilde{\varphi}_{k}h}|_{L^{2}})
|\varphi_{l}\widehat{\varphi_{k}f}|_{L^{2}}
\nonumber \\ &\lesssim&
\bigg(\sum_{k,l \geq [-\log_{2}\epsilon] -4} ( \epsilon^{2s-2}2^{2sk} 2^{-2k}  |\tilde{\varphi}_{l}\widehat{v \tilde{\varphi}_{k}h}|^{2}_{L^{2}} + \epsilon^{2s-2}2^{2sk} 2^{-2k} 2^{-2l}|\tilde{\tilde{\varphi}}_{l}\widehat{\tilde{\varphi}_{k}h}|^{2}_{L^{2}}) \bigg)^{\frac{1}{2}}
\nonumber  \\&&\times
\bigg(\sum_{k,l \geq [-\log_{2}\epsilon] -4} \epsilon^{2s-2}2^{2ls}|\varphi_{l}\widehat{\varphi_{k}f}|^{2}_{L^{2}} \bigg)^{\frac{1}{2}}
\lesssim  |W^{\epsilon}h|_{L^{2}} |W^{\epsilon}(D)f|_{L^{2}}.
\een

For $\mathcal{Y}_{k, l, \geq}$,  we have
$
\mathcal{Y}_{k, l, \geq} \lesssim  \epsilon^{2s-2}2^{(k+l)s} |\tilde{\varphi}_{l}\widehat{\tilde{\varphi}_{k}h}|_{L^{2}} |\varphi_{l}\widehat{\varphi_{k}f}|_{L^{2}}.
$
Thus by \eqref{pa-together-norm},
\ben \label{sum-k-l-geq}
\sum_{k,l \geq [-\log_{2}\epsilon] -4} \mathcal{Y}_{k, l, \geq} \lesssim
\sum_{k,l \geq [-\log_{2}\epsilon] -4}  \epsilon^{2s-2}2^{(k+l)s} |\tilde{\varphi}_{l}\widehat{\tilde{\varphi}_{k}h}|_{L^{2}} |\varphi_{l}\widehat{\varphi_{k}f}|_{L^{2}}
  \lesssim  |W^{\epsilon}h|_{L^{2}} |W^{\epsilon}(D)f|_{L^{2}}.
\een
Combining \eqref{sum-k-l-leq} and \eqref{sum-k-l-geq} gives
\ben \label{part2}
\sum_{k \geq [-\log_{2}\epsilon]-4}
\mathcal{Y}^{\epsilon, 0}(\tilde{\varphi}_{k}h, (1-\zeta(\epsilon D))\varphi_{k}f) \lesssim |W^{\epsilon}h|_{L^{2}} |W^{\epsilon}(D)f|_{L^{2}}.
\een
With \eqref{part1} and \eqref{part2}, \eqref{decomposition-low-high-frequency} gives
\ben \label{h-full-high-v-f}
\mathcal{Y}^{\epsilon, 0}(h, (1-\zeta(\epsilon v))f) \lesssim |W^{\epsilon}h|_{L^{2}} |W^{\epsilon}(D)f|_{L^{2}} + |W^{\epsilon}h|_{L^{2}} |W^{\epsilon}f|_{L^{2}}.
\een

Back to \eqref{decomposition-f} and \eqref{decomposition-h},
 by combining \eqref{low-xi-low-v--low-xi-low-v}, \eqref{high-xi-low-v--low-v} and \eqref{h-full-high-v-f},
we get
\ben \label{case-gamma-0}
	|\mathcal{Y}^{\epsilon,0}(h,f)| \lesssim (|W^{\epsilon}h|_{L^{2}}+|W^{\epsilon}(D)h|_{L^{2}})^{\frac{1}{2}}
	(|W^{\epsilon}f|_{L^{2}}+|W^{\epsilon}(D)f|_{L^{2}})^{\frac{1}{2}}.
\een

{\it Step 2: $\gamma \neq 0$.}  For simplicity, denote $w = |u|\frac{u^{+}}{|u^{+}|}$, then $W_{\gamma/2}(u) = W_{\gamma/2}(w)$.  Note that
	\beno
	\langle u \rangle^{\gamma} h(u)[f(u^{+}) - f(w)] &=& (W_{\gamma/2}h)(u)[(W_{\gamma/2}f)(u^{+})-(W_{\gamma/2}f)(w)]
	\\&& +(W_{\gamma/2}h)(u)(W_{\gamma/2}f)(u^{+})(W_{\gamma/2}(w)W_{-\gamma/2}(u^{+}) - 1),
	\eeno
	 which yields
	\beno
	\mathcal{Y}^{\epsilon,\gamma}(h,f) &=& \mathcal{Y}^{\epsilon,0}(W_{\gamma/2}h, W_{\gamma/2}f) + \mathcal{A},
\\
\mathcal{A} &:=& \int b^{\epsilon}(\frac{u}{|u|}\cdot\sigma)(W_{\gamma/2}h)(u) (W_{\gamma/2}f)(u^{+})(W_{\gamma/2}(w)W_{-\gamma/2}(u^{+}) - 1) du d\sigma.
	\eeno
By Cauchy-Schwartz inequality,
$|W_{\gamma/2}(u)W_{-\gamma/2}(u^{+}) - 1| \lesssim \sin^{2}\frac{\theta}{2}$ and the estimate \eqref{order-2},
	we have
	\beno
	|\mathcal{A}| &\lesssim&  \bigg(\int b^{\epsilon}(\frac{u}{|u|}\cdot\sigma)|(W_{\gamma/2}h)(u)|^{2}|W_{\gamma/2}(w)W_{-\gamma/2}(u^{+}) - 1|du d\sigma\bigg)^{\frac{1}{2}}
	\\&&\times\bigg(\int b^{\epsilon}(\frac{u}{|u|}\cdot\sigma)|(W_{\gamma/2}f)(u^{+})|^{2}|W_{\gamma/2}(w)W_{-\gamma/2}(u^{+}) - 1| du d\sigma\bigg)^{\frac{1}{2}}
	\lesssim  |W_{\gamma/2}h|_{L^{2}}|W_{\gamma/2}f|_{L^{2}},
	\eeno
	where the change of variable $u \rightarrow u^{+}$ has been used in the estimate for $f$. This together with \eqref{case-gamma-0} completes  the proof of the lemma.
\end{proof}
% no restriction on $gamma$.
\begin{rmk} \label{exact-cross-term} Set $
	\mathcal{X}^{\epsilon,\gamma}(h,f) := \int b^{\epsilon}(\frac{u}{|u|}\cdot\sigma)|u|^{\gamma}(1-\zeta(|u|)) h(u)[f(u^{+}) - f(|u|\frac{u^{+}}{|u^{+}|})] du d\sigma,
	$
Then
\beno
	|\mathcal{X}^{\epsilon,\gamma}(h,f)| \lesssim (|W^{\epsilon}W_{\gamma/2}h|_{L^{2}}+|W^{\epsilon}(D)W_{\gamma/2}h|_{L^{2}})
	(|W^{\epsilon}W_{\gamma/2}f|_{L^{2}}+|W^{\epsilon}(D)W_{\gamma/2}f|_{L^{2}}).
	\eeno
Indeed, by the identity $|u|^{\gamma}(1-\zeta(|u|))
= \langle u \rangle^{\gamma}(|u|^{\gamma}\langle u \rangle^{-\gamma}-1)(1-\zeta(|u|))+\langle u \rangle^{\gamma}(1-\zeta(|u|))$,
we have
\beno \mathcal{X}^{\epsilon,\gamma}(h,f) = \mathcal{Y}^{\epsilon,\gamma}((|\cdot|^{\gamma}\langle \cdot \rangle^{-\gamma}-1)(1-\zeta)h,f)
+\mathcal{Y}^{\epsilon,\gamma}((1-\zeta)h,f).
\eeno
 Then the estimate follows from Lemma \ref{crosstermsimilar} and Lemma \ref{operatorcommutator1} because $(|\cdot|^{\gamma}\langle \cdot \rangle^{-\gamma}-1)(1-\zeta), 1-\zeta  \in S^{0}_{1,0}$.
\end{rmk}

 \begin{lem}\label{a-technical-lemma}
It holds
\beno (1-s)\epsilon^{2s-2}\int_{\R^{3}}\int_{0}^{2\epsilon} \theta^{-1-2s}|f(v) - f(v\cos\theta)|^{2} dv d\theta
\lesssim |W^{\epsilon}(D)f|^{2}_{L^{2}} + |W^{\epsilon}f|^{2}_{L^{2}}.\eeno
\end{lem}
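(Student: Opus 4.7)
The plan is to decompose the physical domain into $\{|v|\leq 1/\epsilon\}$ and $\{|v|\geq 1/\epsilon\}$, which are exactly the two branches in the definition of $W^\epsilon$, and to analyse each region via the technique (Taylor/MVT vs.\ triangle inequality) that is tight on it. Throughout, the key identities are the elementary inequalities $1-\cos\theta \leq \theta^2/2$ and the angular integral
\[
(1-s)\epsilon^{2s-2}\int_0^{\min(2\epsilon,\,1/|v|)}\theta^{3-2s}\,d\theta \sim \epsilon^2 \wedge (\epsilon|v|)^{2s-2},
\]
which produces precisely the transition $\epsilon^2 \leftrightsquigarrow \epsilon^{2s-2}|v|^{2s-2}$ built into $W^\epsilon$.

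In the high-velocity region $\{|v|\geq 1/\epsilon\}$ I would further split the angular integration at $\theta=1/|v|$, which lies in $[0,2\epsilon]$ by our assumption. For $\theta\in[1/|v|,\,2\epsilon]$ use the triangle bound $|f(v)-f(v\cos\theta)|^2\leq 2|f(v)|^2+2|f(v\cos\theta)|^2$; the $\theta$-integral gives $\int_{1/|v|}^{2\epsilon}\theta^{-1-2s}d\theta\lesssim |v|^{2s}/(2s)$, which combined with the prefactor $(1-s)\epsilon^{2s-2}$ reproduces exactly the weight $W^\epsilon(v)^2\sim \epsilon^{2s-2}\langle v\rangle^{2s}$ on this region, and this contribution is therefore bounded by $|W^\epsilon f|_{L^2}^2$ (the $f(v\cos\theta)$-term being handled by the change of variables $w=v\cos\theta$). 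For $\theta\in[0,1/|v|]$ use the first-order Taylor/MVT bound $|f(v)-f(v\cos\theta)|^2\lesssim \theta^4|v|^2\int_0^1|\nabla f(v-t v(1-\cos\theta))|^2 dt$, followed by the $\theta$-integral $\int_0^{1/|v|}\theta^{3-2s}d\theta\sim|v|^{2s-4}$; the resulting weight is $(1-s)\epsilon^{2s-2}|v|^{2s-2}=(\epsilon|v|)^{2s-2}\leq 1$ since $\epsilon|v|\geq 1$ in this subregion, and a change of variable in $v$ absorbs the $|v|^2|\nabla f|^2$ factor into a contribution compatible with $|W^\epsilon(D)f|_{L^2}^2$ after Plancherel.

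In the low-velocity region $\{|v|\leq 1/\epsilon\}$ the Taylor expansion applies uniformly: $|f(v)-f(v\cos\theta)|^2\lesssim \theta^4|v|^2\int_0^1|\nabla f(\cdot)|^2 dt$, and after the $\theta$-integral $\int_0^{2\epsilon}\theta^{3-2s}d\theta\sim\epsilon^{4-2s}$ and change of variable $w=v\cos\theta+\cdots$, one obtains $\lesssim \epsilon^2 \int_{|w|\lesssim 1/\epsilon}|w|^2|\nabla f(w)|^2 dw$. Since $\epsilon|w|\lesssim 1$ on this range, the $|w|^2\epsilon^2$-factor is $\lesssim 1$, reducing to a local $L^2$ bound on $\nabla f$ restricted to $|w|\lesssim 1/\epsilon$; passing to Fourier via Plancherel and using that $W^\epsilon(\xi)^2\gtrsim |\xi|^2$ precisely on $\{|\xi|\leq 1/\epsilon\}$ identifies this contribution as $\lesssim |W^\epsilon(D)f|_{L^2}^2$.

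The main obstacle is the bookkeeping in the low-$v$/Taylor regime: the naive estimate produces $\|\nabla f\|_{L^2}^2=\int|\xi|^2|\hat f|^2 d\xi$, which is \emph{not} bounded by $|W^\epsilon(D)f|_{L^2}^2$ on high frequencies. The careful point is that the physical restriction $|v|\leq 1/\epsilon$ must be combined with Plancherel through a dyadic Littlewood--Paley decomposition of $f$ in Fourier (or equivalently, the MVT should be interpreted on $\hat f$ along radial directions, using $\nabla\hat f=-i\widehat{vf}$), so that the $|w|^2$-weight and the restriction $|w|\lesssim 1/\epsilon$ (or dually the $|\xi|\lesssim 1/\epsilon$ frequency cap) cooperate to give the right $W^\epsilon$-weighted bound. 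This matching of the physical-space restriction with the frequency-space weight $W^\epsilon(D)$, through the interplay $\nabla\hat f \leftrightarrow vf$ and the Plancherel identity, is the technical heart of the argument.
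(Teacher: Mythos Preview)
Your decomposition into $\{|v|\leq 1/\epsilon\}$ and $\{|v|\geq 1/\epsilon\}$, together with the angular splitting at $\theta=1/|v|$, captures the right scaling structure, and the high-velocity large-angle piece (triangle inequality) is fine. But there is a genuine gap in \emph{both} Taylor regions, not just the low-velocity one you flag.

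In the high-velocity small-angle region you end with $(\epsilon|v|)^{2s-2}\int_{|v|\geq 1/\epsilon}|\nabla f(v)|^{2}\,dv$, and since $(\epsilon|v|)^{2s-2}\leq 1$ you reduce to $\int_{|v|\geq 1/\epsilon}|\nabla f|^{2}\,dv$. This is \emph{not} controlled by $|W^{\epsilon}(D)f|_{L^{2}}^{2}+|W^{\epsilon}f|_{L^{2}}^{2}$: take $f$ supported near $|v|\sim 1/\epsilon$ with Fourier support at $|\xi|\sim N\gg 1/\epsilon$; then the left side is $\sim N^{2}|f|_{L^{2}}^{2}$ while the right side is $\sim (\epsilon^{2s-2}N^{2s}+\epsilon^{-2})|f|_{L^{2}}^{2}\ll N^{2}|f|_{L^{2}}^{2}$. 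Exactly the same counterexample kills your low-velocity Taylor estimate, as you yourself notice. Moreover, your proposed fix---interpreting the MVT on $\hat f$ using $\nabla\hat f=-i\widehat{vf}$---merely trades the problem for its dual: a frequency restriction $|\xi|\leq 1/\epsilon$ now fails to control $|vf|$ on the high-velocity region. Physical and Fourier decompositions \emph{individually} are each insufficient.

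The paper resolves this by bilinearizing: write $|f(v)-f(v\cos\theta)|^{2}=f(v)[f(v)-f(v\cos\theta)]-f(v\cos\theta)[f(v)-f(v\cos\theta)]$ and treat each term as a bilinear form, then apply the full localization machinery of Lemma~\ref{crosstermsimilar}. The crucial point there (see Step~1.1 of that proof) is that one localizes \emph{simultaneously} in velocity via $\zeta_4(\epsilon v)$ and in frequency via $\zeta(\epsilon D)$ before invoking Taylor; the Fourier-side identity of Proposition~\ref{fourier-transform-cross-term} (or its radial analogue, cf.\ Remark~\ref{hat-f-also-valid}) then allows the velocity localization on one factor to induce a frequency localization on the other. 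This two-sided localization is precisely what prevents the high-frequency/high-velocity leakage that your single-sided decomposition cannot rule out. Your last paragraph gestures toward this (``combine the physical restriction with a dyadic Littlewood--Paley decomposition''), but without the bilinear structure and the mechanism that transfers localization across the two copies of $f$, the argument cannot close.
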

We omit the  proof for brevity because  the localization techniques used in Lemma \ref{crosstermsimilar}
can be applied similarly by considering $f(v)(f(v) - f(v \cos\theta))$ and $f(v \cos\theta)(f(v) - f(v \cos\theta))$ separately.
\bigskip

\noindent {\bf Acknowledgment:}\,
 The research of Renjun Duan was  supported by the NSFC/RGC Joint Research Scheme (N\_CUHK409/19) from RGC in Hong Kong and a Direct Grant from CUHK. The research of
Ling-Bing He was supported by NSF of China under the grant 11771236. The research of Tong Yang  was supported by a fellowship award from the Research Grants Council of the Hong Kong Special Administrative Region, China (Project no. SRF2021-1S01). And the research of
Yu-Long Zhou was supported by NSF of China under the grant 12001552.

%\bibliographystyle{siam}
%\bibliography{Boltzmann}

\end{document}